\numberwithin{equation}{section}
\def\varep{\varepsilon}
\def\N{\mathbb{N}}
\def\R{\mathbb{R}}
\newcommand{\be}{\begin{equation}}
\newcommand{\ee}{\end{equation}}
\newcommand{\baa}{\begin{array}}
\newcommand{\eaa}{\end{array}}
\newtheorem{theorem}{Theorem}[section]
\newtheorem{lemma}[theorem]{Lemma}
\newtheorem{proposition}[theorem]{Proposition}
\title{\bf{The influence of advection on the propagation phenomena of reaction-diffusion equations with KPP-bistable nonlinearity}}
\author{Xing Liang\thanks{School of Mathematical Sciences and Wu Wen-Tsun Key Laboratory of Mathematics, University of Science and Technology of China, Hefei, Anhui 230026,  China  (\texttt{xliang@ustc.edu.cn}). X. Liang is supported by the National Natural Science Foundation of China (12331006).}, \  Lei Zhang\thanks{School of Mathematics and Statistics, Shaanxi Normal University, Xi’an, Shaanxi 710119, China (\texttt{zhanglei890512@gmail.com}). L. Zhang is supported by the National Natural Science Foundation of China (12471168, 12171119) and the Fundamental Research Funds for the Central Universities (GK202304029, GK202306003, GK202402004). } \ and \
Mingmin Zhang\thanks{CNRS, UMR 5219, Institut de Math\'ematiques de Toulouse, Universit\'e Paul Sabatier,  Toulouse 31062, France  (\texttt{mingmin.zhang.math@gmail.com}). M. Zhang is supported by TIRIS  ``Junior Fellowship Program'' through the project ReaDi‐LS, and by the French Agence Nationale de la Recherche (ANR) through the project ReaCh (ANR-23-CE40-0023-01).} \\
}
\date{}
\begin{document}

\maketitle
\begin{abstract}   
	This paper is devoted to  propagation phenomena  for  reaction-diffusion-advection equations in  one-dimensional heterogeneous environments, where heterogeneity is reflected by the nonlinearity term - being KPP type on $(-\infty, -L]$ and being bistable type on $[L,+\infty)$ for some $L>0$.  A comprehensive analysis is presented on the influence of advection and heterogeneous reactions, based on various values of the advection rate $c$. Denote by $c_m$ the minimal wave speed of KPP equation and by $c_b$ the unique wave speed of bistable equation, respectively.  When $c>-c_m$, it is shown that propagation can always occur with leftward spreading speed $c_m+c$ and rightward spreading speed $\min\big(\max(c_b-c,0),c_m-c\big)$. Moreover, a logarithmic delay of the level sets in the left direction is discovered. When $c\le -c_m$, propagation phenomena are determined by the initial data and by the sign of $c_b$. In particular, when $c_b>0$, the leftward propagation speed is $c_b+c$ if the initial population is ``large enough''; whereas extinction occurs if the initial value is located in the bistable region and is ``relatively small''. In addition, the attractiveness of the bistable traveling wave is obtained when the leftward spreading speed is $c_b+c$ and/or when the rightward spreading speed is $c_b-c$.
	

	\vskip 2mm
	\noindent{\small{\it  
			Mathematics Subject Classification (2020)}:   35B40; 35K57; 35B35; 92D25.}
	\vskip 2mm
	\noindent{\small{\it Key words}:    Reaction-diffusion-advection equations; Propagation phenomena; Shifting environment; KPP-bistable.}

\end{abstract}

\tableofcontents

\section{Introduction and main results}
\hspace{1.em}
We address in this paper the  propagation phenomena for  reaction-diffusion-advection  equations in a heterogeneous framework. The heterogeneous character arises  in the equation through continuously varying (in space) reaction term between KPP (for Kolmogorov,
Petrovsky and Piskunov) type and bistable type.  The type of equations we consider here is:
\begin{equation}
	\label{1.1'}
	u_t-u_{xx}-cu_x=f(x,u)~~~t>0,~x\in\R,
\end{equation}
in which $u(t,x)$ denotes the species  density at time
$t$ and location $x$,  the constant $c\in\R$ is  advection rate, $f(x, u)$ describes population dynamics. From the biological point of view, advection can arise either from the behavior of individuals or from physical transport processes, such as winds, currents in rivers, for which the population may have a tendency to move along or against the gradient of $u$. Our paper is intended to understand the influence of the advection on propagation phenomena in the face of a heterogeneous environment of KPP-transition-bistable type.

Throughout this work, we assume that the function $f(y,s): \R\times \R_+\to\R$ is of class $C^2(\R\times\R_+)$, and $s\mapsto f(y,s)$ is locally Lipschitz continuous uniformly for $y\in\R$. Moreover, $f$ satisfies
\be\label{hypf}
		\left\{\baa{l}
		\begin{aligned} \displaystyle
	&\forall~ y\in\R,~~f(y,0)=f(y, 1)=0,~\partial_sf(y,1)<0 \hbox{ and }\ f(y, s)\le0\hbox{ for }s\ge 1,\\
	&\forall s\in\R_+, ~\exists~ L>0,~~f(y,s)=f_m(s)~\text{for}~y\in(-\infty,-L],~~f(y,s)=f_b(s)~\text{for}~y\in[L,+\infty),\\
	&\forall s\in\R_+, ~f(y,s)~\text{is decreasing in $y\in[-L,L]$}.
\end{aligned}
	\eaa\right.
\ee
Here, $f_m$ denotes the KPP nonlinearity:
\be\label{f_m}
f_m(0)=f_m(1)=0,~ 0<f_m(s)\le f'_m(0)s~\hbox{in }(0, 1),~f'_m(1)<0,~ f_m<0\hbox{ in }(1,+\infty),
\ee
while $f_b$ represents bistable reaction:
\begin{equation}
	\label{f_b}
	\begin{aligned}
			f_b(0)=f_b(\theta)=f_b(1)=0~\text{for some}~\theta\in(0,1),\vspace{3pt}~~~~~~~~~~~~~~~~\\
		f_b'(0)\!<\!0,~f_b'(\theta)\!>\!0,~f_b'(1)\!<\!0,~f_b\!>\!0~\text{in}~
		(\theta,1),~f_b\!<\!0~\text{in}~(0,\theta)\!\cup\!(1,+\infty).
	\end{aligned}
\end{equation}
We then refer to $(-\infty,-L]$ as the \textit{KPP region}, while  $[L,+\infty)$ would be regarded as the \textit{bistable region}. 

The propagation phenomena for  \eqref{1.1'} with nontrivial nonnegative continuous and compactly supported initial data shall be  investigated in the following sense: 
\begin{itemize}
	\item \textit{extinction}: $u(t,x)\to 0$ as $t\to+\infty$ uniformly in $x\in\mathbb{R}$;
	\item \textit{blocking $($say, in the right direction$)$}: $u(t,x)\to0$ as $x\to+\infty$ uniformly in $t\ge0$;
	\item \textit{propagation}: there exist $c_r$ and $c_l$ with $c_r+c_l>0$ such that  for $\varep>0$ small enough,
	$$\lim_{t \rightarrow +\infty} \inf_{-(c_l-\varep) t \le x \le  (c_r-\varep) t} u(t,x)>0,\qquad
	\lim_{t \rightarrow +\infty} \sup_{x \le -(c_l+\varep) t} u(t,x)=0=\lim_{t \rightarrow +\infty} \sup_{x \ge (c_r+\varep) t} u(t,x).$$ 
	Here $c_r$ and $c_l$ are called the rightward and leftward asymptotic spreading speeds, respectively.
\end{itemize}



Before stating our main result, let us review the fundamental results on the classical homogeneous reaction-diffusion equation 
\begin{equation}
	\label{1.2}
	u_t=u_{xx}+f(u),~~t>0,~x\in\mathbb{R},
\end{equation}
where $f$ is a $C^1(\R)$ function satisfying $f(0)=f(1)=0$. This equation has been extensively studied in the mathematical, physical and biological literature since the pioneering works of Fisher \cite{F1937} and Kolmogorov, Petrovskii and Piskunov \cite{KPP1937}.

With the KPP reaction $f=f_m$ in \eqref{1.2}, it is shown in \cite{KPP1937} that \eqref{1.2} admits traveling front solutions $u(t,x)=\varphi_\nu(x\cdot e-\nu t)$ where $e=\pm 1$ denotes the direction of propagation and $\nu$ is the wave speed, with $\varphi_\nu:\R\to(0,1)$ and $\varphi_\nu(-\infty)=1$, $\varphi_\nu(+\infty)=0$, if and only if $\nu\ge c_m:=2\sqrt{f'_m(0)}$. For each $\nu\ge c_m$, the wave profile $\varphi_\nu$ satisfies 
\be\label{TW}
\varphi_\nu''+\nu\varphi_\nu'+f(\varphi_\nu)=0~\text{in}~\mathbb{R},~~\varphi_\nu'<0~\text{in}~\mathbb{R},~~\varphi_\nu(-\infty)=1,~~\varphi_\nu(+\infty)=0,
\ee
and has translation invariance. Moreover, $\varphi_\nu$ has the following asymptotics:
\begin{align}\label{phiexp}
	\varphi_\nu(s)\mathop{\sim}_{s\to+\infty}\begin{cases}
		Ae^{-\lambda_\nu s} & \hbox{if }\nu>c_m,\\
		A^*s e^{-\lambda_\nu s} & \hbox{if }\nu=c_m,\\
	\end{cases}
\end{align} 
where $A,\,A^*$ are positive constants and the decay rate $\lambda_\nu>0$ is obtained from the linearized equation $u_t=u_{xx}+f'_m(0)u$ and is given by $\lambda_\nu=\big(\nu-\sqrt{\nu^2-4f_m'(0)}\big)/2$. It was proved in~\cite{HNRR2013,L1985,U1978} that the front with minimal speed $c_m$ attracts, in some sense, the solutions of the Cauchy problem~\eqref{1.2} associated with nonnegative bounded nontrivial compactly supported initial data $u_0$ in $\R$. Furthermore, Aronson and Weinberger~\cite{AW2} proved the  {\it spreading property}, stating that the solution $u$ to the Cauchy problem~\eqref{1.2} with a  nontrivial nonnegative compactly supported initial datum $u_0$ in $\R$ admits an asymptotic spreading speed $c_m=2\sqrt{f_m'(0)}$, in the following sense:
\begin{equation}
	\label{spreading properties}
	\begin{aligned}
	\begin{cases}
			\sup_{|x|\ge \omega t}u(t,x)\to 0,~~~&\text{if}~\omega>c_m\\
			\inf_{|x|\le  \omega t} u(t,x)\to 1~~~~&\text{if}~0\le \omega<c_m,
		\end{cases}
		~~~	\text{as}~~t\to+\infty.
	\end{aligned}
\end{equation}
The minimal traveling wave speed $c_m$ for \eqref{1.2} can therefore also be considered as the asymptotic spreading speed for the Cauchy problem associated with \eqref{1.2}.

In contrast, when the reaction is of bistable type, i.e. $f=f_b$,
equation~\eqref{1.2} has a unique (up to translation) traveling front solution $u(t,x)=\phi(x\cdot e-c_b t)$, where $\phi:\R\to(0,1)$ and satisfies~\eqref{TW}, $e=\pm 1$ is the direction of propagation, and $c_b\in\mathbb{R}$ is the wave speed  (has the sign of $\int_0^1 f_b(s) \mathrm{d}s$), depending only on $f_b$ \cite{AW2,FM1977}. It is known \cite{FM1977} that
\begin{equation}
	\label{2.6}
	\left\{\begin{aligned}
		a_0 e^{-\alpha s}\le \phi(s)\le a_1 e^{-\alpha s},~~s\ge 0,\\
		b_0 e^{\beta s}\le 1-\phi(s)\le b_1 e^{\beta s},~~s\le0,
	\end{aligned}\right.
\end{equation}
where $a_0$, $a_1$, $b_0$ and $b_1$ are some positive constants, $\alpha$ and $\beta$ are given by $\alpha=(c_b+\sqrt{c_b^2-4f_b'(0)})/2>0$ and $\beta=(-c_b+\sqrt{c_b^2-4f_b'(1)})/2>0$. 

\textit{Throughout this paper, we denote by $\varphi_\nu(x-\nu t)$  the KPP traveling front satisfying \eqref{TW} with $f=f_m$ and with wave speed $\nu\ge c_m$, whereas by $\phi(x-c_bt)$ the unique bistable traveling front satisfying \eqref{TW} with $f=f_b$, with wave speed $c_b$ and with the normalization $\phi(0)=\theta$. Let us stress that $c_m>c_b$, which is shown in Lemma \ref{Lemma_cm>cb}.} 

\vskip 0.2cm

\paragraph{Main results.}

Before stating our main result, let us provide some intuition regarding the propagation dynamics of \eqref{1.1'}. We begin with the simplest case where the advection speed $c$ is zero. In this case,  the sign of $c_b$, the speed of bistable traveling front, plays a decisive role in the global dynamics, where \cite{HLZ2} gives a strong clue about the propagation phenomena. However, the dynamics becomes unclear when the advection speed $c$ is non-zero. Especially, it is natural to ask about the role that $c$ plays. For instance, we wonder how $c_b$ vs. $c$ affect the dynamics, and what should be further considered to achieve a complete characterization of the front propagation.

Indeed, $c>0$ indicates that the species will be transported in the positive direction due to advection, while $c<0$ suggests that the species will be transported in the negative direction. Thus, it is not hard to imagine that the dynamics of \eqref{1.1'} can be governed by either the KPP region when $c \gg 1$ or bistable region when $c \ll -1$. However, the dynamics of the system will be characterized by the combined driving forces of the KPP region and the bistable region for $c$ not large enough in either direction. From the above preliminary analysis, it is obvious that  the advection speed $c$ does play a key role in determining the dynamical behavior of \eqref{1.1'}. Which factors will lead to the change in dynamics? Can we give a full picture of the propagation phenomena for \eqref{1.1'}? 

Our goal is therefore to answer these above questions and present  our main result regarding the propagation phenomena for problem \eqref{1.1'} associated with compactly supported initial data, where a particular attention will be devoted to the role played by advection.

\begin{theorem}
	\label{thm-main}
	The solution $u$ of~\eqref{1.1'} with a nonnegative continuous and compactly supported initial datum $u_0\not\equiv 0$ has the following propagation phenomena:
	\begin{itemize}
		\item
		
		When $c_b>0$, there holds
		\begin{table}[H]
			\centering
			\makebox[\textwidth][c]{
				\begin{tabular}{|c|c|c|c|c|}\hline
					Range of advection $c$& $(-\infty,-c_m]$& $(-c_m,c_b]$ &   $(c_b,c_m)$ & $[c_m,+\infty)$ \\\hline
					Leftward speed & $c_b+c$--$\bigstar$; Extinction--$\clubsuit$ & $c_m+c$&  $c_m+c$&$c_m+c$ \\\hline
					& Theorem \ref{thm_c le -cm,cb>0}; Theorem \ref{thm_extinction} & Theorem \ref{thm:-cm_cm:left}& Theorem \ref{thm:-cm_cm:left} &Theorem \ref{thm_c>cm} \\\hline
					Rightward speed & $c_b-c$--$\bigstar$; Extinction--$\clubsuit$ & $c_b-c$& 0 (Blocking) &$c_m-c$ \\\hline
					& Theorem \ref{thm_c le -cm,cb>0}; Theorem \ref{thm_extinction} & Theorem \ref{thm_propagation-2}& Theorem \ref{thm_blocking_c>c_b} &Theorem \ref{thm_c>cm} \\\hline
				\end{tabular}
			}
		\end{table}
		Here,  the notation $\bigstar$ represents that the localized initial condition is further assumed to be ``large enough'', while $\clubsuit$ denotes that \textnormal{spt}$(u_0)$ is included in the bistable region and $\Vert u_0\Vert_{L^\infty(\mathbb{R})} < \theta$.
		\item 
		\noindent
		When $-c_m<c_b<0$, there holds
		\begin{table}[H]
			\centering
			\makebox[\textwidth][c]{
				\begin{tabular}{|c|c|c|c|c|}\hline
					Range of  advection $c$&$(-\infty,-c_m]$& $(-c_m,c_b]$  & $(c_b,c_m)$ & $[c_m,+\infty)$ \\\hline 
					Leftward speed & Extinction & $c_m+c$&  $c_m+c$&$c_m+c$ \\\hline
					& Theorem \ref{thm_extinction}  & Theorem \ref{thm:-cm_cm:left}& Theorem \ref{thm:-cm_cm:left} &Theorem \ref{thm_c>cm} \\\hline
					Rightward speed & Extinction & $c_b-c$& 0 (Blocking) &$c_m-c$ \\\hline
					& Theorem \ref{thm_extinction}  & Theorem \ref{thm_propagation-2}& Theorem \ref{thm_blocking_c>c_b} &Theorem \ref{thm_c>cm} \\\hline
				\end{tabular}
			}
		\end{table}
		
		\item 
		\noindent
		When $c_b\le -c_m$, there holds
		\begin{table}[H]
			\centering
			\makebox[\textwidth][c]{
				\begin{tabular}{|c|c|c|c|c|}\hline
					Range of  advection $c$&$(-\infty,c_b]$ & $(c_b,-c_m]$ & $(-c_m,c_m)$ &  $[c_m,+\infty)$ \\\hline 
					Leftward speed & Extinction &  Extinction& $c_m+c$&$c_m+c$ \\\hline
					& Theorem \ref{thm_extinction}  & Theorem \ref{thm_extinction} & Theorem \ref{thm:-cm_cm:left} &Theorem \ref{thm_c>cm} \\\hline
					Rightward speed & Extinction & Extinction&  0 (Blocking) &$c_m-c$ \\\hline
					& Theorem \ref{thm_extinction}  & Theorem \ref{thm_extinction} & Theorem \ref{thm_blocking_c>c_b} &Theorem \ref{thm_c>cm} \\\hline
				\end{tabular}
			}
		\end{table}		
	\end{itemize}
 Moreover, when $u$ propagates to the left with speed $c_m+c$, the level sets always exhibit a logarithmic time delay, see Theorem \ref{thm_log delay}; when $u$ propagates to the left with speed $c_b+c$ or propagates to the right with speed $c_b-c$,  $u$ will eventually converge to the unique bistable traveling wave with a constant shift, see Theorems \ref{thm_c le -cm,cb>0} and  \ref{thm_propagation-2}. 
\end{theorem}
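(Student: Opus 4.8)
The statement of Theorem~\ref{thm-main} is an omnibus: it records, regime by regime, the content of Theorems~\ref{thm_c le -cm,cb>0}, \ref{thm_extinction}, \ref{thm:-cm_cm:left}, \ref{thm_c>cm}, \ref{thm_propagation-2}, \ref{thm_blocking_c>c_b} and \ref{thm_log delay}. So the plan is to (i) verify that the three tables form an exhaustive and consistent partition of the parameter plane $(c,c_b)$ --- this uses only $c_m>c_b$ from Lemma~\ref{Lemma_cm>cb}, which makes $(-c_m,c_b]$ and $(c_b,c_m)$ genuine intervals when $c_b>-c_m$ and forces the reshuffled thresholds $(-\infty,c_b]$, $(c_b,-c_m]$, $(-c_m,c_m)$, $[c_m,+\infty)$ when $c_b\le-c_m$ --- and (ii) quote the relevant component theorem in each cell, the final clause being a direct restatement of Theorems~\ref{thm_log delay}, \ref{thm_c le -cm,cb>0} and \ref{thm_propagation-2}. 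The real work lies in those components, for which I would use the following common machinery.

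The engine is the parabolic comparison principle together with two structural facts drawn from \eqref{hypf}: the pointwise sandwich $f_b(s)\le f(y,s)\le f_m(s)$ for all $y\in\R,\ s\ge0$ (immediate, since $f(\cdot,s)$ is nonincreasing on $[-L,L]$ with $f(-L,s)=f_m(s)$, $f(L,s)=f_b(s)$, and equals $f_m$, resp.\ $f_b$, outside), and the fact that $f$ is genuinely KPP on $\{x\le-L\}$ and genuinely bistable on $\{x\ge L\}$. Upper bounds on the spreading fronts follow by comparing $u$, in the moving frames $x\mp\sigma t$, with supersolutions built from the homogeneous-KPP front $\varphi_\nu(\mp x-\sigma t)$ and the homogeneous-bistable front $\phi(\mp x-\sigma t)$ --- plugging these into the equation and reading off $\sigma=c_m\pm c$ on the KPP side and $\sigma=c_b\pm c$ on the bistable side, and using the exponential asymptotics \eqref{phiexp}, \eqref{2.6} to absorb the error terms coming from the transition region. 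Lower bounds (genuine propagation) come from: the strong maximum principle makes $u(t,\cdot)>0$ everywhere for $t>0$, in particular on the half-line $\{x\le-L\}$ where the reaction is exactly $f_m$ and has no ignition threshold; hence, when $c>-c_m$, an Aronson--Weinberger-type argument there forces invasion of the whole KPP region and leftward spreading at speed precisely $c_m+c$, and the resulting near-$1$ plateau then ignites a bistable front moving rightward at speed $c_b-c$ whenever $c<c_b$. The hypothesis ``$\bigstar$'' is exactly what replaces this mechanism in the regime $c\le-c_m$, where the KPP region can no longer help: it brings $u$ above the threshold $\theta$ on a long enough interval so that the bistable zone sustains propagation on its own; conversely ``$\clubsuit$'' (support in the bistable region, amplitude below $\theta$) allows one to trap $u$ beneath a subthreshold supersolution that decays before reaching the KPP region, giving extinction.

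For the blocking statement ($c_b<c<c_m$, rightward speed $0$) the key object is a stationary solution of $U''+cU'+f(x,U)=0$ with $U(-\infty)=1$, $U(+\infty)=0$; this is precisely where $c_b<c$ enters, since in the bistable region a front with effective speed $-c<-c_b$ would have to travel leftward, so a standing barrier exists. I would produce $U$ by the sub/supersolution method on the stationary ODE (phase-plane analysis in the bistable zone matched to a decaying tail), then sandwich $u$ between a translate of $U$ (for rightward blocking) and a KPP-type subsolution on the left (for the leftward speed $c_m+c$). The two genuinely delicate pieces are the logarithmic delay and the attractiveness of the bistable wave, and I expect the latter to be the main obstacle. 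For the delay, when the leftward speed equals the \emph{minimal} KPP speed (shifted by $c$), I would adapt the Bramson-type construction of Hamel--Nolen--Roquejoffre--Ryzhik/Lau, with sub- and supersolutions of the form $\varphi_{c_m}\!\bigl(-x-(c_m+c)t+\tfrac{3}{2\lambda_{c_m}}\ln t+O(1)\bigr)$ plus lower-order corrections, the new point being to check that the heterogeneous transition zone and the leakage of mass toward the bistable side alter neither the coefficient $\tfrac{3}{2\lambda_{c_m}}$ nor the boundedness of the residual shift. For the attractiveness, I would first run a zero-number/sliding argument to show that, in the frame moving at $c_b\mp c$, the profile of $u$ near the interface converges to that of $\phi$ --- decoupling it from the KPP plateau that sits \emph{permanently} behind it and from the far field ahead --- and then close with the Fife--McLeod exponential-stability technique, trapping $u$ between $\phi(\cdot\mp q_\pm(t))$ with shifts $q_\pm(t)$ converging exponentially to a common limit. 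Quantifying that decoupling, i.e.\ proving that the $\{0\}$--$\phi$--$\{1\}$--$\varphi_{c_m}$ ``terrace'' pieces together with only a bounded asymptotically constant shift, is where the standard homogeneous-medium bistable-stability machinery does not directly apply and is the hardest step.
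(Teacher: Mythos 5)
Your proposal is correct and follows essentially the same route as the paper: Theorem \ref{thm-main} is established there exactly as you describe, by checking that the case partition is exhaustive (using $c_m>c_b$ from Lemma \ref{Lemma_cm>cb}) and invoking the component Theorems \ref{thm_c>cm}--\ref{thm_log delay}, whose proofs rest on the same comparison-principle machinery you outline (homogeneous KPP barriers for the leftward speed, the stationary solution $U$ of Proposition \ref{prop_U connecting 1 and 0} for blocking, Fife--McLeod-type super/subsolutions with exponential corrections for the bistable-side speeds and convergence, and an adaptation of \cite{HNRR2013} for the logarithmic delay). The only minor implementation differences are that the paper imports its lower KPP barrier from the KPP-decay results of \cite{FPZ} rather than re-running an Aronson--Weinberger argument on the half-line, and it identifies the asymptotic shift of the bistable wave via the Liouville-type theorem of \cite{BH2007} combined with quantitative stability lemmas rather than a zero-number/sliding argument.
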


\noindent
\textbf{Remark}.
	 For the particular case $c_b=0$, we point out that when $c>-c_m$, the  propagation phenomena  coincide exactly with the classification for $c_b>0$, however the case of $c\le -c_m$ is rather delicate, and we leave it as an open question.
	
	\vspace{3mm}

A few comments on our main result are in order. First, it is straightforward to observe that there is greater possibility for species to survive and also propagate when $c \gg 1$. Since large advection will help the species to stay in the KPP region, which is the (more) favorable habitat, this (at least) allows species to propagate in the left direction. On the contrary, the bistable region will dominate the whole habitat when $c \ll -1$, which may lead to 
propagation or extinction, depending on how favorable the bistable region is and also on the initial population. The most complicated and interesting situation is when $c$ is not large enough in either direction, for which the long time dynamics of the solution in the right direction will rest on both the KPP and  bistable regions.  Among these intermediate cases,  two situations in our main result  may be somehow confusing at first glance, for which we give tentative and detailed explanations below:
\begin{itemize}
	\item In the case of $c_b>0$, when the advection rate $c$ changes from the interval $(-c_m,c_b]$ across $-c_m$ to $(-\infty,-c_m]$: we notice that there is a {\it jump} of leftward speed of propagation from $c_m+c$ to $c_b+c$ (for the latter, of course we consider large enough localized initial condition). For the  former case, the advection is not sufficiently negative so that the KPP region is not totally excluded from the habitat of the species. Consequently, this allows for the leftward propagation with speed $c_m+c>0$. However,  for the sufficiently negative advection  $c\in(-\infty,-c_m]$,  eventually it is only the bistable region that plays the role of habitat for the species. The species will propagate to the left with speed $c_b+c<0$ (which means the leftward front of the solution actually moves to the right with speed $-(c_b+c)>0$), when the initial population is ``large enough''; whereas the species will extinct provided that the initial population is located in the bistable region and is ``relatively small''.
	
	\item The rightward blocking phenomena for $c\in(-c_m,c_m)$ and $c>c_b$: the advection is not as strong as the KPP reaction drive, resulting in the KPP region persisting as part of the species' habitat. In view of $c>c_b$, the advection effect forces the species to  leave the bistable region and migrate into the KPP region, where they subsequently get resources, grow and invade in the right direction again. This eventually leads to a saturation state -- blocking in the right direction in the sense that the solution actually converges to the positive stationary solution $U$ such that $U(-\infty)=1$ and $U(+\infty)=0$. 
\end{itemize}

 Compared with existing literature for analyzing the effect of heterogeneous advection on nonlinear spreading and propagation phenomena for reaction-diffusion equations with a single dynamical mechanism (either KPP or bistable or combustion type) of the form
 \begin{equation}
 	\label{adv}
 	u_t-\nabla\cdot(A(x)\nabla u) +q(x)\cdot\nabla u=f(x,u),
 \end{equation}
 with diffusion matrix  $A(x)$ and drift $q(x)$ \cite{Freidlin1984,BLL1990,BLR1992,PX1991,R1992,MR1995,Hamel1997-1,Hamel1997,R1997,ABP2000,CKR2001,KR2001,B2002,BHN-1-2005}, here we consider the simple constant advection mode for the reaction-diffusion-advection equation \eqref{1.1'} in a heterogeneous environment of  KPP-transition-bistable type, and investigate all the possibilities of propagation phenomena by exhausting different values of the advection. To the best of our knowledge, our work is the first to address the effects of the advection on the propagation phenomena and the speed of propagation in a heterogeneous dynamical setting reflected by a KPP-transition-bistable reaction.  More generally, our problem can be generalized to a high dimensional situation:
 \begin{equation*}
 	u_t-\Delta u+ \alpha u_x=f(x,u),~~~~\text{in}~\Omega,
 \end{equation*}
where $\Omega\subset\R^\N$ is a straight cylinder and  $q(x)=\alpha e_1$ represents a shear flow, which can be studied in a similar way but one needs to pay attention to the effect of the geometry of the domains.

\paragraph{Discussion -- perspective of shifting environment.}

We close this section by introducing our main result from the shifting environment viewpoint and by making a comparison between our result and the existing ones.

By setting $u(t,x)=v(t,x+ct)$, equation \eqref{1.1'} is recast as:
\begin{equation}
	\label{1.1}
	v_t=v_{yy}+f(y-ct,v),~~~t>0,~y\in\R,
\end{equation}
in which the parameter $c$ is understood as the shifting speed, and the shifting growth $f(s,v)$ is assumed decreasing in $s\in[-L,L]$, being of KPP type on the left semi-infinite region, while being of bistable type on the right semi-infinite region.  With fixed diffusion, we aim to investigate the effect of shifting reaction. Of particular interest here is  to give a full classification of the propagation phenomena for such a model subject to KPP-transition-bistable nonlinearities with any value of shifting speeds.   The novelty is that here the strong Allee effect is taken into account  with the KPP reaction as the complement so that the whole habitat can be either favorable plus favorable, or favorable plus unfavorable.
In this sense, Theorem \ref{thm-main} can be transferred to the following result for \eqref{1.1}.
\begin{theorem}
	\label{thm-main'}
	The solution $v$ of~\eqref{1.1} with a nonnegative continuous and compactly supported initial datum $u_0\not\equiv 0$ has the following propagation phenomena:
	\begin{itemize}
		\item
		
		When $c_b>0$, there holds
		\begin{table}[H]
			\centering
			\makebox[\textwidth][c]{
				\begin{tabular}{|c|c|c|c|c|}\hline
					Range of $c$& $(-\infty,-c_m]$& $(-c_m,c_b]$ &   $(c_b,c_m)$ & $[c_m,+\infty)$ \\\hline
					Leftward speed & $c_b$--$\bigstar$; Extinction--$\clubsuit$ & $c_m$&  $c_m$&$c_m$ \\\hline
					& Theorem \ref{thm_c le -cm,cb>0}; Theorem \ref{thm_extinction} & Theorem \ref{thm:-cm_cm:left}& Theorem \ref{thm:-cm_cm:left} &Theorem \ref{thm_c>cm} \\\hline
					Rightward speed & $c_b$--$\bigstar$; Extinction--$\clubsuit$ & $c_b$& $c$ &$c_m$ \\\hline
					& Theorem \ref{thm_c le -cm,cb>0}; Theorem \ref{thm_extinction} & Theorem \ref{thm_propagation-2}& Theorem \ref{thm_blocking_c>c_b} &Theorem \ref{thm_c>cm} \\\hline
				\end{tabular}
			}
		\end{table}
		Here,  the notation $\bigstar$ represents that the localized initial condition is further assumed to be ``large enough'', while $\clubsuit$ denotes that \textnormal{spt}$(u_0)$ is included in the bistable region and $\Vert u_0\Vert_{L^\infty(\mathbb{R})} < \theta$.
		\item 
		\noindent
		When $-c_m<c_b<0$, there holds
		\begin{table}[H]
			\centering
			\makebox[\textwidth][c]{
				\begin{tabular}{|c|c|c|c|c|}\hline
					Range of $c$&$(-\infty,-c_m]$& $(-c_m,c_b]$  & $(c_b,c_m)$ & $[c_m,+\infty)$ \\\hline 
					Leftward speed & Extinction & $c_m$&  $c_m$&$c_m$ \\\hline
					& Theorem \ref{thm_extinction} & Theorem \ref{thm:-cm_cm:left}& Theorem \ref{thm:-cm_cm:left} &Theorem \ref{thm_c>cm} \\\hline
					Rightward speed & Extinction & $c_b$& $c$ &$c_m$ \\\hline
					& Theorem \ref{thm_extinction} & Theorem \ref{thm_propagation-2}& Theorem \ref{thm_blocking_c>c_b} &Theorem \ref{thm_c>cm} \\\hline
				\end{tabular}
			}
		\end{table}
		
		\item 
		\noindent
		When $c_b\le -c_m$, there holds
		\begin{table}[H]
			\centering
			\makebox[\textwidth][c]{
				\begin{tabular}{|c|c|c|c|c|}\hline
					Range of $c$&$(-\infty,c_b]$ & $(c_b,-c_m]$ & $(-c_m,c_m)$ &  $[c_m,+\infty)$ \\\hline 
					Leftward speed & Extinction &  Extinction& $c_m$&$c_m$ \\\hline
					& Theorem \ref{thm_extinction}  & Theorem \ref{thm_extinction} & Theorem \ref{thm:-cm_cm:left} &Theorem \ref{thm_c>cm} \\\hline
					Rightward speed & Extinction & Extinction&  $c$ &$c_m$ \\\hline
					& Theorem \ref{thm_extinction}  & Theorem \ref{thm_extinction} & Theorem \ref{thm_blocking_c>c_b} &Theorem \ref{thm_c>cm} \\\hline
				\end{tabular}
			}
		\end{table}
		
	\end{itemize}
 Moreover, when $u$ propagates to the left with speed $c_m$, the level sets always admit a logarithmic time delay, see Theorem \ref{thm_log delay}; when $u$ propagates to the left with speed $c_b$ or propagates to the right with speed $c_b$,  $u$ will eventually converge to the unique bistable traveling wave with a constant shift, see Theorems \ref{thm_c le -cm,cb>0} and  \ref{thm_propagation-2}. 
\end{theorem}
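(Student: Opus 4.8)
The plan is to obtain Theorem~\ref{thm-main'} as a direct translation of Theorem~\ref{thm-main} under the change of unknown relating \eqref{1.1'} and \eqref{1.1}. First I would record the equivalence precisely: if $u$ solves \eqref{1.1'} with initial datum $u_0$, then $v(t,y):=u(t,y-ct)$ solves \eqref{1.1} with the \emph{same} datum (and conversely $u(t,x)=v(t,x+ct)$), since $v_y=u_x$, $v_{yy}=u_{xx}$, $v_t=u_t-c\,u_x$ evaluated at $(t,y-ct)$, and the reaction term $f(y-ct,v(t,y))$ is exactly the shifting nonlinearity of \eqref{1.1}. Because $v(0,\cdot)=u(0,\cdot)=u_0$, the hypotheses marked $\bigstar$ (``$u_0$ large enough'') and $\clubsuit$ ($\mathrm{spt}(u_0)$ in the bistable region, $\|u_0\|_{L^\infty(\R)}<\theta$) are conditions on $u_0$ alone and are unaffected by the transformation.

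Next I would translate each regime. Extinction is manifestly invariant, as $\|v(t,\cdot)\|_{L^\infty(\R)}=\|u(t,\cdot)\|_{L^\infty(\R)}$. If $u$ propagates with leftward speed $c_l$ and rightward speed $c_r$ in the sense of Section~1, then for small $\varep>0$,
\begin{align*}
\lim_{t\to\infty}\sup_{y\le-(c_l-c+\varep)t}v(t,y)&=\lim_{t\to\infty}\sup_{x\le-(c_l+\varep)t}u(t,x)=0,\\
\lim_{t\to\infty}\sup_{y\ge(c_r+c+\varep)t}v(t,y)&=\lim_{t\to\infty}\sup_{x\ge(c_r+\varep)t}u(t,x)=0,
\end{align*}
and, by the same substitution $x=y-ct$, $\liminf_{t\to\infty}\inf_{-(c_l-c-\varep)t\le y\le(c_r+c-\varep)t}v(t,y)>0$; hence $v$ propagates with leftward speed $c_l-c$ and rightward speed $c_r+c$, and $(c_l-c)+(c_r+c)=c_l+c_r>0$ is preserved. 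Substituting the four values $c_l\in\{c_m+c,\,c_b+c\}$ and $c_r\in\{c_b-c,\,c_m-c\}$ appearing in the tables of Theorem~\ref{thm-main} yields precisely the entries $c_m$, $c_b$, $c_b$, $c_m$ of Theorem~\ref{thm-main'}.

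The blocking and traveling-wave statements need slightly more care, since they involve the limiting profile, which is shifted by $ct$. When Theorem~\ref{thm-main} asserts rightward blocking, i.e.\ $u(t,\cdot)$ converges locally uniformly to the stationary solution $U$ of \eqref{1.1'} with $U(-\infty)=1$, $U(+\infty)=0$, then $v(t,y)=u(t,y-ct)\to U(y-ct)$, a profile rigidly transported at speed $c$; comparing with the definition in Section~1 (using $U(+\infty)=0$, $U(-\infty)=1$) shows this is exactly ``rightward speed $c$'' for $v$, rather than genuine blocking. Likewise, in the advective equation the bistable front of \eqref{1.1'} travels at speed $c_b-c$ and its reflection at leftward speed $c_b+c$ (so that adding back $c$ recovers \eqref{TW} with $f=f_b$ at speed $\pm c_b$); therefore a convergence $u(t,x)\to\phi\big(\pm(x-(c_b-c)t)+h\big)$ translates into $v(t,y)\to\phi\big(\pm(y-c_bt)+h\big)$, i.e.\ convergence to the standard bistable front of \eqref{1.2} with $f=f_b$, which covers both the ``leftward speed $c_b$'' and the ``rightward speed $c_b$'' cases. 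Finally, the logarithmic-in-time delay of the left level sets (Theorem~\ref{thm_log delay}) is stated for $u$; the shift $x\mapsto x+ct$ turns the leading position $-(c_m+c)t$ into $-c_mt$ and leaves the sub-leading $O(\log t)$ correction untouched, so the delay carries over verbatim to $v$.

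I do not expect a genuine obstacle: once the change of variables is in place the argument is bookkeeping, and all the analytic substance sits in the cited Theorems~\ref{thm_c le -cm,cb>0}, \ref{thm_extinction}, \ref{thm:-cm_cm:left}, \ref{thm_propagation-2}, \ref{thm_blocking_c>c_b}, \ref{thm_c>cm} and \ref{thm_log delay}. The two points warranting attention are (i) phrasing the blocking regime as convergence to the stationary solution \emph{before} shifting, so that in the moving frame it is correctly identified as propagation at the environmental speed $c$; and (ii) tracking the orientation of the bistable front so that its speed in \eqref{1.1'} is $c_b-c$ (resp.\ the reflected front has leftward speed $c_b+c$), which makes the shifted limit the standard speed-$c_b$ front of \eqref{1.2}.
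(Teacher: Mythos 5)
Your proposal is correct and is essentially the paper's own (implicit) argument: Theorem~\ref{thm-main'} is obtained from Theorem~\ref{thm-main} purely by the change of frame $v(t,y)=u(t,y-ct)$, so that leftward/rightward speeds transform as $c_l\mapsto c_l-c$, $c_r\mapsto c_r+c$, extinction is invariant, blocking becomes propagation at the environmental speed $c$, and all analytic content sits in Theorems~\ref{thm_c>cm}, \ref{thm:-cm_cm:left}, \ref{thm_blocking_c>c_b}, \ref{thm_propagation-2}, \ref{thm_c le -cm,cb>0}, \ref{thm_extinction} and \ref{thm_log delay}. One small sign slip in your bookkeeping: the leftward asymptotics \eqref{equ:c_le_cm:l} read $u(t,x)\approx\phi\big(-x-(c_b+c)t+z_2\big)$, which transform to $\phi(-y-c_bt+z_2)$ (a reflected front moving \emph{left} at speed $c_b$), not to $\phi\big(-(y-c_bt)+h\big)$ as your ``$\pm$'' formula suggests — your verbal description of this case is nonetheless the correct one.
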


Given different values of the shifting speed $c$, the species might reside within the KPP and/or the bistable regions, resulting in significantly varying propagation dynamics after a long time.
Theorem \ref{thm-main'}
shows that, on the one hand,  very similar to the non-shifting case,  the population localization and total population
size can increase the possibility of persistence and  propagation under a moving climate, and, on the other hand, mobility can both reduce and enhance the ability of population to track climate change in order for persistence and even propagation.

In 2009, Berestycki et al. \cite{BDNZ2009} studied \eqref{1.1} by assuming that $f(y,v)$ is of KPP type for $0<y<L$, and $f(y,v)<0$ for $y<0$ and for $y> L$ which embodies the assumption that the population grows logistically in a favorable region of length $L$ but declines exponentially outside of that region. The authors \cite{BDNZ2009} proved that the global dynamics (extinction, persistence as well as convergence to the unique positive solution of $U''+cU'+f(x,U)=0$) is determined by the sign of the generalized principal eigenvalue of the linearized problem around zero state. Berestycki and Rossi  extended the large time dynamics result to high dimension \cite{BR2008}  and to infinite cylindrical type domains \cite{BR2009}, which was then investigated by Vo \cite{Vo2015} for more general unfavorable media at infinity.
In a particular situation $f(y-ct,v)=v(r(y-ct)-v)$ in \eqref{1.1} with $r:\R\to\R$ continuous, bounded and  nonincreasing\footnote{For convenience, here we  restate the conclusions of some references by assuming that $r$ is nonincreasing instead of nondecreasing.}, the KPP-decay case, i.e. $r(-\infty)>0>r(+\infty)$, was studied by Li et al. \cite{LBSF2014} and Fang et al. \cite{FLW2016},  showing that the species will extinct if $c \leq -c_-$ and persist if $c> -c_-$, and when propagation occurs, the leftward speed is $c_-$ and rightward speed is $\min(c,c_-)$. Fang et al. \cite{FPZ}  further considered the  spreading properties and forced waves in a time-periodic setting. Besides,  the KPP-KPP case, i.e. $r(-\infty)>r(+\infty)>0$, was investigated by Hu et al. \cite{HSL}, and by Lam and Yu \cite{LamYu2022} in which the spreading properties were studied, and moreover the frame of  reaction-diffusion equations and integro-differential equations with a distributed time-delay were addressed in \cite{LamYu2022}.

Yet, other types of growth functions (especially Allee effect) are also interesting  in the investigation of population dynamics in moving habitats, which is known for instance in \cite{Allee1938, RRBK2008}. 
Bouhous and Giletti \cite{BG2019} studied the propagation phenomena for  \eqref{1.1} in cylindrical type domains with general monostable reaction $f$ (including Allee effect). Bouhours and Nadin \cite{BN2015} considered \eqref{1.1} when the size of the favorable zone is bounded with general $f$ (including monostable and bistable cases) in the favorable zone, and proved  the
existence of two speeds $0<\underline c \le \overline c<+\infty$ such that the population persists for large enough initial data
when $0<c<\underline c$ and goes extinction when $c >\overline c$.
Recently, Li and Otto \cite{LO2023BMB} investigated forced waves for \eqref{1.1} where the favorable region, characterized by strong Allee effect, is a bounded interval surrounded by the unfavorable ones.
Beyond this, there have been extensive investigations on questions concerning the existence and further properties of forced waves for \eqref{1.1} of the form $v(t,y)=V(y-ct)$ with prescribed speed $c$, see e.g. \cite{Hamel1997-1,Hamel1997, BR2009, BF2018, FLW2016, HZ2017}. Spreading speeds in shifting environments have also been concerned, for instance, with nonlocal feature \cite{ZK2013,ABR2017,LWZ2018, C2021,LamYu2022}, with shifting diffusion \cite{FGH}, in free boundary problems \cite{LeiDu2017,DWZ2018,DHL2021},
in systems \cite{CTW2017, DSFL2021,HGW2023}, in abstract setting \cite{YZ2020}, etc. 
We refer the readers to \cite{Cosner2014,dcdsb} for  comprehensive description of recent development and open challenging questions on reaction-diffusion problems in shifting environments.

We conclude with a brief discussion.  The propagation dynamics resemble those in the KPP-decay case\cite{FLW2016,LBSF2014,FPZ} when $c_b \le -c_m$, where the bistable environment approximates the decay one. When $c_b \in (-c_m,c_m)$, the spreading properties are dominated by the KPP region for a positive and sufficiently large advection, i.e. $c \ge c_m$; the propagation dynamics are determined by the sign of $c_b$ and by the initial value for a negative and sufficiently large  advection, i.e. $c \le -c_m$; the spreading phenomena are characterized by the sign of $c_b-c$  when $c$ is not large enough in either direction, i.e. $c \in (-c_m,c_m)$. It is also noteworthy that our results align with those in \cite{HLZ2} in the specific case $c=0$, which indeed provided the foundational insight for our work.

\section{Propagation phenomena}\label{Sec 2}
In this section, we shall restate our main result in a separate way regarding different propagation phenomena, all of which eventually forms Theorem \ref{thm-main}.

\subsection{Complete propagation when $c\in [c_m,+\infty)$}
We begin by showing the complete propagation result when $c \ge c_m$ (independent of $c_b$). An intuitive explanation  is that  the KPP region  dominates the whole environment and plays a favorable role for the survival and spread of species, due to the large advection. Therefore, given any compactly supported initial population, there will be a hair-trigger effect for species. Moreover, the solution of \eqref{1.1'} will spread with leftward and rightward asymptotic spreading speeds $c_m+c$ and $c_m-c$ respectively.
\begin{theorem}[Complete propagation]
	\label{thm_c>cm}
	Assume that $c\ge c_m$. Let $u$ be the solution of~\eqref{1.1'} with a nonnegative continuous and compactly supported initial datum $u_0\not\equiv 0$. Then,  $u$ propagates to the left with speed $c_m+c$ and to the right with speed $c_m-c$, in the sense that 
	\begin{equation}
		\label{c ge c_m}
		\left\{\baa{l}
		\displaystyle\forall\,\varep>0,~~\lim_{t\to+\infty}\Big(\sup_{x\in(-\infty,-(c_m+c+\varep)t]\cup[(c_m-c+\varep)t,+\infty)}u(t,x)\Big)=0,\vspace{3pt}\\
		\displaystyle\forall\,\varep\in(0,c_m),~~\lim_{t\to+\infty}\Big(\sup_{-(c_m+c-\varep)t\le x\le (c_m-c-\varep)t}|u(t,x)-1|\Big)=0.\eaa\right.
	\end{equation}
\end{theorem}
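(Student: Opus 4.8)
The plan is to establish \eqref{c ge c_m} by a two-sided comparison argument, treating the lower bound (spreading to the level $1$) and the upper bound (no spreading beyond the claimed speeds) separately, and exploiting the fact that for $c\ge c_m$ the whole equation is dominated from below by the homogeneous KPP equation after an advection change of variables.

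First, for the upper bounds I would pass to the moving frame $u(t,x)=v(t,x-ct)$ (or $x+ct$, depending on sign conventions), which turns \eqref{1.1'} into $v_t=v_{yy}+f(y+ct,v)$. Since $0\le f(y,s)\le f'_m(0)s$ everywhere that $f$ is KPP-dominated, and more generally $f(y,s)\le \max(f'_m(0),\sup_y \partial_s f(y,0))\,s$ by assumption \eqref{hypf} and the fact that $f_b'(0)<0$ while $f_m'(0)>0$, the solution is a subsolution of the linear equation $w_t=w_{yy}+Ks$ for a suitable constant; but more precisely, because $f(y,s)\le f'_m(0)s$ holds for all $y$ (the reaction is largest in the KPP region and decreasing in $y$), $v$ is a subsolution of the linear KPP equation $w_t=w_{yy}+f'_m(0)w$. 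Then the standard exponential supersolution $w(t,y)=\min(1, M e^{-\lambda(y-c_m t)})$ (resp. on the other side) together with compact support of $u_0$ gives, after undoing the change of variables, $u(t,x)\to 0$ uniformly for $x\ge (c_m-c+\varep)t$ and for $x\le -(c_m+c+\varep)t$. This is essentially the Aronson--Weinberger upper bound \eqref{spreading properties} transported by the advection $c$.

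Second, for the lower bound I would build a compactly supported subsolution that grows and invades at speed $c_m$ in the frame, using that for $c\ge c_m$ the species, once it reaches the KPP region $(-\infty,-L]$, spreads there like the homogeneous KPP equation, and the advection pushes mass rightward out of the unfavorable/bistable zone only slowly relative to $c_m$. Concretely: by the strong maximum principle and parabolic regularity, $u(1,\cdot)>0$ on all of $\R$; choose a point deep in the KPP region and a small compactly supported bump below $u(1,\cdot)$ there; since $f=f_m$ on $(-\infty,-L]$ is KPP, the Aronson--Weinberger hair-trigger result \eqref{spreading properties} shows this bump spreads to $1$ with speed $c_m$ within the KPP region in the moving frame $y=x+ct$ (the advection term $-cu_x$ becomes part of the frame shift, and inside $(-\infty,-L]$ the equation in the frame is exactly the homogeneous KPP equation $v_t=v_{yy}+f_m(v)$). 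One then propagates this lower bound: the set where $u\approx 1$ expands leftward (in $x$) at speed $c_m+c$ and, because $f(x,s)\ge 0$ for $s\in(0,1)$ throughout $\R$ — note $f_b$ is \emph{not} nonnegative on $(0,\theta)$, so here one must be more careful in the bistable region — one uses instead that the level set where $u\ge\theta+\delta$, once established, cannot recede, and invades rightward at speed at least $c_m-c$ by a traveling-wave subsolution of the KPP equation with speed $c_m$ composed with the frame shift, valid as long as that front sits in $x\le L$; beyond $x=L$ one must check the bistable reaction does not block it, which holds precisely because the front arrives with the ``universal'' KPP profile that is above $\theta$ on a long enough interval to trigger the bistable dynamics. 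Finally a sliding/squeezing argument pins $|u-1|\to 0$ on the full interval $-(c_m+c-\varep)t\le x\le (c_m-c-\varep)t$.

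The main obstacle I expect is the lower bound in the bistable region for the rightward front: unlike the pure KPP case, one cannot simply invoke a scalar comparison with the homogeneous KPP equation because $f$ is genuinely bistable (hence sign-changing) on $[L,+\infty)$. The resolution should be that when $c\ge c_m$ the rightward asymptotic speed $c_m-c\le 0$, so the ``rightward front'' is actually not moving to the right at all when $c>c_m$ (and is marginal when $c=c_m$); thus one only needs to show $u\to 1$ on a region that, in the bistable zone, is essentially stationary or receding, which follows from constructing a stationary or slowly-moving subsolution supported near $x=L$ that exceeds $\theta$ — feasible because the KPP region continually feeds mass across $x=-L$ and the solution is bounded below by a positive constant on any fixed compact set for large time. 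Making this subsolution argument uniform, and handling the borderline speed $c=c_m$ where logarithmic corrections could a priori intrude (but do not affect the first-order spreading speed), is where the technical care is needed; I would isolate it as a lemma about front propagation through the transition zone $[-L,L]$.
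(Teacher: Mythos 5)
Your upper bound is essentially the paper's and is correct: by \eqref{hypf} the reaction is nonincreasing in $x$, so $f(x,s)\le f_m(s)\le f_m'(0)s$ for all $x$ and $s\ge0$, and comparison with the homogeneous KPP problem $w_t=w_{xx}+cw_x+f_m(w)$ (or its linearization) together with the Aronson--Weinberger spreading property transported by the drift gives the first line of \eqref{c ge c_m}.

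The genuine gap is in the lower bound. Every device you invoke there --- the hair-trigger/spreading property of the homogeneous KPP equation applied to a bump ``within the KPP region in the moving frame'', and a KPP traveling wave ``composed with the frame shift'' used as a subsolution --- is built from the nonlinearity $f_m$, and since $f\le f_m$ pointwise such objects are \emph{supersolutions} of \eqref{1.1'}, not subsolutions: writing $N[\psi]:=\psi_t-\psi_{xx}-c\psi_x-f(x,\psi)$, a solution $\psi$ of the homogeneous KPP equation satisfies $N[\psi]=f_m(\psi)-f(x,\psi)\ge0$. They are exact solutions only on $x\le -L$ (a half-line that moves in the frame), and a comparison restricted to that region needs boundary control at $x=-L$ for all times, which your sketch does not provide; the dynamics on the other side can in principle drag the solution down there. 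Your claim that the rightward front ``triggers the bistable dynamics'' beyond $x=L$ is both unjustified and contrary to the statement: for $c\ge c_m$ the set where $u\to1$ is contained in $x\le (c_m-c-\varep)t\le-\varep t$, which recedes into the KPP region, while $u\to0$ on the bistable side. What is missing is a lower barrier governed by a nonlinearity $\le f$ for which spreading with leftward speed $c_m+c$ and rightward speed $c_m-c$ is actually established. The paper supplies exactly this: it chooses a $C^1$ function $g\le f$ with $g=f_m$ on $(-\infty,-L]$, $g(x,s)=-rs$ on $[L,+\infty)$, nonincreasing in $x$, lets $z$ solve $z_t=z_{xx}+cz_x+g(x,z)$ with $z(0,\cdot)=u_0$, and invokes the KPP-decay shifting-environment result (Theorem 1.3 of \cite{FPZ}) for $z$; the squeeze $z\le u\le w$ then yields \eqref{c ge c_m}. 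If you prefer not to quote such a result, you must replace it by an explicit construction (for instance iterated compactly supported bump subsolutions of the type $\eta\Psi$ in \eqref{4.2} recentred along rays of speed less than $c_m+c$, plus a separate argument upgrading positivity to convergence to $1$), and you must treat the borderline case $c=c_m$, where your appeal to the logarithmic delay is stated but not proved; none of this is carried out in your sketch.
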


\subsection{Complete propagation vs. rightward blocking when $c\in(-c_m,c_m)$}

When $c\in(-c_m,c_m)$, it reflects that the KPP region necessarily plays the role of habitat for species, but it is not sure for the bistable region. We shall prove that the species can always spread to the left, whereas  it may be blocked or propagate to the right in the bistable region depending on the sign of $c_b-c$. At this stage, it is worth to notice that the initial condition $u_0$ does not play any role in the propagation phenomena when $c\in(-c_m,c_m)$.

\subsubsection*{Leftward propagation when $c\in(-c_m,c_m)$}

We first show that the species is able to  propagate  to the left for $c\in(-c_m,c_m)$. This is because the KPP region contributes to the survival and leftward spread of the species. Here is our statement.
\begin{theorem}[Leftward propagation]
	\label{thm:-cm_cm:left} 	
	Assume that $c\in(-c_m,c_m)$. Let $u$ be the solution of~\eqref{1.1'} with a nonnegative continuous and compactly supported initial datum $u_0\not\equiv 0$. Then,  $u$ propagates to the left with speed $c_m+c>0$ in the sense that 
	\begin{equation}\label{equ:-cm_cm}
		\left\{\baa{l}
		\displaystyle\forall\,\varep>0,~~\lim_{t\to+\infty}\Big(\sup_{x\le -(c_m+c+\varep)t}u(t,x)\Big)=0,\vspace{3pt}\\
		\displaystyle\forall\,\varep\in(0,c_m+c),\ \forall\,\delta>0,\ \exists\,x_1< -L,~~\limsup_{t\to+\infty}\Big(\sup_{-(c_m+c-\varep)t\le x\le x_1}|u(t,x)-1|\Big)<\delta.\eaa\right.
	\end{equation}
	In particular, $\sup_{-c_2t\le x\le -c_1t}|u(t,x)-1|\to0$ as $t\to+\infty$ for every $0<c_1\le c_2<c_m+c$.
\end{theorem}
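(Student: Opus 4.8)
\emph{Overview and the leftward upper bound.}
I would prove the two statements in \eqref{equ:-cm_cm} separately; the final ``in particular'' claim then follows by applying the second statement with $\varep:=c_m+c-c_2\in(0,c_m+c)$ and letting $\delta\to0$, since $[-c_2t,-c_1t]\subset[-(c_m+c-\varep)t,x_1]$ once $t\ge-x_1/c_1$. For the upper bound I would use $0\le u\le M_0:=\max(1,\|u_0\|_{L^\infty(\R)})$ (comparison with a constant supersolution) together with the fact that $f(x,s)\le f_m(s)\le f_m'(0)s$ for all $x\le L$ and $s\ge0$ (because $f(\cdot,s)$ is nonincreasing on $[-L,L]$ and equals $f_m$ on $(-\infty,-L]$). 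Thus $u$ is a subsolution of the linear equation $w_t-w_{xx}-cw_x=f_m'(0)w$ on $\{x<L\}$. Given $\varep>0$, set $\nu:=c_m+c+\varep/2$; since $c>-c_m$ one has $\nu>0$, and since $\nu-c>c_m=2\sqrt{f_m'(0)}$ the quadratic $\lambda^2-(\nu-c)\lambda+f_m'(0)$ has two positive roots, between which $\lambda\nu-\lambda^2-c\lambda-f_m'(0)>0$. For such a $\lambda$ and $A$ large enough, $\bar u(t,x):=Ae^{\lambda(x+\nu t)}$ is a supersolution of the linear equation on $\{x<L\}$ with $\bar u\ge u$ on the parabolic boundary (at $t=0$ because $u_0$ is bounded and compactly supported, at $x=L$ because $\nu>0$ makes $\bar u(t,L)\ge Ae^{\lambda L}\ge M_0$); hence $u\le\bar u$ on $\{x<L\}$, and for $x\le-(c_m+c+\varep)t$ this gives $u(t,x)\le Ae^{-\lambda\varep t/2}\to0$ uniformly, i.e. the first identity in \eqref{equ:-cm_cm}.

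\emph{A half-line subsolution and its invasion.}
By the strong maximum principle $u(t,x)>0$ for all $t>0$, so $\eta_1:=\min_{[-L-2,-L-1]}u(1,\cdot)>0$. Let $\underline w$ solve the \emph{homogeneous} KPP--advection equation on the fixed half-line $(-\infty,-L)$,
\[
\underline w_t=\underline w_{xx}+c\,\underline w_x+f_m(\underline w)\ \text{ in }\{t>1,\ x<-L\},\qquad \underline w(t,-L)=0,\qquad \underline w(1,\cdot)=\underline w_0,
\]
where $\underline w_0$ is continuous, vanishes at $-L$, satisfies $0\le\underline w_0\le\min(\eta_1,1)\,\mathbf{1}_{[-L-2,-L-1]}$ and $\underline w_0\not\equiv0$. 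Since $f(x,\cdot)=f_m$ for $x\le-L$, the function $\underline w$ is a subsolution of \eqref{1.1'} in $\{x<-L\}$ and lies below $u$ on the parabolic boundary of that set ($\underline w(1,\cdot)\le u(1,\cdot)$ and $\underline w(t,-L)=0\le u(t,-L)$), so $u\ge\underline w$ on $\{t\ge1,\ x<-L\}$ by comparison. The long-time behaviour of $\underline w$ is dictated by the sign of the generalized principal eigenvalue of $\partial_{xx}+c\partial_x+f_m'(0)$ on $(-\infty,-L)$ with Dirichlet condition at $-L$: the substitution $w=e^{-cx/2}z$ turns this operator into $\partial_{xx}+(f_m'(0)-c^2/4)$, of principal eigenvalue $f_m'(0)-c^2/4$, which is \emph{positive precisely because $|c|<c_m$}. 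Hence the zero state is linearly unstable, and by the monostable structure of $f_m$, $\underline w(t,\cdot)$ converges as $t\to+\infty$, locally uniformly on $(-\infty,-L)$, to the unique positive bounded steady state $W$, which is decreasing with $W(-L)=0$, $0<W<1$ and $W(-\infty)=1$; moreover $\underline w$ invades to the left with speed $c_m+c$, meaning $\inf_{-(c_m+c-\varep')t\le x\le-R}\underline w(t,x)\to1$ as $t\to+\infty$ for every $\varep'\in(0,c_m+c)$ and $R>L$.

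\emph{Conclusion of the lower bound.}
Fix $\varep\in(0,c_m+c)$ and $\delta>0$. Using $W(-\infty)=1$, choose $x_1<-L$ with $W\ge1-\delta$ on $(-\infty,x_1]$. Combining the leftward invasion of $\underline w$ with its local convergence to the monotone profile $W$ gives
\[
\liminf_{t\to+\infty}\ \inf_{-(c_m+c-\varep)t\le x\le x_1}u(t,x)\ \ge\ \liminf_{t\to+\infty}\ \inf_{-(c_m+c-\varep)t\le x\le x_1}\underline w(t,x)\ \ge\ W(x_1)\ \ge\ 1-\delta .
\]
Since also $\limsup_{t\to+\infty}\sup_{x\in\R}u(t,x)\le1$ (comparison with the solution of $\dot z=\sup_{y\in\R}f(y,z)$ issued from $M_0$), we obtain $\limsup_{t\to+\infty}\sup_{-(c_m+c-\varep)t\le x\le x_1}|u(t,x)-1|\le\delta$, and as $\delta>0$ is arbitrary this is the second identity in \eqref{equ:-cm_cm}.

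\emph{Main difficulty.}
The heart of the matter is the half-line analysis of $\underline w$: the persistence/extinction dichotomy through the sign of the principal eigenvalue — this is exactly where the hypothesis $c\in(-c_m,c_m)$ is genuinely used — the identification $W(-\infty)=1$, and, above all, the leftward spreading of $\underline w$ at speed $c_m+c$. The latter is classical in spirit (Aronson--Weinberger, Berestycki et al.), but proving it on the time-unbounded, spatially half-infinite domain, as well as justifying the various comparison arguments there, requires barriers (e.g. a truncated moving front placed below $\underline w$) with the correct exponential growth and decay.
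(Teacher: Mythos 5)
Your overall architecture is the same as the paper's: sandwich $u$ between the homogeneous KPP--advection problem from above and a problem that discards the bistable region from below. Your upper-bound half is complete and correct (the explicit supersolution $Ae^{\lambda(x+\nu t)}$ on $\{x<L\}$, using $f(x,s)\le f_m(s)\le f_m'(0)s$ for $x\le L$, is a fine alternative to the paper's comparison with the whole-line KPP equation and the Aronson--Weinberger spreading result), and your observations that $\limsup_t\sup_x u\le 1$ and that $|c|<c_m$ enters through the instability of $0$ on the left half-line are exactly right.

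The gap is in the lower bound, i.e. in the asserted long-time behaviour of the half-line Dirichlet problem for $\underline w$. First, your invasion statement is false as written: for fixed $R>L$ the infimum over $-(c_m+c-\varep')t\le x\le -R$ cannot tend to $1$, because the Dirichlet condition at $-L$ forces a boundary layer; indeed by your own claim $\underline w(t,-R)\to W(-R)<1$. Second, and more seriously, the inequality you actually use, $\liminf_{t\to+\infty}\inf_{-(c_m+c-\varep)t\le x\le x_1}\underline w(t,x)\ge W(x_1)$, is precisely the hard content of the theorem and does not follow from ``local uniform convergence to $W$'' plus ``spreading at speed $c_m+c$'': local uniform convergence controls only fixed compact sets, while the window expands linearly, and the intermediate zone between the leftward-moving front and the fixed neighbourhood of the boundary has to be controlled uniformly in time. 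Doing this requires a genuine Aronson--Weinberger-type argument adapted to the drift (travelling compactly supported bump subsolutions of the type $\eta e^{-cx/2}\cos(\pi x/2R)$ translated leftward at any speed below $c_m+c$, plus a re-initialization/compactness step, together with uniqueness of $W$ and $W(-\infty)=1$, which you also assert without proof). This is exactly the block of work the paper outsources: it compares $u$ from below with the solution of the KPP-decay shifting problem (reaction $g$ equal to $f_m$ on the left and $-rs$ on the right) and invokes Theorem 1.3 (ii)--(iii) of the cited Fang--Peng--Zhao paper, which delivers the expanding-window statement \eqref{equ:-cm_cm} for that barrier directly. So your route is salvageable, but as it stands the key lemma on the half-line problem is neither proved nor replaced by a precise citation, and one of its formulations is incorrect near the boundary.
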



\subsubsection*{Rightward propagation vs. blocking when $c\in(-c_m,c_m)$}
In contrast to leftward propagation Theorem \ref{thm:-cm_cm:left}, rightward propagation phenomena in the case of $c\in(-c_m,c_m)$ is uncertain. We will distinguish our analysis for $c\in(-c_m,c_m)$ into two cases: either $c>c_b$ or $c\le c_b$, where the former means $\max(c_b,-c_m)<c<c_m$, and
 for the latter it suffices to consider the situation when $-c_m<c_b$ and $c\in(-c_m,c_b]$. The case where $c \leq c_b$ and $c_b \leq -c_m$ is outside the range $(-c_m, c_m)$, and it will be discussed later.

\begin{theorem}[Blocking]
	\label{thm_blocking_c>c_b}
	Assume that $c\in(-c_m,c_m)$ and $c>c_b$. Then the solution $u$ of the Cauchy problem~\eqref{1.1'} with a nonnegative continuous and compactly supported initial datum $u_0\not\equiv 0$ is blocked in the right direction, that is, 	\begin{equation}\label{blocking}
		u(t,x)\to 0~~\text{as}~x\to+\infty, ~\text{uniformly in}~t\ge 0.
	\end{equation}
	Furthermore, $u$ satisfies
	\begin{equation}
		\label{blocking-convergence}
		u(t,\cdot)- U\to 0 ~~\text{as}~t\to+\infty, ~\text{locally uniformly in}~x\in\R,
	\end{equation} 
	where $U$ is the unique positive bounded stationary solution of \eqref{1.1'} such that $U(-\infty)=1$ and $U(+\infty)=0$, given in Proposition \ref{prop_U connecting 1 and 0}.
\end{theorem}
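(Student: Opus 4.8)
I would prove the blocking statement \eqref{blocking} and the convergence \eqref{blocking-convergence} separately, both by comparison. The decisive observation is that, because $c>c_b$, a slight upward perturbation of a translate of the bistable front $\phi$ is a \emph{strict} stationary supersolution of \eqref{1.1'} on the bistable region $[L,+\infty)$: since $\phi''+c_b\phi'+f_b(\phi)=0$ and $\phi'<0$,
\[
\big(\phi(\cdot-\xi)+\delta\big)''+c\big(\phi(\cdot-\xi)+\delta\big)'+f_b\big(\phi(\cdot-\xi)+\delta\big)=(c-c_b)\,\phi'(\cdot-\xi)+\big[f_b(\phi(\cdot-\xi)+\delta)-f_b(\phi(\cdot-\xi))\big]<0
\]
for all small $\delta>0$: the first term is negative and, at levels $\phi$ bounded away from $\{0,1\}$, dominates the $O(\delta)$ bracket, while near those levels the bracket is itself $\le 0$ since $f_b$ is decreasing near $0$ and near $1$ ($f_b'(0),f_b'(1)<0$ in \eqref{f_b}).

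\textbf{Blocking.} Comparison with the solution of $\dot{\bar v}=\max_y f(y,\bar v)$, $\bar v(0)=\|u_0\|_{L^\infty}$ (which tends to $1$ because $f(y,s)<0$ for $s>1$ and $f(y,1)=0$) gives $\|u(t,\cdot)\|_{L^\infty(\R)}\le\bar v(t)\to1$, so for each small $\delta>0$ there is $t_1(\delta)$ with $u(t_1(\delta),\cdot)\le1+\delta$ on $\R$, and $u(t_1(\delta),\cdot)\to0$ at $+\infty$ by compact support of $u_0$. I then assemble a continuous stationary supersolution $\bar u_\delta$ of \eqref{1.1'}: equal to $1+\delta$ on $(-\infty,x_0]$ (a supersolution since $f(y,1+\delta)\le0$), equal to $\phi(\cdot-\xi)+\delta$ on $[x_1,+\infty)$ with $\phi(x_1-\xi)=1-\eta$ (the strict supersolution above, valid since $x_1\ge x_0\ge L$), joined by a gentle $C^1$ transition on $[x_0,x_1]$ with values in $(1,1+\delta]$, where $0<\eta\ll\delta$; on the transition $f\le0$ and the curvature plus advection terms are $O(\eta)$, absorbed by $f(x,\bar u_\delta)\le-\tfrac12\rho_1(\bar u_\delta-1)<0$ (using $\partial_sf(y,1)\le-\rho_1<0$) once $\eta$ is small. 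Choosing $x_0$ large (depending on $t_1(\delta)$) so that $u(t_1(\delta),\cdot)\le\bar u_\delta$ on $\R$, the parabolic comparison principle gives $u(t,\cdot)\le\bar u_\delta$ for $t\ge t_1(\delta)$; combined with the finite-time decay of $u$ at $+\infty$ on $[0,t_1(\delta)]$ this yields $\limsup_{x\to+\infty}\sup_{t\ge0}u(t,x)\le\delta$, and letting $\delta\downarrow0$ proves \eqref{blocking}.

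\textbf{Convergence to $U$.} I would trap $u$ between two monotone orbits. Since $\bar u_\delta$ is a stationary supersolution, the solution $\overline U(t,\cdot)$ of \eqref{1.1'} issued from $\bar u_\delta$ is nonincreasing in $t$, dominates $u(t+t_1(\delta),\cdot)$, and decreases to a stationary solution $\overline U_\infty$; on $[L,+\infty)$ one has $\overline U_\infty\le\bar u_\delta<\theta$ near $+\infty$, so $\overline U_\infty$ has no interior maximum where it is $<\theta$ ($f_b<0$ there), hence is eventually monotone with $\overline U_\infty(+\infty)=0$, and $0<\overline U_\infty<1$ by the maximum principle (using $f(y,1)=0$, $f(y,s)<0$ for $s>1$). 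For the lower barrier, Theorem \ref{thm:-cm_cm:left} gives, for $t_2$ large, $u(t_2,\cdot)\ge1-\delta$ on a long interval of the KPP region $(-\infty,-L]$ and $u(t_2,\cdot)>0$ elsewhere; taking a compactly supported subsolution $\underline u$ (a Dirichlet ground state of $\psi''+c\psi'+f_m(\psi)=0$ on a long subinterval) with $0<\underline u\le\min(u(t_2,\cdot),\bar u_\delta)$, the solution $\underline U(t,\cdot)$ from $\underline u$ is nondecreasing, lies below $u(t+t_2,\cdot)$ and (up to a time shift) below $\overline U$, and increases to a stationary solution $\underline U_\infty\le\overline U_\infty$ with $\underline U_\infty(-\infty)=1$ (by leftward KPP spreading in the region where the reaction is $f_m$, $|c|<c_m$) and $\underline U_\infty(+\infty)\le\overline U_\infty(+\infty)=0$. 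Finally $\overline U_\infty(-\infty)\ge\underline U_\infty(-\infty)=1$ while a stationary solution's limit at $-\infty$ must be a zero of $f_m$, so $\overline U_\infty(-\infty)=1$ as well. Hence $\overline U_\infty$ and $\underline U_\infty$ are both positive bounded stationary solutions of \eqref{1.1'} with limits $1$ at $-\infty$ and $0$ at $+\infty$, so $\overline U_\infty=\underline U_\infty=U$ by the uniqueness in Proposition \ref{prop_U connecting 1 and 0}. Since $\underline U(t-t_2,\cdot)\le u(t,\cdot)\le\overline U(t-t_1(\delta),\cdot)$ for $t$ large and the monotone convergences are locally uniform (Dini), \eqref{blocking-convergence} follows.

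\textbf{Main difficulty.} The crux is the supersolution: a stationary supersolution decaying from a value $>1$ down to $0$ at $+\infty$ must cross the band $(\theta,1)$ where $f>0$, which is possible \emph{precisely} because $c>c_b$ turns a translate of $\phi$ into a strict supersolution there; the remainder is bookkeeping, the only further care being to verify the behavior at $\pm\infty$ of $\overline U_\infty$ and $\underline U_\infty$ so that the uniqueness of $U$ applies.
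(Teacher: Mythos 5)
Your proposal is correct and follows essentially the same route as the paper: a stationary, front-shaped supersolution whose existence hinges on $c>c_b$, then trapping $u$ between the time-decreasing solution issued from that supersolution and the time-increasing solution issued from a small compactly supported bump in the KPP region, and identifying both stationary limits with $U$ through their behavior at $\pm\infty$ and the uniqueness in Proposition \ref{prop_U connecting 1 and 0}. The differences are only in execution: the paper uses the decreasing front $\overline\phi_\varep$ of an enlarged nonlinearity $f_{b,\varep}\ge f_b$ with speed $c_{b,\varep}<c$, which decays to $0$ at $+\infty$ and dominates $u_0$ outright (so $u\le\overline u$ for all $t\ge0$ in one step), whereas your vertical shift $\phi(\cdot-\xi)+\delta$ requires waiting until $u\le 1+\delta$, a final limit $\delta\to0$ for \eqref{blocking}, and the extra ODE argument upgrading $\overline U_\infty\le\delta$ near $+\infty$ to $\overline U_\infty(+\infty)=0$; likewise your hand-made derivation of the limits at $-\infty$ is exactly what Lemma \ref{lem:Semi-persistence} supplies.
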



\begin{theorem}[Rightward propagation]
	\label{thm_propagation-2}
	Assume that $-c_m<c_b$ and $c\in(-c_m,c_b]$. Then the solution $u$ of~\eqref{1.1'} with a nonnegative continuous and compactly supported initial datum $u_0\not\equiv 0$ propagates completely, namely,
	\begin{equation}
		\label{2.7}
		u(t,x)\to 1~~\text{as}~t\to+\infty,~\text{locally uniformly in}~x\in\mathbb{R}.
	\end{equation}
	Furthermore, 
	\begin{enumerate}[(i)]
		\item if $c\in(-c_m,c_b)$, then $u$ propagates to the right with speed $c_b-c>0$. Moreover, there exist $X>L$ and $\xi\in\R$ such that
		\begin{equation*}
\lim_{t\to+\infty}\Big(\sup_{x\ge X}|u(t,x)-\phi(x-(c_b-c)t+\xi)\Big)=0,
		\end{equation*}
		where $\phi$ is the unique bistable traveling wave profile  solving \eqref{TW} with $f=f_b$ propagating with speed $\nu=c_b$ and with normalization $\phi(0)=\theta$;
		\item if $c=c_b$, then $u$ propagates to the right with speed zero, in the sense that~\eqref{2.7} holds and $\sup_{x\ge \nu t}u(t,x)\to0$ as $t\to+\infty$ for every $\nu >0$.
	\end{enumerate}	
\end{theorem}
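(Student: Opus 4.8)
\textbf{Proof proposal for Theorem \ref{thm_propagation-2}.}

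The plan is to combine a spreading (hair-trigger) lower bound coming from the KPP region with a sub-/supersolution argument in the bistable region, and then to upgrade the resulting convergence into the precise description by a traveling wave. First I would establish that propagation occurs at all, i.e. the complete propagation \eqref{2.7}. Since $c\in(-c_m,c_b]\subset(-c_m,c_m)$, Theorem~\ref{thm:-cm_cm:left} already tells us that $u\to 1$ uniformly on compact subsets of the KPP region and, more precisely, $u$ is close to $1$ on any interval $[-(c_m+c-\varep)t,x_1]$ with $x_1<-L$. So there is a large time $t_0$ at which $u(t_0,\cdot)\ge 1-\eta$ on a long interval reaching up to some point just left of $-L$. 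From there I would propagate this "plateau of height near $1$" into the bistable region: because $c>-c_m\ge$ (roughly) a threshold that guarantees the bistable traveling wave $\phi(x-(c_b-c)t)$ moves to the right with speed $c_b-c\ge 0$, one can slide a shifted copy of $\phi$ (or, when $c=c_b$, a shifted copy of $\phi$ with a vanishing or frozen front, plus a perturbation term $q(t)$ of the Fife--McLeod type) underneath $u(t_0,\cdot)$ on $[x_0,+\infty)$ for suitable $x_0>L$. The comparison principle on $\{x\ge x_0\}$, together with the already-known lower bound $u\ge 1-\eta$ at $x=x_0$ for all large time (from the leftward invasion result), then forces $u(t,x)\ge \phi(x-(c_b-c)t-\xi_-)-\mathrm{small}$, which gives \eqref{2.7} and the lower spreading estimate.

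For the matching upper bound, the key point is that on $[L,+\infty)$ the equation is exactly the homogeneous bistable equation $u_t=u_{xx}+cu_x+f_b(u)$, equivalently $v_t=v_{yy}+f_b(v)$ after the shift $v(t,y)=u(t,y+ct)$ — wait, more carefully, on the bistable region the advection is still present, so the right object is the bistable traveling wave for $u_t-u_{xx}-cu_x=f_b(u)$, whose profile is still $\phi$ but which travels with ground speed $c_b-c$ in the $x$ variable. Since $u_0$ is compactly supported and $u$ obeys the linear upper bound $u_t-u_{xx}-cu_x\le f_m'(0)u$ (using $f\le f_m'(0)s$-type control globally, which holds because $f(y,s)\le f_m(s)$ for $y\le -L$ and $f(y,\cdot)$ is decreasing in $y$, hence $f(y,s)\le f_m'(0)s$ everywhere for $s$ in the relevant range), a Gaussian-type estimate shows $u(t,x)$ is exponentially small for $x\gtrsim (c_m-c+\varep)t$; but we want the sharper speed $c_b-c<c_m-c$. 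Here I would instead use a bistable supersolution: take $\phi(x-(c_b-c)t+\xi_+)+Q e^{-\kappa t}$ on $\{x\ge X\}$ for large $X$, $\xi_+$, with a correction à la Fife--McLeod, and check it dominates $u$ at $x=X$ for all time (this is where the blocking-type estimates and the fact that $u(t,X)$ stays bounded are used) and at $t=$ some $t_1$ on all of $\{x\ge X\}$. The comparison principle then sandwiches $u$ between two shifts of the same traveling wave.

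Finally, to turn the two-sided bound into genuine convergence to a single shifted wave $\phi(x-(c_b-c)t+\xi)$ uniformly on $\{x\ge X\}$, I would invoke the stability/attractivity theory for bistable fronts (Fife--McLeod), applied on the half-line $[X,+\infty)$ with the boundary data $u(t,X)\to 1$: the solution of the bistable equation on a half-line with a value tending to $1$ at the left endpoint and an initial datum trapped between two translates of $\phi$ converges to a translate of $\phi$, with an exponential rate, and the translate is selected in the limit. The case $c=c_b$ is the degenerate one: $c_b-c=0$, the "wave" is stationary, and the statement reduces to $\sup_{x\ge\nu t}u(t,x)\to 0$ for every $\nu>0$ together with \eqref{2.7}; this should follow from the same supersolution $\phi(x+\xi_+)+Qe^{-\kappa t}$ (now time-independent in its leading part) which is bounded and decaying as $x\to+\infty$, hence pins $u$ below something that is $o(1)$ on $\{x\ge \nu t\}$ for any $\nu>0$. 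I expect the main obstacle to be the construction and verification of the bistable supersolution near the interface $x=L$: one must control the cross-over zone $[-L,L]$ where $f$ is merely decreasing in $y$ and neither formula applies, ensuring that what enters the bistable region from the left is not "too large too fast", and simultaneously that the leftward invasion (Theorem~\ref{thm:-cm_cm:left}) delivers the value near $1$ at the left boundary of the half-line problem early enough and robustly enough to anchor the comparison. Matching the two constant shifts $\xi_-$ and $\xi_+$ into one limiting $\xi$ via the front-stability machinery is then the last technical step.
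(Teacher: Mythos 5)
Your overall strategy (sandwich $u$ between shifted bistable fronts in the region $x\ge X$, then identify a single limiting shift by stability) matches the paper's in spirit, but two of your key steps contain genuine gaps. First, the lower bound is circular as written: Theorem~\ref{thm:-cm_cm:left} only yields $u\approx 1$ on intervals $[-(c_m+c-\varep)t,\,x_1]$ with $x_1<-L$, i.e.\ strictly inside the KPP region; it gives no control whatsoever at a point $x_0>L$, so you cannot use ``$u\ge 1-\eta$ at $x=x_0$ for all large time (from the leftward invasion result)'' as the boundary anchor of a comparison on $\{x\ge x_0\}$ --- a persistent lower bound at a point of the bistable region is exactly what has to be proved. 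The paper bridges the transition zone $[-L,L]$ differently: it first classifies positive bounded stationary solutions (Proposition~\ref{prop_V=1 when c le c_b}, $V\equiv1$), using for $c<c_b$ a rightward-moving subsolution built from a perturbed bistable front $\widetilde\phi_\varep$ (valid for the heterogeneous equation since $f(\cdot,s)\ge f_b(s)$), and for $c=c_b$ a sliding argument against translates of $\phi$ exploiting that $1-V$ decays faster than $1-\phi$ at $-\infty$ because $f_m'(1)\le f_b'(1)$; then \eqref{2.7} follows by squeezing $u$ between monotone-in-time solutions whose limits are stationary. Note in particular that your proposal offers no mechanism for complete propagation when $c=c_b$: there is then no rightward-moving wave to slide, yet \eqref{2.7} still asserts $u\to1$ locally uniformly in the bistable region.

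Second, your supersolution $\phi(x-(c_b-c)t+\xi_+)+Qe^{-\kappa t}$ on $\{x\ge X\}$ cannot be maintained at the lateral boundary: you only know $u(t,X)\le 1+o(1)$ with no rate, while the time-decaying correction vanishes; when $c=c_b$ the situation is worse, since the supersolution at $x=X$ tends to $\phi(X+\xi_+)<1$ whereas $u(t,X)\to1$ by \eqref{2.7}, so the half-line comparison fails outright for large times. The paper's Lemma~\ref{lemma 3.1} adds a spatially decaying term $\delta e^{-\mu(x-X_1)}$ precisely to dominate $u$ at $x=X_1$ uniformly in time, and in case (ii) it replaces $\phi$ by the front $\phi_\varep$ of a perturbed nonlinearity $f_{b,\varep}\ge f_b$ with limit state $1+\varep$ and speed $c_{b,\varep}>c_b$ (with $c_{b,\varep}-c_b<\nu$), which restores the boundary and initial comparisons. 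Finally, ``invoking Fife--McLeod on the half-line with boundary data tending to $1$'' is not an available theorem: the selection of a single shift is obtained in the paper by trapping entire limits along time sequences between two translates of the wave, applying the Liouville-type result of \cite[Theorem 3.1]{BH2007}, and then a quantitative half-line stability lemma (Lemma~\ref{lemma 3.4}); these steps must be carried out, not quoted.
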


\noindent
{\bf Remark}. Theorems \ref{thm_blocking_c>c_b}--\ref{thm_propagation-2}  demonstrate that the rightward spreading property is totally determined by the sign of $c_b -c$, provided that $c \in (-c_m,c_m)$.
\begin{itemize}
	\item  Theorem \ref{thm_propagation-2}: In the case where $-c_m<c_b$ and $c \in (-c_m,c_b]$, since the bistable speed is no less than the advection, that is, $c_b-c\ge 0$, the population will take both the KPP and bistable regions as habitats, thus will lead to a rightward propagation with speed $c_b-c\ge 0$. In particular, when $c=c_b$, we also observe the ``virtual blocking'' phenomenon as in \cite{HLZ2} (among other things, it corresponds to the situation of $c=0$ and $c_b=0$),  that is, the level sets do expand to the right, but with speed 0.

	\item Theorem \ref{thm_blocking_c>c_b}: In the case where $c \in (c_b,c_m)\cap (-c_m,c_m)$, since the advection is greater than the bistable speed, that is, $c_b-c < 0$, the population ``intuitively'' may lead to rightward propagation with speed $c_b-c<0$. However, this is not the case, and our tentative explanation is the following:  On the one hand, since $c_b-c<0$, it pushes the species to leave the bistable region and migrate into the KPP region. On the other hand, since the advection $c \in (-c_m,c_m)$ is not too large, the species will get the source to grow and expand, in particular the species can invade in the right direction, which eventually leads to a saturation state -- blocking in the right direction -- in the sense that the solution converges to the positive steady state $U$ asymptotically. 
\end{itemize}


\subsection{Conditional complete propagation vs. extinction when $c\in(-\infty,-c_m]$}

Eventually, let us deal with  the case that $c\le -c_m$. The sufficient negative advection will force the bistable region to play a central role for the survival of the species, which will result in diverse propagation dynamics of the species depending on the sign of $c_b$ and/or on the size and also the location of the initial condition.

\subsubsection*{Conditional complete propagation}
In the case where  $c\le-c_m$ and $c_b>0$,  the bistable region could be either favorable or unfavorable depending on the initial data. For the survival of
the species, the size and the position of the initial population have to be taken into account. Our following result says that  for initially ``large enough'' localized population set in bistable region, the species can persist and spread, which is indeed in the same spirit of Fife and McLeod \cite{FM1977}. 
\begin{theorem}[Conditional complete propagation]
	\label{thm_c le -cm,cb>0}
	Assume that $c\le -c_m$ and $c_b>0$. Then for any $\eta>0$, there is $L^*>0$ such
	that the following holds: for any nonnegative continuous and compactly supported initial datum\footnote{Here we do not restrict the location of the initial datum $u_0$. In fact, this theorem is still true, when such $u_0$ is fully set in the bistable region.}
	satisfying $u_0\ge \theta+\eta$ on an interval of size $L^*$, the solution $u$ of \eqref{1.1'} with
	initial datum $u_0$ propagates to the right with speed $c_b-c$ and  to the left with speed $c_b+c$.  Moreover, there are $X>L$ and $z_i\in\R$ $($$i=1,2$$)$ such that
	\begin{equation}\label{equ:c_le_cm:r}
	\lim_{t\to+\infty}\Big(	\sup_{x\ge X-ct} |u(t,x)-\phi(x-(c_b-c)t+z_1)|\Big)=0.
	\end{equation}
	and 
 	\begin{equation}\label{equ:c_le_cm:l}
		\lim_{t\to+\infty}\Big(\sup_{ x\le X-ct} |u(t,x)-\phi(-x-(c_b+c)t+z_2)|\Big)=0,
	\end{equation}
	where $\phi$ is the bistable traveling wave profile of \eqref{1.2} with $f=f_b$ propagating with speed $c_b$ and with normalization $\phi(0)=\theta$.
\end{theorem}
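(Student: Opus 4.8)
The plan is to treat \eqref{equ:c_le_cm:r} and \eqref{equ:c_le_cm:l} essentially as two one-sided invasion problems glued at the interface region $|x-ct|\le L$, which eventually lies far to the left of any moving frame we care about since $c\le -c_m<0$.

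\textbf{Step 1: Hair-trigger / persistence via a compactly supported subsolution.} First I would fix $\eta>0$ and use the classical Fife--McLeod construction to produce, for some $L^*$ large, a time-independent (in the original frame, slowly moving) compactly supported subsolution: since $u_0\ge\theta+\eta$ on an interval $I$ of length $L^*$ and $u_0$ sits in the bistable region (or, by the comparison with the full problem, we can always bound $f$ below by $f_b$ on the relevant part using \eqref{hypf} — $f(y,s)\ge f_b(s)$ for $y\le L$? here one must be careful, see the obstacle below), one builds a subsolution of the bistable equation of the form $\max\{\phi(x-\zeta(t))-q(t),\,0\}$ type with $q(t)\to 0$ and $\dot\zeta$ controlled, forcing $\liminf_{t\to\infty}u(t,x)\ge 1-o(1)$ locally uniformly, hence full invasion. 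This gives \eqref{2.7}-type convergence as an intermediate fact and pins the solution above $\theta$ on an expanding interval.

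\textbf{Step 2: Rightward front — reduce to a pure bistable Cauchy problem and quote stability.} For \eqref{equ:c_le_cm:r}, pass to the frame $\tilde x = x+ct$ (equivalently work with $v$ solving \eqref{1.1}); on the region $\tilde x\ge X$, i.e. $x\ge X-ct$, the nonlinearity is exactly $f_b$ for all large $t$ because the transition zone has been left behind. I would trap $u$ between two shifts of the bistable traveling wave $\phi(x-(c_b-c)t+z_1^\pm)$ using the ``bistable stability'' machinery (Fife--McLeod \cite{FM1977}, or the squeezing argument): the lower bound comes from Step 1's subsolution evolving into a wave-like profile; the upper bound from a supersolution $\phi(x-(c_b-c)t + z) + Q e^{-\omega t}$ valid once $u$ has been pushed below $1+\delta$ and the right tail is controlled (e.g. by comparison with a KPP front $\varphi_\nu$ from the $c\le-c_m$ balance, giving exponential decay of the right tail). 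Standard parabolic estimates then upgrade the trapping to the convergence with a single shift $z_1$.

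\textbf{Step 3: Leftward front — the same, after reflection.} For \eqref{equ:c_le_cm:l} I would set $w(t,x)=u(t,-x)$, which solves the equation with advection $-c\ge c_m>0$ and with nonlinearity $\tilde f(x,s)=f(-x,s)$ that is now bistable on $(-\infty,-L]$ and KPP on $[L,+\infty)$; in the original variables the left front travels with speed $-(c_b+c)$, and on $x\le X-ct$ the relevant nonlinearity is again $f_b$. Repeating the squeezing/stability argument (now the ``outer'' side — large negative $x$ — is where the solution is near $1$, so the relevant tail estimate is on $1-u$, controlled by the $\beta$-exponential decay in \eqref{2.6}) yields convergence to $\phi(-x-(c_b+c)t+z_2)$. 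Finally one checks the two pieces are consistent on the bounded overlap $X-ct\le x\le X-ct$ — trivial since it is the same single point up to choice of $X$, and both describe $u\approx 1$ there.

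\textbf{Main obstacle.} The genuine difficulty is controlling the behavior \emph{through the transition region} $|y|\le L$ (in the $v$-frame) and making the reduction to the pure $f_b$ problem honest: the hypotheses \eqref{hypf} only give $f(y,\cdot)=f_b$ on $[L,\infty)$ and monotonicity in $y$ on $[-L,L]$, so on the left moving region I cannot simply replace $f$ by $f_b$ — I can only bound it below by $f_b$ where $f(y,\cdot)\ge f_b$ and I must separately rule out any leakage or speed correction caused by the KPP part while the transition zone is being overtaken. Concretely, the hard part is showing the initial bistable ``bump'' is not eroded before it clears $y=-L$, i.e.\ establishing a uniform-in-time lower barrier near the interface — this is where the ``large enough'' hypothesis (size $L^*$ depending on $\eta$, uniform in the location of $u_0$) is really used, and where one must combine a moving-bump subsolution with the monotonicity $f(\cdot,s)$ decreasing in $y$ to guarantee the species is pushed into, not out of, the favorable zone. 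Once that barrier is in place, Steps 2--3 are essentially the textbook bistable stability theorem applied in translated frames.
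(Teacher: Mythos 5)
Your Step 1 and Step 2 are essentially the paper's route for the right-moving front, and your hedging about whether one may bound $f$ from below by $f_b$ is unnecessary: by \eqref{hypf}, $f(y,s)$ is nonincreasing in $y$ on $[-L,L]$ with $f(-L,\cdot)=f_m$ and $f(L,\cdot)=f_b$, so $f(y,s)\ge f_b(s)$ for \emph{every} $y\in\R$. Consequently, in the frame $y=x+ct$ the solution dominates the solution of the homogeneous bistable equation with the same initial datum, and Fife--McLeod's theorem immediately produces a pair of diverging fronts of speed $c_b$ as a lower barrier, wherever the initial bump is located; this is exactly how the ``large enough'' hypothesis is used, and the ``main obstacle'' you single out (erosion of the bump before it clears the transition zone) is therefore not an obstacle at all. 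The right-hand upper barrier is the supersolution of Lemma \ref{lemma 3.1}(i) (only $c<c_b$ is needed), and the single shift $z_1$ is pinned down by a compactness/Liouville argument together with quantitative stability lemmas, close to the squeezing you sketch.

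The genuine gap is in your Step 3. Since $c\le -c_m$ and $c_b<c_m$, one has $c_b+c<0$: the ``leftward front'' is a retreating edge located near $x=-(c_b+c)t>0$ and moving to the \emph{right}, and the region $x\le X-ct$ in \eqref{equ:c_le_cm:l} expands rightward and contains the whole KPP region $(-\infty,-L]$ for all times. Hence for large negative $x$ the solution is close to $0$, not to $1$ as you assert (the profile there is $\phi(-x-(c_b+c)t+z_2)\to\phi(+\infty)=0$), and the reduction ``on $x\le X-ct$ the relevant nonlinearity is again $f_b$'' is false. The actual crux, which your plan never addresses, is the \emph{upper} bound on the left: one must show that the exponentially small tail sitting in the KPP region, where the linear growth rate is $f_m'(0)>0$, cannot be amplified and cannot place the left edge to the left of $-(c_b+c)t+O(1)$. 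This is precisely where $c\le-c_m$ enters: the paper first proves, via the heat kernel with drift (Lemma \ref{lemma_c<-c_m}), that $u\to 0$ uniformly on $\{x\le-(c_m+c)t+x_0\}$, and then uses this as boundary control at a fixed abscissa $X_3>L$ for a supersolution of the pure bistable equation of the form $\phi(-x-(c_b+c)(t-T_3)+z_3)+\delta e^{-\delta(t-T_3)}+\delta e^{-\mu(x-X_3)}$ posed only on $x\ge X_3$ (Lemma \ref{lemma:c<-c_m}), before matching shifts through refined sub/supersolutions and stability lemmas. Your reflection trick changes the advection into $-c\ge c_m$ but does not remove the KPP zone from the convergence region, so without this drift-kills-KPP ingredient your Step 3 cannot close.
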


\subsubsection*{Extinction}
In the end, we consider extinction phenomenon under the assumption that $c\le -c_m$. On the one hand, if $c_b<0$, the bistable region has no possibility to be favorable for species to persist; on the other hand, if we assume further  that the localized initial condition is set on the bistable region and is ``relatively small'', the solution will go to zero, whatever the sign of $c_b$ is. Therefore, we have the following extinction result.
\begin{theorem}[Extinction]
	\label{thm_extinction}
	Assume that $c\le -c_m$. Let  $u$ be the solution of \eqref{1.1'} with a nonnegative continuous and compactly supported initial datum $u_0\not\equiv 0$. Then, $u$ will extinct, namely, $u(t,x)\to 0$ as $t\to+\infty$ uniformly in $x\in\R$, provided one of the following conditions is satisfied:
	\begin{itemize}
		\item[(i)] $c_b<0$;
		\item[(ii)]   \textnormal{spt}$(u_0)$ is included in the bistable region and $\Vert u_0\Vert_{L^\infty(\mathbb{R})} < \theta$.
	\end{itemize}
\end{theorem}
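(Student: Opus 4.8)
The plan is to handle the two cases by a common supersolution strategy. Since $c\le -c_m$, the advection term pushes mass into the bistable region; the key point is that in the bistable region the reaction $f_b$ satisfies $f_b(s)\le f_b'(0)s$ near $0$ only after the barrier $\theta$, and more importantly $f_b\le 0$ on $[0,\theta]$, so if we can confine the solution below $\theta$ we gain a genuine negative reaction. For case~(ii), the idea is simplest: if $\mathrm{spt}(u_0)\subset[L,+\infty)$ and $\|u_0\|_{L^\infty(\R)}<\theta$, then as long as $u$ stays in the bistable region and below $\theta$ it is a subsolution of the linear equation $w_t-w_{xx}-cw_x=f_b'(0)\,w$ (using $f_b(s)\le f_b'(0)s$ on $[0,\theta]$ with $f_b'(0)<0$, which follows from $f_b'(0)<0$ and $f_b\le0$ on $(0,\theta)$ — actually one uses the concavity/sign directly, $f(y,s)\le 0$ there since $f(y,\cdot)$ is decreasing in $y$ and equals $f_b$ for $y\ge L$, while for $y<L$ one needs $f(y,s)\le f_m'(0)s$, handled below). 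I would first verify the simpler sub-case that the support never leaves $[L,\infty)$: then $u_t-u_{xx}-cu_x=f_b(u)\le 0$ as long as $u<\theta$, so $\|u(t,\cdot)\|_\infty$ is nonincreasing and stays $<\theta$ for all time; then a heat-kernel comparison with the solution of $w_t=w_{xx}+cw_x$ (whose sup norm decays like $t^{-1/2}$) forces $u\to0$ uniformly. The subtlety is that $c<0$ may drag the support to the left of $L$; but leftward transport only moves mass toward the KPP region, so I would instead build a global supersolution as in case~(i).

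For case~(i), with $c_b<0$ (hence by the sign characterization $\int_0^1 f_b<0$), the goal is a traveling supersolution. The natural candidate is $\bar u(t,x)=\phi\big(-x-(c_b+c)t-Mt+\xi_0\big)$ for suitable $M\ge 0$ and large $\xi_0$, or more robustly a pair built from a shifted bistable front moving leftward and something controlling the KPP tail. Since $c\le -c_m$ we have $c_b+c\le c_b-c_m<0$, so the wave $\phi(-x-(c_b+c)t+\xi)$ is a leftward-moving front in the $y\ge L$ region; in the region $y<L$ one must dominate $f$, and here the hypothesis that $f(y,\cdot)$ is decreasing in $y$ on $[-L,L]$ and the KPP bound $f_m(s)\le f_m'(0)s$ is crucial: one compares with $v_t-v_{xx}-cv_x=f_m'(0)v$, whose fundamental solution decays exponentially in time precisely because $c\le -c_m=-2\sqrt{f_m'(0)}$ (the generalized principal eigenvalue of $\partial_{xx}+c\partial_x+f_m'(0)$ on the whole line is $f_m'(0)-c^2/4\le 0$, with a strict sign when $c<-c_m$ and the borderline case $c=-c_m$ handled by an Aronson–Weinberger-type argument giving algebraic rather than exponential decay but still extinction). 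The cleanest route: show that for $c\le -c_m$ the linear problem $w_t-w_{xx}-cw_x=f_m'(0)w$ on $\R$ with bounded initial data has $\|w(t,\cdot)\|_\infty\to 0$; then since $f(y,s)\le \max(f_m'(0)s,\,0)$ fails in general, instead use that $f(y,s)\le g(s)$ where $g$ is a KPP-type majorant with $g(s)\le f_m'(0)s$ — but the bistable part is not dominated by a linear-at-zero function globally. So one truncates: bound $u$ first by $1$ (comparison with the constant supersolution, valid since $f(y,s)\le 0$ for $s\ge 1$), then use that on $[0,1]$, $f(y,s)\le K s$ for some $K$, shift to a moving frame $y=x-ct$ wait—better to keep working with the supersolution $\bar w(t,x)=\min\big(1,\;e^{(f_m'(0)-c^2/4)t}\,\psi(x)\big)$-type object adapted so that in the bistable region the genuine negativity of $f_b$ on $(1,\infty)$ and below $\theta$ closes the argument.

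The main obstacle, and where I would concentrate effort, is the borderline case $c=-c_m$ in part~(i): here the linearized operator $\partial_{xx}+c\partial_x+f_m'(0)$ has generalized principal eigenvalue exactly $0$, so one does not get exponential decay from the KPP side, and the bistable region with $c_b<0$ is what must supply the decay. The fix is to note $c_b+c\le c_b-c_m<0$ strictly, so the shifted bistable front $\phi(-x-(c_b+c)t+\xi)$ genuinely retreats, and to glue it to a slowly-decaying KPP-side barrier using the monotonicity $f(\cdot,s)$ decreasing on $[-L,L]$ so that the glued function is a supersolution across the transition zone; then a comparison-principle argument (squeezing $u$ between $0$ and this supersolution, which tends to $0$ pointwise and, after a further localized argument using that $u$ is eventually $<\theta$ everywhere, uniformly) yields $u(t,\cdot)\to 0$ uniformly in $x$. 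I expect the transition-region estimate — verifying the supersolution inequality where the profile crosses $y=\pm L$ — to be the delicate computational core, resolved using $f(y,s)\le f(L,s)=f_b(s)$ for $y\ge -L$... no, the inequality goes the other way; rather one uses $f(y,s)\le f(-L,s)=f_m(s)$ only for $y\le -L$, so in $[-L,L]$ one bounds $f(y,s)\le f(-L,s)=f_m(s)\le f_m'(0)s$, which together with $\phi$ being a supersolution of the $f_b$ equation for $y\ge L$ and of the linear $f_m'(0)$ equation elsewhere (after choosing the shift so that $\phi$ is small in $[-L,L]$) closes everything.
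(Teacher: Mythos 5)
Your case (i) rests on a claim that is false and on a glueing that cannot be closed. For $c\le-c_m$ the solution of the linear problem $w_t-w_{xx}-cw_x=f_m'(0)w$ with compactly supported data does \emph{not} satisfy $\|w(t,\cdot)\|_{L^\infty(\R)}\to0$: its supremum, attained near $x=-ct$, behaves like $e^{f_m'(0)t}t^{-1/2}\to+\infty$. What the condition $c\le-c_m$ actually buys (and what the paper proves in Lemma \ref{lemma_c<-c_m}) is decay uniformly on the expanding left region $x\le-(c_m+c)t+x_0$, never uniformly in $x\in\R$; so the KPP linearization alone can never give extinction, and the decay on the right must come from bistable barriers. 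Your glued supersolution does not provide them: since $\phi(-x-(c_b+c)t+\xi)$ solves the $f_b$-equation with drift $c$ exactly, one has $W_t-W_{xx}-cW_x-f(x,W)=f_b(W)-f(x,W)\le0$ for $x\le L$ (because \eqref{hypf} forces $f(x,s)\ge f_b(s)$ there), with strict negativity precisely where you make $\phi$ small, since there $f(x,W)\approx f_m(W)>0>f_b(W)$. So the retreating bistable front is a supersolution only in $\{x\ge L\}$ and the ``transition-zone'' inequality goes the wrong way; this is exactly why the paper never builds one global supersolution in case (i), but works on $\{x\ge X\}$ only and controls the lateral boundary $x=X$ directly through Lemma \ref{lemma_c<-c_m} (see Lemma \ref{lemma:c<-c_m}).

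Even granting the retreating-front bound, it only yields smallness \emph{behind} the interface, i.e.\ on $X\le x\le(-c_b-c-\varepsilon)t$; ahead of it the bound is close to $1$ and says nothing. Controlling the far right is where $c_b<0$ must be used, and it is absent from your argument: the paper closes case (i) with a second, rightward upper bound — the Fife--McLeod-type supersolution of Lemma \ref{lemma 3.1}(i), giving smallness for $x>(c_b-c+\varepsilon)t$ when $c<c_b$, or the stationary blocking barrier of Theorem \ref{thm_blocking_c>c_b} when $c_b<c$, with a perturbation of $f_b$ when $c=c_b$ — and the two regions overlap precisely because $c_b-c<-c_b-c$ when $c_b<0$ (a crude Gaussian bound with speed $c_m-c$ would not overlap unless $c_b\le-c_m$). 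Case (ii) is essentially unaddressed: the sub-case ``the support never leaves $[L,+\infty)$'' is vacuous (the solution is positive on all of $\R$ for $t>0$), and falling back on the case (i) construction cannot work since there $c_b$ may be positive, so no retreating or blocking bistable barrier is available; the hypothesis $\|u_0\|_{L^\infty(\R)}<\theta$ must itself be converted into a barrier below the threshold. The paper does this by constructing a leftward-moving monostable ``extinction wave'' connecting $\varepsilon$ and $\theta$ for a modified nonlinearity $f_{b,\varepsilon}\ge f_b$, with speed $\nu_0>-c$, and taking its minimum with a KPP front to obtain a global supersolution above $u_0$; nothing in your proposal plays this role. (Minor: $f_b(s)\le f_b'(0)s$ on $[0,\theta]$ is false near $\theta$; only $f_b\le0$ is available there, which is in fact all you would need.)
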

 Let us comment on Theorem \ref{thm_extinction} (ii). This sufficient condition for extinction indeed relates to not only the size but also the location of the initial condition. This condition, we believe, can be relaxed but somehow it is not easy to find an optimal one. A heuristic explanation is the following: suppose that \textnormal{spt}$(u_0)$ is large enough and fully located in the KPP region, i.e., far to the left, and $\Vert u_0\Vert_{L^\infty(\mathbb{R})} < \theta$, the solution may have a chance to persist with $u(T,\cdot)\ge \theta+\eta$ on an interval of size $L^*$ for some large $T>0$, which will lead to propagation provided that $c_b>0$, due to Theorem \ref{thm_c le -cm,cb>0}.

\subsection{Sharp estimate of the level sets in the left direction when $c\in(-c_m,+\infty)$}

Given $\varrho\in(0,1)$, define the level set $E_\varrho^-(t)=\inf\{x\in\R|u(t,x)=\varrho\}$ for any $t>0$.

\begin{theorem}\label{thm_log delay}
	Assume that $c>-c_m$.	Let  $u$ be the solution of \eqref{1.1'} with a nonnegative continuous and compactly supported initial datum $u_0\not\equiv 0$.
	Then $u$ propagates to the left with speed $c_m+c$, thanks to Theorems \ref{thm_c>cm}--\ref{thm:-cm_cm:left}, and for every $\varrho\in(0,1)$,
	\begin{equation*}
		E_\varrho^-(t)=-(c_m+c)t+\frac{3}{2\lambda^*}\ln t+O_{t\to+\infty}(1)
	\end{equation*}
	with $\lambda^*=c_m/2$.
\end{theorem}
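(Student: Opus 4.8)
The plan is to recognise the claimed formula as Bramson's logarithmic correction for the classical Fisher--KPP equation, read in the leftward-moving frame, and to confine every comparison argument to the half-line $\{x\le -L\}$, on which $f\equiv f_m$. Setting $w(t,z):=u(t,-z)$, one checks that $w_t=w_{zz}-cw_z+f_m(w)$ for $z\ge L$, and then $W(t,z):=w(t,z+ct)$ solves the homogeneous equation $W_t=W_{zz}+f_m(W)$. Since $\lambda^*=c_m/2$ is exactly the critical decay rate $\lambda_{c_m}$ in \eqref{phiexp}, the sharp long-time behaviour of the critical Fisher--KPP front (Bramson's correction; see \cite{HNRR2013,L1985,U1978}) says that the $\varrho$-level set of $W$ sits at $c_m t-\frac{3}{2\lambda^*}\ln t+O(1)$, equivalently that the $\varrho$-level set of $w$ sits at $(c_m+c)t-\frac{3}{2\lambda^*}\ln t+O(1)$, i.e. $E_\varrho^-(t)=-(c_m+c)t+\frac{3}{2\lambda^*}\ln t+O(1)$. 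It therefore remains to transfer this homogeneous estimate to the heterogeneous solution $u$; by Theorems \ref{thm_c>cm}--\ref{thm:-cm_cm:left} the latter is already, for $t$ large, front-like near its leftward interface (close to $1$ on an interval lying to the right of some point $\chi(t)\le -L$, and tending to $0$ as $x\to-\infty$), which is what makes the matching possible.

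\textbf{Upper bound on the leftward spread (lower bound for $E_\varrho^-$).}
Here I would compare $u$, on $\{x\le -L\}$ where $f\equiv f_m$ and $f_m(s)\le f_m'(0)s$, with the classical Bramson supersolution $\bar u(t,x)=\min\{1,\ \psi(t,x)\}$, in which $\psi$ is the Gaussian-modulated critical-exponential supersolution of the linearised equation $v_t=v_{xx}+cv_x+f_m'(0)v$ built around the curve $x=-(c_m+c)t+\frac{3}{2\lambda^*}\ln t+C_0$. For $t$ large this curve lies inside $\{x<-L\}$, so $\bar u$ is a genuine supersolution of \eqref{1.1'} on $\{x\le -L\}$ (using $f_m(s)\le f_m'(0)s$ on the tail and $f(x,s)\le 0$ for $s\ge 1$), and $\bar u\ge u$ on the parabolic boundary $\{t=T\}\cup\{x=-L\}$ once $C_0$ is chosen large depending on the fixed profile $u(T,\cdot)$. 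The comparison principle then gives $u\le\bar u$, hence $u(t,x)<\varrho$ for $x\le -(c_m+c)t+\frac{3}{2\lambda^*}\ln t-C$, and therefore $E_\varrho^-(t)\ge -(c_m+c)t+\frac{3}{2\lambda^*}\ln t-C$.

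\textbf{Lower bound on the leftward spread (upper bound for $E_\varrho^-$).}
This is the more delicate half. Fix $\delta\in(0,1-\varrho)$. By Theorems \ref{thm_c>cm}--\ref{thm:-cm_cm:left} there are $T_0>0$ and a curve $\chi(t)\le -L$ — a fixed point when $c\in(-c_m,c_m)$, and a line $\chi(t)=-(c-c_m+2\varepsilon)t$ when $c\ge c_m$ — along a left neighbourhood of which $u(t,\cdot)\ge 1-\delta$ for all $t\ge T_0$. On $\{t\ge T_0,\ x\le\chi(t)\}$, where once more $f\equiv f_m$, I would slide beneath $u$ the classical Bramson subsolution for $W_t=W_{zz}+f_m(W)$, transported back to the leftward frame, matched below $u$ along $\{t=T_0\}\cup\{x=\chi(t)\}$ by means of the uniform lower bound $1-\delta$ at $x=\chi(t)$. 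The known sharp lower estimate for such front-like data then gives $u(t,x)\ge\varrho$ for $x$ up to $-(c_m+c)t+\frac{3}{2\lambda^*}\ln t-C'$; combined with the supersolution bound and the intermediate value theorem (recall $u(t,\cdot)<\varrho$ far to the left and $u(t,\cdot)\ge 1-\delta>\varrho$ near $\chi(t)$), this yields $E_\varrho^-(t)=-(c_m+c)t+\frac{3}{2\lambda^*}\ln t+O_{t\to+\infty}(1)$.

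\textbf{The main obstacle.}
The crux is that on the transition interval $[-L,L]$ the reaction is weaker than $f_m$ (indeed $f(\cdot,s)\le f(-L,s)=f_m(s)$ there, by monotonicity in $y$), so no $f_m$-subsolution can be propagated through that layer; one is therefore forced to localise the lower comparison in $\{x\le\chi(t)\}$ and to drive it through the artificial boundary $x=\chi(t)$. This requires, first, a uniform-in-time lower bound $u(t,\chi(t))\ge 1-\delta$ valid for all large $t$ — which is precisely the already-proved leftward propagation of Theorems \ref{thm_c>cm}--\ref{thm:-cm_cm:left} — and, second, an argument that the bounded heterogeneous layer together with the behaviour of $u$ to the right of $\chi(t)$ perturbs the leftward front by only $O(1)$ rather than by a further multiple of $\ln t$; this last point is intuitively clear since the interface escapes to $-\infty$, but it must be built into the choice of $\chi(t)$ and into the constants of the Bramson sub/supersolutions. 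When $c\ge c_m$ there is the extra bookkeeping that $\chi(t)$ is itself drifting to $-\infty$, which has to be reconciled with the constant-shift conclusion.
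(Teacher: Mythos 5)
Your architecture is the right one, and the first half is essentially the paper's: since $f(x,s)\le f_m(s)\le f_m'(0)s$ for all $x$ and $s\ge0$, the solution of \eqref{1.1'} is dominated by the solution of the homogeneous KPP problem with the same datum, and quoting \cite{HNRR2013} for that solution gives $E_\varrho^-(t)\ge -(c_m+c)t+\frac{3}{2\lambda^*}\ln t+C_1$; your supersolution $\min\{1,\psi\}$ is just a hands-on version of this. Likewise, for the other inequality the paper does exactly what you propose on the right side of the comparison domain: it anchors the subsolution on a ray $x=\mu_0 t-L$ with $-(c_m+c)<\mu_0<\min(0,c_m-c)$, where $u\ge\overline\varrho$ for large $t$ by Theorems \ref{thm_c>cm}--\ref{thm:-cm_cm:left}; this ray plays the role of your curve $\chi(t)$ and keeps the whole comparison inside $\{x\le -L\}$, so your worry that the transition layer might contribute a further $\ln t$ shift evaporates.

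The genuine gap is on the \emph{left} side of your comparison domain. You propose to match the Bramson-type subsolution below $u$ only along $\{t=T_0\}\cup\{x=\chi(t)\}$ and then work on the whole half-line $\{x\le\chi(t)\}$. That ordering fails at the far left of the initial slice: by Lemma \ref{lemma1.3}, $u(T_0,\cdot)$ decays like a Gaussian as $x\to-\infty$, whereas any front-like subsolution with the critical profile decays only like $|x|e^{-\lambda^*|x|}$ (cf. \eqref{phiexp}), so it cannot be slid under $u$ on an unbounded half-line, and there is no quotable ``sharp lower estimate for front-like data'' covering this half-line problem with a moving boundary. The comparison must instead be restricted on the left to $x\ge -(c_m+c)t-\sigma\sqrt t-L$, and one then needs a pointwise lower bound on $u$ along that moving boundary, of order $\hat x\,e^{-\lambda^*\hat x}t^{-3/2}$ for $\hat x\in[0,\sigma\sqrt t]$, to dominate the tail of the shifted front there. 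This is precisely the ingredient the paper supplies before the sliding step: in the frame moving leftward with speed $c_m+c$ (so that $\hat x\ge0$ stays in $\{x\le -L\}$, where $f\equiv f_m$), it solves the Dirichlet problem for the linearization $z_t=z_{\hat x\hat x}+c_mz_{\hat x}+f_m'(0)z$, $z(t,0)=0$, reads off the $\hat x\,e^{-\lambda^*\hat x-\hat x^2/4t}t^{-3/2}$ asymptotics from the heat kernel, and converts it into a nonlinear subsolution $a(t)z$ using $f_m(s)\ge f_m'(0)s-Ms^2$ and $a'=-CM(1+t)^{-3/2}a^2$. Your proposal never produces (or even mentions) this ahead-of-the-front estimate, so the lower bound for the level sets, i.e. $E_\varrho^-(t)\le -(c_m+c)t+\frac{3}{2\lambda^*}\ln t+C_2$, is not established as written; once it is added, your anchoring at $\chi(t)$ and the logarithmically shifted profile $U_{c_m}\big(-x-(c_m+c)t+\frac{3}{2\lambda^*}\ln t+x_2-L\big)$, built from a KPP nonlinearity $f_1\le f_m$ vanishing at $\overline\varrho\in(\varrho,1)$, close the argument exactly as in the paper.
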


This theorem demonstrates that whenever the KPP region plays a role as (part of) the habitat, the leftward propagation will be very similar to the homogeneous KPP case in the sense that the same logarithmic correction appears regarding the asymptotic position of the level sets in the left direction, which means that the effect of the bistable region is rather weak. Then, one can ask whether such logarithmic delay of the level sets remains true, when $u$ propagates to the right with speed $c_m-c$? As a matter of fact,  given any $\varrho\in(0,1)$, by defining $E_\varrho^+(t)=\sup\{x\in\R|u(t,x)=\varrho\}$ for any $t>0$, one readily observes that  $E_\varrho^+(t)\le (c_m-c)t-3/(2\lambda^*)\ln t+C$ for some $C\in\R$, since equation \eqref{1.1} with $f_m(v)$ instead of $f(y-ct,v)$ produces such an upper bound. This means that there exists {\it at least} a logarithmic delay for the level sets in the rightward propagation, which answers part of the question.   However, how to catch a more precise behavior of $E_\varrho^+(t)$ is far from clear, which also strongly rests on the effect of the bistable region. This is beyond the scope of this work and is left as an open question.

\paragraph{Organization of the paper.} The main body of the paper is devoted to the proofs of theorems in this section.









	\section{Preliminary results}
	
	\begin{lemma}\label{Lemma_cm>cb}
	There holds $c_m>c_b$.
	\end{lemma}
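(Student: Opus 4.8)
The plan is to exploit the variational/monotonicity characterization of traveling-front speeds together with a comparison between the KPP nonlinearity $f_m$ and the bistable nonlinearity $f_b$ near the unstable state $0$. Recall $c_m = 2\sqrt{f_m'(0)}$ with $f_m'(0)>0$, whereas $c_b$ is the unique speed of the bistable front $\phi$ for $f_b$, and by \eqref{f_b} we have $f_b'(0)<0$. The key observation is simply that the two relevant linearizations at $0$ have opposite-signed coefficients: $f_m'(0)>0>f_b'(0)$. I would split into the (easy) case $c_b\le 0$ and the case $c_b>0$.

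\textbf{Case $c_b\le 0$.} Since $f_m'(0)>0$ we have $c_m=2\sqrt{f_m'(0)}>0\ge c_b$, and we are done immediately.

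\textbf{Case $c_b>0$.} Here I would use the exponential decay rate of the bistable front at $+\infty$. By \eqref{2.6}, $\phi(s)\sim$ (const)$\,e^{-\alpha s}$ as $s\to+\infty$ with $\alpha=(c_b+\sqrt{c_b^2-4f_b'(0)})/2>0$; in particular, since $f_b'(0)<0$, the discriminant $c_b^2-4f_b'(0)>c_b^2$ is genuinely larger than $c_b^2$, which forces $\alpha>c_b$ — equivalently $c_b^2-4f_b'(0)>c_b^2$ gives $\sqrt{c_b^2-4f_b'(0)}>c_b$, hence $\alpha=(c_b+\sqrt{c_b^2-4f_b'(0)})/2>c_b$. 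Plugging the ansatz $\phi\sim e^{-\alpha s}$ into $\phi''+c_b\phi'+f_b(\phi)=0$ and linearizing at $0$ yields the characteristic relation $\alpha^2-c_b\alpha+f_b'(0)=0$. On the other hand, the KPP minimal speed is characterized by $c_m=\min_{\lambda>0}\big(\lambda+f_m'(0)/\lambda\big)$. The idea now is to use $\phi$ itself as a (super- or sub-) solution template: one compares the decaying tail of $\phi$ with the exponential profile $e^{-\lambda(x-\nu t)}$ that solves the linearized KPP equation $u_t=u_{xx}+f_m'(0)u$. Concretely, I would argue by contradiction: if $c_b\ge c_m$, then because $f_m(s)\le f_m'(0)s$ and (by continuity and the sign conditions) one can arrange $f_b(s)\le f_m'(0)s$ on $[0,1]$ as well after noting the relevant inequality only near $0$ matters, the bistable front $\phi$ (translated) would be a supersolution of the KPP equation $u_t=u_{xx}+f_m(u)$ moving with speed $c_b\ge c_m$; but a front-like supersolution moving at speed $\ge c_m$ with the correct $0$–$1$ connection is incompatible with the KPP spreading property \eqref{spreading properties} unless its decay rate matches a genuine KPP rate $\lambda_\nu\le\lambda_{c_m}=c_m/2$, whereas $\phi$ decays like $e^{-\alpha s}$ with $\alpha>c_b\ge c_m>c_m/2$, i.e. too fast — a contradiction. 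Cleaning this up, the cleanest route is purely algebraic: from $\alpha^2-c_b\alpha+f_b'(0)=0$ with $f_b'(0)<0$ we get $c_b=\alpha+f_b'(0)/\alpha<\alpha$, while for KPP $c_m=\lambda^*+f_m'(0)/\lambda^*$ with $\lambda^*=c_m/2=\sqrt{f_m'(0)}$; I would then show $c_b<c_m$ by comparing $\alpha$ against $\lambda^*$ using that the map $\lambda\mapsto \lambda + r/\lambda$ is increasing for $\lambda\ge\sqrt{r}$ and decreasing for $\lambda\le\sqrt r$, being careful that this alone does not close the gap — hence the need for the PDE comparison input.

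\textbf{Main obstacle.} The genuinely nontrivial point is that $c_m$ and $c_b$ come from \emph{different} equations, so a purely algebraic comparison of the characteristic relations is not enough by itself; one needs a PDE-level comparison argument. The cleanest implementation I expect to work: use that near $0$, $f_b'(0)<0<f_m'(0)$, so for any small $\delta>0$ there is $s_\delta>0$ with $f_b(s)\le (f_m'(0)-0)s$ and more to the point $f_b(s)\le 0\le f_m(s)$ on $(0,\theta)$; build from the bistable front a compactly-modified subsolution for the KPP Cauchy problem that spreads to the right at a speed arbitrarily close to $\max(c_b,0)$; then invoke the KPP spreading result \eqref{spreading properties} (equivalently, the characterization of $c_m$ as the exact spreading speed) to conclude $\max(c_b,0)\le c_m$, and finally upgrade $\le$ to $<$ using that $\phi$'s decay rate $\alpha$ strictly exceeds $c_m/2$ (so $\phi$ can never \emph{equal} a critical KPP front), which rules out $c_b=c_m$. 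Assembling these pieces gives $c_m>c_b$ in all cases. I would expect the write-up to lean on the subsolution construction and on \eqref{spreading properties}; the rest is bookkeeping with the explicit formulas for $\alpha$ and $\lambda^*$.
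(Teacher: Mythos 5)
Your plan splits into a weak inequality and a strictness step, and only the first of these is sound. The part where you place a (truncated, perturbed) bistable front below the KPP dynamics and invoke the spreading property to get $\max(c_b,0)\le c_m$ is essentially the paper's first step, which does it more directly by comparing the two Cauchy solutions with the same datum (since the standing hypothesis \eqref{hypf} forces $f_b\le f_m$). The gap is in how you exclude the borderline case $c_b=c_m$, which is the entire content of the lemma once $c_b\le c_m$ is known. First, there is a sign error: because $f_b\le f_m$, the translate $\phi(x-c_bt)$ satisfies $u_t-u_{xx}-f_m(u)=f_b(\phi)-f_m(\phi)\le 0$, so it is a \emph{sub}solution of the KPP equation, not a supersolution; arranging $f_b(s)\le f_m'(0)s$ is irrelevant to this. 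Moreover, the principle you appeal to is not valid: a front-like \emph{super}solution traveling at any speed $\ge c_m$ is perfectly compatible with KPP spreading (supersolutions only bound the solution from above, e.g. $\min\bigl(1,e^{-\lambda(x-\nu t)}\bigr)$ with $\nu\ge c_m$), so no contradiction can be extracted from its decay rate.

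Second, and more fundamentally, the observation that $\alpha>c_m/2$, i.e. that $\phi$ "can never equal a critical KPP front", is a non sequitur: $\phi$ solves a different ODE, and a fast-decaying subsolution moving at exactly speed $c_m$ does not contradict the crude spreading property, which gives no information about the KPP solution at positions $c_mt+O(1)$. Ruling out $c_b=c_m$ requires strictly finer input, and this is precisely where the paper's proof lives: by \cite{FM1977} the bistable solution (for suitable data) develops a front located at $c_bt+O(1)$, while by \cite{HNRR2013} the level sets of the dominating KPP solution sit at $c_mt-\frac{3}{2\lambda^*}\ln t+O(1)$ with $\lambda^*=c_m/2$; if $c_b=c_m$ this logarithmic lag violates the comparison $w\ge z$ for large times. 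Your proposal never invokes the logarithmic delay (nor any substitute, such as strict monotonicity of the bistable speed obtained by inserting an intermediate bistable nonlinearity $\hat f$ with $f_b\le\hat f\le f_m$, $\hat f\not\equiv f_b$, and a sliding argument), so the strict inequality $c_m>c_b$ is not established by the argument as written.
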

\begin{proof}
	 Let $w$ and $z$ be respectively the solution to the Cauchy problem \eqref{1.2} in $\R_+\times\R$ with $f$ replaced by $f_m$ and $f_b$, associated with the same initial data. First of all, one readily infers from the comparison principle that  $w(t,x)>z(t,x)$ for $t>0$ and $x\in\R$, which implies $c_m\ge c_b$. 
	 
	 Assume towards contradiction that $c_m=c_b$. Let us focus on the region $x>0$. It follows from  \cite[Theorem 3.2]{FM1977},  there exists $\xi\in\R$ such that 
	$ \sup_{x>0}\big|z(t,x)-\phi(x-c_b t+\xi)\big|\to 0$ as $t\to+\infty$.
	 This implies that the level sets of $z$ is asymptotically $c_bt+O_{t\to+\infty}(1)$. On the other hand, it is known from \cite{HNRR2013} that the level sets of $w$ behave asymptotically as $c_mt -3/(2\lambda^*)\ln t+O_{t\to+\infty}(1)$, which move apprently  slower than the front propagation of $z$. This is a contradiction. We then conclude that $c_m>c_b$.
\end{proof}

	\begin{lemma}\label{lem:u:basic_propert}
		Let $u$ be the solution to~\eqref{1.1'} with a nonnegative continuous and compactly supported initial datum $u_0\not\equiv 0$. Then
			\begin{equation*}
				\sup_{x \in \mathbb{R}}u(t,x)\le 1~~~~~~\text{as}~~t\to+\infty.
			\end{equation*}
			Namely,		for any $\varep>0$, there exists $T>0$ such that $\sup_{x \in \mathbb{R}}u(t,x)\le 1+\varep$ for all $t\ge T$.
	\end{lemma}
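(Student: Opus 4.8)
The plan is to build a spatially homogeneous supersolution by comparing $u$ with the solution of the associated ODE. First I would observe that, because of the structural assumptions \eqref{hypf}, \eqref{f_m} and \eqref{f_b}, there is a uniform constant $M>0$ (one can take $M=\sup_{s\ge 1}\sup_{y\in\R}\partial_s f(y,s)$ made harmless, or simply use $f(y,s)\le 0$ for $s\ge 1$ together with $\partial_s f$ bounded on $\R\times[0,2]$) such that $f(y,s)\le g(s)$ for all $y\in\R$ and $s\ge 0$, where $g\in C^1(\R_+)$ satisfies $g(1)=0$, $g'(1)<0$ and $g(s)<0$ for $s>1$. The cleanest choice is $g(s):=\sup_{y\in\R}f(y,s)$, which is well defined and continuous since $f(y,\cdot)=f_m$ for $y\le -L$, $f(y,\cdot)=f_b$ for $y\ge L$, and $f$ is continuous and monotone in $y$ on the compact band $[-L,L]$; by \eqref{hypf} one has $g(1)=0$ and $g(s)\le 0$ for $s\ge 1$, and $g$ inherits enough regularity to run a standard ODE argument.

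Next I would set $\bar u(t)$ to be the solution of the ODE $\bar u' = g(\bar u)$ with initial value $\bar u(0)=\max(1,\|u_0\|_{L^\infty(\R)})=:K$. Since $g(s)\le 0$ for $s\ge 1$ and $K\ge 1$, the function $\bar u$ is nonincreasing and stays in $[1,K]$; I would then show $\bar u(t)\to 1$ as $t\to+\infty$. This is the routine ODE fact: $\bar u$ is monotone and bounded, hence converges to some limit $\ell\in[1,K]$ with $g(\ell)=0$; since $g<0$ on $(1,+\infty)$ by the choice of $g$ (note $f_m<0$ and $f_b<0$ on $(1,+\infty)$, and $f(y,s)\le 0$ there for all $y$), the only zero of $g$ in $[1,+\infty)$ is $s=1$, so $\ell=1$. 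Then $\bar u(t)$, viewed as a spatially constant function on $\R_+\times\R$, is a supersolution of \eqref{1.1'} because $\bar u_t - \bar u_{xx} - c\bar u_x - f(x,\bar u) = g(\bar u) - f(x,\bar u)\ge 0$, and $\bar u(0)\ge u_0$ on $\R$. The comparison principle for \eqref{1.1'} (valid since the initial data are ordered and bounded, and $s\mapsto f(y,s)$ is locally Lipschitz uniformly in $y$) yields $u(t,x)\le \bar u(t)$ for all $t\ge 0$, $x\in\R$. Given $\varep>0$, choosing $T$ so that $\bar u(T)\le 1+\varep$ — possible since $\bar u(t)\to 1$ and $\bar u$ is nonincreasing — gives $\sup_{x\in\R}u(t,x)\le \bar u(t)\le 1+\varep$ for all $t\ge T$, which is exactly the claim.

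The only mildly delicate point is verifying that $g(s)=\sup_{y\in\R}f(y,s)$ has the properties needed to drive the ODE to $1$ — namely that $g$ is continuous, that $g(s)\le 0$ for $s\ge 1$, and that $g$ has no zero in $(1,+\infty)$ — and that one may legitimately invoke the comparison principle with this possibly merely continuous (not $C^1$) $g$; this is handled by noting $g$ is locally Lipschitz in $s$ uniformly from the hypotheses on $f$, so the ODE is well-posed and comparison applies. I expect this bookkeeping to be the main (very minor) obstacle; everything else is the standard supersolution argument. An equally valid alternative, avoiding the construction of $g$ altogether, is to note that for any $K\ge 1$ the constant $K$ is itself a supersolution (since $f(x,K)\le 0$), deduce $u\le K:=\max(1,\|u_0\|_\infty)$ for all time, and then bootstrap: on the set where $u\le 1$ there is nothing to prove, and a short parabolic estimate combined with $f(x,s)\le -\delta(s-1)$ for $s$ near and above $1$ (uniformly in $x$, from $\partial_s f(y,1)<0$ and continuity) forces $\limsup_{t\to\infty}\sup_x u(t,x)\le 1$. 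I would present the ODE-supersolution version as the cleaner of the two.
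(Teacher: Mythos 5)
Your proposal is correct and is essentially the paper's own argument: the paper compares $u$ with the solution of the ODE $\xi'(t)=f_m(\xi(t))$, $\xi(0)=\max(1,\Vert u_0\Vert_{L^\infty(\mathbb{R})})$, which decreases to $1$, using that $f(y,s)\le f_m(s)$ for all $y$ (a direct consequence of the monotonicity in $y$ in \eqref{hypf}). In particular your $g(s)=\sup_{y\in\R}f(y,s)$ is nothing but $f_m(s)$, so the regularity bookkeeping you flag as the delicate point is unnecessary.
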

	\noindent
		\textbf{Remark}. Based upon Lemma \ref{lem:u:basic_propert}, we observe that the solution of \eqref{1.1'} will eventually be bounded from above by 1 for large times albeit with a small nuance, no matter how large the $L^\infty$ norm of $u_0$ is. Therefore, we assume with no loss of generality that the initial condition $u_0$ is nontrivial such that, for any $\varep>0$, there holds $0\le u_0(x)< 1+\varep$ for $x\in\R$  throughout this paper.
	\begin{proof}
%
		Let $\xi$ be the solution to the ODE $\xi'(t)=f_m(\xi(t))$ for~$t\ge 0$ with initial condition $\xi(0)=\max\big( 1,\Vert u_0\Vert_{L^\infty(\mathbb{R})})$, it follows that $\xi(t)\searrow 1$ as $t\to+\infty$. By the comparison principle, one has~$0<u(t,x)\le \xi(t)$ for all $(t,x)\in(0,+\infty)\times \mathbb{R}$, which implies that $ u(t, x)\le 1$ as $t\to+\infty$ uniformly for $x\in\R$. This completes the proof.
	\end{proof}

The following lemma provides the Gaussian upper bounds for the solution $u$ to \eqref{1.1'}.

\begin{lemma}
	\label{lemma1.3}
	Let $L_1>0$, $L_2>0$, and let $u$ be the solution to the Cauchy problem~$	u_t=u_{xx}+cu_x+f(u)$, with a nonnegative continuous and compactly supported initial datum $u_0$ satisfying {\rm{spt}}$(u_0)\subset[-L_1,L_2]$,   where the $C^1(\R_+)$ function $f$ is assumed to  satisfy $f(s)\le Ks$ for all $s\ge 0$ with some constant $K>0$. Then there holds, for all $t>0$,
	$$u(t,x)\le M e^{K t}  e^{-\frac{(x+ct+L_1)^2}{4t}}\hbox{ for all }x\le-ct-L_1,\ \hbox{ and }\ u(t,x)\le M e^{K t} e^{-\frac{(x+ct-L_2)^2}{4t}}\hbox{ for all }x\ge L_2-ct,$$
	with $M:=\max(1,\|u_0\|_{L^\infty(\R)})$.
\end{lemma}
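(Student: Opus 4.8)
\textbf{Proof strategy for Lemma \ref{lemma1.3}.}

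The plan is to bound $u$ from above by an explicit supersolution built from the heat kernel of the linearized equation $u_t = u_{xx} + cu_x + Ku$. First I would remove the advection by setting $v(t,x) = u(t,x-ct)$, so that $v$ solves $v_t = v_{xx} + f(v)$ with the same initial data $u_0$ still supported in $[-L_1,L_2]$; this is the natural reduction since the Lemma's two regimes $x \le -ct-L_1$ and $x \ge L_2-ct$ correspond exactly to $x \le -L_1$ and $x \ge L_2$ in the $v$-variable. Next, using $f(s) \le Ks$ for $s \ge 0$ together with $v \ge 0$ (which follows from the maximum principle since $u_0 \ge 0$ and $u_0 \not\equiv 0$, or simply $v \ge 0$ for $u_0 \ge 0$), the function $v$ is a subsolution of the linear equation $\bar v_t = \bar v_{xx} + K\bar v$. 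By the comparison principle, $v(t,x) \le \bar v(t,x)$ where $\bar v$ is the solution of the linear problem with datum $u_0$, namely
\begin{equation*}
\bar v(t,x) = e^{Kt} \int_{-L_1}^{L_2} \frac{1}{\sqrt{4\pi t}}\, e^{-\frac{(x-y)^2}{4t}}\, u_0(y)\, \mathrm{d}y.
\end{equation*}

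The remaining step is a pointwise estimate of this Gaussian convolution in the two spatial regimes. For $x \le -L_1$ and $y \in [-L_1,L_2]$, one has $|x-y| = y - x \ge -L_1 - x = |x+L_1|$ when $x \le -L_1$, actually more carefully $x - y \le x + L_1 \le 0$ so $(x-y)^2 \ge (x+L_1)^2$; hence $e^{-(x-y)^2/(4t)} \le e^{-(x+L_1)^2/(4t)}$. Bounding $u_0(y) \le M := \max(1,\|u_0\|_{L^\infty(\R)})$ and using $\int_\R (4\pi t)^{-1/2} e^{-(x-y)^2/(4t)}\,\mathrm{d}y = 1$ gives $\bar v(t,x) \le M e^{Kt} e^{-(x+L_1)^2/(4t)}$ for $x \le -L_1$. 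Translating back via $u(t,x) = v(t,x+ct)$ yields the first claimed bound for $x \le -ct - L_1$. The regime $x \ge L_2$ is symmetric: for $y \in [-L_1,L_2]$ one has $x - y \ge x - L_2 \ge 0$, so $(x-y)^2 \ge (x-L_2)^2$ and the same computation gives $\bar v(t,x) \le M e^{Kt} e^{-(x-L_2)^2/(4t)}$, which after the shift becomes $u(t,x) \le M e^{Kt} e^{-(x+ct-L_2)^2/(4t)}$ for $x \ge L_2 - ct$.

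There is essentially no serious obstacle here; the only mild care needed is (i) checking that the KPP/bistable $f$ in question indeed satisfies $f(s) \le Ks$ for all $s \ge 0$ with a uniform $K$ — for $f_m$ this is immediate from \eqref{f_m} with $K = f_m'(0)$, for $f_b$ one uses $C^2$ regularity and $f_b(0)=0$ together with $f_b < 0$ beyond $1$ to get such a $K$ on all of $\R_+$, and for the heterogeneous $f(x,\cdot)$ one takes $K$ to be the maximum of the relevant Lipschitz-type constants; and (ii) justifying the comparison principle on the unbounded domain, which is standard given that $v$ and $\bar v$ have at most the growth allowed by the boundedness of $u$ from Lemma \ref{lem:u:basic_propert}. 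I would present the argument in the translated variable $v$, invoke the explicit heat kernel representation, and perform the two one-line Gaussian estimates above.
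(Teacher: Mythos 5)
Your proposal follows essentially the same route as the paper: pass to the moving frame $v(t,z)=u(t,z-ct)$ to remove the advection, compare with the linearization $\bar v_t=\bar v_{zz}+K\bar v$ via the heat-kernel representation carrying the factor $e^{Kt}$, and then estimate the resulting Gaussian convolution over $[-L_1,L_2]$; this is exactly the paper's argument. One small correction to your last step: as written you invoke simultaneously the pointwise bound $e^{-(x-y)^2/(4t)}\le e^{-(x+L_1)^2/(4t)}$, the bound $u_0\le M$, and the normalization $\int_\R(4\pi t)^{-1/2}e^{-(x-y)^2/(4t)}\,dy=1$, but these cannot all be used at once — using the pointwise bound leaves the factor $(L_1+L_2)/\sqrt{4\pi t}$ (which exceeds $1$ for small $t$), while using the kernel mass alone loses the Gaussian factor. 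The standard fix is the slightly sharper inequality: for $x\le -L_1\le y$ one has $(x-y)^2=(x+L_1)^2-2(x+L_1)(y+L_1)+(y+L_1)^2\ge (x+L_1)^2+(y+L_1)^2$, so that $\frac{e^{Kt}}{\sqrt{4\pi t}}\int_{-L_1}^{L_2}e^{-\frac{(x-y)^2}{4t}}u_0(y)\,dy\le M e^{Kt}e^{-\frac{(x+L_1)^2}{4t}}\cdot\frac{1}{\sqrt{4\pi t}}\int_\R e^{-\frac{(y+L_1)^2}{4t}}\,dy=Me^{Kt}e^{-\frac{(x+L_1)^2}{4t}}$, and symmetrically with $(x-L_2)^2+(y-L_2)^2$ for $x\ge L_2$. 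With that one-line adjustment your proof is complete and coincides with the paper's.
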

\begin{proof}
	Set $z=x+ct$ and $v(t,z)=u(t,z-ct)$, then $v(t,z)$ satisfies 
	\begin{equation*}
		v_t=v_{zz}+f(v)~~~t>0,~z\in\R,
	\end{equation*}
	with initial condition $u_0$ satisfying spt$(u_0)\subset[-L_1,L_2]$. By the comparison principle, for any $z \in \mathbb{R}$ and $t>0$, 
	$$
	v(t,z)\leq 
	\frac{e^{Kt}}{\sqrt{4\pi t}} \int_{\mathbb{R}} e^{- \frac{(z-y)^2}{4t}} u_0(y) d y=\frac{e^{Kt}}{\sqrt{4\pi t}} \int_{-L_1}^{L_2} e^{- \frac{(z-y)^2}{4t}} u_0(y) d y.
	$$
	This  gives that for  $t>0$,
	\begin{equation*}
		v(t,z)\le M e^{K t}  e^{-\frac{(z+L_1)^2}{4t}}\hbox{ for all }z\le-L_1,\ \hbox{ and }\ v(t,z)\le M e^{K t} e^{-\frac{(z-L_2)^2}{4t}}\hbox{ for all }z\ge L_2,
	\end{equation*}
	Turning back to the function $u$, the conclusion immediately follows.
\end{proof}

	\begin{lemma}\label{lem:Semi-persistence}
		Assume that $c\in(-c_m,c_m)$. Let $u$ be the solution of~\eqref{1.1'} with a nonnegative continuous and compactly supported initial datum $u_0\not\equiv 0$. Then $u(t,x)$ is semi-persistent, that is, for every $\overline x\in\mathbb{R}$,
		$$\inf_{x\le\bar x}\Big(\liminf_{t\to+\infty} u(t,x)\Big)>0.$$
		Moreover, \eqref{1.1'} admits positive stationary solutions, and any  positive stationary solution $p$ of \eqref{1.1'}  satisfies $p(-\infty)=1$.
	\end{lemma}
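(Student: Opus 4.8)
\emph{Overview.} The plan is to prove the three assertions in turn — existence of a positive stationary solution, the behaviour $p(-\infty)=1$ for any positive stationary solution, and then semi-persistence. The hypothesis $|c|<c_m$ (equivalently $c^2/4<f_m'(0)$) is used crucially: it makes the zero state linearly \emph{oscillatory} for the KPP dynamics on $(-\infty,-L]$.

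\emph{Step 1 (a stationary subsolution, hence a positive stationary solution).} First I would fix $\varepsilon>0$ with $f_m'(0)-\varepsilon-c^2/4>0$ and an interval $I_R\subset(-\infty,-L)$ of length $R$ large enough that the principal Dirichlet eigenvalue $\mu_1$ of $\phi\mapsto\phi''+c\phi'+(f_m'(0)-\varepsilon)\phi$ on $I_R$ is positive; after the gauge change $\phi=e^{-cx/2}\tilde\phi$ one has $\mu_1=f_m'(0)-\varepsilon-c^2/4-(\pi/R)^2>0$ for $R$ large. Let $\phi_R>0$ be the corresponding eigenfunction, $\|\phi_R\|_{L^\infty}=1$, extended by $0$. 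Using $f_m(s)\ge(f_m'(0)-\varepsilon)s$ for small $s$ and $f(\cdot,0)\equiv0$, the function $\kappa\phi_R$ is a generalized stationary subsolution of \eqref{1.1'} for $\kappa$ small (the inequality on $I_R$ comes from $\mu_1>0$; outside $I_R$ it is trivial; the two corners on $\partial I_R$ are convex and hence harmless). Since $u(1,\cdot)>0$ by the strong maximum principle, shrinking $\kappa$ gives $u(1,\cdot)\ge\kappa\phi_R$ on $\R$. The solution $w$ of \eqref{1.1'} with initial datum $\kappa\phi_R$ is then nondecreasing in $t$, bounded above by $1$, and converges in $C^2_{\mathrm{loc}}$ to a stationary solution $w_\infty$ with $0<w_\infty\le1$ (positivity by the elliptic strong maximum principle, as $w_\infty\ge\kappa\phi_R\not\equiv0$). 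This proves that \eqref{1.1'} admits a positive stationary solution, and the comparison principle gives $\liminf_{t\to+\infty}u(t,x)\ge w_\infty(x)>0$ for every $x\in\R$.

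\emph{Step 2 (positive stationary solutions tend to $1$ at $-\infty$).} Let $p$ be a positive stationary solution. A maximum-principle argument — $p$ is bounded (trajectories escaping to $+\infty$ in the KPP and bistable ODEs do so in finite $x$), hence $p\le1$ by passing to the limit of translates near a near-maximal point and using $f(y,s)\le f_m(s)<0$ for $s>1$ — gives $0<p\le1$, and on $(-\infty,-L]$, $p$ solves $p''+cp'+f_m(p)=0$ with $p'$ bounded by interior estimates. Put $E=\tfrac12(p')^2+F_m(p)$, $F_m(s)=\int_0^s f_m$, so $E'=-c(p')^2$. If $c\ne0$, then $E$ is monotone and bounded, hence has a finite limit at $-\infty$; this forces $\int_{-\infty}^{-L}(p')^2<\infty$, so (as $p''$ is bounded) $p'(x)\to0$, then $F_m(p(x))$ has a limit, and since $F_m$ is a homeomorphism of $[0,1]$ onto $[0,F_m(1)]$, $p(x)$ converges as $x\to-\infty$ to an equilibrium $p_-\in\{0,1\}$. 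The value $p_-=0$ is excluded: if $p(-\infty)=0$, fix $\delta$ small and $R_\delta$ with $f_m'(0)-\delta-c^2/4-(\pi/R_\delta)^2>0$; on a far-left interval $J$ of length $R_\delta$ one has $f_m(p)\ge(f_m'(0)-\delta)p$, so $p$ is a positive supersolution on $J$ of $\phi\mapsto\phi''+c\phi'+(f_m'(0)-\delta)\phi$, and sliding the principal eigenfunction under $p$ until contact at an interior point forces the principal eigenvalue $\le0$, a contradiction. Hence $p(-\infty)=1$. If $c=0$, then $E$ is constant and the conclusion follows from the conservative phase portrait of $p''+f_m(p)=0$: every trajectory on an energy level $\ne F_m(1)$ leaves $(0,1]$ in finite $x$, while on the level $F_m(1)$ the only orbit staying in $(0,1]$ for all $x\le-L$ converges to the saddle $(1,0)$ as $x\to-\infty$.

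\emph{Step 3 (conclusion) and main obstacle.} Applying Step 2 to $w_\infty$ gives $w_\infty(-\infty)=1$, so for every $\bar x\in\R$ the positive continuous function $w_\infty$, tending to $1$ at $-\infty$, satisfies $\inf_{x\le\bar x}w_\infty(x)>0$; combined with Step 1, $\inf_{x\le\bar x}\big(\liminf_{t\to+\infty}u(t,x)\big)\ge\inf_{x\le\bar x}w_\infty(x)>0$, which is the semi-persistence statement. I expect Step 2 to be the main obstacle — precisely the exclusion of $p_-=0$ and of oscillations of $p$ near $-\infty$. This is exactly where $|c|<c_m$ is indispensable: it makes the linearization at $0$ oscillatory (so nothing positive can decay to $0$) and the relevant principal eigenvalue positive on long intervals, while the monotonicity of $E$ rules out oscillations when $c\ne0$ and the degenerate case $c=0$ requires the separate Hamiltonian analysis. (Consistently, the statement fails for $c=\pm c_m$, in line with the extinction phenomenon of Theorem~\ref{thm_extinction}.)
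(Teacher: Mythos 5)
Your proposal is correct, but the key step --- showing that every positive (bounded) stationary solution satisfies $p(-\infty)=1$ --- is handled by a genuinely different argument than the paper's. The paper never analyzes the ODE on the left half-line directly: it first proves semi-persistence by sliding the compactly supported subsolution $\eta\Psi(\cdot-x_0)$ (essentially your small Dirichlet eigenfunction $e^{-cx/2}\cos(\pi x/(2R))$ placed in the KPP region) arbitrarily far to the left under the stationary limit, deduces $\liminf_{x\to-\infty}p>0$, and then identifies the limit by taking translates $p(\cdot+x_n)$, $x_n\to-\infty$, which converge to a bounded entire solution of $p_\infty''+cp_\infty'+f_m(p_\infty)=0$ with $\inf_{\R}p_\infty>0$, squeezed to $1$ by comparison with the ODE $\xi'=f_m(\xi)$. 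You instead prove $0<p\le 1$, use the Lyapunov function $E=\tfrac12(p')^2+F_m(p)$ (monotone when $c\ne0$) to show that $p$ converges at $-\infty$ to $0$ or $1$, exclude $0$ by sliding the principal eigenfunction of $\phi''+c\phi'+(f_m'(0)-\delta)\phi$ under $p$ (this is exactly where $|c|<c_m$ enters, as in the paper's subsolution), treat $c=0$ separately via the Hamiltonian phase portrait, and then read off semi-persistence from $w_\infty(-\infty)=1$ rather than from the paper's direct sliding bound $p>\eta$ on $(-\infty,x_0]$. Both routes work; the paper's avoids the case distinction $c=0$ versus $c\ne0$ and does not need $p\le1$, while yours gives genuine convergence of $p$ at $-\infty$ without the limiting-translates compactness step. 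One caveat you share with the paper: ``any positive stationary solution'' must implicitly mean bounded --- your parenthetical justification of boundedness (finite-$x$ escape of large trajectories) is not guaranteed by the hypotheses, which impose no growth condition on $f(\cdot,s)$ for large $s$, but the paper's own proof likewise presupposes a bounded $p$ when extracting $p_\infty$, and all subsequent applications of the lemma concern bounded stationary solutions, so this is a shared imprecision rather than a defect of your argument. Also, your claim that the limit $p_-$ is an equilibrium needs the (routine) remark that $p''\to-f_m(p_-)$ forces $f_m(p_-)=0$.
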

	\begin{proof}
		For any $0<\varep<\frac{1}{4}(c_m^2-c^2)$, one can choose $R>0$ large enough  such that
		\begin{equation}
			\label{4.1}
			\frac{\pi}{2R}<\sqrt{f'_m(0)-\varep-\frac{c^2}{4}}.
		\end{equation}
		Then, define $\Psi:\R\to\R$ as
		\begin{equation}
			\label{4.2}
			\begin{aligned}
				\Psi(x)=\begin{cases}
					\ \displaystyle e^{-\frac{c}{2}x}\cos\Big(\frac{\pi}{2R}x\Big)&\text{in} ~[-R,R],\cr
					\ 0&\text{elsewhere}.
				\end{cases}
			\end{aligned}
		\end{equation}
		Then there exists $\eta_0>0$ such that $-(\eta\Psi)''-c(\eta\Psi)'\le f_m(\eta\Psi)$ in $(-R,R)$ for all $0<\eta\le \eta_0$. Fix now any~$x_0\le -L-R$ and pick $\eta\in(0,\eta_0]$ such that $\eta\Psi(\cdot-x_0)<u(1,\cdot)$ in $\mathbb{R}$. 
		
		Let $z$ be the solution to~\eqref{1.1'} with initial datum $z(0,\cdot)=\eta\Psi(\cdot-x_0)$ in $\mathbb{R}$. The strong maximum principle applied in $(0,+\infty)\times[-L-2R,-L]$ yields that $z(t,x)>z(0,x)=\eta\Psi(x-x_0)$ for $t>0$ and $x\in[-L-2R,-L]$. Therefore, $z(t+h,\cdot)>z(t,\cdot)$ in $\R$ for every $h>0$ and $t\ge 0$. Namely, $z$ is increasing with respect to~$t$.  Moreover, the comparison principle implies that~$0<z(t,x)< u(t+1,x)$ for all $t>0$ and $x\in\mathbb{R}$. It then follows from parabolic estimates that $z(t,\cdot)$ converges as $t\to+\infty$, locally uniformly in $\mathbb{R}$,  to a  positive bounded stationary solution $p$  of~\eqref{1.1'}. Clearly, $p(x) \geq z(t,x)>0$ for $t>0$ and $x\in \mathbb{R}$. Furthermore, together with Lemma \ref{lem:u:basic_propert}, it follows that
		\begin{equation}\label{4.3}
			0<p(x)\le \liminf_{t\to+\infty} u(t,x)\le \limsup_{t\to+\infty} u(t,x)\le 1,~~\text{locally uniformly for}~x\in\mathbb{R}.
		\end{equation} 
		Since $p$ is continuous and positive in $\mathbb{R}$, one gets from~\eqref{4.3} that, for any given~$\overline x>x_0$,
		\begin{equation}
			\label{equ:inf:x0_xbar}
			\liminf_{t\to+\infty}u(t,x)\ge \min_{[x_0,\overline x]}p>0~~\text{for all}~ x\in[x_0, \overline x].
		\end{equation}
		In view of \eqref{equ:inf:x0_xbar}, to prove the semi-persistence result,
		it suffices to show 
		\begin{equation}\label{equ:inf:x0}
			\inf_{x \leq x_0}\liminf_{t\to+\infty}u(t,x)>0.
		\end{equation}
		
		Notice also that $p(x)>  z(0,x)$ for $x\in\mathbb{R}$. By continuity of $p$ and $z(0,\cdot)$, there is $\hat\kappa>1$ such that $p>\eta\Psi(\cdot-\kappa x_0)$ in $[\kappa x_0-R,\kappa x_0+R]$ for all $\kappa\in[1,\hat \kappa]$. Define
		$$\kappa^*:=\sup\big\{\kappa\ge 1: p>\eta\Psi(\cdot-\widetilde\kappa x_0)~\text{in}~[\widetilde\kappa x_0-R,\widetilde\kappa x_0+R]~\text{for all}~\widetilde\kappa\in [1,\kappa]\big\}.$$
		It follows that $\kappa^*\ge\hat\kappa>1$. We only need to prove $\kappa^*=+\infty$. Assuming by contradiction that~$\kappa^*<+\infty$, we see from the  definition of $\kappa^*$ that $p\ge \eta\Psi(\cdot-\kappa^*x_0)$ in $[\kappa^*x_0-R,\kappa^*x_0+R]$ and there is $x^*\in[\kappa^*x_0-R,\kappa^*x_0+R]$ such that $p(x^*)= \eta\Psi(x^*-\kappa^*x_0)$. Since $p>0$ in $\mathbb{R}$ and $\Psi(\cdot-\kappa^*x_0)=0$ at $\kappa^*x_0\pm R$, one has $x^*\in(\kappa^*x_0-R,\kappa^*x_0+R)$. Then the strong elliptic maximum principle implies that $p\equiv \eta\Psi(\cdot-\kappa^*x_0)$ in $(\kappa^*x_0-R,\kappa^*x_0+R)$ and then in $[\kappa^*x_0-R,\kappa^*x_0+R]$ by continuity, which is impossible. Thus, $\kappa^*=+\infty$ and $p>\eta\Psi(\cdot-\kappa x_0)$ in $[\kappa x_0-R,\kappa x_0+R]$ for all $\kappa\ge 1$. This implies particularly that $p(x)>\eta\Psi(0)=\eta$ for  $x\le x_0$. It follows from \eqref{4.3} that
		$
		\liminf_{t\to+\infty}u(t,x)>\eta~~\text{for all}~x\le x_0
		$, and hence, \eqref{equ:inf:x0} holds. We then obtain the semi-persistence result, as well as the existence of positive stationary solutions to \eqref{1.1'}.
		
		Assume that $p$ is a positive stationary solution  of \eqref{1.1'} in $\R$, let us show that  $p(-\infty)=1$. Fix $x_1<-L-R$. Since the function $\Psi$ given in \eqref{4.2} is compactly supported in $\R$, we can  choose $\eta\in(0,\eta_0]$ such that $\eta\Psi(\cdot-x_1)<p$ in $\mathbb{R}$. By repeating the arguments to obtain the semi-persistence result, we have $\eta\Psi(\cdot-\tilde{x})<p$ in $\mathbb{R}$ for any $\tilde{x}<-L-R$, which further implies that $\liminf_{x\rightarrow -\infty} p(x)>0$.  We now claim that
		\begin{equation*}
			\liminf_{x\rightarrow -\infty} p(x)>0~~~\Longrightarrow~~~p(-\infty)=1.
		\end{equation*}
		Indeed, let us consider an arbitrary sequence $(x_n)_{n\in\mathbb{N}}$ in $\mathbb{R}$ diverging to  $-\infty$ as $n\to+\infty$ and define $p_n:=p(\cdot+x_n)$ in $\R$ for each $n\in\mathbb{N}$. By standard elliptic estimates, the sequence $(p_n)_{n\in\mathbb{N}}$ converges as $n\to+\infty$, up to extraction of some subsequence, in $C^2_{loc}(\mathbb{R})$ to a bounded function $p_\infty$ which solves $p_\infty''+cp_\infty'+f_m(p_\infty)=0$ in $\mathbb{R}$. In view of $\liminf_{x\rightarrow -\infty} p(x)>0$, we have $\inf_{\R}p_\infty>0$. Notice that $f_m>0$ in $(0,1)$ and $f_m<0$ in $(1,+\infty)$, it then follows that the equilibrium $1$ attracts all solutions of $\xi'(t)=f_m(\xi(t))$ with any positive initial data $\xi(0)$. Therefore, a comparison argument implies ~$p_\infty\equiv 1$ in $\R$. That is,~$p_n\to 1$ as~$n\to+\infty$ in $C^2_{loc}(\R)$. Since the  sequence~$(x_n)_{n\in\mathbb{N}}$ was arbitrarily chosen, we conclude that  $p(x)\to 1$ and $p'(x)\to0$ as $x\to-\infty$. Our claim is achieved. This completes the proof of Lemma \ref{lem:Semi-persistence}.
	\end{proof}

\begin{lemma}
	\label{lemma_c<-c_m}
	Assume that $c\le -c_m$. Let $u$ be the solution of~\eqref{1.1'} with a nonnegative continuous and compactly supported initial datum $u_0\not\equiv 0$. Then, for any $x_0\in\R$,
	\begin{equation}
		\label{6.1-2}
		\lim_{t\to+\infty} u(t,x)=0,~~~~\text{uniformly in}~x\le -(c_m+c)t+x_0.
	\end{equation}
	In particular, $\lim_{t\to+\infty} u(t,x)=0$ uniformly for $x\le x_0$.
\end{lemma}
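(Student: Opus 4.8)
The plan is to trap $u$ from above by an explicit supersolution that tends to $0$, as $t\to+\infty$, on the set $\{x\le-(c_m+c)t+x_0\}$; since $c\le-c_m$ forces $-(c_m+c)\ge0$, this set contains $\{x\le x_0\}$, so the ``in particular'' clause is automatic once the main statement is proved. What makes the construction work in this regime is that KPP exponentials are supersolutions of the \emph{linearized} equation even in the adverse advection direction: with $\lambda^*:=c_m/2=\sqrt{f_m'(0)}$ one has $(\lambda^*)^2+c\lambda^*+f_m'(0)=\sqrt{f_m'(0)}\,(c_m+c)\le0$ and $\lambda^*+c\le-c_m/2<0$.

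\emph{A priori barrier.} Replacing $u_0$ by $u(t_0,\cdot)$ for a large $t_0$, I may assume (Lemma~\ref{lem:u:basic_propert}, Lemma~\ref{lemma1.3}) that $\|u_0\|_{L^\infty}<1+\varepsilon_0$, that $\mathrm{spt}(u_0)\subset[-R_0,R_0]$, and that the Gaussian bound of Lemma~\ref{lemma1.3} (applied with the global linear bound $f(x,s)\le Ms$ valid for $s\in[0,1+\varepsilon_0]$) holds on all of $\{x\le L\}$. Then $\psi_C(x):=Ce^{\lambda^*(x-L)}$ with $C\ge1$ is a stationary supersolution of~\eqref{1.1'} on $\R$: for $x\le L$ because $f(x,s)\le f_m'(0)s$ there and $-[(\lambda^*)^2+c\lambda^*+f_m'(0)]\ge0$; for $x\ge L$ because $\psi_C\ge C\ge1$, so $f_b(\psi_C)\le0$, while $-\lambda^*(\lambda^*+c)>0$. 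As $e^{\lambda^* x}$ decays far more slowly than the Gaussian tail of $u_0$, I may enlarge $C$ so that $\psi_C\ge u_0$, and the comparison principle yields $u(t,x)\le Ce^{\lambda^*(x-L)}$ for all $t\ge0$; in particular $u(t,L)\le C$.

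\emph{Decay.} Since $c_b<c_m\le-c$ (Lemma~\ref{Lemma_cm>cb}) we have $c_b+c<0$, so the reflected shifted bistable profile $\Psi(t,x):=\phi\big(-x-(c_b+c)t-a_0\big)$ is an exact solution of $u_t=u_{xx}+cu_x+f_b(u)$ which increases from $0$ at $-\infty$ to $1$ at $+\infty$ and drifts to the right; a Fife--McLeod correction $\Psi+q_0e^{-\omega_0t}$ is then a supersolution of~\eqref{1.1'} on $\{x\ge L\}$, and for $a_0$ large it dominates $u_0$ there (using $\Psi(0,\cdot)\approx1$ on bounded sets together with the Gaussian tail of $u_0$ at $+\infty$). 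On $\{x\le L\}$, for small $\rho>0$ the function $C''e^{-\rho t}e^{\mu_+(x-L)}$, with $\mu_+$ the larger root of $\mu^2+c\mu+f_m'(0)+\rho=0$ (real and positive when $c<-c_m$), is a supersolution of~\eqref{1.1'}. Choosing the constants so that the two pieces agree at $x=L$ with a downward corner produces a global supersolution, and a single continuity-in-$t$ comparison (the contact point can lie neither in an open half nor at $x=L$, by the strong maximum principle) gives, for $t\ge T_0$, that $u(t,x)\le C''e^{-\rho t}e^{\mu_+(x-L)}$ for $x\le L$ (so $\sup_{x\le L}u(t,\cdot)\to0$) and $u(t,x)\le\Psi(t,x)+q_0e^{-\omega_0t}$ for $x\ge L$. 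Finally, on the line $x=-(c_m+c)t+x_0$ the argument of $\phi$ equals $(c_m-c_b)t-x_0-a_0$, which tends to $+\infty$; by monotonicity of $\phi$ the second bound gives $\sup_{L\le x\le-(c_m+c)t+x_0}u(t,\cdot)\to0$ as well, and combining the two bounds proves the uniform decay on $\{x\le-(c_m+c)t+x_0\}$.

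\emph{Where the difficulty lies.} The crux is the decay step, i.e.\ the fact that no population leaks back across $x=L$ into the KPP region when $c\le-c_m$: the barrier on $\{x\ge L\}$ needs $u(t,L)$ small at its boundary, while the barrier on $\{x\le L\}$ needs it small even to start, so the two comparisons must be propagated together and all free constants (including those enforcing the downward-corner matching at $x=L$ for every $t\ge T_0$) chosen compatibly. An alternative is to prove $\limsup_{t\to+\infty}u(t,L)=0$ first, by a Liouville-type argument on the $\omega$-limit set: any entire limit profile is trapped under $Ce^{\lambda^*(x-L)}$, hence vanishes at $-\infty$, and for $c\le-c_m$ this forces it to be identically $0$; one then reruns the decay step with $\rho$ dictated by the rate so obtained. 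The borderline case $c=-c_m$ (where $\rho$ and $\mu_+$ degenerate) requires a separate or limiting treatment, but there $-(c_m+c)t\equiv0$ and only the half-line statement $x\le x_0$ is at stake.
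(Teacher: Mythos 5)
Your route is genuinely different from the paper's: the paper simply compares $u$ with the solution $w$ of the linearized problem $w_t=w_{xx}+cw_x+f_m'(0)w$ and reads off from the explicit Gaussian representation the bound $w(t,x)\le \tfrac{M}{\sqrt{4\pi t}}\,e^{\frac{c_m}{2}(x+(c_m+c)t)}$, which is $O(t^{-1/2})$ on $\{x\le -(c_m+c)t+x_0\}$ for every $c\le-c_m$ at once. Your two-piece nonlinear barrier (an exponentially decaying KPP supersolution on the left plus a reflected Fife--McLeod front on the right, using $c_m>c_b$ from Lemma \ref{Lemma_cm>cb} so that the front interface, moving at speed $-(c_b+c)$, outruns the line $x=-(c_m+c)t+x_0$) is sound in outline for $c<-c_m$, and in fact mirrors the construction the paper itself performs later in Lemma \ref{lemma:c<-c_m} -- note that there the boundary control at the left edge is supplied by the present lemma, so your left barrier is exactly what keeps your argument non-circular. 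Two details need repair, both routine. First, the two pieces cannot literally ``agree at $x=L$ with a downward corner'' for all $t\ge T_0$: $C''e^{-\rho t}$ and $\phi(-L-(c_b+c)t-a_0)+q_0e^{-\omega_0 t}$ are different functions of $t$. But this gluing is unnecessary: since $f(x,s)\le f_m'(0)s$ for \emph{all} $x$, your left function is a supersolution of \eqref{1.1'} on the whole line, so one first compares globally (using the Gaussian tails from Lemma \ref{lemma1.3} to dominate at the initial time) to get $u(t,L)\le C''e^{-\rho t}$, and then runs the half-line comparison on $\{x\ge L\}$ after choosing $\omega_0<\rho$ and $T_0$ large so that $q_0e^{-\omega_0 t}\ge C''e^{-\rho t}$ there. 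Second, $\Psi+q_0e^{-\omega_0 t}$ alone is not a supersolution where $\phi$ takes intermediate values; you need the standard Fife--McLeod time-dependent shift of the wave argument (or the extra spatially decaying term), exactly as in Lemma \ref{lemma:c<-c_m}.

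The genuine gap is the borderline case $c=-c_m$, which is part of the statement and is used downstream (Theorems \ref{thm_c le -cm,cb>0} and \ref{thm_extinction} cover $c\le-c_m$). There your left barrier ceases to exist: $\mu^2+c\mu+f_m'(0)+\rho=0$ has no real root for any $\rho>0$ when $c=-c_m$, so there is no supersolution of the form $C''e^{-\rho t}e^{\mu_+(x-L)}$ and hence no decay mechanism on $\{x\le L\}$, while the conclusion $\sup_{x\le x_0}u(t,\cdot)\to0$ for every $x_0$ (including $x_0>L$, which also requires the boundary input for the right barrier) still has content. Deferring this to ``a separate or limiting treatment'' does not close it: letting $\rho\to0$ yields no decay, and your alternative Liouville route (``any entire limit trapped under $Ce^{\lambda^*(x-L)}$ must vanish identically when $c\le-c_m$'') asserts precisely the nontrivial classification that would have to be proved -- note that at $c\le -c_m$ the linearization at $0$ admits decaying modes $e^{\lambda_\pm x}$ with $\lambda_-\le\lambda^*\le\lambda_+$, so the trapping only rules out the slow mode. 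At the critical speed the true decay on the line $x=-(c_m+c)t+x_0$ is only algebraic, of order $t^{-1/2}$, which no fixed-rate exponential barrier can capture; the natural repair is exactly the paper's computation (comparison with the linearized equation and its heat kernel), or equivalently a supersolution carrying a $t^{-1/2}$ prefactor. With that case supplied, and the two repairs above, your proof goes through; as written it is incomplete at $c=-c_m$.
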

\begin{proof}
	Let $w$ be the solution of $w_t=w_{xx}+cw_x+f'_m(0)w$ for $t>0$ and $x\in\R$, associated with initial condition $w(0,\cdot)=u_0$ in $\R$. Then,
	\begin{align*}
		w(t,x)&=\frac{e^{f_m'(0)t}}{\sqrt{4\pi t}}\int_\R e^{-\frac{[x+(c_m+c)t-c_mt-y]^2}{4t}}u_0(y)dy\\
		&=\frac{e^{\frac{c_m^2t}{4}}}{\sqrt{4\pi t}}\int_\R e^{-\frac{[x+(c_m+c)t-y]^2-2[x+(c_m+c)t-y](c_mt)+(c_m t)^2}{4t}}u_0(y)dy\\
		&\leq\frac{1}{\sqrt{4\pi t}}\int_\R e^{\frac{2[x+(c_m+c)t-y](c_mt)}{4t}}u_0(y)dy\\
		&=\frac{1}{\sqrt{4\pi t}} e^{\frac{[x+(c_m+c)t]c_m}{2}}\int_\R e^{\frac{-c_my}{2}}u_0(y)dy
	\end{align*} 
	from which we deduce that $\sup_{x\le -(c_m+c)t+x_0} w(t,x)\to 0$ as $t\to+\infty$, for any $x_0\in\R$. This, together with the comparison principle, implies  \eqref{6.1-2}. 
\end{proof}

\section[Proofs of Theorems \ref{thm_c>cm}--\ref{thm:-cm_cm:left}]{Propagation in the KPP region when $c\ge c_m$ and when $c\in(-c_m,c_m)$: Proofs of Theorems \ref{thm_c>cm}--\ref{thm:-cm_cm:left}}
\label{Sec 3}
	
		We now give the proofs of Theorem \ref{thm_c>cm} -- describing the complete propagation when $c\ge c_m$ and of Theorem \ref{thm:-cm_cm:left} -- concerning the spreading property in the left direction when $c\in(-c_m,c_m)$.
	
	\begin{proof}[Proofs of Theorems \ref{thm_c>cm}--\ref{thm:-cm_cm:left}]
		Assume that $c> -c_m$.  The conclusion can be easily reached by comparison arguments.

		Let $g(x,s): \R\times \R_+\mapsto\R$ be a $C^1$ function. Moreover, we also require that $g$ satisfies
		$g(x,0)=g(x, 1)=0$ for each $x\in\R$, and $g(x,s)\le f(x,s)$ for $(x,s)\in\R\times\R_+$,
		$g(x,s)=f_m(s)~\text{for}~x\in(-\infty,-L],~~g(x,s)=-rs~\text{for}~y\in[L,+\infty)$, 
		$g(x,s)$ is nonincreasing in $x\in\R$ for each $s\in\R_+$. Let $z$ be the solution to problem $z_t=z_{xx}+cz_x+g(x,z)$ starting with initial value $z_0=u_0$ in $\R$. Notice that $z$ behaves as a lower barrier and it is known from, for instance, Theorem 1.3 (ii) and (iii) of \cite{FPZ} that $z$ satisfies \eqref{c ge c_m}--\eqref{equ:-cm_cm} when $c \geq c_m$ and when $c \in (-c_m,c_m)$, respectively.  
		
		On the other hand, define by $w$ the solution of $w_t=w_{xx}+cw_x+f_m(w)$ in $\R_+\times\R$ with initial datum $w_0=u_0$ in $\R$, it follows from \cite{AW1} that $w$ satisfies property \eqref{c ge c_m}. 
		By the comparison priniciple,  we have
		$z(t,x)\leq u(t,x) \leq w(t,x)$ for $(t,x)\in\R_+\times \R$. As a consequence, taking into account the properties satisfied by $z$ and $w$, the conclusion in Theorems \ref{thm_c>cm}--\ref{thm:-cm_cm:left}  immediately follows.
	\end{proof}

\section[Proof of Theorem \ref{thm_blocking_c>c_b}]{Blocking in bistable region when  $c\in(-c_m,c_m)$ and $c>c_b$: Proof of Theorem \ref{thm_blocking_c>c_b}}

 As preliminaries, we first show the existence and uniqueness of positive bounded stationary solutions  for the above two cases respectively, which will play crucial roles in the course of our investigation. 
\begin{proposition}
	\label{prop_U connecting 1 and 0}
	Assume  that $c\in(-c_m,c_m)$ and $c>c_b$. Then  problem~\eqref{1.1'} admits a unique positive bounded stationary solution $U$ such that $U(-\infty)=1$ and $U(+\infty)=0$. Such $U$ satisfies $0<U< 1$ and $U'<0$ in $\R$.
\end{proposition}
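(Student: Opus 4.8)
The plan is to split the argument into an existence part and a uniqueness part, both exploiting the monotone structure coming from the hypothesis that $f(\cdot,s)$ is nonincreasing in $x$ and that the species is trapped by the negative advection $c>c_b$. For existence, I would start from the positive stationary solution $p$ produced by Lemma \ref{lem:Semi-persistence}: we already know $p(-\infty)=1$, so the only thing to establish is that the ``canonical'' stationary solution also satisfies $p(+\infty)=0$. To this end I would take the minimal positive stationary solution $\underline U$ obtained by parabolic iteration starting from a small compactly supported subsolution (as in the proof of semi-persistence), and the maximal one $\overline U$ obtained as the decreasing limit $t\to+\infty$ of the solution with initial datum $\equiv 1$; both are stationary solutions with $(\cdot)(-\infty)=1$ by Lemma \ref{lem:Semi-persistence}. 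The key point is to show $\overline U(+\infty)=0$: in the bistable region $[L,+\infty)$ the equation is $U''+cU'+f_b(U)=0$, and since $c>c_b$ there is no nonincreasing traveling-wave-type solution connecting a positive value to a positive value with the right advection; more concretely, one compares $\overline U$ on $[L,+\infty)$ with supersolutions built from the bistable front $\phi$ shifted appropriately, using that $c>c_b$ forces the relevant sub-/supersolution ordering to push the tail down to $0$. A clean way is a sliding/phase-plane argument in $[L,+\infty)$: bounded nonnegative solutions of $U''+cU'+f_b(U)=0$ that stay below $1$ and are obtained as limits of compactly supported data must tend to a zero of $f_b$, and the candidates $\theta$ or $1$ are ruled out because $c>c_b$ makes the would-be heteroclinic impossible (this is where Lemma \ref{Lemma_cm>cb}-type reasoning and the sign of $c_b$ enter). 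Having $\overline U(+\infty)=0$, monotonicity in $x$ ($U'<0$) follows from the standard trick: the function is squeezed between $1$ and $0$, so a sliding argument using the decreasing-in-$x$ structure of $f$ gives strict monotonicity, and then $0<U<1$ is automatic by the strong maximum principle.

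For uniqueness I would use a sliding method adapted to the heterogeneous setting. Suppose $U_1,U_2$ are two positive bounded stationary solutions with the prescribed limits. Because both tend to $1$ at $-\infty$ and to $0$ at $+\infty$, for large shifts $\tau$ one has $U_1(\cdot+\tau)>U_2$ on all of $\R$ (the tails at $\pm\infty$ make this comparison hold once $\tau$ is large, using the exponential decay \eqref{2.6} of the bistable front near $0$ and $1$ to control the crossing). Then decrease $\tau$ to the critical value $\tau^*$; the monotonicity of $f$ in $x$ is exactly what makes $U_1(\cdot+\tau)$ a supersolution of the equation satisfied by $U_2$ when $\tau\ge 0$, so the comparison is preserved, and at $\tau^*$ a contact point appears and the strong maximum principle (applied to the difference, which satisfies a linear elliptic inequality with the contact either interior or at $\pm\infty$ handled by the sliding-to-infinity version) forces $\tau^*\le 0$ and in fact $U_1\equiv U_2$. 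The case $\tau^*<0$ is excluded by reversing the roles of $U_1$ and $U_2$; a standard argument then pins down $\tau^*=0$ and hence $U_1=U_2$. Throughout, the decreasing dependence of $f$ on $x\in[-L,L]$ together with the $x$-independence of $f$ outside $[-L,L]$ is what guarantees the sliding comparisons are one-directional, which is the crux of uniqueness for this kind of ``transition'' stationary solution.

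The main obstacle I expect is establishing $\overline U(+\infty)=0$, i.e. ruling out a stationary solution that stabilizes at a positive plateau ($\theta$ or $1$) as $x\to+\infty$. This is precisely the point where the hypothesis $c>c_b$ must be used quantitatively: one has to show that no nonincreasing bounded solution of $U''+cU'+f_b(U)=0$ on $[L,+\infty)$ can connect a positive value to a positive value, equivalently that the only admissible asymptotic state (reachable from compactly supported perturbations, i.e. from below) is $0$. I would handle this by a comparison with a subsolution moving to the right: if $\overline U$ did not decay to $0$, then combined with $\overline U(-\infty)=1$ one could fit a shifted bistable front $\phi(\cdot-\sigma)$ below $\overline U$ and, since $c>c_b$, the solution of the parabolic problem started from $\phi(\cdot-\sigma)$ would invade to the right strictly faster than the stationary obstacle allows (the advection overwhelms the bistable speed), contradicting stationarity of $\overline U$ as an upper barrier — or alternatively, invoke the blocking mechanism directly: $c>c_b$ means the bistable front with the ``wrong'' sign cannot be stationary, so the tail must collapse. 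Once this decay-to-zero is in hand, the remaining steps (monotonicity, strict positivity, and the sliding uniqueness) are routine.
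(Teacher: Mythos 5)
Your uniqueness and monotonicity parts are essentially the paper's own argument: order the two solutions using their exponential rates at $\pm\infty$, slide to the critical shift, apply the strong maximum principle to the difference, and use the strict decrease of $f(\cdot,s)$ on $[-L,L]$ to force the shift to be zero; the same sliding idea gives $U'<0$, and $0<U<1$ is an easy maximum-principle/Cauchy--Lipschitz step. That outline is sound. The existence part, however, has a genuine gap. Your candidate $\overline U$, defined as the decreasing limit of the solution with initial datum $\equiv 1$, is identically $1$: since $f(x,1)=0$ for every $x$, the constant $1$ is itself a stationary solution of \eqref{1.1'}, so the flow started from $1$ never moves and $\overline U(+\infty)=1$, not $0$. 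No argument can force decay at $+\infty$ for that object, because the proposition asserts uniqueness only among stationary solutions with the prescribed limits, not among all positive bounded stationary solutions ($V\equiv 1$ always exists). Moreover, the mechanism you propose for ruling out a positive plateau is oriented the wrong way: in \eqref{1.1'} the bistable front travels with speed $c_b-c<0$ when $c>c_b$, i.e.\ it recedes to the left and the state $0$ invades, so a solution started from a shifted $\phi$ placed under a stationary barrier simply retreats and yields no contradiction; ``advection overwhelming the bistable speed'' is exactly why blocking, rather than rightward invasion, occurs (compare Theorem \ref{thm_blocking_c>c_b}).

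What your sketch is missing, and what the paper does, is the construction of an explicit \emph{decaying stationary supersolution}. Take $f_{b,\varep}\ge f_b$ with zeros $0,\theta,1+\varep$ and associated front $\overline\phi_\varep$ of speed $c_{b,\varep}$, with $\varep$ small so that $c_{b,\varep}<c$; then, wherever $\overline\phi_\varep<1$, one has $\overline\phi_\varep''+c\overline\phi_\varep'+f_b(\overline\phi_\varep)=(c-c_{b,\varep})\overline\phi_\varep'+f_b(\overline\phi_\varep)-f_{b,\varep}(\overline\phi_\varep)<0$, so $\overline u_0=\min(\overline\phi_\varep,1)$ is a stationary supersolution of \eqref{1.1'} tending to $0$ at $+\infty$. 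Squeezing the parabolic flow between the small compactly supported bump $\eta\Psi(\cdot-x_0)$ and $\overline u_0$ produces monotone-in-time limits which are stationary, equal to $1$ at $-\infty$ by Lemma \ref{lem:Semi-persistence}, and equal to $0$ at $+\infty$ because they lie below $\overline u_0$. This is precisely where $c>c_b$ enters quantitatively; without such a decaying upper barrier your existence argument does not close.
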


Based on Proposition \ref{prop_U connecting 1 and 0}, we point out that if $u_0$ satisfies~$0\le u_0\le U$ in $\mathbb{R}$, then the comparison principle immediately implies that the associated solution $u$ of the Cauchy problem~\eqref{1.1'} will satisfy $0\le u(t,x)\le U(x)$ for all $(t,x)\in\R_+\times\R$, hence it is blocked in the bistable region. 

\begin{proof} We divide our proof into four steps.
	
	\noindent
	\textit{Step 1. Existence.}	For $\varep\in(0,(1-\theta)/2)$, let $f_{b,\varep}$ be a $C^1(\mathbb{R})$ function such that 
	\begin{equation}
		\label{above f_b}
		\begin{aligned}
			\left\{\baa{l}
			f_{b,\varep}(0)=f_{b,\varep}(\theta)=f_{b,\varep}(1+\varep)=0,~~f_{b,\varep}'(0)<0,~~f_{b,\varep}'(1+\varep)<0,\vspace{3pt}\\
			f_{b,\varep}>0~\text{in}~(-\infty,0)\cup(\theta,1+\varep),~~f_{b,\varep}<0~\text{in}~(0,\theta)\cup(1+\varep,+\infty).\eaa\right.
		\end{aligned}
	\end{equation}
	Moreover, we assume that $f_{b,\varep}\ge f_b$ in $\R$, $f_{b,\varep}=f_b$ in $(0,1-\varep)$, and $f_{b,\varep}$ is decreasing in $[1-\varep,1+\varep]$. For each $\varep\in(0,(1-\theta)/2)$, let  $\overline\phi_\varep$ be the unique traveling front profile of $u_t=  u_{xx}+f_{b,\varep}(u)$ such that 
	\begin{equation}
		\label{TW-above f_b}
		\overline\phi_\varep''+c_{b,\varep}\overline\phi_\varep'+f_{b,\varep}(\overline\phi_\varep)=0\hbox{ in }\R,~~\overline\phi_\varep'<0~\text{in}~\mathbb{R},~~\overline\phi_\varep(L)=1,~~\overline\phi_\varep(-\infty)=1+\varep,~~\overline\phi_\varep(+\infty)=0,
	\end{equation}
	with speed $c_{b,\varep}$. It is standard to see that $\overline\phi_\varep\to\phi$ in $C^2_{loc}(\R)$ and $c_{b,\varep}\to c_b$ as $\varep\to0$. Since $c_b<c$, we can fix $\varep\in(0,(1-\theta)/2)$ small enough such that $c_{b,\varep}$ is close enough to $c_b$ and $c_{b,\varep}<c$.
	
	Let $\overline{u}$ be the solution of \eqref{1.1'} in $\R_+\times\R$ with initial condition
	\begin{align}
		\label{prop2.3-super u}
		\overline{u}_0(x)=\min\big(\overline{\phi}_\varep(x),1 \big)~~~~\text{for}~~x\in\R.
	\end{align}
	We observe that $\overline u_0\equiv1$ in $(-\infty,L]$ and $\overline u_0=\overline\phi_\varep$ in $(L,+\infty)$. As long as $\overline u_0=\overline \phi_\varep<1$, since $\overline{\phi}_\varep''+c\overline\phi_\varep'+f_b(\overline{\phi}_\varep)= \overline{\phi}_\varep''+c_{b,\varep}\overline\phi_\varep'+f_{b,\varep}(\overline{\phi}_\varep)+(c-c_{b,\varep})\overline\phi_\varep'+f_b(\overline{\phi}_\varep)-f_{b,\varep}(\overline{\phi}_\varep)<0$ in $\R$, it follows that $\overline{u}_0$ is a stationary supersolution of~\eqref{1.1'} in $\R$.

	Let $\underline u$  be the solution of~\eqref{1.1'} with initial condition $\underline u_0=\eta \Psi(\cdot-x_0)$ given in \eqref{4.2} for $x_0\le-L -R$ with $R>0$ given in~\eqref{4.1}. We can choose $\eta>0$ sufficiently small such that $\underline u_0<\overline u_0$ in $\mathbb{R}$.	The comparison principle implies  that 
	\begin{equation}
		\label{comparison-u}
		0<\underline u(t,x)<\overline u(t,x)< \overline u_0(x)~~~\text{for all}~~t>0~\text{and}~x\in\R.
	\end{equation} 
	Moreover, $\underline u$~is increasing with respect to $t$ and $\overline u$ is decreasing with respect to $t$ in $\R_+\times\R$. 
	From standard parabolic estimates, $\underline u(t,\cdot)$ and $\overline u(t,\cdot)$ converge as $t\to+\infty$, locally uniformly in $\mathbb{R}$, to classical positive stationary solutions $p$ and $q$  of~\eqref{1.1'}, respectively. The comparison principle implies that
	\begin{equation}
		\label{prop2.3-1}
		0<\underline u(t,x)<p(x)\le  q(x)<\overline u_0,~~~\text{for}~t>0,~x\in\R.
	\end{equation} 
Moreover, since $\max(-c_m,c_b)<c<c_m$, we infer from Lemma \ref{lem:Semi-persistence} that $	p(-\infty)=q(-\infty)= 1.$ On the other hand, since
$\lim_{x\to+\infty}\overline u_0(x)=0$, we deduce from  \eqref{prop2.3-1} that $p(+\infty)=q(+\infty)=0$.
	Therefore, we have shown the existence of positive bounded stationary solution $U$ of \eqref{1.1'} such that $U(-\infty)=1$ and $U(+\infty)=0$. 
	
	\noindent
	\textit{Step 2. $0<U< 1$ in $\R$.}
	Let now $U$ be a positive stationary solution of \eqref{1.1} such that $U(-\infty)=1$ and $U(+\infty)=0$.  Indeed, \eqref{prop2.3-1} implies that $U>0$ in $\R$. It suffices to show that $U<1$ in $\R$. To do so, we first suppose that  there is $\hat x\in\R$ such that $U(\hat x)=\max_\R U>1$. Hence, $U'(\hat x)=0$ and $U''(\hat x)\le 0$. We then observe that $0=U''(\hat x)+cU'(\hat x)+f(\hat x,U(\hat x))<0$, which  is impossible. Therefore, $U\le 1$ in $\R$. If there is $\bar x\in\R$ such that $U(\bar x)=1$, then it is necessarily a local maximum of $U$, whence  $U'(\bar x)=0$. The Cauchy-Lipschitz theorem implies that $U\equiv 1$ in $[\bar x, +\infty)$. This contradicts $U(+\infty)=0$. Consequently, $0<U<1$ in $\R$.	
	
	\noindent
	\textit{Step 3. Uniqueness.}
	Suppose that $U$ and $W$ are two distinct positive bounded stationary solutions  of \eqref{1.1'} connecting 1 and 0. We notice that $U$ and $W$ have the same decay rate at $\pm\infty$, that is,
	\begin{equation}\label{decay-UW}
		1-U(x)=O(e^{\eta x})=1-W(x) ~\text{as}~x\to-\infty,~~~U(x)=O(e^{-\zeta x})=W(x)~~\text{as}~x\to+\infty
	\end{equation}
	with $\eta=(-c+\sqrt{c^2-4f_m'(1)})/2>0$ and $\zeta=(c+\sqrt{c^2-4f_b'(0)})/2>0$. Therefore, one can choose $\kappa_0>0$ sufficiently large such that $W>U(\cdot+\kappa_0)$ in $\R$, thanks to \eqref{decay-UW}. Define 
	\begin{equation*}
		\kappa^*=\inf\{\kappa\in\R: W>U(\cdot+\kappa)~\text{in}~\R\}.
	\end{equation*}
	We observe that $-\infty<\kappa^*\le \kappa_0$. Set $w:=W-U(\cdot+\kappa^*)$. By continuity, it follows that the function $w$ is nonnegative in $\R$ and vanishes at some point $x_0\in\R$.  Moreover, $w$ satisfies
	\begin{equation*}
		-w''(x)-cw'(x)=f(x,W(x))-f(x+\kappa^*,U(x+\kappa^*))\ge f(x,W(x))-f(x,U(x+\kappa^*)) =\gamma(x)w(x)~~~x\in\R,
	\end{equation*} 
	for some bounded function $\gamma$ in $\R$.
	Since $w(x_0)=0$, the strong maximum principle implies that $w\equiv0$ in $\R$. 
	That is, $W(x)=U(x+\kappa^*)$ for $x\in\R$. Suppose that $\kappa^*>0$, then we derive that both $U$ and $U(\cdot+\kappa^*)$ are stationary solutions of \eqref{1.1'}, which is impossible since 
	\begin{equation*}
		0\!=\!U''(x+\kappa^*)+cU'(x+\kappa^*)+f(x, U(x+\kappa^*))\gneqq U''(x+\kappa^*)+c U'(x+\kappa^*)+f(x+\kappa^*, U(x+\kappa^*))\!=\!0~~\text{for}~x\in\R,
	\end{equation*}
contradiction, thanks to the hypothesis \eqref{hypf} on $f$. Similarly, we also exclude the case that $\kappa^*<0$. Therefore, $\kappa^*=0$, which simply means that $W\equiv U$ in $\R$.	Consequently, such stationary solution of \eqref{1.1'} connecting 1 and 0 is unique.

		\noindent
	\textit{Step 4. $U'<0$ in $\R$.} Assume by contradiction that the unique positive stationary solution of \eqref{1.1} such that $U(-\infty)=1$ and $U(+\infty)=0$ is not strictly decreasing in $\R$, there then exists $a>0$ such that $U\ge U(\cdot+a)$ in $\R$ and $U(x^*)=U(x^*+a)$ for some $x^*\in\R$. In particular, the functions $U$ and $U(\cdot+a)$ are strictly separated and ordered for all $|x|$ large enough. Define $z:=U-U(\cdot+a)$ in $\R$, it follows that $z\ge\neq 0$ in $\R$ and $z(x^*)=0$. Moreover, $z$ satisfies
	\begin{equation*}
		-z''(x)-cz'(x)=f(x,U(x))-f(x+a,U(x+a))\ge f(x,U(x))-f(x,U(x+a)) =\mu(x)z(x),~~~x\in\R,
	\end{equation*} 
    	for some bounded function $\mu$ in $\R$.
    Since $z(x^*)=0$, the strong maximum principle implies that $z\equiv0$ in $\R$. With the same reasoning as in Step 3, we arrive at a  contradiction. Consequently, $U$ is strictly decreasing in $\R$.
	The proof of Proposition \ref{prop_U connecting 1 and 0} is complete. 
\end{proof}

We now  carry out the proof of Theorem \ref{thm_blocking_c>c_b}. The proof, among other things, is based  on a comparison with   a traveling front with  zero speed as a barrier.

\begin{proof}[Proof of Theorem \ref{thm_blocking_c>c_b}]
	 The strategy of the proof consists in constructing a supersolution which blocks the solution $u(t,x)$ for all times $t\ge0$ as $x\to +\infty$. 
	
	\noindent
	\textit{Step 1. Blocking.} Set $M=\max(\Vert u_0\Vert_{L^\infty(\mathbb{R})},1)+1$.  For any $\varep\in(0,(1-\theta)/2)$,  let $f_{b,\varep}\in C^1(\R)$ be defined as in the beginning of the proof of Proposition \ref{prop_U connecting 1 and 0}, with $M$ this time instead of $1+\varep$. There is then a decreasing front profile $\overline\phi_\varep$ solving \eqref{TW-above f_b}. It is seen that $\overline \phi_\varep\to\phi$ in $C^2_{loc}(\R)$ and $c_{b,\varep}\to c_b$ as $\varep\to 0$. We then fix $\varep\in(0,(1-\theta)/2)$ small enough such that  $c_{b,\varep}$ is close enough to $c_b$ and $c_{b,\varep}<c$. One can choose $A>0$ sufficiently large such that  $\max(u_0(x),1)\le \overline \phi_\varep(-A)$ for all $x\le L$ and $u_0(x)\le \overline\phi_\varep(x-A-L)$ for all $x\ge L$. 
	
	Define $\overline u$ by\footnote{We remark here that the construction of $\overline u$ relies only on the assumption that $c>c_b$.}
	\begin{equation}
		\label{super_thm2.5}
		\overline{u}(x)=\left\{\baa{ll} \overline{\phi}_\varep(-A) & \hbox{if }x<L,\vspace{3pt}\\ 
		\overline{\phi}_\varep(x-A-L) & \hbox{if }x\ge L.\eaa\right.
	\end{equation} 
	For $x\le L$, due to $\overline u(x)=\overline\phi_\varep(-A)\ge 1$, one has that $f(x,\overline\phi_\varep(-A))\le 0$. Moreover, since $\overline u(x)=\overline\phi_\varep(x-A-L)$ for $x\ge L$, it is observed that
	\begin{align}
		\label{stationary supersol_bistable}
		\overline u_t-\overline u_{xx}-c\overline u_x-f(x,\overline u)=-\overline\phi_\varep''-c\overline\phi_\varep'-f_b(\overline\phi_\varep)
		\ge -\overline\phi_\varep''-c_{b,\varep}\overline\phi_\varep'-\overline f_{b,\varep}(\overline\phi_\varep)=0,~~~x\ge L,
	\end{align}
	by noticing that $\overline \phi_\varep'<0$, $c>c_{b,\varep}$ and $f_b\le f_{b,\varep}$ in $\R$.
	Therefore, $\overline u$ is a stationary supersolution of~\eqref{1.1'} in $\R$, with $u_0\le\overline{u}$ in $\R$.  The comparison principle implies that $u(t,x)\le \overline u(x)$ for $t\ge 0$ and $x\in\R$. Consequently, $u$ will be blocked. That is,
	\begin{equation*}
		u(t,x)\to 0~~\text{as}~x\to+\infty, ~\text{uniformly in}~t\ge 0.
	\end{equation*}

	\noindent
	\textit{Step 2. Convergence to the stationary solution $U$.}
	We denote by $u_1$ the solution to the Cauchy problem \eqref{1.1'} with initial datum $u_1(0,\cdot)=\overline u$ in $\R$ with $\overline u$ given in \eqref{super_thm2.5} such that $u_1(0,\cdot)>u_0$ in $\R$, and by $u_2$ the solution to the Cauchy problem~\eqref{1.1'}  with initial condition $u_2(0,\cdot)=\eta\Psi(\cdot-x_0)< u(1,\cdot)$ in $\mathbb{R}$ for $\eta>0$ small enough and for some $x_0\le-L -R$, where $R>0$ and $\Psi$ are given as in~\eqref{4.1}--\eqref{4.2}. 
	The comparison principle implies  that 
	\begin{equation}
		0<u_2(t,x)<u(t+1,x)<u_1(t+1,x)< u_1(0,x)=\overline u(x)~~~\text{for all}~~t>0~\text{and}~x\in\R.
	\end{equation}  
	Moreover, $u_2$~is increasing with respect to $t$ and $u_1$ is decreasing with respect to $t$ in $(0,+\infty)\times\R$. 
	From standard parabolic estimates, $u_2(t,\cdot)$ and $u_1(t,\cdot)$ converge as $t\to+\infty$, locally uniformly in $\mathbb{R}$, to classical stationary solutions $p$ and $q$  of~\eqref{1.1'}, respectively. Moreover, there holds
	\begin{equation}
		\label{comparison-u-upper lower}
		0< p(x)\le\liminf_{t\to+\infty}u(t,x)\le\limsup_{t\to+\infty} u(t,x) \le  q(x)\le  \overline u(x),
	\end{equation} 
	locally uniformly for $x\in\mathbb{R}$.  
	
	In view of  $\max(-c_m,c_b)<c<c_m$, by Lemma \ref{lem:Semi-persistence}, we  eventually arrive at $p(-\infty)=q(-\infty)=1$. On the other hand, since $\underline u$ is nonnegative and $\overline\phi_\varep(+\infty)=0$, one also derives that $p(+\infty)=q(+\infty)=0.$
	Furthermore, it follows from Proposition \ref{prop_U connecting 1 and 0}  that $p=q=U$ in $\R$.
	Therefore, the local uniform convergence of $u(t,\cdot)$ to $U$ as $t\to+\infty$ follows immediately from \eqref{comparison-u-upper lower}.
	The proof of Theorem~\ref{thm_blocking_c>c_b} is therefore complete.
\end{proof}

\section[Proof of Theorem \ref{thm_propagation-2}]{Propagation in bistable region when $-c_m<c_b$  and $c\in(-c_m,c_b]$: Proof of Theorem \ref{thm_propagation-2}} 
\label{Sec4_propagtion in bistable region}

In this section, we study the rightward spreading speed and the attractiveness of the bistable  traveling wave when $-c_m<c_b$, motivated by \cite{FM1977,HLZ2}.

\begin{proposition}
	\label{prop_V=1 when c le c_b}
	Assume that $-c_m<c_b$  and  $c\in(-c_m,c_b]$, then $V\equiv 1$ is the unique positive stationary solution of \eqref{1.1'}.
\end{proposition}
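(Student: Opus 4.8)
The plan is to prove uniqueness by comparing an arbitrary positive bounded stationary solution with a rightward‑travelling bistable front used as a subsolution.

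First observe that $V\equiv 1$ does solve $V''+cV'+f(x,V)=0$, since $f(x,1)=0$ for every $x\in\R$ by \eqref{hypf}. Now let $V$ be an arbitrary positive bounded stationary solution. In \emph{Step 1} I would show $0<V\le 1$ and $V(-\infty)=1$. Positivity is assumed; for the upper bound, if $\sup_\R V>1$ and the supremum is attained at $\hat x$, then $V'(\hat x)=0$, $V''(\hat x)\le 0$, so $0=V''(\hat x)+f(\hat x,V(\hat x))<0$ because $f(y,s)<0$ for all $y\in\R$ and $s>1$ (indeed $f_m,f_b<0$ on $(1,+\infty)$, and on $[-L,L]$ one has $f(\cdot,s)\le f(-L,s)=f_m(s)$ by the monotonicity in \eqref{hypf}); if the supremum is not attained, the same contradiction is obtained after passing to a $C^2_{loc}$ limit of translates $V(\cdot+x_n)$, which solves an autonomous equation and attains its supremum $>1$. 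Hence $V\le 1$. Since $c\le c_b<c_m$ and $c>-c_m$, Lemma~\ref{lem:Semi-persistence} applies and gives $V(-\infty)=1$. In \emph{Step 2}, writing $w:=1-V\ge 0$, the function $w$ solves a linear elliptic equation $w''+cw'+\gamma(x)w=0$ with $\gamma$ bounded, so by the strong maximum principle either $V\equiv 1$ (and we are done) or $V<1$ on $\R$; the latter is what must be excluded.

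\emph{Step 3} is the heart of the matter. Since $f$ is nonincreasing in $y\in[-L,L]$ one has $f_m\ge f_b$, hence $f(y,s)\ge f_b(s)$ for all $(y,s)$; using $\phi''+c_b\phi'+f_b(\phi)=0$ a direct computation shows that
\[
\underline u(t,x):=\phi\bigl(x-(c_b-c)t-\xi\bigr)
\]
is a subsolution of \eqref{1.1'} for every $\xi\in\R$, and — this is where $c\le c_b$ enters — it travels rightward with the nonnegative speed $c_b-c$. The idea is then to slide this front under $V$: if one can choose $\xi$ with $\phi(\cdot-\xi)\le V$ on $\R$, then, $V$ being a stationary (hence super‑)solution, the comparison principle forces $\phi(x-(c_b-c)t-\xi)\le V(x)$ for all $t\ge 0$, and for $c<c_b$ letting $t\to+\infty$ yields $V(x)\ge\phi(-\infty)=1$, contradicting $V<1$.

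I expect the genuine obstacle to be the sliding step, i.e. arranging $\phi(\cdot-\xi)\le V$ on the whole line, since $V$ need not be monotone a priori. The two ends are manageable by exponential estimates: near $-\infty$ one compares the decay of $1-V$, governed by the positive root $\mu$ of $\mu^2+c\mu+f_m'(1)=0$ (the KPP equation linearised at $1$), with that of $1-\phi(\cdot-\xi)$, governed by $\beta$ as in \eqref{2.6}; since $f_m\ge f_b$ and $f_m(1)=f_b(1)=0$ force $f_m'(1)=f_b'(1)$, the map $c\mapsto\mu$ is decreasing and equals $\beta$ at $c=c_b$, so $c\le c_b$ gives $\mu\ge\beta$; symmetrically, should $V(+\infty)=0$, the decay rate of $V$ at $+\infty$ is no larger than that of $\phi$, again because $c\le c_b$. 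With the ends made compatible, the remaining work is to push the sliding through the intermediate region (which is not compact uniformly in $\xi$) and to treat the borderline case $c=c_b$, where the front no longer moves; for the latter I would either perturb the problem (approximating $c_b$ or $f_b$, in the spirit of the proof of Proposition~\ref{prop_U connecting 1 and 0}) and pass to the limit, or argue by a direct phase‑plane analysis of $V''+c_bV'+f_b(V)=0$ on $[L,+\infty)$ excluding every bounded orbit other than $V\equiv 1$ that issues from the state $1$. Steps~1 and~2 are routine; essentially all the difficulty lies in Step~3.
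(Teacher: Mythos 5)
Your Steps 1--2 are sound and essentially match the paper, which obtains $V\le 1$ by an ODE comparison and $V(-\infty)=1$ from Lemma~\ref{lem:Semi-persistence}; your computation that $\phi(x-(c_b-c)t-\xi)$ is a subsolution of \eqref{1.1'} (using $f(x,s)\ge f_b(s)$, a consequence of the monotonicity in \eqref{hypf}) is also correct, as is the nice observation that $f_m\ge f_b$ with equality at $s=1$ forces $f_m'(1)=f_b'(1)$, hence $\mu\ge\beta$ for $c\le c_b$. The genuine gap is exactly the step you flag and leave open: producing \emph{one} shift $\xi$ with $\phi(\cdot-\xi)\le V$ on all of $\R$. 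As formulated, this cannot be closed without an additional a priori study of $V$ near $+\infty$: you write ``should $V(+\infty)=0$'', but nothing guarantees that $V$ has a limit at $+\infty$, let alone the lower bound $V(x)\ge c_2e^{-\zeta x}$ with $\zeta=(c+\sqrt{c^2-4f_b'(0)})/2\le\alpha$ that your fitting requires; establishing this trichotomy ($\liminf_{+\infty}V>0$, or exponential decay at the expected rate) is a separate phase-plane argument absent from the proposal. (Incidentally, the non-uniformly-compact intermediate region you worry about is resolved by sliding the front to the \emph{left}, $\xi\to-\infty$, not to the right: the left comparison is then only needed on $(-\infty,\xi]$, on $[\xi,-M_0]$ one has $V\ge\theta\ge\phi(\cdot-\xi)$ for a fixed $M_0$, and the remaining compact piece is independent of $\xi$ --- but this still hinges on the unproved behavior of $V$ at $+\infty$.) The borderline case $c=c_b$ is likewise only gestured at.

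The paper removes precisely the obstruction you identify by \emph{not} using $\phi$ itself. For $c<c_b$ it takes a perturbed nonlinearity $f_{b,\varep}\le f_b$ with zeros at $-\varep$, $\theta$, $1-\varep$, whose front $\widetilde\phi_\varep$ has limits $1-\varep$ at $-\infty$ and $-\varep$ at $+\infty$ and speed $c_{b,\varep}\to c_b>c$. Since $\widetilde\phi_\varep<1-\varep<1=V(-\infty)$ on the left and is negative far to the right, the ordering $\max\big(\widetilde\phi_\varep(\cdot-x_0),0\big)<V$ holds for a suitable $x_0$ using only $V>0$ and $V(-\infty)=1$ --- no decay-rate comparison and no information on $V$ at $+\infty$ are needed. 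Letting this truncated front travel rightward with speed $\delta\in(0,c_{b,\varep}-c)$ gives $V\ge 1-\varep$, and $\varep\to0$ concludes; the case $c=c_b$ is then handled separately by a sliding argument with $\phi$ (which is a stationary subsolution in that case), via the decay comparison at $-\infty$ much as you envisage. So your plan has the right skeleton, but the key device that makes the barrier fit under an \emph{arbitrary} positive stationary solution --- perturbing $f_b$ downwards so that the front's range is $(-\varep,1-\varep)$ --- is missing, and without it (or the extra asymptotic analysis of $V$ at $+\infty$ together with a complete treatment of $c=c_b$) the proof is not complete.
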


\begin{proof}
	Suppose that $V$ is a positive bounded stationary solution of \eqref{1.1'}. Since $c_b<c_m$, we have $c\in(-c_m,c_m)$. It then follows from Lemma \ref{lem:Semi-persistence} that $V(-\infty)=1$.  We prove now that $V\le 1$ in $\R$. In fact, let $\xi(t)$ be the solution of the ODE $\xi'(t)=f_m(\xi(t))$ for $t>0$ associated with $\xi(0)=\Vert V\Vert_{L^\infty(\mathbb{R})}+1$. We notice that $\xi(t)\searrow 1$ as $t\to+\infty$. By comparison principle, there holds $V(x)<\xi(t)$ for $t\ge 0$ and $x\in\R$, whence $V(x)\le 1$ for $x\in\R$.  To complete the proof, it remains to prove that $V\ge 1$ in $\R$.  In the sequel, we deal with the case that $c<c_b$ and $c=c_b$, respectively. 
	
	
	\noindent	
	\textbf{Step 1.} \textit{The case that $c<c_b$.} 
	To show that $V\ge 1$ in $\R$, we build a subsolution as a lower barrier which moves to the right with speed $\delta>0$. To be specific, for $\varep\in(0,\min(\theta, 1-\theta)/2)$, let $ f_{b,\varep}$ be a $C^1(\mathbb{R})$ function such that 
	\begin{equation*}
		\label{below f_b}
		\begin{aligned}
			\left\{\baa{l}
			 f_{b,\varep}(-\varep)= f_{b,\varep}(\theta)= f_{b,\varep}(1-\varep)=0,~~ f_{b,\varep}'(-\varep)<0,~~ f_{b,\varep}'(1-\varep)<0,\vspace{3pt}\\
			 f_{b,\varep}=f_b~\text{in}~(\varep,1-2\varep),~~ f_{b,\varep}>0~\text{in}~(-\infty,-\varep)\cup(\theta,1-\varep),~~ f_{b,\varep}<0~\text{in}~(-\varep,\theta)\cup(1-\varep,+\infty).\eaa\right.
		\end{aligned}
	\end{equation*}
	We further assume that $ f_{b,\varep}\le f_b$ in $\R$ and that $ f_{b,\varep}$ is decreasing in $[-\varep,\varep]$ and in $[1-2\varep,1]$. For each $\varep\in(0,\min(\theta, 1-\theta)/2)$, let $\widetilde\phi_\varep$ be the unique traveling front profile of $u_t=  u_{xx}+ f_{b,\varep}(u)$ such that 
	\begin{equation}
		\label{TW-below f_b}
		\widetilde\phi_\varep''+ c_{b,\varep}\widetilde\phi_\varep'+ f_{b,\varep}(\widetilde\phi_\varep)=0\hbox{ in }\R,~~\widetilde\phi_\varep'<0~\text{in}~\mathbb{R},~~\widetilde\phi_\varep(0)=\theta,~~\widetilde\phi_\varep(-\infty)=1-\varep,~~\widetilde\phi_\varep(+\infty)=-\varep,
	\end{equation}
	with speed $ c_{b,\varep}$. It is standard to see that $\widetilde\phi_\varep\to\phi$ in $C^2_{loc}(\R)$ and $ c_{b,\varep}\to c_b$ as $\varep\to0$. We then fix $\varep\in(0,\min(\theta, 1-\theta)/2)$ small enough such that $ c_{b,\varep}>c$.

	Since $V(-\infty)=1$ and $V>0$ in $\R$, one can choose $x_0\in\R$ such that $\widetilde\phi_\varep(\cdot-x_0)<V$ in $\R$. 
	Fix now $0<\delta< c_{b,\varep}-c$, and define 
	\begin{align}
		\label{sub u}
		\underline{u}(t,x)=\max\big(\widetilde\phi_\varep(x-\delta t-x_0), 0\big),~~~~t\ge 0,~x\in\R.
	\end{align}
	Let us now check that $\underline u$ is a subsolution to \eqref{1.1'} in $\R_+\times\R$. To do so, it suffices to check the situation of $\underline u(t,x)=\widetilde\phi_\varep(x-\delta t-x_0)>0$, where we have 
	\begin{align*}
		\underline u_t-\underline u_{xx}-c\underline u_x-f(x,\underline u)
		\le -\widetilde\phi_\varep''-(c+\delta)\widetilde\phi_\varep'-f_b(\widetilde\phi_\varep)< -\widetilde\phi_\varep''- c_{b,\varep}\widetilde\phi_\varep'- f_{b,\varep}(\widetilde\phi_\varep)=0,
	\end{align*}
	due to the fact that $f(x,s)$ is nonincreasing with respect to $x$ for each $s> 0$, and that $c+\delta< c_{b,\varep}$ and $ f_{b,\varep}\le f_b$. Therefore, $\underline u$ is a subsolution to \eqref{1.1'} for $t\ge 0$ and $x\in\R$. By comparison principle, it follows that
	\begin{equation}
		\label{lemma2.3 comparison}
		\underline u(t,x)< V(x),~~~t\ge 0,~x\in\R.
	\end{equation}
	However, we observe that such a subsolution $\underline u$ moves to the right with speed $\delta>0$, which implies that $\sup_{x< \delta t}\underline u\to 1-\varep$ as $t\to+\infty$. Since $\varep>0$ is arbitrarily chosen and small enough, together with \eqref{lemma2.3 comparison}, we exclude the possibility that $V<1$ somewhere in $\R$. Thus, $V\ge 1$ in $\R$.  Consequently, we have achieved $V\equiv 1$ in the case of $c\in(-c_m,c_b)$.

	\noindent
	\textbf{Step 2.} \textit{The case that $c=c_b$}.  Assume by contradiction that $V(x_0)<1$ for some point $x_0\in\R$. Let $\phi$ be the bistable traveling wave profile such that
	\begin{equation*}
		\phi''+c_b\phi'+f_b(\phi)=0~\text{in}~\R,~\phi(0)=\theta,~\phi(-\infty)=1,~\phi(+\infty)=0.
	\end{equation*}
	By observing that $\phi(x)$ satisfies
	\begin{equation*}
		-\phi''-c_b\phi'-f(x,\phi)\le -	\phi''- c_b\phi'-f_b(\phi)=0~~\text{in}~\R,
	\end{equation*}
	it follows that $\phi$ is a stationary subsolution of \eqref{1.1'} in $\R$. Moreover, we observe that 
	$V(x)\ge O(e^{-\alpha x})=\phi(x)$ as $x\to+\infty$, with $\alpha$ given in \eqref{2.6}, whereas as $x\to-\infty$ one has $1-\phi(x)=O(e^{\beta x})$ and $1-V(x)=O(e^{\gamma x})$ with $\beta$ given in \eqref{2.6} and $\gamma=(-c_b+\sqrt{c_b^2-4f_m'(1)})/2>\beta$, due to  $f_m'(1)\le f_b'(1)<0$. It follows that $1-V(x)$ decays faster than $1-\phi(x)$ as $x\to-\infty$.
	There then exists $\eta_0\in\R$ such that $V>\phi(\cdot+\eta_0)$ in $\R$. Now define 
	\begin{equation*}
		\eta^*=\inf\{\eta\in\R: V>\phi(\cdot+\eta_0)~\text{in}~\R\}. 
	\end{equation*}
	One has that $-\infty<\eta^*\le \eta_0$ by noticing that $V(x_0)<\phi(-\infty)=1$ in $\R$, as assumed. By continuity, the function $w:=V-\phi(\cdot+\eta^*)$ is nonnegative, nontrivial and vanishes somewhere. Namely, there is $x_0\in\R$ such that $w(x_0)=0$ and $w$ satisfies
	\begin{align*}
		-w''(x)-c_bw'(x)=f(x,V)-f_b(\phi(x+\eta^*))
		\ge f_b(V)-f_b(\phi(x+\eta^*))=r(x)w(x),~~~x\in\R,
	\end{align*}
	for some bounded function $r$ in $\R$.
	The strong maximum principle implies that $w\equiv0$ in $\R$, i.e. $V\equiv\phi(\cdot+\eta^*)$ in $\R$. This is impossible, contradicting the asymptotic behaviors of $V$ and $\phi$ as $x\to-\infty$. We have proven that $V\ge 1$ in $\R$, whence $V\equiv 1$ also holds true in the case of $-c_m<c=c_b$. The proof of Proposition~\ref{prop_V=1 when c le c_b} is complete.
\end{proof}


\begin{proof}[Proof of Theorem~$\ref{thm_propagation-2}$]
	Proposition \ref{prop_V=1 when c le c_b} gives that   $V\equiv 1$ is the unique positive bounded stationary solution of \eqref{1.1'}. The comparison principle then implies that $0<u(t,x)\le M:=\max(1,\Vert u_0 \Vert_{L^\infty(\mathbb{R})})$ for all $t>0$ and $x\in\mathbb{R}$.
	
	Let $z$  be the solution to the Cauchy problem~\eqref{1.1'} with initial condition $z(0,\cdot)=\eta\Psi(\cdot-x_0)<u(1,\cdot)$ in $\mathbb{R}$ for $\eta>0$ small enough and for any arbitrary $x_0\le-L -R$, where $R>0$ and $\Psi$ are given as in~\eqref{4.1}--\eqref{4.2}, thanks to the condition that $c\in(-c_m,c_b]\subset(-c_m,c_m)$; while let $w$~denote the solution to~\eqref{1.1'} with initial condition $w(0,\cdot)=M$ in $\mathbb{R}$. The comparison principle implies that $0<z(t,x)<u(t+1,x)<w(t+1,x)\le M$ for all $t>0$ and $x\in\mathbb{R}$. Moreover, $z$~is increasing with respect to $t$ and $w$ is nonincreasing with respect to $t$ in $\R_+\times\R$. From parabolic estimates, $z(t,\cdot)$ and $w(t,\cdot)$ converge as $t\to+\infty$, locally uniformly in $\mathbb{R}$, to positive classical stationary solutions $p$ and $q$  of~\eqref{1.1'}, respectively. Moreover, there holds
	\begin{equation}
		\label{4.30}
		0<p\le \liminf_{t\to+\infty} u(t,\cdot)\le \limsup_{t\to+\infty} u(t,\cdot)\le q\le M,
	\end{equation} 
	locally uniformly in $\mathbb{R}$. Based on  Proposition \ref{prop_V=1 when c le c_b}, it is immediate to see that $p\equiv 1=q$ in $\R$. Consequently, it follows that $u(t,x)\to 1$ as $t\to+\infty$, locally uniformly for $x\in\R$. This proves \eqref{2.7}.

	Next, we shall prove Theorem~\ref{thm_propagation-2} (i), which  relies on three preliminary lemmas.
	

	\begin{lemma}
	\label{lemma 3.1}
	  Assume that $c<c_b$. Let $u$ be the solution of~\eqref{1.1'} with a nonnegative continuous and compactly supported initial datum $u_0\not\equiv 0$. The following holds true for some $\mu>0$ and $\delta>0$:
	 \begin{itemize}
	 	\item[(i)]  there exist $X_1>L$, $T_1>0$, $z_1\in\mathbb{R}$ such that
	 		\begin{equation}
	 		\label{4.57}
	 		u(t,x)\le\phi(x-(c_b-c)(t-T_1)+z_1)+\delta e^{-\delta(t-T_1)}+\delta e^{-\mu(x-X_1)}~~\text{for all}~ t\ge T_1~\text{and} ~x\ge X_1,
	 	\end{equation}
	 	
	 	\item[(ii)] assume further that $-c_m<c_b$ and $c\in(-c_m,c_b)$, there exist  $X_2>L$,  $T_2>0$,  $z_2\in\mathbb{R}$ such that
	 		\begin{equation}
	 		\label{4.58}
	 		u(t,x)\ge\phi(x-(c_b-c)(t-T_2)+z_2)-\delta e^{-\delta(t-T_2)}-\delta e^{-\mu(x-X_2)}~~\text{for all}~ t\ge T_2 ~\text{and} ~x\ge X_2,
	 	\end{equation}
	 \end{itemize}
	where $\phi$ is the unique bistable traveling front profile solving~~\eqref{TW}  with $f_b$ instead of $f$, with speed $\nu=c_b$ and with $\phi(0)=\theta$.
\end{lemma}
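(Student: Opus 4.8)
}
The plan is a Fife--McLeod type comparison argument performed on the quarter--plane $\{t\ge T_i,\ x\ge X_i\}$ with $X_i>L$, the barriers being the exact right--moving front $\phi\big(x-(c_b-c)t+z\big)$ --- which solves $w_t=w_{xx}+cw_x+f_b(w)$ exactly and travels to the right because $c_b-c>0$ --- corrected by a time layer $\pm\delta e^{-\delta(t-T_i)}$, a boundary layer $\pm\delta e^{-\mu(x-X_i)}$, and a slowly relaxing phase shift $\mp\sigma\big(1-e^{-\delta(t-T_i)}\big)$ inside $\phi$. For (i) I would take, for $t\ge T_1$ and $x\ge X_1>L$,
\[
\overline U(t,x)=\phi\!\Big(x-(c_b-c)(t-T_1)-\sigma\big(1-e^{-\delta(t-T_1)}\big)+\zeta\Big)+\delta e^{-\delta(t-T_1)}+\delta e^{-\mu(x-X_1)},
\]
and verify it is a supersolution of $v_t=v_{xx}+cv_x+f_b(v)$ there (equivalently of \eqref{1.1'}, since $f(x,\cdot)=f_b$ for $x\ge X_1>L$). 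With $w:=\delta e^{-\delta(t-T_1)}+\delta e^{-\mu(x-X_1)}\ge0$ and the profile equation $\phi''+c_b\phi'+f_b(\phi)=0$ from \eqref{TW}, the defect $\overline U_t-\overline U_{xx}-c\overline U_x-f_b(\overline U)$ reduces to $\big[f_b(\phi)-f_b(\phi+w)\big]-\sigma\delta e^{-\delta(t-T_1)}\phi'-\delta^2 e^{-\delta(t-T_1)}+\delta\mu(c-\mu)e^{-\mu(x-X_1)}$, where $\phi,\phi'$ are evaluated at the shifted argument, and $-\sigma\delta e^{-\delta(t-T_1)}\phi'\ge0$ since $\phi'<0$.

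The main obstacle is to render this defect nonnegative: the last two terms are $\le0$ (choosing $\mu>0$ with $\mu\ge c$, and $\mu(\mu-c)$ small, so that $\delta<\mu(c_b-c)$ can hold; possible since $c<c_b$) while $f_b(\phi)-f_b(\phi+w)$ has an uncontrolled sign where $\phi$ is bounded away from $0$ and $1$. I would split the region accordingly. Where $\phi$ is close to $0$ or to $1$ (including a little above $1$, using $f_b'(0),f_b'(1)<0$ from \eqref{f_b}), monotonicity of $f_b$ gives $f_b(\phi)-f_b(\phi+w)\ge\gamma_0 w$, which dominates the negative terms once $\delta$ is small and $\mu(\mu-c)\le\gamma_0$. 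On the complementary steep set $|\phi'|\ge\kappa_0>0$, the term $-\sigma\delta e^{-\delta(t-T_1)}\phi'\ge\sigma\kappa_0\delta e^{-\delta(t-T_1)}$ absorbs the $O(\delta^2 e^{-\delta(t-T_1)})$ term and the Lipschitz error $\le\mathrm{Lip}(f_b)\,w$ once $\sigma$ is large relative to $\mathrm{Lip}(f_b)$; the remaining boundary--layer contribution is controlled by the crucial choice $0<\delta<\mu(c_b-c)$, because on the steep set the phase variable is confined to a fixed bounded interval, so $x-X_1\ge(c_b-c)(t-T_1)-\zeta-X_1-C$ and hence $e^{-\mu(x-X_1)}\le C'e^{\mu\zeta}e^{-\mu(c_b-c)(t-T_1)}\le C'e^{\mu\zeta}e^{-\delta(t-T_1)}$, negligible against $\sigma\kappa_0\delta e^{-\delta(t-T_1)}$ for $\zeta\ll0$. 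This dictates the order of the choices: $\delta$ small, then $\mu$ (small, resp. close to $c$) with $\delta<\mu(c_b-c)$, then $\sigma$ large, then $X_1>L$ and $T_1$ large, and finally $\zeta\ll0$. It then remains to dominate $u$ on the parabolic boundary: on $\{x=X_1\}$ the argument of $\phi$ stays $\le X_1+\zeta$, so $\overline U(t,X_1)\ge\phi(X_1+\zeta)+\delta\ge1+\tfrac{3\delta}{4}$ for $\zeta\ll0$, whereas Lemma~\ref{lem:u:basic_propert} gives $u(t,X_1)\le1+\tfrac{\delta}{2}$ for $t\ge T_1$ large; on $\{t=T_1\}$, for $X_1\le x\le-\zeta-K$ one has $\phi(x+\zeta)\ge1-\tfrac{\delta}{4}$ so the additive $+\delta$ beats $u(T_1,x)\le1+\tfrac{\delta}{2}$, while for $x\ge-\zeta-K$ the Gaussian bound of Lemma~\ref{lemma1.3} forces $u(T_1,x)\le\delta\le\overline U(T_1,x)$. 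The comparison principle on the quarter--plane yields $u\le\overline U$, and since $0\le\sigma(1-e^{-\delta(t-T_1)})\le\sigma$ with $\phi$ decreasing, $\overline U(t,x)\le\phi\big(x-(c_b-c)(t-T_1)+z_1\big)+\delta e^{-\delta(t-T_1)}+\delta e^{-\mu(x-X_1)}$ with $z_1:=\zeta-\sigma$, which is \eqref{4.57}.

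Part (ii) is the mirror construction: under $-c_m<c_b$ and $c\in(-c_m,c_b)$ we additionally have $u(t,\cdot)\to1$ locally uniformly (established at the beginning of the present proof), which lets us place a front \emph{below} $u$ on a long interval at a large time $T_2$. I would use $\underline U(t,x)=\max\{\phi(x-(c_b-c)(t-T_2)+\sigma(1-e^{-\delta(t-T_2)})+\zeta_2)-\delta e^{-\delta(t-T_2)}-\delta e^{-\mu(x-X_2)},\,0\}$, a subsolution where it is positive by the same computation with reversed signs (the floor at $0$ is harmless since $f_b(0)=0$ and a maximum of subsolutions is a subsolution). Here the boundary comparison on $\{x=X_2\}$ is the easy direction, as $\underline U(t,X_2)\le1-\delta<1-\tfrac{\delta}{2}\le u(t,X_2)$ for $T_2$ large; on $\{t=T_2\}$ one uses $u(T_2,\cdot)\ge1-\delta$ on $[X_2,-\zeta_2+K]$ (local uniform convergence) together with $\phi<1$ and the boundary layer, while for $x>-\zeta_2+K$ one has $\phi(x+\zeta_2)<\delta$ so $\underline U<0\le u$. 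The comparison principle together with $\sigma(1-e^{-\delta(t-T_2)})\le\sigma$ then gives \eqref{4.58} with $z_2:=\zeta_2+\sigma$ and the same $\mu,\delta$ as in (i). The delicate point throughout is keeping the two exponential corrections compatible in the steep zone of $\phi$, which is exactly what the inequality $\delta<\mu(c_b-c)$ buys.
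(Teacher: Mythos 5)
Your construction is essentially the paper's own proof: the same Fife--McLeod barriers $\phi(\text{relaxing phase})\pm\delta e^{-\delta(t-T_i)}\pm\delta e^{-\mu(x-X_i)}$ compared on the quarter-plane $\{t\ge T_i,\ x\ge X_i>L\}$ where $f(x,\cdot)=f_b$, the same three-region verification of the defect (tails near $0$ and $1$ via $f_b'(0),f_b'(1)<0$, steep middle zone absorbed by the relaxing shift) with the same key requirement $\delta<\mu(c_b-c)$ converting the spatial layer into a decaying-in-time term there, and the same treatment of the parabolic boundary — Lemma~\ref{lem:u:basic_propert} plus the Gaussian bound of Lemma~\ref{lemma1.3} for part (i), and the local convergence $u\to1$ from \eqref{2.7} for part (ii). Your deviations are only cosmetic (writing the shift as $\zeta-\sigma(1-e^{-\delta(t-T_1)})$ with $\zeta\ll0$ in place of the paper's constants $A$, $B$, $C$, $\omega$; imposing $\mu\ge c$ where the paper only needs $\mu^2-c\mu<\tfrac12\min(|f_b'(0)|,|f_b'(1)|)$; truncating the subsolution at $0$ where the paper lets it dip slightly negative), so the argument is correct and matches the paper's.
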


	\begin{proof}
	  We first introduce some parameters. Choose $\mu>0$ such that
		\begin{equation}
			\label{4.59}
			0<\mu<\min\left(\frac{\max_{[-2\delta,1+2\delta]}|f_b'|}{c_m},\frac{1}{2}\Big(c+\sqrt{c^2+2\min\big(|f_b'(0)|, |f_b'(1)|\big)}\Big) \right).
		\end{equation}
		Then we take $\delta>0$ such that 
		\begin{equation}	
			\label{4.60}\left\{\baa{l}
			\displaystyle 0<\delta< \min\Big(\mu (c_b-c), \frac{1}{2},\frac{|f_b'(0)|}{2}, \frac{|f_b'(1)|}{2}\Big),\vspace{3pt}\\
			\displaystyle f_b'\le \frac{f_b'(0)}{2}~\text{in}~[-2\delta,3\delta],~~f_b'\le \frac{f_b'(1)}{2}~\text{in}~[1-3\delta,1+2\delta].\eaa\right.
		\end{equation}
		Let $C>0$ be such that
		\begin{equation}
			\label{phiC}
			\phi\ge 1-\delta/2~\text{in}~(-\infty,-C]\ \hbox{ and }\ \phi\le \delta~\text{in}~[C,+\infty).
		\end{equation}
		Since $\phi'$ is negative and continuous in $\mathbb{R}$, there is $\kappa>0$ such that 
		\begin{equation}
			\label{4.61}
			\phi'\le-\kappa<0~\text{in}~[-C, C].
		\end{equation}
		Finally, pick $\omega>0$ so large that
		\begin{equation}
			\label{4.62}
			\kappa\omega\ge 2\delta+\max_{[-2\delta,1+2\delta]}|f_b'|,
		\end{equation} 
		and $B>\omega$ such that
		\begin{equation}
			\label{4.63}
			\Big(\max_{[-2\delta,1+2\delta]}|f_b'|+ \mu^2-c\mu\Big)e^{-\mu B}<\Big(\max_{[-2\delta,1+2\delta]}|f_b'|+ \mu^2-c\mu\Big)e^{-\mu(B-\omega)}\le \delta.
		\end{equation} 
		
		\vskip 2mm
		\noindent\textit{Step 1: proof of~\eqref{4.57}}.  First of all,   it follows from Lemma \ref{lem:u:basic_propert} that there are $X_1>L$ and $T_1>0$ such that 
		\be\label{4.64}
		u(t,x)\le 1+\frac{\delta}{2}\ \hbox{ for all }t\ge T_1\hbox{ and }x\ge X_1. 
		\ee
		Moreover, since $u(t,x)$ has a Gaussian upper bound  at each fixed $t>0$ for all $|x|$ large enough by Lemma \ref{lemma1.3}, whereas $\phi(s)$ decays exponentially to $0$ as $s\to+\infty$ by~\eqref{2.6}, there is $A\ge B$ such that
		\begin{equation} 
			\label{4.65}
			u(T_1,x)\le \phi (x-X_1-A-C)+\delta~~\text{for all}~x\ge X_1.
		\end{equation}
		
		For $t\ge T_1$ and $x\ge X_1$, let us define
		$$\overline u(t,x)=\phi(\overline\xi(t,x))+\delta e^{-\delta(t- T_1)}+\delta e^{-\mu(x-X_1)},$$
		where 
		$$\overline\xi(t,x)=x-X_1-(c_b-c)(t- T_1)+\omega e^{-\delta(t- T_1)}-\omega-A-C.$$
		Let us check that $\overline u(t,x)$ is a supersolution to $u_t=  u_{xx}+cu_x+f_b(u)$ for $t\ge T_1$ and $x\ge X_1$. At time $T_1$, one has $\overline u(T_1,x)\ge \phi(x-X_1-A-C)+\delta\ge u(T_1,x)$ for all $x\ge X_1$, by~\eqref{4.65}. Moreover, since~$\overline \xi(t,X_1)\le -A-C<-C$ for $t\ge T_1$, one gets that $\overline u(t,X_1)\ge 1-\delta/2+\delta e^{-\delta(t- T_1)}+\delta\ge 1+\delta/2 \ge u(t,X_1)$ by~\eqref{phiC} and~\eqref{4.64}. Thus, it remains to check that $\mathcal{N}\overline u(t,x)\!:=\!\overline u_t(t,x)\!-\! \overline u_{xx}(t,x)\!-c\overline u_x(t,x)\!-\!f_b(\overline u(t,x))\!\ge\!0$ for all $t\ge T_1$ and $x\ge X_1$. After a straightforward computation, one derives
		\begin{align*}
			\mathcal{N}\overline u(t,x)=f_b(\phi(\overline \xi(t,x)))-f_b(\overline u(t,x))-\phi'(\overline \xi(t,x)))\omega\delta e^{-\delta(t- T_1)}-\delta^2 e^{-\delta(t- T_1)}-( \mu^2-c\mu) \delta e^{-\mu(x-X_1)}.
		\end{align*}
		We distinguish three cases: 
		\begin{itemize}
			\item if $\overline \xi(t,x)\le -C$, one has $1-\delta/2\le \phi(\overline\xi(t,x))<1$ by~\eqref{phiC}, hence $1+2\delta>\overline u(t,x)\ge 1-\delta/2$; it follows from~\eqref{4.60} that $f_b(\phi(\overline \xi(t,x)))-f_b(\overline u(t,x))\ge -(f_b'(1)/2)\big(\delta e^{-\delta(t- T_1)}+\delta e^{-\mu(x-X_1)}\big)$ and it then can be deduced from~\eqref{4.59}--\eqref{4.60} as well as the negativity of $\phi'$ and $f_b'(1)$ that
			\begin{align*}
				\mathcal{N} \overline u(t,x)&\ge -\frac{f_b'(1)}{2}\Big(\delta e^{-\delta(t- T_1)}+\delta e^{-\mu(x-X_1)}\Big)-\delta^2 e^{-\delta(t- T_1)}-( \mu^2-c\mu)\delta e^{-\mu(x-X_1)}\cr
				&= \Big(-\frac{f_b'(1)}{2}-\delta\Big)\delta e^{-\delta(t- T_1)}+\Big(-\frac{f_b'(1)}{2}- \mu^2+c\mu\Big)\delta  e^{-\mu(x-X_1)}>0;
			\end{align*}
			\item if $\overline\xi(t,x)\ge C$, one derives $0<\phi(\overline \xi(t,x))\le \delta$ by~\eqref{phiC}, and then $0<\overline u(t,x)\le 3\delta$; it follows from~\eqref{4.60} that $f_b(\phi(\overline \xi(t,x)))-f_b(\overline u(t,x))\ge -(f_b'(0)/2)\big(\delta e^{-\delta(t- T_1)}+\delta e^{-\mu(x-X_1)}\big)$; by virtue of~\eqref{4.59}--\eqref{4.60} and the negativity of $\phi'$ and $f_b'(0)$, there holds
			\begin{align*}
				\mathcal{N} \overline u(t,x)&\ge -\frac{f_b'(0)}{2}\Big(\delta e^{-\delta(t- T_1)}+\delta e^{-\mu(x-X_1)}\Big)-\delta^2 e^{-\delta(t- T_1)}-( \mu^2-c\mu)\delta e^{-\mu(x-X_1)}\cr
				&=  \Big(-\frac{f_b'(0)}{2}-\delta\Big)\delta e^{-\delta(t- T_1)}+\Big(-\frac{f_b'(0)}{2}- \mu^2+c\mu\Big)\delta e^{-\mu(x-X_1)}>0;
			\end{align*}
			\item if $-C\le\overline\xi(t,x)\le C$, it turns out that $x-X_1\ge (c_b-c)(t- T_1)-\omega e^{-\delta(t- T_1)}+\omega+A\ge (c_b-c)(t- T_1)+B$, whence $e^{-\mu(x-X_1)}\le e^{-\mu((c_b-c)(t- T_1)+B)}$. By~\eqref{4.60} and~\eqref{4.61}--\eqref{4.63}, one infers that
			\begin{align*}
				\mathcal{N}\overline u(t,x)&\ge -\!\max_{[-2\delta,1+2\delta]}\!|f_b'|\Big(\delta e^{-\delta(t- T_1)}\!+\!\delta e^{-\mu(x-X_1)}\Big)\!+\!\kappa\omega\delta e^{-\delta(t- T_1)}\!-\!\delta^2 e^{-\delta(t- T_1)}\!-\! ( \mu^2-c\mu) \delta e^{-\mu(x-X_1)}\cr
				&\ge \Big(\kappa\omega-\delta-\max_{[-2\delta,1+2\delta]}|f_b'|\Big)\delta e^{-\delta(t- T_1)}-\Big(\max_{[-2\delta,1+2\delta]}|f_b'|+ \mu^2-c\mu \Big)\delta e^{-\mu((c_b-c)(t- T_1)+B)}\cr
				&\ge \Big(\kappa\omega-2\delta-\max_{[-2\delta,1+2\delta]}|f_b'|\Big)\delta e^{-\delta(t- T_1)}\ge 0.
			\end{align*} 
		\end{itemize}
		\vskip -2mm
		
		As a consequence, we have proven that $\mathcal{N}\overline u(t,x):=\overline u_t(t,x)- \overline u_{xx}(t,x)-c\overline u_x(t,x)-f_b(\overline u(t,x))\ge 0$ for all $t\ge T_1$ and $x\ge X_1$. The maximum principle implies that 
		\begin{align*}
			u(t,x)\le \overline u(t,x)=\phi\big(x-X_1-(c_b-c)(t- T_1)+\omega e^{-\delta(t- T_1)}-\omega-A-C\big)+\delta e^{-\delta(t- T_1)}+\delta e^{-\mu(x-X_1)}
		\end{align*}
		for all $t\ge T_1$ and $x\ge X_1$, whence~\eqref{4.57} is achieved by taking $T_1=X_1$ and $z_1=-X_1-\omega-A-C$, since $\phi$ is decreasing.
		
		\vskip 2mm
		\noindent\textit{Step 2: proof of~\eqref{4.58}}.  Assume that $-c_m<c_b$ and $c\in(-c_m,c_b)$. Since $u(t,\cdot)\to 1$ as $t\to+\infty$ locally uniformly in $\R$ by \eqref{2.7}, one can choose $T_2>0$  large enough and $X_2>L$ such that 
		\begin{equation}\label{4.66}
			u(t,x)\ge 1-\delta~~\text{for all}~t\ge T_2\text{ and for all}~ x\in [X_2, X_2+ B+ 2C].
		\end{equation}
		
		For $t\ge T_2$ and $x\ge X_2$, we set
		$$\underline u(t,x)=\phi(\underline\xi(t,x))-\delta e^{-\delta(t-T_2)}-\delta e^{-\mu(x-X_2)},$$
		in which
		$$\underline\xi(t,x)=x-X_2-(c_b-c)(t-T_2)-\omega e^{-\delta(t-T_2)}+\omega-B-C.$$
		We shall check that $\underline u(t,x)$ is a subsolution to $u_t=  u_{xx}+cu_x+f_b(u)$ for all  $t\ge T_2$ and $x\ge X_2$. At time $t=T_2$, one has $ \underline u(T_2,x)\le 1-\delta-\delta e^{-\mu(x-X_2)}\le 1-\delta\le u(T_2,x)$ for $X_2\le x \le X_2+B+2C$ due to~\eqref{4.66}. For $x\ge X_2+B+2C$, one infers from $\underline\xi(T_2,x)\ge X_2+B+2C-X_2-B-C=C$ and \eqref{phiC} that $\phi(\underline\xi(T_2,x))\le \delta$, hence $\underline u(T_2,x)\le\delta-\delta-\delta e^{-\mu(x-X_2)}<0<u(T_2,x)$. In conclusion, $\underline u(T_2,x)\le u(T_2,x)$ for all $x\ge X_2$. At $x=X_2$, we have $\underline u(t,X_2)\le 1-\delta e^{-\delta(t-T_2)}-\delta< u(t,X_2)$ for all $t\ge T_2$, owing to~\eqref{4.66}. It thus suffices to check that $\mathcal{N}\underline u(t,x):=\underline u_t(t,x)-  \underline u_{xx}(t,x)-c\underline u_x(t,x)-f_b(\underline u(t,x))\le 0$ for all  $t\ge T_2$ and $x\ge X_2$. By a straightforward computation, one has
		$$\mathcal{N}\underline u(t,x)=f_b(\phi(\underline \xi(t,x)))-f_b(\underline u(t,x))+\phi'(\underline\xi(t,x))\omega\delta e^{-\delta(t-T_2)}+\delta^2 e^{-\delta(t-T_2)}+  ( \mu^2-c\mu)\delta e^{-\mu(x-X_2)}$$
		By analogy to Step 1, we consider three cases:
		\begin{itemize}
			\item	if $\underline \xi(t,x)\le -C$, then $1-\delta/2\le \phi(\underline \xi(t,x))<1$ by~\eqref{phiC} and thus $1>\underline u(t,x)\ge 1-3\delta$; thanks to~\eqref{4.60}, one has $f_b(\phi(\underline \xi(t,x)))-f_b(\underline u(t,x))\le (f_b'(1)/2)(\delta e^{-\delta(t-T_2)}+\delta e^{-\mu(x-X_2)})$; therefore, by using~\eqref{4.59}--\eqref{4.60} as well as the negativity of $\phi'$ and $f_b'(1)$, it comes that
			\begin{align*}
				\mathcal{N}\underline u(t,x)&< \frac{f_b'(1)}{2}\Big(\delta e^{-\delta(t-T_2)}+\delta e^{-\mu(x-X_2)}\Big)+\delta^2 e^{-\delta(t-T_2)}+  ( \mu^2-c\mu) \delta e^{-\mu(x-X_2)}\cr
				&=\Big(\frac{f_b'(1)}{2}+\delta\Big)\delta e^{-\delta(t-T_2)}+\Big(\frac{f_b'(1)}{2}+ \mu^2-c\mu\Big)\delta e^{-\mu(x-X_2)}<0;
			\end{align*}
			\item if $\underline \xi(t,x)\ge C$, then $0<\phi(\underline\xi(t,x))\le \delta$ by~\eqref{phiC} and thus $-2\delta<\underline u(t,x)\le \delta$; it follows from~\eqref{4.60} that $f_b(\phi(\underline \xi(t,x)))-f_b(\underline u(t,x))\le (f_b'(0)/2)(\delta e^{-\delta(t-T_2)}+\delta e^{-\mu(x-X_2)})$; therefore, owing to~\eqref{4.59}--\eqref{4.60} as well as the negativity of $\phi'$  and $f_b'(0)$, one infers that
			\begin{align*}
				\mathcal{N}\underline u(t,x)&< \frac{f_b'(0)}{2}\Big(\delta e^{-\delta(t-T_2)}+\delta e^{-\mu(x-X_2)}\Big)+\delta^2 e^{-\delta(t-T_2)}+   ( \mu^2-c\mu) \delta e^{-\mu(x-X_2)}\cr
				&=\Big(\frac{f_b'(0)}{2}+\delta\Big)\delta e^{-\delta(t-T_2)}+\Big(\frac{f_b'(0)}{2}+ \mu^2-c\mu\Big)\delta e^{-\mu(x-X_2)}<0;
			\end{align*}
			\item if $-C\le\underline\xi(t,x)\le C$, one has  $x-X_2\ge (c_b-c)(t-T_2)+\omega e^{-\delta(t-T_2)}-\omega+B\ge (c_b-c)(t-T_2)-\omega+B$, whence $e^{-\mu(x-X_2)}\le e^{-\mu((c_b-c)(t-T_2)+B-\omega)}$; by~\eqref{4.60} and~\eqref{4.61}--\eqref{4.63}, one deduces that
			\begin{align*}
				\mathcal{N}\underline u(t,x)&\le \max_{[-2\delta,1+2\delta]}|f_b'|\Big(\delta e^{-\delta(t-T_2)}+\delta e^{-\mu(x-X_2)}\Big)-\kappa\omega\delta e^{-\delta(t-T_2)}+\delta^2 e^{-\delta(t-T_2)}+   ( \mu^2-c\mu) \delta e^{-\mu(x-X_2)}\cr
				&\le \Big(\max_{[-2\delta,1+2\delta]}|f_b'|-\kappa\omega+\delta\Big)\delta e^{-\delta(t-T_2)}+\Big(\max_{[-2\delta,1+2\delta]}|f_b'|+ \mu^2-c\mu\Big)\delta e^{-\mu((c_b-c)(t-T_2)+B-\omega)}\cr
				&\le \Big(\max_{[-2\delta,1+2\delta]}|f_b'|-\kappa\omega+2\delta\Big)\delta e^{-\delta(t-T_2)}\le0.
			\end{align*}
		\end{itemize}
		\vskip -2mm
		
		Consequently, one has $\mathcal{N}\underline u(t,x):=\underline u_t(t,x)- \underline u_{xx}(t,x)-c\underline u_x(t,x)-f_b(\underline u(t,x))\le 0$ for all $t\ge T_2$ and $x\ge X_2$. The maximum principle implies that 
		\begin{align*}
			u(t,x)\ge \underline u(t,x)=\phi \big(x-X_2-(c_b-c)(t-T_2)-\omega e^{-\delta(t-T_2)}+\omega-B-C\big)-\delta e^{-\delta(t-T_2)}-\delta e^{-\mu(x-X_2)}
		\end{align*}
		for all $t\ge T_2$ and $x\ge X_2$. Therefore,~\eqref{4.58} is proved by taking $z_2=-X_2+\omega-B-C$, since $\phi$ is decreasing. The proof of Lemma~\ref{lemma 3.1} is thereby complete.
	\end{proof}

%
%
	
	More general than Lemma \ref{lemma 3.1}, we have:
	
	\begin{lemma}
		\label{lemma 3.2}
		Assume that $c<c_b$. Let $u$ be the solution of~\eqref{1.1'} with a nonnegative continuous and compactly supported initial datum $u_0\not\equiv 0$. Then for any $\varep>0$, 
		
		\begin{itemize}
			\item[(i)]
			there exist $X_{1,\varep}>L$, $T_{1,\varep}>0$ and $z_{1,\varep}\in\mathbb{R}$  such that
			\begin{equation}
				\label{4.67}
				u(t,x) \le \phi(x-(c_b-c)(t-T_{1,\varep})+ z_{1,\varep})+\varep e^{-\delta(t-T_{1,\varep})}+\varep e^{-\mu(x-X_{1,\varep})},~~~t\ge T_{1,\varep},~~x\ge X_{1,\varep},
			\end{equation}
			
			\item[(ii)] assume further that $-c_m<c_b$ and $c\in(-c_m,c_b)$, there exist  $X_{2,\varep}>L$,  $T_{2,\varep}>0$ and $z_{2,\varep}\in\mathbb{R}$ such that
			\begin{equation}
				\label{4.68}
				u(t,x)\ge\phi(x-(c_b-c)(t-T_{2,\varep})+z_{2,\varep})-\varep e^{-\delta(t-T_{2,\varep})}-\varep e^{-\mu(x-X_{2,\varep})},~~~t\ge T_{2,\varep},~~x\ge X_{2,\varep},
			\end{equation}
		\end{itemize}		
		with $\mu>0$,  $\delta>0$  as given in Lemma~$\ref{lemma 3.1}$.
	\end{lemma}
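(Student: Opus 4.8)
The plan is to dichotomize on the size of $\varepsilon$ relative to the constant $\delta$ fixed in Lemma~\ref{lemma 3.1}. If $\varepsilon\ge\delta$, then \eqref{4.67} and \eqref{4.68} are nothing but \eqref{4.57} and \eqref{4.58}, since $\delta\le\varepsilon$: one simply takes $X_{i,\varepsilon}=X_i$, $T_{i,\varepsilon}=T_i$, $z_{i,\varepsilon}=z_i$. Thus the only substantive case is $0<\varepsilon<\delta$, which I would handle by re-running the super-/sub-solution construction from the proof of Lemma~\ref{lemma 3.1} with the \emph{coefficient} of the two correction terms $e^{-\delta(t-T)}$ and $e^{-\mu(x-X)}$ lowered from $\delta$ to $\varepsilon$, while keeping the structural constants $\mu$, $\delta$, $C$, $\kappa$, $\omega$, $B$ from \eqref{4.59}--\eqref{4.63} and \eqref{phiC} unchanged.

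The first step is to note that the interior differential inequality survives this replacement essentially verbatim. With $\overline u(t,x)=\phi(\overline\xi(t,x))+\varepsilon e^{-\delta(t-T_{1,\varepsilon})}+\varepsilon e^{-\mu(x-X_{1,\varepsilon})}$ and the same shift $\overline\xi(t,x)=x-X_{1,\varepsilon}-(c_b-c)(t-T_{1,\varepsilon})+\omega e^{-\delta(t-T_{1,\varepsilon})}-\omega-A_\varepsilon-C$, the computation of $\mathcal N\overline u$ reproduces the one in the proof of Lemma~\ref{lemma 3.1}, except that the factor $\delta$ in front of the two exponentials becomes $\varepsilon\le\delta$ (and $-\delta^{2}e^{-\delta(t-T)}$ becomes $-\delta\varepsilon e^{-\delta(t-T)}$). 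In each of the three regions $\overline\xi\le-C$, $\overline\xi\ge C$, $-C\le\overline\xi\le C$, the smaller deviation $\overline u-\phi(\overline\xi)$ still keeps both $\overline u$ and $\phi(\overline\xi)$ inside the intervals on which the one-sided bounds of \eqref{4.60} are in force — this is exactly where $\varepsilon\le\delta$ is used — and the positive drift term $-\phi'(\overline\xi)\,\omega\delta\,e^{-\delta(t-T)}\ge\kappa\omega\delta\,e^{-\delta(t-T)}$ dominates the rest; e.g.\ in the middle region the surviving estimate reads $\mathcal N\overline u\ge\bigl(\kappa\omega\delta-(\max_{[-2\delta,1+2\delta]}|f_b'|+\delta)\varepsilon\bigr)e^{-\delta(t-T)}-\delta\varepsilon e^{-\delta(t-T)}\ge\delta^{2}e^{-\delta(t-T)}-\delta^{2}e^{-\delta(t-T)}\ge0$, using \eqref{4.62}--\eqref{4.63} and $\varepsilon\le\delta$. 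The subsolution $\underline u(t,x)=\phi(\underline\xi(t,x))-\varepsilon e^{-\delta(t-T_{2,\varepsilon})}-\varepsilon e^{-\mu(x-X_{2,\varepsilon})}$, with $\underline\xi(t,x)=x-X_{2,\varepsilon}-(c_b-c)(t-T_{2,\varepsilon})-\omega e^{-\delta(t-T_{2,\varepsilon})}+\omega-B-C$, is treated symmetrically.

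The second step is to re-secure the parabolic boundary data, which is the only genuinely new point. For (i): using Lemma~\ref{lem:u:basic_propert}, fix $T_{1,\varepsilon}>0$ so that $u(t,\cdot)\le1+\varepsilon/2$ on $\R$ for $t\ge T_{1,\varepsilon}$, pick any $X_{1,\varepsilon}>L$, and then — invoking the Gaussian bound of Lemma~\ref{lemma1.3} at time $T_{1,\varepsilon}$, the exponential decay \eqref{2.6} of $\phi$, and the monotonicity of $\phi$ — choose $A_\varepsilon\ge B$ so large that simultaneously $u(T_{1,\varepsilon},x)\le\phi(x-X_{1,\varepsilon}-A_\varepsilon-C)+\varepsilon$ for all $x\ge X_{1,\varepsilon}$ and $\phi(-A_\varepsilon-C)>1-\varepsilon/2$. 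Then $\overline u(T_{1,\varepsilon},\cdot)\ge u(T_{1,\varepsilon},\cdot)$ on $[X_{1,\varepsilon},+\infty)$; moreover $\overline\xi(t,X_{1,\varepsilon})\le-A_\varepsilon-C$ for $t\ge T_{1,\varepsilon}$ because $c_b-c>0$, so $\overline u(t,X_{1,\varepsilon})\ge\phi(-A_\varepsilon-C)+\varepsilon>1+\varepsilon/2\ge u(t,X_{1,\varepsilon})$. The comparison principle on $\{t\ge T_{1,\varepsilon},\,x\ge X_{1,\varepsilon}\}$ then gives \eqref{4.67} with $z_{1,\varepsilon}=-X_{1,\varepsilon}-\omega-A_\varepsilon-C$, after using that $\phi$ is decreasing. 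For (ii), assuming $-c_m<c_b$ and $c\in(-c_m,c_b)$, \eqref{2.7} provides $u(t,\cdot)\to1$ locally uniformly, so one may pick $T_{2,\varepsilon}>0$ and $X_{2,\varepsilon}>L$ with $u(t,x)\ge1-\varepsilon$ for all $t\ge T_{2,\varepsilon}$ and all $x\in[X_{2,\varepsilon},X_{2,\varepsilon}+B+2C]$; splitting at $x=X_{2,\varepsilon}+B+2C$ exactly as in the proof of Lemma~\ref{lemma 3.1}, one verifies $\underline u(T_{2,\varepsilon},\cdot)\le u(T_{2,\varepsilon},\cdot)$ on $[X_{2,\varepsilon},+\infty)$ and $\underline u(t,X_{2,\varepsilon})\le1-\varepsilon\le u(t,X_{2,\varepsilon})$, and the comparison principle yields \eqref{4.68} with $z_{2,\varepsilon}=-X_{2,\varepsilon}+\omega-B-C$.

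I expect the only delicate issue to be this boundary matching: in Lemma~\ref{lemma 3.1} the inequality $\overline u(t,X_1)\ge u(t,X_1)$ held precisely because the correction coefficient $\delta$ coincided with the slack $\delta/2$ in $u(t,\cdot)\le1+\delta/2$, and for $\varepsilon<\delta$ that coincidence is lost; one compensates by translating $\phi$ sufficiently far to the left, i.e.\ by taking $A_\varepsilon$ large depending on $\varepsilon$, so that $\phi$ itself is within $\varepsilon/2$ of $1$ at the left boundary. Beyond that observation, the argument is a mechanical re-run of Lemma~\ref{lemma 3.1} with $\varepsilon$ in the role formerly played by $\delta$ as the coefficient of the two exponential corrections, and requires no new idea.
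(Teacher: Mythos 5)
Part (i) of your proposal is sound and is essentially the paper's argument: the $\varep\ge\delta$ case is dispatched by Lemma~\ref{lemma 3.1}, and for $0<\varep<\delta$ your supersolution with coefficient $\varep$ works; keeping the original $\omega$ in the shift (the paper instead rescales it to $\omega_\varep=\varep\omega/\delta$) is harmless here, since the interior inequality only improves and the lemma allows an arbitrary shift $z_{1,\varep}$. Your repair of the boundary condition at $x=X_{1,\varep}$ (translating $\phi$ far enough left so that $\phi(-A_\varep-C)\ge 1-\varep/2$) is exactly the role played by the paper's $\varep$-dependent constant $C_\varep$.

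Part (ii), however, has a genuine gap, and it sits precisely at the point you declared would go through ``exactly as in the proof of Lemma~\ref{lemma 3.1}.'' In Lemma~\ref{lemma 3.1}, for $x\ge X_2+B+2C$ one has $\underline\xi(T_2,x)\ge C$, hence $\phi(\underline\xi(T_2,x))\le\delta$ by \eqref{phiC}, and therefore $\underline u(T_2,x)\le\delta-\delta-\delta e^{-\mu(x-X_2)}<0<u(T_2,x)$: the initial comparison beyond the interval where $u\ge 1-\delta$ is secured only because the correction coefficient equals $\delta$. With your coefficient $\varep<\delta$ and the constant $C$ from \eqref{phiC} kept unchanged, this becomes $\underline u_\varep(T_{2,\varep},x)\le\delta-\varep-\varep e^{-\mu(x-X_{2,\varep})}$, which can be strictly positive for $x$ moderately beyond $X_{2,\varep}+B+2C$ (where \eqref{phiC} only gives $\phi\le\delta$, not $\phi\le\varep$), while no lower bound on $u(T_{2,\varep},x)$ is available there: at the fixed time $T_{2,\varep}$ the solution is Gaussian-small for large $x$ by Lemma~\ref{lemma1.3}, so $\underline u_\varep(T_{2,\varep},\cdot)\le u(T_{2,\varep},\cdot)$ is not verified on all of $[X_{2,\varep},+\infty)$ and the comparison principle cannot be invoked. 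The fix is exactly the modification the paper makes and that your plan explicitly excludes: introduce an $\varep$-dependent constant $C_\varep>C$ with $\phi\le\varep$ on $[C_\varep,+\infty)$ (and $\phi\ge1-\varep/2$ on $(-\infty,-C_\varep]$), use the shift $-B_\varep-C_\varep$ in $\underline\xi_\varep$, and, invoking \eqref{2.7} with a larger $T_{2,\varep}$, impose $u\ge1-\varep$ on the correspondingly enlarged interval $[X_{2,\varep},X_{2,\varep}+B_\varep+2C_\varep]$; then beyond that interval $\phi(\underline\xi_\varep(T_{2,\varep},\cdot))\le\varep$ and the initial comparison gives $\underline u_\varep(T_{2,\varep},\cdot)\le\varep-\varep-\varep e^{-\mu(\cdot-X_{2,\varep})}<0<u(T_{2,\varep},\cdot)$. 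With that amendment your argument coincides with the paper's.
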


\noindent
\textbf{Remark}.
A straightforward consequence of Lemmas \ref{lemma 3.1}--\ref{lemma 3.2} is that \eqref{4.57}--\eqref{4.58} and \eqref{4.67}--\eqref{4.68} hold true under the assumption of Theorem \ref{thm_propagation-2} (i), i.e. when
		$-c_m<c_b$ and $c\in(-c_m,c_b)$.
		\vspace{3mm}
	
	\begin{proof}
		Let $\mu>0$, $\delta>0$, $C>0$, $\kappa>0$ and $\omega>0$ be defined as in~\eqref{4.59}--\eqref{4.62} (notice that these parameters are independent of $\varep$). It is immediate to see from Lemma~\ref{lemma 3.1} that, when $\varep\ge\delta$, the conclusion of Lemma~\ref{lemma 3.2} holds true with $X_{i,\varep}=X_i$, $T_{i,\varep}=T_i$ and $z_{i,\varep}=z_i$, for $i=1,2$. It remains to discuss the case
		$$0<\varep<\delta.$$
		For convenience, let us introduce some further parameters. Pick $C_\varep>C>0$ such that
		$$\phi\ge 1-\frac{\varep}{2}~\text{in}~(-\infty,-C_\varep]\ \hbox{ and }\ \phi\le \varep~\text{in}~[C_\varep,+\infty).$$
		Define
		\be\label{omegaeps}
		\omega_\varep:=\frac{\varep\omega}{\delta}>0.
		\ee
		Finally, let $B_\varep>\omega_\varep$ be large enough such that
		$$\Big(\max_{[-2\delta,1+2\delta]}|f_b'|+ \mu^2-c\mu\Big)e^{-\mu B_\varep}<\Big(\max_{[-2\delta,1+2\delta]}|f_b'|+ \mu^2-c\mu\Big)e^{-\mu(B_\varep-\omega_\varep)}\le \delta.$$
		
		\vskip 2mm
		\noindent\textit{Step 1: proof of~\eqref{4.67}}. By repeating the arguments used in the proof of~\eqref{4.64}--\eqref{4.65}   in  Step 1 of Lemma~\ref{lemma 3.1} and by replacing $\delta$ with $\varep$, there are $T_{1,\varep}$ large enough and $X_{1,\varep}>L$ such that $u(t,X_{1,\varep})\le 1+\varep/2$ for all $t\ge T_{1,\varep}$ and $u(T_{1,\varep},x)\le \phi (x-X_{1,\varep}-A_\varep-C_\varep)+\varep$ for all $x\ge X_{1,\varep}$, for some $A_\varep\ge B_\varep$. Define
		$$\overline u_\varep(t,x)=\phi(\overline\xi_\varep(t,x))+\varep e^{-\delta(t-T_{1,\varep})}+\varep e^{-\mu(x-X_{1,\varep})}~~\text{for}~t\ge T_{1,\varep}~\text{and} ~x\ge X_{1,\varep},$$
		where 
		$$\overline\xi_\varep(t,x)=x-X_{1,\varep}-(c_b-c)(t-T_{1,\varep})+\omega_\varep e^{-\delta(t-T_{1,\varep})}-\omega_\varep-A_\varep-C_\varep.$$
		Following the same lines as in Step 1 of Lemma~\ref{lemma 3.1}, one has $\overline{u}_\varep(X_{1,\varep},x)\ge u(X_{1,\varep},x)$ for all $x\ge X_{1,\varep}$, $\overline{u}_\varep(t,X_{1,\varep})\ge u(t,X_{1,\varep})$ for all $t\ge T_{1,\varep}$, and it can be deduced that $\overline u_\varep(t,x)$ is a supersolution to $u_t=  u_{xx}+cu_x+f_b(u)$ for all $t\ge T_{1,\varep}$ and $x\ge X_{1,\varep}$, by dividing the calculations into three cases:
		
		\begin{itemize}
			\item if $\underline \xi(t,x)\le -C$, then $1-\delta/2\le \phi(\underline \xi(t,x))<1$ by~\eqref{phiC}, hence $1+2\delta \ge1+2 \varep \ge \overline{u}_\varep(t,x)\ge 1-\delta/2 $; therefore, by using~\eqref{4.59}--\eqref{4.60} and the negativity of $\phi'$ and $f_b'(1)$, it follows that
			\begin{align*}
				\mathcal{N}\overline{u}_\varep(t,x)&\ge -  \frac{f_b'(1)}{2}\Big(\varep e^{-\delta(t-T_{1,\varep})}+\varep e^{-\mu(x-X_{1,\varep})}\Big)-\delta\varep e^{-\delta(t-T_{1,\varep})}-   ( \mu^2-c\mu) \varep e^{-\mu(x-X_{1,\varep})}\cr
				&=\Big(-\frac{f_b'(1)}{2}-\delta\Big)\varep e^{-\delta(t-T_{1,\varep})}+\Big(-\frac{f_b'(1)}{2}- \mu^2+c\mu\Big)\varep e^{-\mu(x-X_{1,\varep})}> 0;
			\end{align*}
			\item if $\underline \xi(t,x)\ge C$, then $0<\phi(\overline{\xi}_\varep(t,x))\le \delta$ by~\eqref{phiC} and thus $0<\overline{u}_\varep(t,x)\le 3\delta$; therefore, owing to~\eqref{4.59}--\eqref{4.60} as well as the negativity of $\phi'$  and $f_b'(0)$, it follows that
			\begin{align*}
				\mathcal{N}\overline{u}_\varep(t,x)&\ge  -\frac{f_b'(0)}{2}\Big(\varep e^{-\delta(t-T_{1,\varep})}+\varep e^{-\mu(x-X_{1,\varep})}\Big)-\delta\varep e^{-\delta(t-T_{1,\varep})}-  ( \mu^2-c\mu)\varep e^{-\mu(x-X_{1,\varep})}\cr
				&=\Big(-\frac{f_b'(0)}{2}-\delta\Big)\varep e^{-\delta(t-T_{1,\varep})}+\Big(-\frac{f_b'(0)}{2}- \mu^2+c\mu\Big)\varep e^{-\mu(x-X_{1,\varep})}\ge 0;
			\end{align*}
			\item if $-C\le\overline{\xi}_\varep(t,x)\le C$, one has $x-X_{1,\varep}\ge (c_b-c)(t-T_{1,\varep})-\omega_\varep e^{-\delta(t-T_{1,\varep})}+\omega_\varep+A_\varep+C_\varep-C\ge (c_b-c)(t-T_{1,\varep})+B_{\varep}$, hence $e^{-\mu(x-X_{1,\varep})}\le e^{-\mu((c_b-c)(t-T_{1,\varep})+B_{\varep})}$; since $\omega_\varep=\varep\omega/\delta$, one infers from~\eqref{4.60},~\eqref{4.61}--\eqref{4.62} and~\eqref{4.71}, that
			\begin{align*}
				\mathcal{N}\overline{u}_\varep(t,x)&\ge- \max_{[-2\delta,1+2\delta]}|f_b'|\Big(\varep e^{-\delta(t-T_{1,\varep})}
				+\varep e^{-\mu(x-X_{1,\varep})}\Big)\!+\!\kappa\omega_\varep\delta e^{-\delta(t-T_{1,\varep})}\!\\&\quad-\!\varep\delta e^{-\delta(t-T_{1,\varep})}\!-\!  ( \mu^2-c\mu) \varep e^{-\mu(x-X_{1,\varep})}\cr
				&\ge \Big(-\!\max_{[-2\delta,1+2\delta]}\!|f_b'|\!+\!\kappa\omega\!-\!\delta\!\Big)\varep e^{-\delta(t-T_{1,\varep})}\!-\!\Big(\!\max_{[-2\delta,1+2\delta]}\!|f_b'|\!+\! \mu^2-\!c\mu\!\Big)\varep e^{-\mu((c_b-c)(t-T_{1,\varep})+B_{\varep})}\cr
				&\ge \Big(\max_{[-2\delta,1+2\delta]}|f_b'|-\kappa\omega+2\delta\Big)\varep e^{-\delta(t-T_{1,\varep})}\ge  0.
			\end{align*}
		\end{itemize}
		
		Therefore, the maximum principle implies that
		$$u(t,x)\le \phi\big(x-X_{1,\varep}-(c_b-c)(t-T_{1,\varep})+\omega_\varep e^{-\delta(t-T_{1,\varep})}-\omega_\varep-A_\varep-C_\varep\big)+\varep e^{-\delta(t-T_{1,\varep})}+\varep e^{-\mu(x-X_{1,\varep})}$$
		for all $t\ge T_{1,\varep}$ and $x\ge X_{1,\varep}$. Consequently,~\eqref{4.67} follows by choosing $z_{1,\varep}=-X_{1,\varep}-\omega_\varep-A_\varep-C_\varep$.
		
		\vskip 2mm	
		\noindent\textit{Step 2: proof of~\eqref{4.68}}. Using the same argument as for the proof of~\eqref{4.66} with $\delta$ replaced by $\varep$, one infers that there exist $X_{2,\varep}>L$ and $T_{2,\varep}>0$ sufficiently large such that
		$$u(t,x)\ge  1-\varep~~\text{for all}~t\ge T_{2,\varep}~\text{and}~ x\in[X_{2,\varep},X_{2,\varep}+ B_\varep+ 2C_\varep].$$
		Then we set
		$$\underline u_\varep(t,x)=\phi(\underline\xi_\varep(t,x))-\varep e^{-\delta(t-T_{2,\varep})}-\varep e^{-\mu(x-X_{2,\varep})}~~~\text{for}~t\ge T_{2,\varep}~\text{and}~x\ge X_{2,\varep},$$
		in which
		$$\underline\xi_\varep(t,x)=x-X_{2,\varep}-(c_b-c)(t-T_{2,\varep})-\omega_\varep e^{-\delta(t-T_{2,\varep})}+\omega_\varep-B_\varep-C_\varep.$$
		As in the proof of~\eqref{4.58}, one can show that $\underline{u}_\varep(T_{2,\varep},x)\le u(T_{2,\varep},x)$ for all $x\ge X_{2,\varep}$, that $\underline{u}_\varep(t,X_{2,\varep})\le u(t,X_{2,\varep})$ for all $t\ge T_{2,\varep}$, and that $\underline u_\varep(t,x)$ is a subsolution of $u_t=  u_{xx}+cu_x+f_b(u)$ for all $t\ge T_{2,\varep}$ and $x\ge X_{2,\varep}$. By the maximum principle, one derives that
		$$u(t,x)\ge \phi\big( x-X_{2,\varep}-(c_b-c)(t-T_{2,\varep})-\omega_\varep e^{-\delta(t-T_{2,\varep})}+\omega_\varep-B_\varep-C_\varep\big)-\varep e^{-\delta(t-T_{2,\varep})}-\varep e^{-\mu(x-X_{2,\varep})}$$
		for all $t\ge T_{2,\varep}$ and $x\ge X_{2,\varep}$. Then~\eqref{4.68} follows by taking $z_{2,\varep}=-X_{2,\varep}+\omega_\varep-B_\varep-C_\varep$, since $\phi'<0$. The proof of Lemma~\ref{lemma 3.2} is thereby complete.
	\end{proof}
	
	Based on Lemmas~\ref{lemma 3.1} and~\ref{lemma 3.2}, we now provide the stability result of the bistable traveling front $\phi(x-(c_b-c)t)$ in the bistable region, under the assumption of Theorem \ref{thm_propagation-2} (i), i.e. when
	$-c_m<c_b$ and $c\in(-c_m,c_b)$.
	
	\begin{lemma}
		\label{lemma 3.4}
		Assume   that $-c_m<c_b$ and $c\in(-c_m,c_b)$. Let $\mu>0$, $\delta>0$, $C>0$, $\kappa>0$ and $\omega>0$ be as in~\eqref{4.59}--\eqref{4.62} in the proof of Lemma~$\ref{lemma 3.1}$.   If there exist $\varep\in(0,\delta]$, $t_0>0$, $x_0>L$ and $\xi\in\mathbb{R}$ such that
		\begin{equation}
			\label{4.70}
			\sup_{x\ge x_0}\big|u(t_0,x)-\phi(x-(c_b-c)t_0+\xi)\big|\le \varep,
		\end{equation}
		\begin{equation}\label{4.70'}
			1-\varep\le u(t,x_0)\le 1+\frac{\varep}{2}~~~~\hbox{ for all~ $t\ge t_0$},
			\ee
			$$\phi(x_0-(c_b-c)t_0+\xi)\ge 1-\frac{\varep}{2},$$ 
			and
			\begin{equation}
				\label{4.71}
				\Big(\max_{[-2\delta,1+2\delta]}|f_b'|+ \mu^2-c\mu\Big)e^{-\mu((c_b-c)t_0-x_0-\omega_\varep-\xi-C)}\le\delta
			\end{equation}
			with $\omega_\varep=\varep\omega/\delta$, then  there exists $\widetilde M>0$ such that the following holds true:
			$$\sup_{x\ge x_0}\big|u(t,x)-\phi(x-(c_b-c)t+\xi)\big|\le \widetilde M\varep~~\text{for all}~t\ge t_0.$$
		\end{lemma}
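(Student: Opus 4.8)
The plan is to prove the estimate by a Fife--McLeod type comparison argument on the parabolic half-cylinder $\{t\ge t_0,\ x\ge x_0\}$, recycling almost verbatim the barriers already built in the proofs of Lemmas~\ref{lemma 3.1}--\ref{lemma 3.2}. Since $x_0>L$, the solution satisfies $u_t=u_{xx}+cu_x+f_b(u)$ on $\{x\ge x_0\}$, so it may legitimately be compared there with functions built from the bistable front $\phi$. With $\zeta(t,x):=x-(c_b-c)t+\xi$ and $\omega_\varep:=\varep\omega/\delta$, set
\[
\overline u(t,x)=\phi\big(\zeta(t,x)+\omega_\varep(e^{-\delta(t-t_0)}-1)\big)+\varep e^{-\delta(t-t_0)}+\varep e^{-\mu(x-x_0)},
\]
\[
\underline u(t,x)=\phi\big(\zeta(t,x)+\omega_\varep(1-e^{-\delta(t-t_0)})\big)-\varep e^{-\delta(t-t_0)}-\varep e^{-\mu(x-x_0)}.
\]
Because $\varep\le\delta$, both functions take values in $[-2\delta,1+2\delta]$, where the one-sided bounds on $f_b'$ recorded in~\eqref{4.60} are available; with this in hand, the verification that $\mathcal{N}\overline u:=\overline u_t-\overline u_{xx}-c\overline u_x-f_b(\overline u)\ge0$ and $\mathcal{N}\underline u\le0$ for $t\ge t_0$, $x\ge x_0$ is \emph{word for word} the three-regime computation carried out in Steps~1--2 of the proof of Lemma~\ref{lemma 3.1} (the cases $\zeta+\text{shift}\le-C$, $\zeta+\text{shift}\ge C$, and $\zeta+\text{shift}\in[-C,C]$), so I will not reproduce it. The only adjustment is in the transition regime $-C\le\zeta+\text{shift}\le C$: there $\zeta+\text{shift}\ge-C$ forces $x-x_0\ge(c_b-c)(t-t_0)+\big((c_b-c)t_0-x_0-\xi-C\big)$, and combining this with hypothesis~\eqref{4.71} (which provides exactly the required smallness, even after discarding the harmless $-\omega_\varep$ in its exponent) and with $\delta<\mu(c_b-c)$ from~\eqref{4.60} gives $\big(\max_{[-2\delta,1+2\delta]}|f_b'|+\mu^2-c\mu\big)e^{-\mu(x-x_0)}\le\delta e^{-\delta(t-t_0)}$, which is the estimate that played the role of~\eqref{4.63} before.

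I would then check the ordering on the parabolic boundary of $\{t>t_0,\ x>x_0\}$. On $\{t=t_0\}$ the shifts in $\overline u,\underline u$ vanish, so $\overline u(t_0,x)\ge\phi(\zeta(t_0,x))+\varep$ and $\underline u(t_0,x)\le\phi(\zeta(t_0,x))-\varep$ for all $x\ge x_0$, whence $\underline u(t_0,\cdot)\le u(t_0,\cdot)\le\overline u(t_0,\cdot)$ by~\eqref{4.70}. On $\{x=x_0\}$: for $t\ge t_0$ one has $-(c_b-c)t\le-(c_b-c)t_0$ and $\omega_\varep(e^{-\delta(t-t_0)}-1)\le0$, so the argument of $\phi$ inside $\overline u(t,x_0)$ does not exceed $x_0-(c_b-c)t_0+\xi$; since $\phi$ is decreasing and $\phi(x_0-(c_b-c)t_0+\xi)\ge1-\varep/2$, we obtain $\overline u(t,x_0)\ge 1-\varep/2+\varep=1+\varep/2\ge u(t,x_0)$ by~\eqref{4.70'}, while $\underline u(t,x_0)<1-\varep\le u(t,x_0)$ using merely $\phi<1$ and~\eqref{4.70'}. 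Since $u$, $\overline u$ and $\underline u$ are bounded, the comparison principle on the half-line yields $\underline u\le u\le\overline u$ throughout $\{t\ge t_0,\ x\ge x_0\}$.

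To conclude, I would use that $\phi$ is globally Lipschitz with constant $\|\phi'\|_{L^\infty(\R)}$ and that $|\omega_\varep(e^{-\delta(t-t_0)}-1)|\le\omega_\varep=\varep\omega/\delta$, so that for $t\ge t_0$, $x\ge x_0$ both $\overline u(t,x)-\phi(\zeta(t,x))$ and $\phi(\zeta(t,x))-\underline u(t,x)$ are at most $\big(\|\phi'\|_{L^\infty(\R)}\,\omega/\delta+2\big)\varep$; hence $|u(t,x)-\phi(\zeta(t,x))|\le\widetilde M\varep$ with $\widetilde M:=\|\phi'\|_{L^\infty(\R)}\,\omega/\delta+2$, which is the asserted estimate.

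I expect no genuinely new analytic difficulty, since the hard part --- the differential inequalities for the barriers in the three regimes --- is already done for Lemmas~\ref{lemma 3.1}--\ref{lemma 3.2}. The real content here is organisational: (a) the lateral comparison at $x=x_0$ must hold for \emph{every} $t\ge t_0$, which is precisely why~\eqref{4.70'} and $\phi(x_0-(c_b-c)t_0+\xi)\ge1-\varep/2$ are imposed as standing hypotheses rather than just at $t_0$; and (b) having dropped the large auxiliary shift ``$A$'' of Lemma~\ref{lemma 3.1} --- which was needed there only to beat a Gaussian tail, and is superfluous here thanks to the uniform closeness~\eqref{4.70} --- one must still force the transition-layer estimate through, and~\eqref{4.71} is exactly the hypothesis that does this. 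The main care is therefore in bookkeeping the various shifts.
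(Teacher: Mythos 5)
Your proposal is correct and is essentially the paper's own argument: your barriers $\overline u,\underline u$ are exactly the paper's (the shifts $\omega_\varep(e^{-\delta(t-t_0)}-1)$ and $\omega_\varep(1-e^{-\delta(t-t_0)})$ are just the paper's $\pm\omega_\varep e^{-\delta(t-t_0)}\mp\omega_\varep$ rewritten), the three-regime verification, the use of \eqref{4.70'}, $\phi(x_0-(c_b-c)t_0+\xi)\ge1-\varep/2$ and \eqref{4.71} on the parabolic boundary and in the transition layer, and the final Lipschitz estimate yielding $\widetilde M=\omega\Vert\phi'\Vert_{L^\infty(\R)}/\delta+2$ all coincide with the paper's proof.
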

		
		\begin{proof}
			We first claim that 
			$$\overline u(t,x)=\phi(x-(c_b-c)t+\omega_\varep e^{-\delta(t-t_0)}-\omega_\varep+\xi)+\varep e^{-\delta(t-t_0)}+\varep e^{-\mu(x-x_0)}$$
			and 
			$$\underline u(t,x)=\phi(x-(c_b-c)t-\omega_\varep e^{-\delta(t-t_0)}+\omega_\varep+\xi)-\varep e^{-\delta(t-t_0)}-\varep e^{-\mu(x-x_0)}$$
			are, respectively, a super- and a subsolution of $u_t=  u_{xx}+cu_x+f_b(u)$ for $t\ge t_0$ and $x\ge x_0$. We just check that $\underline u(t,x)$ is a subsolution in detail (the supersolution can be handled in a similar way). 
			
			At time $t=t_0$, one has $\underline u(t_0,x)=\phi(x-(c_b-c)t_0+\xi)-\varep-\varep e^{-\mu(x-x_0)}\le u(t_0,x)$ for all $x\ge x_0$ thanks to~\eqref{4.70}. Moreover, $\underline u(t,x_0)=\phi(x_0-(c_b-c) t-\omega_\varep e^{-\delta(t-t_0)}+\omega_\varep+\xi)-\varep e^{-\delta(t-t_0)}-\varep\le 1-\varep\le u(t,x_0)$ for all $t\ge t_0$, owing to~\eqref{4.70'}. It then remains to show that $\mathcal{N}\underline u(t,x):=\underline u_t(t,x)- \underline u_{xx}(t,x)-c\underline u_x(t,x)-f_b(\underline u(t,x))\le 0$ for all $t\ge t_0$ and $x\ge x_0$. For convenience, we set
			$$\underline \xi(t,x):=x-(c_b-c)t-\omega_\varep e^{-\delta(t-t_0)}+\omega_\varep+\xi.$$
			By a straightforward computation, one has
			\begin{equation*}
				\mathcal{N}\underline u(t,x)=f_b(\phi(\underline \xi(t,x)))- f_b(\underline u(t,x))+\phi'(\underline\xi(t,x))\omega_\varep\delta e^{-\delta(t-t_0)}+\varep\delta e^{-\delta(t-t_0)}+   ( \mu^2-c\mu) \varep e^{-\mu(x-x_0)}.
			\end{equation*}
			We divide our analysis into three cases:
			\begin{itemize}
				\item if $\underline \xi(t,x)\le -C$, then $1-\delta/2\le \phi(\underline \xi(t,x))<1$ by~\eqref{phiC}, hence $1>\underline u(t,x)\ge 1-\delta/2-2\varep\ge 1-3\delta$; therefore, by using~\eqref{4.59}--\eqref{4.60} and the negativity of $\phi'$ and $f_b'(1)$, it follows that
				\begin{align*}
					\mathcal{N}\underline u(t,x)&\le  \frac{f_b'(1)}{2}\Big(\varep e^{-\delta(t-t_0)}+\varep e^{-\mu(x-x_0)}\Big)+\varep\delta e^{-\delta(t-t_0)}+   ( \mu^2-c\mu) \varep e^{-\mu(x-x_0)}\cr
					&=\Big(\frac{f_b'(1)}{2}+\delta\Big)\varep e^{-\delta(t-t_0)}+\Big(\frac{f_b'(1)}{2}+ \mu^2-c\mu\Big)\varep e^{-\mu(x-x_0)}\le 0;
				\end{align*}
				\item if $\underline \xi(t,x)\ge C$, then $0<\phi(\underline\xi(t,x))\le \delta$ by~\eqref{phiC} and thus $-2\delta\le-2\varep\le\underline u(t,x)\le \delta$; therefore, owing to~\eqref{4.59}--\eqref{4.60} as well as the negativity of $\phi'$  and $f_b'(0)$, it follows that
				\begin{align*}
					\mathcal{N}\underline u(t,x)&\le  \frac{f_b'(0)}{2}\Big(\varep e^{-\delta(t-t_0)}+\varep e^{-\mu(x-x_0)}\Big)+\varep\delta e^{-\delta(t-t_0)}+  ( \mu^2-c\mu)\varep e^{-\mu(x-x_0)}\cr
					&=\Big(\frac{f_b'(0)}{2}+\delta\Big)\varep e^{-\delta(t-t_0)}+\Big(\frac{f_b'(0)}{2}+ \mu^2-c\mu\Big)\varep e^{-\mu(x-x_0)}\le 0;
				\end{align*}
				\item if $-C\le\underline\xi(t,x)\le C$, one has $x-x_0\ge (c_b-c)(t-t_0)+(c_b-c)t_0-x_0+\omega_\varep e^{-\delta(t-t_0)}-\omega_\varep-\xi-C\ge (c_b-c)(t-t_0)+(c_b-c)t_0-x_0-\omega_\varep-\xi-C$, hence $e^{-\mu(x-x_0)}\le e^{-\mu((c_b-c)(t-t_0)+(c_b-c)t_0-x_0-\omega_\varep-\xi-C)}$; since $\omega_\varep=\varep\omega/\delta$, one infers from~\eqref{4.60},~\eqref{4.61}--\eqref{4.62} and~\eqref{4.71}, that
			\begin{align*}
				\mathcal{N}\underline u(t,x)&\le \max_{[-2\delta,1+2\delta]}|f_b'|\Big(\varep e^{-\delta(t-t_0)}+\varep e^{-\mu(x-x_0)}\Big)\!-\!\kappa\omega_\varep\delta e^{-\delta(t-t_0)}\!+\!\varep\delta e^{-\delta(t-t_0)}\!+\!  ( \mu^2-c\mu) \varep e^{-\mu(x-x_0)}\cr
				&\le \Big(\!\max_{[-2\delta,1+2\delta]}\!|f_b'|\!-\!\kappa\omega\!+\!\delta\!\Big)\varep e^{-\delta(t-t_0)}\!+\!\Big(\!\max_{[-2\delta,1+2\delta]}\!|f_b'|\!+\! \mu^2-\!c\mu\!\Big)\varep e^{-\mu((c_b-c)(t-t_0)+(c_b-c)t_0-x_0-\omega_\varep-\xi-C)}\cr
				&\le \Big(\max_{[-2\delta,1+2\delta]}|f_b'|-\kappa\omega+2\delta\Big)\varep e^{-\delta(t-t_0)}\le  0.
			\end{align*}
			\end{itemize}

			Eventually, one concludes that $\mathcal{N}\underline u(t,x):=\underline u_t(t,x)-c\underline u_x(t,x)- \underline u_{xx}(t,x)-f_b(\underline u(t,x))\le 0$ for all $t\ge t_0$ and~$x\ge x_0$. The maximum principle implies that 
			\begin{align*}
				u(t,x)\ge \underline u(t,x)= \phi\big(x-(c_b-c)t-\omega_\varep e^{-\delta(t-t_0)}+\omega_\varep+\xi\big)-\varep e^{-\delta(t-t_0)}-\varep e^{-\mu(x-x_0)}
			\end{align*}
			for all $t\ge t_0$ and $x\ge x_0$. For these $t$ and $x$, since $\phi'<0$, one derives that
			\begin{align*}
				u(t,x)\ge \phi(x-(c_b-c)t+\omega_\varep+\xi)-2\varep\ge \phi(x-(c_b-c)t+\xi)-\omega_\varep\Vert\phi'\Vert_{L^\infty(\mathbb{R})}-2\varep.
			\end{align*}
			Similarly, using especially that
			$$\Big(\max_{[-2\delta,1+2\delta]}|f_b'|+ \mu^2-c\mu\Big)e^{-\mu((c_b-c)t_0-x_0-\xi-C)}\le\Big(\max_{[-2\delta,1+2\delta]}|f_b'|+ \mu^2-c\mu\Big)e^{-\mu((c_b-c)t_0-x_0-\omega_\varep-\xi-C)}\le\delta$$
			by~\eqref{4.71}, one can also derive that $u(t,x)\le\overline{u}(t,x)=\phi\big(x-(c_b-c)t+\omega_\varep e^{-\delta(t-t_0)}-\omega_\varep+\xi\big)+\varep e^{-\delta(t-t_0)}+\varep e^{-\mu(x-x_0)}$ for all $t\ge t_0$ and $x\ge x_0$, hence
			\begin{align*}
				u(t,x)\le\phi(x-(c_b-c)t-\omega_\varep+\xi)+2\varep\le \phi(x-(c_b-c)t+\xi)+\omega_\varep\Vert\phi'\Vert_{L^\infty(\mathbb{R})}+2\varep.
			\end{align*}
			In conclusion, one has 
			$$\sup_{x\ge x_0}\big|u(t,x)-\phi(x-(c_b-c)t+\xi)\big|\le\omega_\varep\Vert\phi' \Vert_{L^\infty(\mathbb{R})}+2\varep=\widetilde{M}\varep~\text{for all}~t\ge t_0,$$
			where $\widetilde{M}:=\omega_\varep\Vert\phi' \Vert_{L^\infty(\mathbb{R})}/\varep+2=\omega\Vert\phi' \Vert_{L^\infty(\mathbb{R})}/\delta+2$ is independent of $\varep$, $t_0$, $x_0$ and $\xi$.	The proof of Lemma~\ref{lemma 3.4} is thereby complete.
		\end{proof}

		Now we are in a position to complete the proof of Theorem~\ref{thm_propagation-2}.
		
		\noindent
		\textit{Proof of Theorem~$\ref{thm_propagation-2}$ $($continued$)$.}  We first complete the proof of property (i). Remember that $-c_m<c_b$ and $c\in (-c_m,c_b)$.
		Let $X_1>L$, $X_2>L$, $T_1>0$, $T_2>0$, $z_1\in\mathbb{R}$, $z_2\in\mathbb{R}$, $\mu>0$ and $\delta>0$ be as in Lemma~\ref{lemma 3.1}, and let also $C>0$ be as in~\eqref{phiC} in the proof of Lemma~\ref{lemma 3.1}. For $t\ge\max(T_1,T_2)$ and $x\ge\max(X_1,X_2)>L$, Lemma~\ref{lemma 3.1} implies that
		\be\label{4.72}\baa{l}
		\phi(x-(c_b-c)(t-T_2)+z_2)-\delta e^{-\delta(t-T_2)}-\delta e^{-\mu(x-X_2)}\vspace{3pt}\\
		\qquad\qquad\qquad\qquad\qquad\le u(t,x)\le \phi(x-(c_b-c)(t-T_1)+z_1)+\delta e^{-\delta(t-T_1)}+\delta e^{-\mu(x-X_1)}.\eaa
		\ee
		Consider any given sequence $(t_n)_{n\in\mathbb{N}}$ such that $t_n\to+\infty$ as $n\to +\infty$. By standard parabolic estimates, the functions
		$$(t,z)\mapsto u_n(t,z):=u(t+t_n,z+(c_b-c)t_n)$$
		converge as $n\to+\infty$ up to extraction of a subsequence, locally uniformly in $(t,z)\in\mathbb{R}\times\mathbb{R}$, to a classical solution $u_\infty$ of $(u_\infty)_t= (u_\infty)_{zz}+c(u_\infty)_z+f_b(u_\infty)$ in $\mathbb{R}\times\mathbb{R}$. From~\eqref{4.72} applied at $(t+t_n,z+(c_b-c)t_n)$, the passage to the limit as $n\to+\infty$ gives
		\begin{align*}
			\phi(z-(c_b-c)(t-T_2)+z_2)\le u_\infty(t,z)\le\phi(z-(c_b-c)(t-T_1)+z_1)~~\text{for all}~(t,z)\in\mathbb{R}\times\mathbb{R}.
		\end{align*}
		Then, \cite[Theorem 3.1]{BH2007} can be adapted to yield that there exists $\xi\in\mathbb{R}$ such that $u_\infty(t,z)=\phi(z-(c_b-c)t+\xi)$ for all $(t,z)\in\mathbb{R}\times\mathbb{R}$, whence 
		\begin{equation}
			\label{4.73}
			u_n(t,z)\to \phi(z-(c_b-c) t+\xi)~\text{as}~n\to+\infty,~~\text{locally uniformly in}~(t,z)\in\mathbb{R}\times\mathbb{R}.
		\end{equation}
		
		Consider now any $\varep\in(0,\delta/3]$. Let $A_\varep>0$ be such that
		\be\label{phieps2}
		\phi\ge 1-\frac{\varep}{2}\hbox{ in $(-\infty,- A_\varep]\ $ and }\ \phi\le\frac\varep2\hbox{ in $[A_\varep,+\infty)$}.
		\ee
		Set $E_1:=\max\big(A_\varep-(c_b-c)T_1-z_1, A_\varep-\xi\big)$ and $E_2:=\min(-A_\varep-(c_b-c)T_2-z_2,-A_\varep-\xi\big)<E_1$. Then, it can be deduced from~\eqref{4.73} that 
		\begin{equation}
			\label{4.74}
			\sup_{E_2\le z \le E_1}\big|u_n(0,z)-\phi(z+\xi)\big|\le \varep~~\text{for all}~n~\text{large enough}.
		\end{equation}
		Since $t_n\to+\infty$ as $n\to+\infty$,~\eqref{4.72} and~\eqref{phieps2} imply that, for all $n$ large enough,
		\begin{equation}
			\label{4.75}
			\begin{aligned}
				\begin{cases}
					0<u_n(0,z)\le\varep~~~&\text{for all}~z\ge E_1,\cr
					1-\varep\le u_n(0,z)\le 1+\varep&\displaystyle\text{for all}~E_2-\frac{c_b-c}{2}t_n \le z\le E_2.
				\end{cases}
			\end{aligned}
		\end{equation}
		Furthermore, since $E_1\ge A_\varep-\xi$ and $E_2\le -A_\varep-\xi$, one has
		\begin{equation}
			\label{4.76}
			\begin{aligned}
				\begin{cases}
					\displaystyle 0<\phi(z+\xi)\le\frac{\varep}{2}<\varep&\text{for all}~z\ge E_1,\vspace{3pt}\cr
					\displaystyle 1-\varep<1-\frac{\varep}{2}\le \phi(z+\xi)<1&\text{for all}~z\le E_2.
				\end{cases}
			\end{aligned}
		\end{equation}
		Then~\eqref{4.75}--\eqref{4.76} imply that, for all $n$ large enough,
		$$\big|u_n(0,z)-\phi(z+\xi)\big|\le 2\varep~~\text{for all}~z\in\Big[E_2-\frac{c_b-c}{2}t_n,E_2\Big]\cup[E_1,+\infty).$$
		Together with~\eqref{4.74} and the definition of $u_n(t,z)$, one has, for all $n$ large enough,
		\begin{equation}
			\label{4.77}
			\big|u(t_n,x)-\phi(x-(c_b-c)t_n+\xi)\big|\le 2\varep~~\text{for all}~x\ge E_2+\frac{c_b-c}{2}t_n.
		\end{equation}
		
		On the other hand, one infers from Lemma~\ref{lemma 3.2} that, for all $n$ large enough, 
		\begin{align}
			\label{cdn1}
			1-3\varep\le  \phi(x-(c_b-c)(t_n-T_{2,\varep})+z_{2,\varep})-\varep e^{-\delta(t_n-T_{2,\varep})}-\varep e^{-\mu(x-X_{2,\varep})}\le u(t_n,x)~~~~~~~~~~~~\cr
			\le\phi(x-(c_b-c)(t_n-T_{1,\varep})+ z_{1,\varep})+\varep e^{-\delta(t_n-T_{1,\varep})}+\varep e^{-\mu(x-X_{1,\varep})}\le 1+2\varep,
		\end{align}
		for all $\max(X_1,X_2,X_{1,\varep},X_{2,\varep})\le x\le E_2+(c_b-c)t_n/2$, where $X_{1,\varep}>L$, $X_{2,\varep}>L$, $T_{1,\varep}>0$, $T_{2,\varep}>0$, $z_{1,\varep}\in\mathbb{R}$ and $z_{2,\varep}\in\mathbb{R}$ were given in Lemma~\ref{lemma 3.2}. Notice also that, for all $n$ large enough,
		\begin{equation}
			\label{cdn2}
			1-\varep\le\phi(x-(c_b-c)t_n+\xi)<1~~\text{for all}~\max(X_1,X_2,X_{1,\varep},X_{2,\varep})\le x \le E_2+\frac{c_b-c}{2}t_n.
		\end{equation} 
		One deduces from~\eqref{cdn1}--\eqref{cdn2}  that, for all $n$ large enough, 
		$$\big|u(t_n,x)-\phi(x-(c_b-c)t_n+\xi)\big|\le 3\varep~~\text{for all}~\max(X_1,X_2,X_{1,\varep},X_{2,\varep})\le x\le E_2+\frac{c_b-c}{2}t_n.$$
		Together with~\eqref{4.77}, one derives that, for all $n$ large enough,
		$$\big|u(t_n,x)-\phi(x-(c_b-c)t_n+\xi)\big|\le 3\varep~~\text{for all}~x\ge\max(X_1,X_2,X_{1,\varep},X_{2,\varep}).$$
		Choose $x_\varep=\max(X_1,X_2,X_{1,\varep},X_{2,\varep})$, then thanks to \eqref{2.7} and $c_b-c>0$,  it follows that for all $n$ large enough, 
		$$1-3\varep\le u(t,x_\varep)\le 1+\frac{3\varep}{2}~~\text{for all}~t\ge t_n,$$
		and
		$$\phi(x_\varep-(c_b-c) t_n+\xi)\ge 1-\frac{3\varep}{2},~~\Big(\max_{[-2\delta,1+2\delta]}|f_b'|+ \mu^2-c\mu\Big)e^{-\mu((c_b-c)t_n-x_\varep-3\varep\omega/\delta-\xi-C)}\le\delta.$$
		It then follows from Lemma~\ref{lemma 3.4} (applied with $t_0=t_n$, $x_0=x_\varep$ and $3\varep$ instead of $\varep$) that, for all $n$ large enough,
		$$\big|u(t,x)-\phi(x-(c_b-c)t+\xi)\big|\le 3\widetilde{M}\varep~~\text{for all}~t\ge t_n~\text{and}~ x\ge x_\varep,$$
		with $\widetilde{M}$ given in Lemma~\ref{lemma 3.4}. Since $\varep\in(0,\delta/3]$ was arbitrary, one finally infers that 
$$	\lim_{t\to+\infty}\Big(\sup_{x\ge x_\varep}|u(t,x)-\phi(x-(c_b-c)t+\xi)\Big)=0.$$
		Therefore, property (i) is achieved.

		It now remains to prove property (ii). Assume now that $-c_m<c=c_b$. Our goal is to show that $\sup_{x\ge \nu t}u(t,x)\to0$ as $t\to+\infty$ for every $\nu>0$. So let us fix $\nu>0$ in the sequel. For $\varep\in(0,(1-\theta)/2)$, let $f_{b,\varep}$ be a $C^1(\mathbb{R})$ function such that 
		$$\left\{\baa{l}
		f_{b,\varep}(0)=f_{b,\varep}(\theta)=f_{b,\varep}(1+\varep)=0,~~f_{b,\varep}'(0)<0,~~f_{b,\varep}'(1+\varep)<0,\vspace{3pt}\\
		f_{b,\varep}=f_b~\text{in}~(-\infty,1-\varep),~~f_{b,\varep}>0~\text{in}~(\theta,1+\varep),~~f_{b,\varep}<0~\text{in}~(1+\varep,+\infty).\eaa\right.$$
		We can also choose $f_{b,\varep}$ so that $f_{b,\varep}\ge f_b$ in $\R$, so that $f_{b,\varep}$ is decreasing in $[1-\varep,1+\varep]$. For each $\varep\in(0,(1-\theta)/2)$, let $\phi_\varep$ be the unique traveling front profile of $u_t=  u_{xx}+f_{b,\varep}(u)$ such that 
		$$ \phi_\varep''+c_{b,\varep}\phi_\varep'+f_{b,\varep}(\phi_\varep)=0\hbox{ in }\R,~~\phi_\varep'<0~\text{in}~\mathbb{R},~~\phi_\varep(0)=\theta,~~\phi_\varep(-\infty)=1+\varep,~~\phi_\varep(+\infty)=0,$$
		with speed $c_{b,\varep}>c_b$. It is standard to see that $\phi_\varep\to\phi$ in $C^2_{loc}(\R)$ and $c_{b,\varep}\to c_b$ as $\varep\to0$. We then fix $\varep\in(0,(1-\theta)/2)$ small enough such that $0<c_{b,\varep}-c_b<\nu$.

		By Lemma \ref{lem:u:basic_propert},	there are large enough $T>0$ and $X>L$ such that $u(t,x)\le 1+\varep/2$ for all $t\ge T$ and $x\ge X$. Since~$u(t,x)$ has  a Gaussian upper bound as $x\to+\infty$ at each fixed $t>0$ by Lemma \ref{lemma1.3},  whereas~$\phi_\varep(s)$ has an exponential decay (similar to~\eqref{2.6}) as $s\to+\infty$, it follows that there is $A>0$ such that $u(T,x)\le \phi_\varep (x-(c_{b,\varep}-c_b)T-A)$ for all $x\ge X$, and $u(t,X)\le\phi_\varep (X-(c_{b,\varep}-c_b)t-A)$ for all $t\ge T$ (here we also use the fact $c_{b,\varep}>c_b$ and $\phi_\varep(-\infty)=1+\varep$). By setting $\underline u(t,x):=\phi_\varep(x-(c_{b,\varep}-c_b)t-A)$ for $t\ge T$ and $x\ge X$,  a direct computation yields that
		\begin{align*}
			\overline u_t-\overline u_{xx}-c_b\overline u_x-f_b(\overline u)&=-(c_{b,\varep}-c_b)\phi_\varep'(\xi(t,x))-\phi_\varep''(\xi(t,x))-c_b\phi_\varep'(\xi(t,x))-f_b(\phi_\varep(\xi(t,x)))\\
			&=f_{b,\varep}(\phi_\varep(\xi(t,x)))-f_b(\phi_\varep(\xi(t,x)))\ge 0,~~~~~t\ge T,~x\ge X,
		\end{align*}
		with $\xi(t,x):=x-(c_{b,\varep}-c_b)t-A$, since $f_{b,\varep}\ge f_b$ in $\R$.
		The comparison principle implies that~$0<u(t,x)\le\phi_\varep (x-(c_{b,\varep}-c_b)t-A)$ for all $t\ge T$ and $x\ge X$, hence $\sup_{x\ge \nu t}u(t,x)\to0$ as $t\to+\infty$, since $c_{b,\varep}-c<\nu $ and $\phi_\varep(+\infty)=0$. 
		This completes the proof of Theorem~\ref{thm_propagation-2}.
	\end{proof}


	\section[Proofs of Theorems \ref{thm_c le -cm,cb>0}--\ref{thm_extinction}]{Conditional complete propagation vs. extinction when $c\in(-\infty,-c_m]$}

	
	\subsection[Proof of Theorem \ref{thm_c le -cm,cb>0}]{Conditional complete propagation: Proof of Theorem \ref{thm_c le -cm,cb>0}}
	In this subsection, we investigate the leftward propagation properties when $c\in(-\infty,-c_m]$ for ``large enough'' initial data, inspired by \cite{FM1977,HLZ2}. This is notably different from the preceding section. Here, we must consider both the KPP region and the bistable region simultaneously, leading to a more intricate analysis compared to the sections above.
		\begin{lemma}
		\label{lemma:c<-c_m}
		Assume that $c\le  -c_m$.  Let $u$ be the solution of~\eqref{1.1'} with a nonnegative continuous and compactly supported initial datum $u_0\not\equiv 0$. Then there exist  $T_3>0$, $X_3>L$, $z_3\in\mathbb{R}$, $\mu>0$ and $\delta>0$ such that
		\begin{align}\label{6.2-1}
			u(t,x)\le\phi\big(-x-(c_b+c)(t-T_3)+z_3\big)+\delta e^{-\delta(t-T_3)}+\delta e^{-\mu(x-X_3)}~~~\text{for}~t\ge T_3~\text{and}~ x\ge X_3.
		\end{align}
	\end{lemma}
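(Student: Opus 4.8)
The plan is to adapt the supersolution construction from the proof of Lemma~\ref{lemma 3.1}, the essential new feature being that here the relevant bistable wave travels \emph{towards} the left edge of the comparison domain rather than away from it. The first thing I would record is the sign relation $-(c_b+c)\ge c_m-c_b>0$, which holds because $c\le-c_m$ and $c_m>c_b$ (Lemma~\ref{Lemma_cm>cb}). Since $X_3$ will be taken larger than $L$, on the quarter-space $\{t\ge T_3,\ x\ge X_3\}$ the function $u$ solves $u_t=u_{xx}+cu_x+f_b(u)$; the reflection $x\mapsto-x$ turns this into $v_t=v_{yy}-cv_y+f_b(v)$ with $v(t,y)=u(t,-y)$, and one checks from $\phi''+c_b\phi'+f_b(\phi)=0$ that $\phi(-x-(c_b+c)t+z)$ is an entire solution of $u_t=u_{xx}+cu_x+f_b(u)$ for every $z\in\R$, travelling rightward with the fixed positive speed $-(c_b+c)$.

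I would then fix the parameters $\mu>0$, then $\delta>0$, then $C>0$, then $\kappa>0$, then $\omega>0$ exactly as in~\eqref{4.59}--\eqref{4.62}, imposing in addition the two compatible requirements $\delta\le\mu(c_m-c_b)$ and $\kappa\omega\ge 2\max_{[-2\delta,1+2\delta]}|f_b'|+\delta+\mu^2-c\mu$. Next I set $X_3:=L+1$ and --- this is the single point where the construction departs from Lemma~\ref{lemma 3.1} --- $z_3:=X_3+C+\omega$, so that the transition layer of the reflected wave initially sits to the \emph{right} of $X_3$. Finally I choose $T_3$ so large that $\sup_{\R}u(T_3,\cdot)\le1+\delta/2$ (Lemma~\ref{lem:u:basic_propert}) and $\sup_{x\le z_3+C}u(t,x)<\delta$ for all $t\ge T_3$ (Lemma~\ref{lemma_c<-c_m}, available since $c\le-c_m$ and $z_3+C$ is a fixed constant). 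The barrier is then
\[\overline\xi(t,x)=-x-(c_b+c)(t-T_3)+\omega e^{-\delta(t-T_3)}-\omega+z_3,\qquad \overline u(t,x)=\phi(\overline\xi(t,x))+\delta e^{-\delta(t-T_3)}+\delta e^{-\mu(x-X_3)} .\]

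The verification mirrors the proof of Lemma~\ref{lemma 3.1}. The same computation (using $\phi''=-c_b\phi'-f_b(\phi)$) yields
\[\mathcal{N}\overline u:=\overline u_t-\overline u_{xx}-c\overline u_x-f_b(\overline u)=f_b(\phi(\overline\xi))-f_b(\overline u)-\phi'(\overline\xi)\,\omega\delta e^{-\delta(t-T_3)}-\delta^2 e^{-\delta(t-T_3)}-(\mu^2-c\mu)\delta e^{-\mu(x-X_3)},\]
and the cases $\overline\xi\le-C$ and $\overline\xi\ge C$ are handled verbatim. In the case $-C\le\overline\xi\le C$ the choice of $z_3$ is used: from $\overline\xi\le C$ and $z_3=X_3+C+\omega$ one gets $x-X_3\ge-(c_b+c)(t-T_3)\ge0$, hence $e^{-\mu(x-X_3)}\le e^{-\delta(t-T_3)}$ (using $\delta\le\mu(c_m-c_b)$ and $-(c_b+c)\ge c_m-c_b$), so that with $-\phi'(\overline\xi)\ge\kappa$ on $[-C,C]$ one obtains $\mathcal{N}\overline u\ge(\kappa\omega-2\max_{[-2\delta,1+2\delta]}|f_b'|-\delta-\mu^2+c\mu)\,\delta e^{-\delta(t-T_3)}\ge0$. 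On the parabolic boundary: $\overline u(t,X_3)\ge\delta>u(t,X_3)$ for $t\ge T_3$ by the choice of $T_3$; at $t=T_3$, for $x\ge z_3+C$ one has $\phi(-x+z_3)\ge1-\delta/2$ and hence $\overline u(T_3,x)\ge1+\delta/2\ge u(T_3,x)$, whereas for $X_3\le x\le z_3+C$ one has $\overline u(T_3,x)\ge\delta>u(T_3,x)$. The comparison principle then gives $u\le\overline u$ on the quarter-space; since $\phi$ is decreasing and $\omega e^{-\delta(t-T_3)}-\omega\le0$, replacing $z_3$ by $z_3-\omega$ produces exactly~\eqref{6.2-1}.

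The point that needs care --- and the reason one cannot simply copy Lemma~\ref{lemma 3.1} --- is the sign $c_b+c<0$: the reflected bistable wave moves toward $x=X_3$ and eventually crosses it, rather than receding. If the wave were started at or to the left of $X_3$, then when its transition layer passes $x=X_3$ (at a time of size $\sim\omega$) the correction $\delta e^{-\mu(x-X_3)}$ is still of order $\delta$ while the stabilizing term $\kappa\omega\delta e^{-\delta(t-T_3)}$ has already decayed, and $\mathcal{N}\overline u$ becomes negative. Starting the transition to the right of $X_3$ (via $z_3=X_3+C+\omega$) keeps $x-X_3\ge-(c_b+c)(t-T_3)$ throughout the transition region, which closes the estimate; the price is that the boundary and initial comparisons near $x=X_3$ must be won from $u(t,\cdot)\to0$ there (Lemma~\ref{lemma_c<-c_m}) rather than from the profile being $\approx1$, and this is the genuinely new ingredient compared with Lemma~\ref{lemma 3.1}.
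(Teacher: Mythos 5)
Your construction is correct and essentially reproduces the paper's own proof of Lemma~\ref{lemma:c<-c_m}: the same supersolution $\phi\big(-x-(c_b+c)(t-T_3)+\omega e^{-\delta(t-T_3)}-\omega+\text{shift}\big)+\delta e^{-\delta(t-T_3)}+\delta e^{-\mu(x-X_3)}$ with the transition layer initially pushed to the right of $X_3$, the initial/boundary comparison near $x=X_3$ won from Lemmas~\ref{lem:u:basic_propert} and~\ref{lemma_c<-c_m}, and the same three-case verification of $\mathcal{N}\overline u\ge0$; the only difference is bookkeeping (the paper offsets the layer by a large constant $A$ chosen through~\eqref{6.2-A}, while you offset by $C+\omega$ and instead strengthen the lower bound on $\kappa\omega$). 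One cosmetic remark: importing~\eqref{4.60} verbatim carries the unused condition $\delta<\mu(c_b-c)$, which cannot be met if $c_b\le c$ (a case not excluded by the hypothesis $c\le-c_m$ alone), but since your computation only relies on $\delta\le\mu(c_m-c_b)\le-\mu(c_b+c)$ — exactly the paper's condition in~\eqref{6.2-delta} — this is harmless.
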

\begin{proof}
We recall that $c_b<c_m$ which is already proved in the introduction. It implies that $-c_m<-c_b$, whence $c<-c_b$, namely $c+c_b<0$.   

We first introduce some parameters. Choose $\mu>0$ such that
\begin{equation}
	\label{6.2-mu}
	0<\mu<\frac{1}{2}\Big(c+\sqrt{c^2+2\min\big(|f_b'(0)|, |f_b'(1)|\big)}\Big).
\end{equation}
 Choose  $\delta>0$ such that 
\begin{equation}	
\label{6.2-delta}\left\{\baa{l}
\displaystyle 0<\delta< \min\Big(-\mu(c_b+c),\frac{1}{5},\frac{|f_b'(0)|}{2}, \frac{|f_b'(1)|}{2}\Big),\vspace{3pt}\\
\displaystyle f_b'\le \frac{f_b'(0)}{2}~\text{in}~[0,3\delta],~~f_b'\le \frac{f_b'(1)}{2}~\text{in}~[1-\delta,1+2\delta].\eaa\right.
\end{equation}
Let $C>0$ be such that
\begin{equation}
\label{6.2-C}
\phi\ge 1-\delta/2~\text{in}~(-\infty,-C]\ \hbox{ and }\ \phi\le \delta~\text{in}~[C,+\infty).
\end{equation}
Since $\phi'$ is negative and continuous in $\mathbb{R}$, there is $\kappa>0$ such that 
\begin{equation}
\label{6.2-kappa}
\phi'\le-\kappa<0~\text{in}~[-C, C].
\end{equation}
Finally, pick $\omega>0$ so large that
\begin{equation}
\label{6.2-omega}
\kappa\omega\ge 2\delta+\max_{[0,1+2\delta]}|f_b'|.
\end{equation} 
	and $A>\omega+C$ such that (recall that $c<0$)
\begin{equation}
	\label{6.2-A}
\Big(\max_{[0,1+2\delta]}|f_b'|+ \mu^2-c\mu\Big)e^{-\mu(A-\omega-C)}\le \delta.
\end{equation}

By Lemma \ref{lem:u:basic_propert}, we derive that there exists $T_3>0$  large enough such that  
\begin{equation}
\label{initial}
u(T_3,x)<1+ \delta/2~~\text{uniformly for}~x\in\R.
\end{equation}
Fix $X_3>L$ and $\overline{X}_3=X_3+A+C$. We infer from Lemma \ref{lemma_c<-c_m} that, up to increasing $T_3$, 
\begin{equation}
	\label{boundary}
	u(t,x) \le \delta~~~~~\text{for}~t\ge T_3, ~x \in [X_3,\overline{X}_3].
\end{equation}
When $x\ge \overline X_3$, we observe that $-x+X_3+A\le -C$ and thus
\begin{equation}
	\label{6.2-initial-phi}
	\phi(-x+X_3+A)\ge 1-\delta/2~~~~\text{for}~x\ge \overline X_3.
\end{equation}

For $t\ge T_3$ and $x\ge X_3$, let us now define
$$\overline u(t,x)=\phi(\overline\xi(t,x))+\delta e^{-\delta(t-T_3)}+\delta e^{-\mu (x-X_3)},$$
where 
$$\overline\xi(t,x)=-x+X_3-(c_b+c)(t-T_3)+\omega e^{-\delta(t-T_3)}-\omega+A.$$
Let us check that $\overline u(t,x)$ is a supersolution to \eqref{1.1'} for $t\ge T_3$ and $x\ge X_3$. At time $t=T_3$, one has $\overline u(T_3,x)\ge \phi(-x+X_3+A)+\delta\ge 1-\delta/2+\delta= 1+\delta/2> u(T_3,x)$ for  $x\ge \overline X_3$, by~\eqref{initial} and \eqref{6.2-initial-phi}. Thanks to \eqref{boundary}, we have $\overline u(T_3,x)\ge \delta \ge  u(T_3,x)$ for $x\in [X_3,\overline{X}_3]$. This implies that $\overline u(T_3,x)\ge u(T_3,x)$ for $x\ge X_3$. Moreover, for $t\ge T_3$, it is obvious that $\overline u(t,X_3)\ge\delta \ge u(t,X_3)$, due to \eqref{boundary}. Therefore, it remains to check that $\mathcal{N}\overline u(t,x)\!:=\!\overline u_t(t,x)\!-\! \overline u_{xx}(t,x)\!-c\overline u_x(t,x)\!-\!f_b(\overline u(t,x))\!\ge\!0$ for all $t\ge T_3$ and $x\ge X_3$. To this end, one derives from a straightforward computation that for $t\ge T_3$ and $x\ge X_3$,
\begin{align*}
\mathcal{N}\overline u(t,x)=f_b(\phi(\overline \xi(t,x)))-f_b(\overline u(t,x))-\phi'(\overline \xi(t,x)))\omega\delta e^{-\delta(t-T_3)}
-\delta^2 e^{-\delta(t-T_3)}-( \mu^2-c\mu) \delta e^{-\mu(x-X_3)}.
\end{align*}
We distinguish three cases: 
\begin{itemize}
\item if $\overline \xi(t,x)\le -C$, one has $1-\delta/2\le \phi(\overline\xi(t,x))<1$ by~\eqref{6.2-C}, hence $1-\delta/2\le \overline u(t,x)\le 1+2\delta$; it follows from~\eqref{6.2-delta} that $f_b(\phi(\overline \xi(t,x)))-f_b(\overline u(t,x))\ge -(f_b'(1)/2)\big(\delta e^{-\delta(t-T_3)}+\delta e^{-\mu (x-X_3)}\big)$ and it then can be deduced from the negativity of $\phi'$ and $f_b'(1)$ as well as \eqref{6.2-mu}--\eqref{6.2-delta} that
\begin{align*}
	\mathcal{N} \overline u(t,x)&\ge \Big(-\frac{f_b'(1)}{2}-\delta\Big)\delta e^{-\delta(t-T_3)}+\Big(-\frac{f_b'(1)}{2}-\mu^2+c\mu\Big)\delta e^{-\mu(x-X_3)}>0;
\end{align*}
\item if $\overline\xi(t,x)\ge C$, one derives $0<\phi(\overline \xi(t,x))\le \delta$ by~\eqref{6.2-C}, and then $0<\overline u(t,x)\le 3\delta$; it follows from~\eqref{6.2-delta} that $f_b(\phi(\overline \xi(t,x)))-f_b(\overline u(t,x))\ge -(f_b'(0)/2)\big(\delta e^{-\delta(t-T_3)}+\delta e^{-\mu (x-X_3)}\big)$; by virtue of the negativity of $\phi'$ and $f_b'(0)$ as well as \eqref{6.2-mu}--\eqref{6.2-delta}, there holds
\begin{align*}
	\mathcal{N} \overline u(t,x)&\ge  \Big(-\frac{f_b'(0)}{2}-\delta\Big)\delta e^{-\delta(t-T_3)}+\Big(-\frac{f_b'(0)}{2}-\mu^2+c\mu\Big)\delta e^{-\mu(x-X_3)}>0;
\end{align*}
\item if $-C\le\overline\xi(t,x)\le C$, then  $0<\overline u(t,x)\le 1+2\delta$. By noticing that $x-X_3\ge -(c_b+c)(t-T_3)-\omega+A-C$, we have $e^{-\mu(x-X_3)}\le e^{-\mu(-(c_b+c)(t-T_3)-\omega+A-C)}$. One infers from  \eqref{6.2-delta} and \eqref{6.2-kappa} -- \eqref{6.2-A} that
\begin{align*}
	\mathcal{N}\overline u(t,x)&\ge -\max_{[0,1+2\delta]}|f_b'|\Big(\delta e^{-\delta(t-T_3)}+\delta e^{-\mu (x-X_3)}\Big)+\kappa\omega\delta e^{-\delta(t-T_3)}-\delta^2 e^{-\delta(t-T_3)}-( \mu^2-c\mu) \delta e^{-\mu(x-X_3)}\cr
	&= \Big(\kappa\omega-\max_{[0,1+2\delta]}|f_b'|-\delta\Big)\delta e^{-\delta(t-T_3)}-\Big(\max_{[0,1+2\delta]}|f_b'|+\mu^2-c\mu\Big)\delta e^{-\mu(x-X_3)}\cr
	&\ge \Big(\kappa\omega-\max_{[0,1+2\delta]}|f_b'|-\delta\Big)\delta e^{-\delta(t-T_3)}-\Big(\max_{[0,1+2\delta]}|f_b'|+\mu^2-c\mu\Big)\delta e^{-\mu\big(
		-(c_b+c)(t-T_3)-\omega+A-C\big)}\cr
	&\ge\Big(\kappa\omega-\max_{[0,1+2\delta]}|f_b'|-2\delta\Big)\delta e^{-\delta(t-T_3)}\ge 0.
\end{align*} 
\end{itemize}
\vskip -2mm

As a consequence, we have proven that $\mathcal{N}\overline u(t,x)=\overline u_t(t,x)- \overline u_{xx}(t,x)-c\overline u_x(t,x)-f_b(\overline u(t,x))\ge 0$ for all $t\ge T_3$ and $x\ge X_3$. The comparison principle yields that 
\begin{align*}
u(t,x)\le \overline u(t,x)=\phi\big(-x+X_3-(c_b+c)(t-T_3)+\omega e^{-\delta(t-T_3)}-\omega+A\big)+\delta e^{-\delta(t-T_3)}+\delta e^{-\mu (x-X_3)}
\end{align*}
for all $t\ge T_3$ and $x\ge X_3$. By picking $z=A-\omega+X_3$, the conclusion follows.
\end{proof}

\noindent
{\it Proof of Theorem \ref{thm_c le -cm,cb>0}.}
		Assume now that $c\le -c_m$ and $c_b>0$. Remember that $u$ is the solution of~\eqref{1.1'} with any nonnegative continuous and compactly supported initial datum $u_0$
		 satisfying $u_0\ge \theta+\eta$ for any $\eta>0$ on an interval of size $L^*$.

		  \noindent
		 {\it Step 1. Propagation.} Let us  look at the Cauchy problem \eqref{1.1} with initial datum $v_0=u_0$ in $\R$. 
		 
	First of all, according to the hypothesis \eqref{hypf} of $f$, the solution $v$ of \eqref{1.1} will be bounded from below by the function $w$ which solves $w_t=w_{yy}+f_b(w)$ (the mass of $f_b$ is positive, since $c_b>0$) for $t>0$ and $x\in\R$ with initial condition $w_0=v_0$ in $\R$, thanks to the comparison principle. By virtue of Theorem 3.2 in Fife and McLeod \cite{FM1977},  there exist some constants $\xi_1\in\R$, $\xi_2\in\R$, $M>0$ and $\eta>0$ such that 
		\begin{equation}\label{2.7-w}
		\sup_{y<0}\big|w(t,y)-\phi(-y-c_bt+\xi_1)\big|+ \sup_{y>0}\big|w(t,y)-\phi(y-c_b t+\xi_2)\big|<M e^{-\eta t}~~~\text{for}~t>0.
		\end{equation}
	 This implies that the function $w$  develops into a pair of diverging fronts,  moving in  opposite directions. Since $w$ plays the role of a lower barrier for the function $v$, one concludes that	  $v$ indeed propagates to the right and to the left,  with propagation speeds no smaller than $c_b>0$.

	 Thanks to the upper barrier of $u$ constructed in Lemma \ref{lemma 3.1} (i) (by noticing $c<c_b$), one uses the transformation $v(t,y)=u(t,y-ct)$ for $(t,y)\in\R_+\times\R$ and immediately deduces that there exist $X_1>L$, $T_1>0$,  $\tau_1\in\mathbb{R}$,  $\bar \mu>0$ and $\bar\delta>0$ such that
	 \begin{equation}\label{6.18-0}
	 	v(t,y)\le\phi(y-c_bt+\tau_1)+\bar\delta e^{-\bar\delta(t-T_1)}+\bar\delta e^{-\bar\mu(y-ct-X_1)}~~\text{for all}~ t\ge T_1~\text{and} ~y\ge X_1+ct,
	 \end{equation}
  The remaining part of the proof will rely on the following several lemmas. 
	 
	 \begin{lemma}\label{lemma6.3_upper bound right}
		Assume that $c\le  -c_m$ and $c_b>0$.  Let $v$ be the solution of~\eqref{1.1} with any nonnegative continuous and compactly supported initial datum $v_0=u_0$ in $\R$, with $u_0$ as given in Theorem \ref{thm_c le -cm,cb>0}.
	 Then there exist $X_2>L$, $X_4>L$, $T_2>0$, $T_4>0$, $\tau_2\in\R$, $z_4\in\R$, $\mu>0$ and $\delta>0$ such that
		\begin{equation}
			\label{6.18}
			v(t,y)\ge\phi(y-c_bt+\tau_2)-\delta e^{-\delta(t-T_2)}-\delta e^{-\mu(y-X_2)}~~\text{for all}~ t\ge T_2~\text{and} ~y\ge X_2,
		\end{equation}
		\begin{equation}
			\label{6.19}
			\qquad\quad	v(t,y)\ge\phi(-y-c_bt+z_4)-\delta e^{-\delta(t-T_4)}-\delta e^{-\mu(X_4-y)}~~\text{for all}~ t\ge T_4~\text{and} ~y\le X_4.
		\end{equation}
	\end{lemma}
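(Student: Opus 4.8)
The plan is to produce, for $v$, two explicit lower barriers: one travelling to the right with speed $c_b$ that yields \eqref{6.18}, and one travelling to the left with speed $c_b$ that yields \eqref{6.19}. Each is the mirror image of a barrier already used in this paper — the sub-solution constructed for \eqref{4.58} in the proof of Lemma~\ref{lemma 3.1}, and the super-solution of Lemma~\ref{lemma:c<-c_m} now re-cast as a sub-solution. From the outset I would fix the constants $\mu,\delta,C,\kappa,\omega$ exactly as in \eqref{6.2-mu}--\eqref{6.2-omega}, adding the single extra smallness requirement $\delta<\alpha\,(-(c+c_b))$, where $\alpha>0$ is the decay exponent of $\phi$ at $+\infty$ from \eqref{2.6}; this is admissible because $-(c+c_b)=-c-c_b>0$ (recall $c\le-c_m<-c_b$, the inequality $c_m>c_b$ coming from Lemma~\ref{Lemma_cm>cb}). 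Why $\delta$ must be this small is explained in the last paragraph.

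First I would harvest the initial and boundary data from Step~1 of the proof of Theorem~\ref{thm_c le -cm,cb>0}. Since $v\ge w$ on $\R_+\times\R$ and $w$ obeys the Fife--McLeod estimate \eqref{2.7-w}, one has $v(t,y)\ge\phi(y-c_bt+\xi_2)-Me^{-\eta t}$ for $y>0$ and $v(t,y)\ge\phi(-y-c_bt+\xi_1)-Me^{-\eta t}$ for $y<0$; in particular $\liminf_{t\to+\infty}v(t,X_2)\ge1$ and $\liminf_{t\to+\infty}v(t,X_4)\ge1$. Taking $T_2,T_4$ large then gives $v(t,X_i)\ge1-\delta$ for $t\ge T_i$ $(i=2,4)$ and, comparing the decreasing profile $\phi$ with the bounds above and using $c_b>0$, it gives that at $t=T_2$ (resp. $t=T_4$) the function $v$ dominates, on $\{y\ge X_2\}$ (resp. on $\{y\le X_4\}$), a copy of $\phi$ translated far to the right of $X_2$ (resp. far to the left of $X_4$) minus $\delta$; this is the analogue of the initialisations \eqref{4.65} and \eqref{4.66}.

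Next comes the sub-solution check. For \eqref{6.18} this is the easy case: because $c<0$ one has $y-ct>X_2>L$ for all $y\ge X_2$ and $t\ge0$, so $v$ solves the pure bistable equation $v_t=v_{yy}+f_b(v)$ on $\{y\ge X_2\}\times\{t\ge T_2\}$; I would take, on that set, $\underline v(t,y)=\phi(\underline\xi(t,y))-\delta e^{-\delta(t-T_2)}-\delta e^{-\mu(y-X_2)}$ with $\underline\xi(t,y)=y-X_2-c_b(t-T_2)-\omega e^{-\delta(t-T_2)}+\omega+(\text{const})$ — i.e. the sub-solution of \eqref{4.58} with the advection coefficient set to $0$ and $c_b-c$ replaced by $c_b$ — and the inequality $\underline v_t-\underline v_{yy}-f_b(\underline v)\le0$ follows from the very same three-case argument (according as $\underline\xi\le-C$, $\underline\xi\ge C$ or $-C\le\underline\xi\le C$, using \eqref{6.2-delta}, \eqref{6.2-kappa}--\eqref{6.2-A} and $\phi'<0$). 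Together with the boundary data above, the comparison principle gives \eqref{6.18} after absorbing the shift into $\tau_2$. For \eqref{6.19} the barrier is the mirror image, $\underline v(t,y)=\phi(\underline\xi(t,y))-\delta e^{-\delta(t-T_4)}-\delta e^{-\mu(X_4-y)}$ with $\underline\xi(t,y)=-y-c_b(t-T_4)-\omega e^{-\delta(t-T_4)}+\omega+(\text{const})$, built exactly as the super-solution of Lemma~\ref{lemma:c<-c_m} but with the sign of the correction terms reversed.

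The genuine difficulty, and the only real novelty, lies in \eqref{6.19}: the half-line $\{y\le X_4\}$ is not contained in the bistable zone $\{y-ct\ge L\}=\{y\ge L+ct\}$, so I would split it along the (decreasing) moving interface $y=L+ct$. On $\{L+ct\le y\le X_4\}$ the equation for $v$ is $v_t=v_{yy}+f_b(v)$ and the three-case computation applies verbatim. On $\{y<L+ct\}\cap\{y\le X_4\}$ one has $\underline\xi(t,y)\ge(-(c+c_b))(t-T_4)+\big((-c)T_4-L+(\text{const})\big)$, a quantity that becomes large and positive once $T_4$ is large; then \eqref{2.6} and the choice $\delta<\alpha\,(-(c+c_b))$ force $\phi(\underline\xi(t,y))\le\delta e^{-\delta(t-T_4)}$, hence $\underline v(t,y)<0<v(t,y)$ on that whole region, in particular on the interface itself. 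Applying the comparison principle on $\{t>T_4,\ L+ct<y<X_4\}$ — whose parabolic boundary $\{t=T_4\}\cup\{y=X_4\}\cup\{y=L+ct\}$ carries $\underline v\le v$ by the two previous paragraphs — yields $v\ge\underline v$ there, while $v>0>\underline v$ on the remainder of $\{y\le X_4\}$, so \eqref{6.19} holds after renaming the shift $z_4$. I expect this splitting to be the main obstacle: one must be sure the leftward sub-solution — whose transition layer sits near $y\approx-c_bt$ — stays inside $\{y-ct\ge L\}$ for large $t$ (this is exactly where $c\le-c_m$, i.e. $-(c+c_b)>0$, is used) and that it is strictly negative in the complementary far region, which is what forces the extra smallness $\delta<\alpha\,(-(c+c_b))$ on top of \eqref{6.2-mu}--\eqref{6.2-omega}. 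The remaining steps are the by-now-standard sub-/super-solution bookkeeping of Lemmas~\ref{lemma 3.1} and \ref{lemma:c<-c_m}.
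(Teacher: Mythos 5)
Your proposal is correct in substance and, for \eqref{6.18}, coincides with the paper's proof: same Fife--McLeod initialization through the lower solution $w$ via \eqref{2.7-w}, same rightward-moving subsolution $\phi(\underline\xi)-\delta e^{-\delta(t-T_2)}-\delta e^{-\mu(y-X_2)}$, same three-case verification. The divergence is in \eqref{6.19}, precisely at the point you single out as ``the genuine difficulty''. The paper does not split along the moving interface $y=L+ct$ at all: by hypothesis \eqref{hypf}, $f(\cdot,s)$ is nonincreasing in the spatial variable and equals $f_b(s)$ on $[L,+\infty)$, hence $f(y-ct,s)\ge f_b(s)$ for every $y$, $t$ and $s\ge0$, so $\underline v_t-\underline v_{yy}-f(y-ct,\underline v)\le \underline v_t-\underline v_{yy}-f_b(\underline v)$ everywhere; the pure-bistable subsolution is therefore a subsolution of \eqref{1.1} on the whole half-line $\{y\le X_4\}$, and the comparison is run on the fixed cylinder $\{t\ge T_4,\ y\le X_4\}$ with only the bottom and the lateral boundary $y=X_4$ to control. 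Your alternative — checking the inequality only on $\{L+ct\le y\le X_4\}$ and showing the barrier is negative to the left of the interface, then comparing on the non-cylindrical domain — is also valid: the estimate $\underline\xi\ge-(c+c_b)(t-T_4)+(-c)T_4-L+\mathrm{const}$ together with \eqref{2.6} does force $\underline v<0$ there once $T_4$ is large and $\delta<\alpha\,(-(c+c_b))$. What the paper's route buys is that this entire discussion, the extra smallness condition on $\delta$, and the moving-boundary comparison are unnecessary; what your route buys is nothing beyond what the one-line monotonicity observation already gives, so it is a (correct) complication rather than a generalization. Note also that your reduction for \eqref{6.18} (``$y-ct\ge X_2>L$, so the equation is purely bistable there'') is fine since $c<0$, and is just a special case of the same monotonicity remark.
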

	\noindent
	{\bf Remark}.  Let $u$ be as given in Theorem \ref{thm_c le -cm,cb>0}. It is obvious that \eqref{6.19} is equivalent to
	\begin{equation}	\label{u-left-lower bd}
		u(t,x)\ge\phi(-x-(c_b+c)t+z_4)-\delta e^{-\delta(t-T_4)}-\delta e^{-\mu(X_4-x-ct)}~~\text{for all}~ t\ge T_4~\text{and} ~x\le X_4-ct,
	\end{equation}
	which will be more convenient in the investigation of leftward propagation of  Theorem \ref{thm_c le -cm,cb>0} later on.
	\begin{proof}
		We first introduce some parameters.  Let $\mu>0$, $\delta>0$, $C>0$, $\kappa>0$ and $\omega$ satisfy \eqref{4.59}--\eqref{4.62} and \eqref{6.2-mu}--\eqref{6.2-omega}. We further assume that
		\begin{equation}
		\label{2.7-mu}
		0<\mu<\sqrt{\min\left(\frac{|f_b'(0)|}{2},\frac{|f_b'(1)|}{2}\right)},
	\end{equation}
	\begin{equation}	
	\label{2.7-delta}
	\left\{\baa{l}
	\displaystyle 0<\delta< \min\Big(\mu c_b,\frac{1}{5}, \frac{|f_b'(0)|}{2}, \frac{|f_b'(1)|}{2}\Big), \vspace{3pt}\\
	\displaystyle f_b'\le \frac{f_b'(0)}{2}~\text{in}~[-2\delta,\delta],~~f_b'\le \frac{f_b'(1)}{2}~\text{in}~[1-3\delta,1],\eaa\right.
\end{equation}
and
		\begin{equation}
			\label{2.7-omega}
			\kappa\omega\ge 2\delta+\max_{[-2\delta,1]}|f_b'|.
		\end{equation} 
		We also choose some constant $B>\omega$ such that
		\begin{equation}
			\label{2.7-B}
		\Big(\max_{[-2\delta,1]}|f_b'|+ \mu^2\Big)e^{-\mu(B-\omega)}\le \delta.
		\end{equation}

		\vskip0.2cm
		\noindent
		{\it Step 1. Proof of \eqref{6.18}.}
		First,  \eqref{2.7-w} implies that 
		$v(t,y)\ge w(t,y)\ge \phi(y-c_bt+\xi_2)-M e^{-\eta t}$ for $t>0$ and $y\geq0$. In particular, 	
		 there exist $X_2>L$ and $T_2\ge \max( -\ln(\delta/(2M))/\eta,  (X_2+B+3C+ \xi_2)/c_b)>0$ such that for $X_2\le y\le X_2+B+2C$ and $t\ge T_2$, there holds
		\begin{equation}\label{2.7-initial}
			\begin{aligned}
				v(t,y)\ge w(t,y)&\ge \phi(y-c_b t+\xi_2)-M e^{-\eta t}\\
				&\ge \phi(X_2+B+2C-c_b T_2+\xi_2)-M e^{-\eta T_2}\ge \phi(-C)-\delta/2\ge 1-\delta.
			\end{aligned}
		\end{equation}
		
		For $t\ge T_2$ and $y\ge X_2$, we define
		\begin{equation*}
			\underline v(t,y)=\phi(\xi(t,y))-\delta e^{-\delta(t-T_2)}-\delta e^{-\mu(y-X_2)},
		\end{equation*}
		with $\xi(t,y)=y-X_2-c_b(t-T_2)-\omega e^{-\delta(t-T_2)}+\omega-B-C$. We are going to verify that $\underline v$ is a subsolution for \eqref{1.1} for $t\ge T_2$ and $y\ge X_2$. At $t=T_2$, we notice that $\underline v(T_2,y)\le 1-\delta\le v(T_2,y)$ for $X_2\le y\le X_2+B+2C$ by virtue of \eqref{2.7-initial}, while in the region where $y\ge X_2+B+2C$, since $\xi( T_2,y)\ge B+2C-B-C=C$, it follows that $\underline v(T_2,y)\le \delta-\delta-\delta e^{-\mu(y-X_2)}<0<v(T_2,y)$. Therefore, we have $\underline v(T_2,y)\le v(T_2,y)$ for $y\ge X_2$. Moreover, we also observe from \eqref{2.7-initial} that $\underline v(t,X_2)\le 1-\delta e^{-\delta(t-T_2)}-\delta\le v(t, X_2)$ for $t\ge T_2$. It remains to check that $\underline v_t(t,y)-\underline v_{yy}(t,y)-f(y-ct,\underline v(t,y))\le 0$ for $t\ge T_2$ and $y\ge X_2$. By noticing that  $ \underline v_t(t,y)-\underline v_{yy}(t,y)-f(y-ct,\underline v(t,y))\le  \underline v_t(t,y)-\underline v_{yy}(t,y)-f_b(\underline v(t,y))=:\mathcal{L}\underline v(t,y)$ for $t\ge 0$ and $y\in \R$, it suffices to prove that  $\mathcal{L}\underline v(t,y)\le 0$ for $t\ge T_2$ and $y\ge X_2$. After a straightforward computation, one derives 
		\begin{equation*}
			\mathcal{L}\underline v(t,y)=f_b(\phi(\xi(t,y)))-f_b(\underline v(t,y))+\phi'(\xi(t,y))\omega\delta e^{-\delta(t-T_2)}+\delta^2 e^{-\delta(t-T_2)}+\mu^2\delta e^{-\mu(y-X_2)}.
		\end{equation*}
		By doing the analysis according to three cases, namely, $\xi(t,y)\le -C$, $\xi(t,y)\ge C$ and $-C\le \xi(t,y)\le C$  based on the choices of the parameters above, eventually we obtain that $\mathcal{L}\underline v(t,y)=\underline v_t(t,y)-\underline v_{yy}(t,y)-f_b(\underline v(t,y))\le 0$ for $t\ge T_2$ and $y\ge X_2$. The comparison principle then implies that  for $t\ge T_2$ and $y\ge X_2$
		\begin{equation*}
			v(t,y)\ge  \phi(y-X_2-c_b(t-T_2)-\omega e^{-\delta(t-T_2)}+\omega-B-C)-\delta e^{-\delta(t-T_2)}-\delta e^{-\mu(y-X_2)}.
		\end{equation*}
		By taking $\tau_2=-X_2+c_b T_2+\omega-B-C$, we then reach \eqref{6.18}, as stated.	
		
		\vskip0.2cm
		\noindent
		 {\it Step 2. Proof of \eqref{6.19}.}  Fix $X_4>L+B+C$.
			We infer from \eqref{2.7-w} that 
			$v(t,y)\ge w(t,y)\ge \phi(-y-c_bt+\xi_1)-M e^{-\eta t}$ for $t>0$ and $y\leq0$, and that $v(t,y)\ge w(t,y)\ge \phi(y-c_bt+\xi_2)-M e^{-\eta t}$ for $t>0$ and $y\geq0$. Then, there exists $T_4>\max\big((2C+\xi_1)/c_b, -(1/\eta)\ln(\delta/(2M)),(C+X_4+\xi_2)/c_b\big)>0$ such that
			$$
			\begin{aligned}
				v(t,y)\ge w(t,y)&\ge \phi(-y-c_b t+\xi_1)-M e^{-\eta t}\\
				&\ge \phi(C-c_b T_4 +\xi_1)-M e^{-\eta T_4}\ge \phi(-C)-\delta/2\ge 1-\delta,~~~~~ t\ge T_4,~~-C\le y\le 0,
			\end{aligned}
			$$
			and 
			$$\begin{aligned}
				v(t,y)\ge w(t,y)&\ge \phi(y-c_b t+\xi_2)-M e^{-\eta t}\\
				&\ge \phi(X_4-c_b T_4 +\xi_2)-M e^{-\eta T_4}\ge \phi(-C)-\delta/2\ge 1-\delta, ~~~~t\ge T_4,~~0\le y\le X_4.
			\end{aligned}$$
			Combining the above two inequalities, it follows that
			\begin{equation}
				\label{2.7_initial'}v(t,y)\ge 1-\delta~~~\text{for}~~ t\ge T_4~~\text{and}~-C\le y\le X_4.
			\end{equation}
			
			For $t\ge T_4$ and $y\le X_4$, set now
			\begin{equation*}
				\breve v(t,y)=\phi( \zeta(t,y))-\delta e^{-\delta(t-T_4)}-\delta e^{-\mu(X_4-y)},
			\end{equation*}
			with $\zeta(t,y)=-y+X_4-c_b(t-T_4)-\omega e^{-\delta(t-T_4)}+\omega-B-C$. We are going to show that $\breve{v}$ is a subsolution for \eqref{1.1} for $t\ge T_4$ and $y\le X_4$. Indeed, at time $t=T_4$, we notice that $\breve{v}(T_4,y)\le 1-\delta-\delta e ^{-\mu(X_4-y)}\le v(T_4,y)$ for $-C\le y\le X_4$ by virtue of \eqref{2.7_initial'}, and moreover, the observation that $\zeta(T_4,y)\ge C$ for $y\le -C$ implies that $\breve{v}(T_4,y)\le \delta-\delta-\delta e^{-\mu(X_4-y)}\le 0<v(T_4,y)$ for $y\le -C$. Thus, $\breve v(T_4,y)\le v(T_4, y)$ for $y\le X_4.$ On the other hand, \eqref{2.7_initial'} also implies that $v(t,X_4)\ge 1-\delta\ge \breve v(t,X_4)$ for $t\ge T_4$. The remaining parts can be proved similarly to those in Step 1 with $z_4=X_4+c_b T_4+\omega-B-C$.
	\end{proof} 
	
	At this stage, let us combine \eqref{6.18-0} and \eqref{6.18}, by repeating the lines as in the proof of Theorem \ref{thm_propagation-2} (together with parallel results to the technical Lemmas \ref{lemma 3.2}--\ref{lemma 3.4} and with \cite[Theorem 3.1]{BH2007}), eventually we reach the following stability result: there exist $X>L$ and some constant $z_1\in\R$ such that
		\begin{equation*}
		\sup_{y\ge X} |v(t,y)-\phi(y-c_bt+z_1)|\to0~~~\text{as}~t\to+\infty,
	\end{equation*}
		Equivalently,
	\begin{equation}\label{2.7-3}
		\sup_{ x\ge X-ct} |u(t,x)-\phi(x-(c_b-c)t+z_1)|\to0~~~\text{as}~t\to+\infty.
	\end{equation}
	This proves the stability result \eqref{equ:c_le_cm:r} in Theorem \ref{thm_c le -cm,cb>0}, which also demonstrates that the rightward propagation speed is $c_b-c$.
	
	From now on, we will focus on the proof of the ``leftward'' propagation with speed $c_b+c<0$. To start with, we give a stronger estimate than \eqref{6.2-1} in Lemma \ref{lemma:c<-c_m}.

\begin{lemma}
	\label{lemma-6.4}
		Assume that $c\le  -c_m$ and $c_b>0$.  Let $u$ be the solution of~\eqref{1.1'} with a nonnegative continuous and compactly supported initial datum $u_0\not\equiv 0$. Then for any $\varep>0$, there exist $X_{3,\varep}>L$,  $T_{3,\varep}>0$ and  $z_{3,\varep}\in\mathbb{R}$ such that
	\begin{align}\label{6.4-1}
		u(t,x)\le\phi\big(-x-(c_b+c)(t-T_{3,\varep})+z_{3,\varep}\big)+\varep e^{-\delta(t-T_{3,\varep})}+\varep e^{-\mu(x-X_{3,\varep})},~~~t\ge T_{3,\varep},~ x\ge X_{3,\varep},
	\end{align}
with $\mu>0$ and $\delta>0$ as given in Lemma \ref{lemma6.3_upper bound right}. 
\end{lemma}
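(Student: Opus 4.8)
The plan is to run the $\varep$-refinement that took Lemma~\ref{lemma 3.1} to Lemma~\ref{lemma 3.2}, now applied to the leftward barrier of Lemma~\ref{lemma:c<-c_m}. Throughout, keep $\mu$, $\delta$, $C$, $\kappa$, $\omega$ fixed exactly as in \eqref{6.2-mu}--\eqref{6.2-omega} (these are $\varep$-independent). If $\varep\ge\delta$, nothing is to be done: Lemma~\ref{lemma:c<-c_m} already gives \eqref{6.4-1} with $X_{3,\varep}=X_3$, $T_{3,\varep}=T_3$, $z_{3,\varep}=z_3$, because $\delta e^{-\delta(t-T_3)}+\delta e^{-\mu(x-X_3)}\le\varep e^{-\delta(t-T_3)}+\varep e^{-\mu(x-X_3)}$. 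So assume from now on $0<\varep<\delta$, and introduce the refinements mirroring the proof of Lemma~\ref{lemma 3.2}: pick $C_\varep>C$ with $\phi\ge1-\varep/2$ on $(-\infty,-C_\varep]$ and $\phi\le\varep$ on $[C_\varep,+\infty)$; set $\omega_\varep:=\varep\omega/\delta$ as in \eqref{omegaeps}; and choose $A_\varep>\omega_\varep+C_\varep$ so large that $\big(\max_{[0,1+2\delta]}|f_b'|+\mu^2-c\mu\big)e^{-\mu(A_\varep-\omega_\varep-C_\varep)}\le\delta$, in analogy with \eqref{6.2-A}.

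Next, by Lemma~\ref{lem:u:basic_propert} fix $T_{3,\varep}>0$ with $u(T_{3,\varep},x)<1+\varep/2$ for all $x\in\R$; fix $X_{3,\varep}>L$, put $\overline X_{3,\varep}:=X_{3,\varep}+A_\varep+C_\varep$, and enlarge $T_{3,\varep}$ if necessary so that, by Lemma~\ref{lemma_c<-c_m}, $u(t,x)\le\varep$ for all $t\ge T_{3,\varep}$ and $x\in[X_{3,\varep},\overline X_{3,\varep}]$; note also that $\phi(-x+X_{3,\varep}+A_\varep)\ge1-\varep/2$ whenever $x\ge\overline X_{3,\varep}$. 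Then define, for $t\ge T_{3,\varep}$ and $x\ge X_{3,\varep}$,
\begin{equation*}
\overline u_\varep(t,x)=\phi\big(\overline\xi_\varep(t,x)\big)+\varep e^{-\delta(t-T_{3,\varep})}+\varep e^{-\mu(x-X_{3,\varep})},
\end{equation*}
with $\overline\xi_\varep(t,x)=-x+X_{3,\varep}-(c_b+c)(t-T_{3,\varep})+\omega_\varep e^{-\delta(t-T_{3,\varep})}-\omega_\varep+A_\varep$. The ordering at the initial time $\overline u_\varep(T_{3,\varep},\cdot)\ge u(T_{3,\varep},\cdot)$ on $[X_{3,\varep},+\infty)$ and at the lateral boundary $\overline u_\varep(t,X_{3,\varep})\ge u(t,X_{3,\varep})$ for $t\ge T_{3,\varep}$ follow verbatim from the corresponding lines of Lemma~\ref{lemma:c<-c_m} with $\varep$ replacing $\delta$. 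The interior inequality $\mathcal{N}\overline u_\varep:=(\overline u_\varep)_t-(\overline u_\varep)_{xx}-c(\overline u_\varep)_x-f_b(\overline u_\varep)\ge0$ is checked by the same three-case split on $\overline\xi_\varep$ ($\overline\xi_\varep\le-C$, $\overline\xi_\varep\ge C$, $-C\le\overline\xi_\varep\le C$), reusing the linearization bounds of \eqref{6.2-delta} and the constants \eqref{6.2-C}--\eqref{6.2-A}. The comparison principle then gives $u\le\overline u_\varep$, and since $\phi$ is decreasing this is exactly \eqref{6.4-1} with $z_{3,\varep}=A_\varep-\omega_\varep+X_{3,\varep}$.

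I do not expect a genuine obstacle: the proof is a routine $\varep$-bookkeeping of Lemma~\ref{lemma:c<-c_m}, and it is literally the leftward analogue of what is done in Lemma~\ref{lemma 3.2}. The one point requiring attention is the scaling $\omega_\varep=\varep\omega/\delta$, chosen so that in the middle case $|\overline\xi_\varep|\le C$ the gain term satisfies $-\phi'(\overline\xi_\varep)\,\omega_\varep\delta\,e^{-\delta(t-T_{3,\varep})}\ge\kappa\omega_\varep\delta\,e^{-\delta(t-T_{3,\varep})}=\kappa\omega\,\varep\,e^{-\delta(t-T_{3,\varep})}$, which is linear in $\varep$ and therefore still dominates the $\varep e^{-\delta(\cdot)}$ error terms by \eqref{6.2-omega}, while $\omega$, $\kappa$, $\delta$ remain the fixed constants of Lemma~\ref{lemma:c<-c_m}; one should also make sure $A_\varep$ is chosen large enough that the $e^{-\mu(x-X_{3,\varep})}$ remainder, estimated via $e^{-\mu(x-X_{3,\varep})}\le e^{-\mu(-(c_b+c)(t-T_{3,\varep})-\omega_\varep+A_\varep-C_\varep)}$ in that case, is absorbed as well.
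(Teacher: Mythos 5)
Your proposal is correct and follows essentially the same route as the paper's own proof: the trivial case $\varep\ge\delta$, then for $0<\varep<\delta$ the refined constants $C_\varep$, $\omega_\varep=\varep\omega/\delta$, $A_\varep$, the supersolution $\phi(\overline\xi_\varep)+\varep e^{-\delta(\cdot)}+\varep e^{-\mu(\cdot)}$ with the same shift $\overline\xi_\varep$, the initial/boundary checks via Lemmas \ref{lem:u:basic_propert} and \ref{lemma_c<-c_m}, the three-case interior estimate, and the shift $z_{3,\varep}=X_{3,\varep}-\omega_\varep+A_\varep$. Your key observation that $\kappa\omega_\varep\delta=\kappa\omega\varep$ keeps the gain term linear in $\varep$ is exactly the mechanism used in the paper.
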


\begin{proof}

 	Let $\mu>0$, $\delta>0$, $C>0$, $\kappa>0$ and $\omega>0$ be defined as in the proof of Lemma \ref{lemma6.3_upper bound right} (all independent of $\varep$). It follows from Lemma \ref{lemma:c<-c_m} that the conclusion of Lemma \ref{lemma-6.4} immediately holds true when $\varep\ge \delta$, with $z_{3,\varep}=X$ and $T_{3,\varep}=T$. It is left to deal with the case where $$0<\varep<\delta.$$
 	
 	Let us introduce for convenience further parameters. Fix $C_\varep>C>0$ such that
 	\begin{equation}
 		\label{6.4-Cvarep}
 		\phi\ge 1-\frac{\varep}{2}~~~\text{in}~(-\infty,-C_\varep],~~~~\phi\le \frac{\varep}{2}~~~\text{in}~[C_\varep,+\infty).
 	\end{equation}
 Denote
 \begin{equation}
 	\label{6.4-omega_varep}
 	\omega_\varep:=\frac{\varep\omega}{\delta}.
 \end{equation}
Finally, we choose $A_\varep>\omega_\varep+C_\varep$ such that (notice that $\mu^2-c\mu+\max_{[0,1+2\delta]}|f_b'|>0$)
\begin{equation}
	\label{6.4-A}
	\big(\mu^2-c\mu+\max_{[0,1+2\delta]}|f_b'|\big)e^{-\mu(A_\varep-\omega_\varep-C_\varep)}<\delta.
\end{equation}

We simply repeat the arguments used in the proofs of \eqref{initial} and \eqref{boundary} by replacing $\delta$ with $\varep$. Lemma \ref{lem:u:basic_propert} gives the existence of $T_{3,\varep}>0$ large enough such that 
\begin{equation}
	\label{6.4-initial}
	u(T_{3,\varep},x)<1+\frac{\varep}{2}~~~~\text{uniformly for}~x\in\R.
\end{equation} 
Fix $X_{3,\varep}>L$ and $\overline X_{3,\varep}:= X_{3,\varep}+A_\varep+C_\varep$. We deduce from Lemma \ref{lemma_c<-c_m} that, up to increasing $T_{3,\varep}$,
\begin{equation}
	\label{6.4-boundary}
	u(t,x)<\varep~~~~~~~~~~~\text{for}~t\ge T_{3,\varep}~~\text{and}~ ~x \in [X_{3,\varep},\overline X_{3,\varep}].
\end{equation}

Define now for $t\ge T_{3,\varep}$ and $x\ge X_{3,\varep}$,
\begin{equation*}
	\overline u(t,x)=\phi(\overline\xi(t,x))+\varep e^{-\delta(t-T_{3,\varep})}+\varep e^{-\mu(x-X_{3,\varep})},
\end{equation*}
with $$\overline\xi(t,x):=-x+X_{3,\varep}-(c_b+c)(t-T_{3,\varep})+\omega_\varep e^{-\delta(t-T_{3,\varep})}-\omega_\varep+A_\varep.$$
We now check that $\overline u$ is a supersolution to \eqref{1.1'} for $t\ge T_{3,\varep}$ and $x\ge X_{3,\varep}$. In fact, at time $T_{3,\varep}$, we infer from \eqref{6.4-boundary} that
\begin{equation}
	\overline u(T_{3,\varep}, x)\ge \varep> u(T_{3,\varep},x)~~~~~~ \text{ for } X_{3,\varep}\le x\le\overline X_{3,\varep}.
\end{equation}
On the other hand, in the case where $x\ge \overline X_{3,\varep}$, that is, $-x+X_{3,\varep}+A_\varep\le -C_\varep$, \eqref{6.4-Cvarep} and \eqref{6.4-initial} imply
\begin{equation}
		\overline u(T_{3,\varep}, x)\ge \phi(-x+X_{3,\varep}+A_\varep)+\varep\ge \phi(-C_\varep)+\varep\ge 1+\frac{\varep}{2}>u(T_{3,\varep},x).
\end{equation}
Moreover, at $x=X_{3,\varep}$, it is easy to see that $\overline u(t,X_{3,\varep})\ge\varep>u(t,X_{3,\varep})$ for all $t\ge T_{3,\varep}$ due to \eqref{6.4-boundary}. It is then left to check that $\mathcal{N}\overline u(t,x):=\overline u_t(t,x)-\overline u_{xx}(t,x)-c\overline u_x(t,x)-f_b(\overline u(t,x))\ge0$ for $t\ge T_{3,\varep}$ and $x\ge X_{3,\varep}$.

After a straightforward computation, we arrive at
\begin{align*}
	\mathcal{N}\overline u(t,x)=f_b(\phi(\overline\xi(t,x)))-f_b(\overline u(t,x))-\delta\omega_\varep e^{-\delta(t-T_{3,\varep})}\phi'(\overline\xi(t,x)) -\delta \varep e^{-\delta(t-T_{3,\varep})}+(c\mu-\mu^2)\varep e^{-\mu(x-X_{3,\varep})}.
\end{align*}
We divide into three cases.
\begin{itemize}
	\item if $\overline\xi(t,x)\le -C$, one has $1-\varep/2\le\phi(\overline \xi(t,x))<1$, therefore $1-\delta<1-\varep/2\le\overline u(t,x)<1+2\varep<1+2\delta$. Thus, it follows from \eqref{6.2-mu}--\eqref{6.2-delta} that
	\begin{align*}
	\mathcal{N} \overline u(t,x)&\ge \Big(-\frac{f_b'(1)}{2}-\delta\Big)\varep e^{-\delta(t-T_{3,\varep})}+\Big(-\frac{f_b'(1)}{2}-\mu^2+c\mu\Big)\varep e^{-\mu(x-X_{3,\varep})}>0;
\end{align*}
\item if $\overline\xi(t,x)\ge C$, one derives $0<\phi(\overline \xi(t,x))\le \varep/2<\delta$, and then $0<\overline u(t,x)\le 3\varep<3\delta$. Together with \eqref{6.2-mu}--\eqref{6.2-delta}, there then holds
\begin{align*}
\mathcal{N} \overline u(t,x)&\ge  \Big(-\frac{f_b'(0)}{2}-\delta\Big)\varep e^{-\delta(t-T_{3,\varep})}+\Big(-\frac{f_b'(0)}{2}-\mu^2+c\mu\Big)\varep e^{-\mu(x-X_{3,\varep})}>0;
\end{align*}
\item if $-C\le\overline\xi(t,x)\le C$, then  $0<\overline u(t,x)\le 1+2\varep<1+2\delta$. By noticing that $x-X_{3,\varep}\ge-(c_b+c)(t-T_{3,\varep})-\omega_\varep+A_\varep-C\ge -(c_b+c)(t-T_{3,\varep})-\omega_\varep+A_\varep-C_\varep$, we have $e^{-\mu(x-X_{3,\varep})}\le e^{-\mu\big(-(c_b+c)(t-T_{3,\varep})-\omega_\varep+A_\varep-C_\varep\big)}$. One then infers from  \eqref{6.2-delta}, \eqref{6.2-kappa}, \eqref{6.2-omega} and \eqref{6.4-A} that
\begin{align*}
\mathcal{N}\overline u(t,x)\ge& -\max_{[0,1+2\delta]}|f_b'|\Big(\varep e^{-\delta(t-T_{3,\varep})}+\varep e^{-\mu (x-X_{3,\varep})}\Big)+\kappa\omega_\varep\delta e^{-\delta(t-T_{3,\varep})}\\
&\qquad\qquad\qquad\qquad\qquad\qquad-\delta \varep e^{-\delta(t-T_{3,\varep})}-( \mu^2-c\mu) \varep e^{-\mu(x-X_{3,\varep})}\cr
=& \Big(\kappa\omega-\max_{[0,1+2\delta]}|f_b'|-\delta\Big)\varep e^{-\delta(t-T_{3,\varep})}-\Big(\max_{[0,1+2\delta]}|f_b'|+\mu^2-c\mu\Big)\varep e^{-\mu(x-X_{3,\varep})}\cr
\ge& \Big(\kappa\omega-\max_{[0,1+2\delta]}|f_b'|-\delta\Big)\varep e^{-\delta(t-T_{3,\varep})}-\Big(\max_{[0,1+2\delta]}|f_b'|+\mu^2-c\mu\Big)\varep e^{-\mu\big(
	-(c_b+c)(t-T_{3,\varep})-\omega_\varep+A_\varep-C_\varep\big)}\cr
\ge&\Big(\kappa\omega-\max_{[0,1+2\delta]}|f_b'|-2\delta\Big)\varep e^{-\delta(t-T_{3,\varep})}\ge 0.
\end{align*} 
\end{itemize}
Consequently, we have shown that $\mathcal{N}\overline u(t,x)=\overline u_t(t,x)-\overline u_{xx}(t,x)-c\overline u_x(t,x)-f_b(\overline u(t,x))\ge0$ for $t\ge T_{3,\varep}$ and $x\ge X_{3,\varep}$. The maximum principle implies that 
\begin{equation*}
	u(t,x)\le \overline u(t,x)= \phi\big(-x+X_{3,\varep}-(c_b+c)(t-T_{3,\varep})+\omega_\varep e^{-\delta(t-T_{3,\varep})}-\omega_\varep+A_\varep\big)+\varep e^{-\delta(t-T_{3,\varep})}+\varep e^{-\mu(x-X_{3,\varep})}
\end{equation*}
for  $t\ge T_{3,\varep}$ and $x\ge X_{3,\varep}$. Thus, \eqref{6.4-1} is achieved by taking $z_{3,\varep}=X_{3,\varep}-\omega_\varep+A_\varep$. This completes the proof of Lemma \ref{lemma-6.4}.
\end{proof}

In the same spirit of Lemma \ref{lemma-6.4}, we  give below a strong version of \eqref{6.19} in Lemma \ref{lemma6.3_upper bound right}, which concerns the lower estimate of the leftward propagation front.

\begin{lemma}
	\label{lemma-6.5}
	Assume that $c\le  -c_m$ and $c_b>0$.  Let $v$ be as given in Lemma \ref{lemma6.3_upper bound right}.  Then for any $\varep>0$, there exist  $X_{4,\varep}>L$, $T_{4,\varep}>0$ and $z_{4,\varep}\in\R$ such that 
	\begin{equation}
		\label{6.5}
		v(t,y)\ge\phi(-y-c_b(t-T_{4,\varep})+z_{4,\varep})-\varep e^{-\delta(t-T_{4,\varep})}-\varep e^{-\mu(X_{4,\varep}-y)}~~~\text{for}~ t\ge T_{4,\varep}~\text{and} ~y\le X_{4,\varep},
	\end{equation}
	with $\mu>0$ and $\delta>0$ as given in  Lemma \ref{lemma6.3_upper bound right}.
\end{lemma}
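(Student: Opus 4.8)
The plan is the following. When $\varep\ge\delta$ the estimate~\eqref{6.5} is already~\eqref{6.19} (take $X_{4,\varep}=X_4$, $T_{4,\varep}=T_4$, $z_{4,\varep}=z_4$), so one may assume $0<\varep<\delta$, and for this range I rerun the leftward-moving-front subsolution construction of Step~2 of Lemma~\ref{lemma6.3_upper bound right} with the amplitude $\delta$ of the perturbation terms replaced by $\varep$. I keep the structural parameters $\mu,\delta,C,\kappa,\omega$ from Lemma~\ref{lemma6.3_upper bound right} (so in particular \eqref{2.7-mu}, \eqref{2.7-omega}, \eqref{6.2-C} and~\eqref{6.2-kappa} hold), and I introduce $C_\varep>C$ as in~\eqref{6.4-Cvarep}, $\omega_\varep:=\varep\omega/\delta$ as in~\eqref{6.4-omega_varep}, and $B_\varep>\omega_\varep$ large enough that $\bigl(\max_{[-2\delta,1]}|f_b'|+\mu^2\bigr)e^{-\mu(B_\varep-\omega_\varep)}\le\delta$. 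This mirrors the way Lemma~\ref{lemma 3.2} refines Lemma~\ref{lemma 3.1} and Lemma~\ref{lemma-6.4} refines Lemma~\ref{lemma:c<-c_m}.

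First I would fix the initial and boundary data. Recall from the proof of Theorem~\ref{thm_c le -cm,cb>0} that $v\ge w$, where $w$ solves $w_t=w_{yy}+f_b(w)$ with $w_0=v_0$ and satisfies the Fife--McLeod estimate~\eqref{2.7-w}; since $\phi(-\infty)=1$, both fronts in~\eqref{2.7-w} converge to $1$ uniformly on bounded sets as $t\to+\infty$ (one uses the left front for $y<0$ and the right front for $y>0$). Hence there are $X_{4,\varep}>L+B_\varep+C_\varep$ and $T_{4,\varep}>0$, both large, with $v(t,y)\ge 1-\varep$ for all $t\ge T_{4,\varep}$ and $y\in[X_{4,\varep}-B_\varep-2C_\varep,\,X_{4,\varep}]$. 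Next I set, for $t\ge T_{4,\varep}$ and $y\le X_{4,\varep}$,
\[
\breve v_\varep(t,y)=\phi\bigl(\zeta_\varep(t,y)\bigr)-\varep e^{-\delta(t-T_{4,\varep})}-\varep e^{-\mu(X_{4,\varep}-y)},\qquad \zeta_\varep(t,y):=-y+X_{4,\varep}-c_b(t-T_{4,\varep})-\omega_\varep e^{-\delta(t-T_{4,\varep})}+\omega_\varep-B_\varep-C_\varep,
\]
and I check that $\breve v_\varep$ is a subsolution of~\eqref{1.1} on this half-strip: ordering at $t=T_{4,\varep}$ (splitting according to whether $\zeta_\varep(T_{4,\varep},\cdot)$ exceeds $C_\varep$, using $\phi\le\varep/2$ past $C_\varep$ and $v\ge1-\varep$ on the window), ordering at $y=X_{4,\varep}$ (from $v\ge1-\varep$), and the parabolic inequality. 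For the last item I use that $f$ is nonincreasing in its first variable and equals $f_b$ on $[L,+\infty)$, hence $f(y-ct,s)\ge f_b(s)$ for every $(y,t,s)$, so it suffices to verify $\breve v_{\varep,t}-\breve v_{\varep,yy}-f_b(\breve v_\varep)\le0$; this is the three-zone computation ($\zeta_\varep\le-C$, $\zeta_\varep\ge C$, $-C\le\zeta_\varep\le C$) already carried out in detail in Step~2 of Lemma~\ref{lemma6.3_upper bound right} and in Lemma~\ref{lemma-6.4}, now with $\varep$ in place of $\delta$ throughout. The comparison principle then gives $v\ge\breve v_\varep$, and since $\phi'<0$ and $\omega_\varep\bigl(e^{-\delta(t-T_{4,\varep})}-1\bigr)\le0$ one gets $\phi(\zeta_\varep(t,y))\ge\phi\bigl(-y-c_b(t-T_{4,\varep})+z_{4,\varep}\bigr)$ with $z_{4,\varep}:=X_{4,\varep}+\omega_\varep-B_\varep-C_\varep$, which is precisely~\eqref{6.5}.

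I do not expect a genuine obstacle: the statement is the direct $\varep$-refinement of~\eqref{6.19}, proved by the same leftward-moving barrier already used several times in this section, and there is no advection term to deal with in~\eqref{1.1}. What needs care is only bookkeeping -- choosing $\omega_\varep=\varep\omega/\delta$ and $B_\varep$ so that after the $\varep$-rescaling exactly the same sign inequalities close in the three-zone analysis (the drift term $\kappa\omega_\varep\delta e^{-\delta(t-T_{4,\varep})}=\kappa\omega\varep e^{-\delta(t-T_{4,\varep})}$ must still dominate $\bigl(\max_{[-2\delta,1]}|f_b'|+\delta\bigr)\varep e^{-\delta(t-T_{4,\varep})}$ via~\eqref{2.7-omega}, and in the middle zone the surviving $e^{-\mu(X_{4,\varep}-y)}$ must be absorbed using $X_{4,\varep}-y\ge c_b(t-T_{4,\varep})+B_\varep-\omega_\varep$ together with the choice of $B_\varep$ and with $\delta<\mu c_b$) -- together with extracting the uniform lower bound $v\ge1-\varep$ on the required bounded window from the two-piece estimate~\eqref{2.7-w}.
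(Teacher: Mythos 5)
Your proposal is correct and follows essentially the same route as the paper: case split at $\varep\ge\delta$, parameters $C_\varep$ and $\omega_\varep=\varep\omega/\delta$, a lower bound $v\ge 1-\varep$ on a bounded window extracted from the Fife--McLeod estimate \eqref{2.7-w}, and a subsolution $\phi(\zeta_\varep)-\varep e^{-\delta(t-T_{4,\varep})}-\varep e^{-\mu(X_{4,\varep}-y)}$ with decaying shift, verified by the three-zone computation after reducing to the $f_b$-equation via $f(\cdot,s)\ge f_b(s)$. The only difference is cosmetic bookkeeping: you carry an extra translation $B_\varep$ in $\zeta_\varep$, whereas the paper omits it and instead imposes the smallness condition \eqref{6.5-X} directly on $X_{4,\varep}$ (yielding $z_{4,\varep}=\omega_\varep$ rather than your $z_{4,\varep}=X_{4,\varep}+\omega_\varep-B_\varep-C_\varep$), which changes nothing in substance.
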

Let $u$ be as given in Theorem \ref{thm_c le -cm,cb>0}. It turns out that \eqref{6.5} is parallel to the following estimate of $u$ for $t\ge T_{4,\varep}$ and  $x\le X_{4,\varep}-ct$,
\begin{equation}
	\label{6.5-u}
	u(t,x)\ge\phi(-x-(c+c_b)(t-T_{4,\varep})-cT_{4,\varep}+z_{4,\varep})-\varep e^{-\delta(t-T_{4,\varep})}-\varep e^{-\mu(X_{4,\varep}-x-ct)},
\end{equation}
with $\mu>0$ and $\delta>0$ as given in  Lemma \ref{lemma6.3_upper bound right}. 

\begin{proof}The proof  is essentially very similar to that of Lemma \ref{lemma-6.4}. We sketch the details for the sake of completeness.
	
	Let $\mu>0$, $\delta>0$, $C>0$, $\kappa>0$ and $\omega>0$ be defined as in the proof of Lemma \ref{lemma6.3_upper bound right} (independent of $\varep$). We first notice that the case of $\varep\ge\delta$ is done in Lemma \ref{lemma6.3_upper bound right}. Therefore, it is sufficient to consider the case of $0<\varep<\delta$. As before, we introduce parameters $C_\varep>0$ and $\omega_\varep>0$ as in \eqref{6.4-Cvarep}--\eqref{6.4-omega_varep} for convenience.

Fix $X_{4,\varep}>C_\varep+\omega_\varep+L$ such that 
\begin{equation}
	\label{6.5-X}
	\Big(\max_{[-2\delta,1]}|f_b'|+ \mu^2\Big)e^{-\mu(X_{4,\varep}-C_\varep-\omega_\varep)}<\delta.
\end{equation}
We infer from \eqref{2.7-w} that 
$v(t,y)\ge w(t,y)\ge \phi(-y-c_bt+\xi_1)-M e^{-\eta t}$  for $t>0$ and $y\le 0$, 
and that
$v(t,y)\ge w(t,y)\ge \phi(y-c_bt+\xi_2)-M e^{-\eta t}$  for  $t>0$ and $y\ge 0$.  We can then choose $T_{4,\varep}>\max\big((2C_\varep+\xi_1)/c_b,(C_\varep+X_{4,\varep}+\xi_2)/c_b, -(1/\eta)\ln(\varep/(2M))\big)>0$ such that 
\begin{equation*}
	\begin{aligned}
		v(t,y)\ge w(t,y)&\ge \phi(-y-c_b t+\xi_1)\!-\!M e^{-\eta t}\\
		&\ge \phi(C_\varep-c_b T_{4,\varep} +\xi_1)\!-\!M e^{-\eta T_{4,\varep}}\ge \phi(-C_\varep)\!-\!\frac{\varep}{2}\ge 1\!-\!\varep,~~ t\ge T_{4,\varep},~-C_\varep\le y\le 0,
	\end{aligned}
\end{equation*}
 and
\begin{equation*}
	\begin{aligned}
		v(t,y)\ge w(t,y)&\ge \phi(y-c_b t+\xi_2)\!-\!M e^{-\eta t}\\
		&\ge \phi(X_{4,\varep}-c_b T_{4,\varep} +\xi_2)\!-\!M e^{-\eta T_{4,\varep}}\ge \phi(-C_\varep)-\frac{\varep}{2}\ge 1\!-\!\varep,~~~  t\ge T_{4,\varep},~~ 0 \le y \le X_{4,\varep}.
	\end{aligned}
\end{equation*}
Combining the above two inequalities, we obtain that 
\begin{equation}
	\label{6.5_ini}
	v(t,y)\ge 1-\varep~~~~~\text{for}~~t\ge T_{4,\varep}~~\text{and}~~-C_\varep\le y\le X_{4,\varep}.
\end{equation}

For $t\ge T_{4,\varep}$ and $y\le X_{4,\varep}$, let us define
\begin{equation*}
	\underline v(t,y)=\phi( \zeta(t,y))-\varep e^{-\delta(t-T_{4,\varep})}-\varep e^{-\mu(X_{4,\varep}-y)},
\end{equation*}
with $\zeta(t,y)=-y-c_b(t-T_{4,\varep})-\omega_\varep e^{-\delta(t-T_{4,\varep})}+\omega_\varep$.
We shall prove that $\underline{v}$ is a subsolution for \eqref{1.1} for $t\ge T_{4,\varep}$ and $y\le X_{4,\varep}$. Indeed, at time $t=T_{4,\varep}$, we notice that $\underline{v}(T_{4,\varep},y)\le 1-\varep-\varep e ^{-\mu(X_{4,\varep}-y)}\le v(T_{4,\varep},y)$ for $-C_\varep\le y\le X_{4,\varep}$ by virtue of \eqref{6.5_ini}. For $y\le -C_\varep$, the observation that $\zeta(T_{4,\varep},y)\ge C_\varep$  implies that $\underline{v}(T_{4,\varep},y)\le 0<v(T_{4,\varep},y)$ for $y\le -C_\varep$. Consequently, we derive that $\underline v(T_{4,\varep},y)\le v(T_{4,\varep}, y)$ for $y\le X_{4,\varep}.$ At $x=X_{4,\varep}$, \eqref{6.5_ini} implies that $v(t,X_{4,\varep})\ge 1-\varep\ge \underline v(t,X_{4,\varep})$ for all $t\ge T_{4,\varep}$. Therefore, it remains to verify that $\underline{v}_t(t,y)-\underline{v}_{yy}(t,y)-f(y-ct,\underline{v}(t,y))\le 0$ for $t\ge T_{4,\varep}$ and $y\le X_{4,\varep}$. Since $\underline{v}_t(t,y)-\underline{v}_{yy}(t,y)-f(y-ct,\underline{v}(t,y))\le \underline{v}_t(t,y)-\underline{v}_{yy}(t,y)-f_b(\underline{v}(t,y))=:\mathcal{L}\underline v(t,y)$ for $t\ge 0$ and $y\in\R$, it is therefore sufficient to check that $\mathcal{L}\underline v(t,y)\le 0$ for $t\ge T_{4,\varep}$ and $y\le X_{4,\varep}$. By a direct computation, we have that for $t\ge T_{4,\varep}$ and $y\le X_{4,\varep}$,
\begin{equation*}
	\mathcal{L}\underline v(t,y)=f_b(\phi(\zeta(t,y)))-f_b(\underline v(t,y))+\phi'(\zeta(t,y))\omega_\varep\delta e^{-\delta(t-T_{4,\varep})}+\delta\varep e^{-\delta(t-T_{4,\varep})} +\mu^2 \varep e^{-\mu(X_{4,\varep}-y)}.
\end{equation*}
Again, we distinguish between three cases:
\begin{itemize}
	\item 
	if $\zeta(t,y)\le -C$,  $1>\phi(\zeta(t,y))\ge 1-\varep/2$ and $1>\underline v\ge 1-3\varep$. One infers from \eqref{2.7-mu}--\eqref{2.7-delta} that
	\begin{align*}
		\mathcal{L}\underline v\le \Big(\frac{f_b'(1)}{2}+\delta\Big)\varep e^{-\delta(t-T_{4,\varep})}+\Big(\frac{f_b'(1)}{2}+\mu^2\Big)\varep e^{-\mu(X_{4,\varep}-y)}<0;
	\end{align*}
	
	\item 
	 if $\zeta(t,y)\ge C$, we have  $\varep>\phi(\zeta(t,y))>0$ and $\varep>\underline v\ge -2\varep$. Thanks to \eqref{2.7-mu}--\eqref{2.7-delta}, one has 
	 \begin{align*}
	 	\mathcal{L}\underline v\le \Big(\frac{f_b'(0)}{2}+\delta\Big)\varep e^{-\delta(t-T_{4,\varep})}+\Big(\frac{f_b'(0)}{2}+\mu^2\Big)\varep e^{-\mu(X_{4,\varep}-y)}<0;
	 \end{align*}
	
	\item 
	 if $-C\le \zeta(t,y)\le C$, it follows that $-y+X_{4,\varep}\ge c_b(t-T_{4,\varep})+X_{4,\varep}-\omega_\varep-C\ge c_b(t-T_{4,\varep})+X_{4,\varep}-\omega_\varep-C_\varep$.  One deduces from \eqref{2.7-delta}--\eqref{2.7-omega} and \eqref{6.5-X} that 
		 \begin{align*}
		\mathcal{L}\underline v\le& \max_{[-2\delta,1]}|f_b'|\Big(
		\varep e^{-\delta(t-T_{4,\varep})}+\varep e^{-\mu(X_{4,\varep}-y)}
		\Big)-\kappa\omega_\varep\delta e^{-\delta(t-T_{4,\varep})} + \delta\varep e^{-\delta(t-T_{4,\varep})}+\mu^2\varep e^{-\mu(X_{4,\varep}-y)}\\
		=&		 \Big(\max_{[-2\delta,1]}|f_b'|+\delta-\kappa\omega\Big)\varep e^{-\delta(t-T_{4,\varep})}+\Big(\max_{[-2\delta,1]}|f_b'|+\mu^2\Big)\varep e^{-\mu(X_{4,\varep}-y)}\\
		\le &\Big(\max_{[-2\delta,1]}|f_b'|+\delta-\kappa\omega\Big)\varep e^{-\delta(t-T_{4,\varep})}+\Big(\max_{[-2\delta,1]}|f_b'|+\mu^2\Big)\varep e^{-\mu(c_b(t-T_{4,\varep})+X_{4,\varep}-\omega_\varep-C_\varep)}\\
		\le&\Big(\max_{[-2\delta,1]}|f_b'|+2\delta-\kappa\omega\Big)\varep e^{-\delta(t-T_{4,\varep})}\le 0.
	\end{align*}
\end{itemize}
We conclude that $\mathcal{L}\underline{v}(t,y)\le 0$ for $t\ge T_{4,\varep}$ and $y\le X_{4,\varep}$, which implies that $\underline{v}$ is a subsolution of \eqref{1.1} for $t\ge T_{4,\varep}$ and $y\le X_{4,\varep}$. The comparison principle then gives that  for $t\ge T_{4,\varep}$ and $y\le X_{4,\varep}$,
\begin{equation*}
	v(t,y)\ge \underline v(t,y)=\phi(-y-c_b(t-T_{4,\varep})-\omega_\varep e^{-\delta(t-T_{4,\varep})}+\omega_\varep)-\varep e^{-\delta(t-T_{4,\varep})}-\varep e^{-\mu(X_{4,\varep}-y)}.
\end{equation*}
Therefore, \eqref{6.5} is achieved by taking $z_{4,\varep}=\omega_\varep$ and  by noticing that  $\phi'<0$ in $\R$. This completes the proof.	
\end{proof}

\begin{lemma}
	\label{lemma:left:stability:sup}
	Assume that $c\le  -c_m$ and $c_b>0$. Let $\mu>0$, $\delta>0$, $C>0$, $\kappa>0$ and $\omega>0$ satisfy \eqref{6.2-mu}--\eqref{6.2-omega} and \eqref{2.7-mu}--\eqref{2.7-omega}.   If there exist $\varep\in(0,\delta]$, $ t_1>0$, $x_1>L$ and $\xi_1\in\mathbb{R}$  such that
	\begin{equation}
		\label{6.38-initial}
		\sup_{x\le x_1-ct_1}\Big(u(t_1,x)-\phi (-x-(c_b+c) t_1+\xi_1)\Big)\le \varep,
	\end{equation}
	\begin{equation}\label{6.39-bc-u}
	\sup_{x\le x_1}	u(t,x) \leq \varep~~\text{and} ~~~ u(t,x_1-ct)\le 1+\varep/2~~~~~~\text{for}~  t \ge   t_1,
	\end{equation}
\begin{equation}
	\label{6.39-bc-phi}
\phi(-x_1-c_bt_1+\xi_1)\ge 1-\varep/2,
\end{equation}
	and
	\begin{equation}
		\label{equ:left:sup:f_mu}
	\Big(\max_{[0,1+2\delta]}|f_b'|+ \mu^2 -c \mu \Big)e^{-\mu(-(c_b+c) t_1-x_1-\omega_\varep+\xi_1-C)}+	\Big(\max_{[0,1+2\delta]}|f_b'|+ \mu^2 \Big)e^{-\mu(c_b t_1+x_1-\xi_1-C)}\le\delta
	\end{equation}
	with $\omega_\varep=\varep\omega/\delta$, then  there exists $M_1>0$ such that the following holds true:
	$$	\sup_{x\le x_1-ct}\Big(u(t,x)-\phi (-x-(c_b+c) t+\xi_1)\Big)\le  M_1\varep~~~~\text{for all}~t\ge  t_1.$$
\end{lemma}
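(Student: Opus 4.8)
The plan is to run a moving-boundary maximum-principle argument: build an explicit supersolution of the bistable equation on the region $\{t\ge t_1,\ x_1\le x\le x_1-ct\}$ that lies above $u$, and combine it with the trivial observation that on the complementary set $\{x\le x_1\}$ the bound is already available. First I would reduce: since $x_1>L$, the reaction is exactly $f_b$ on $\{x\ge x_1\}$, so there $u$ solves $u_t=u_{xx}+cu_x+f_b(u)$; and on $\{x\le x_1\}$ hypothesis \eqref{6.39-bc-u} gives $u(t,x)\le\varep$, hence $u(t,x)-\phi(-x-(c_b+c)t+\xi_1)\le\varep\le M_1\varep$ for any $M_1\ge1$. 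So it remains to control $u$ on $\{x_1\le x\le x_1-ct,\ t\ge t_1\}$.

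On that region I would take, in the spirit of Lemmas \ref{lemma-6.4} and \ref{lemma 3.4},
\[
\overline u(t,x)=\phi(\overline\xi(t,x))+\varep e^{-\delta(t-t_1)}+\varep e^{-\mu(x-x_1)}+\varep e^{-\mu(x_1-ct-x)},\qquad
\overline\xi(t,x)=-x-(c_b+c)t+\omega_\varep e^{-\delta(t-t_1)}-\omega_\varep+\xi_1,
\]
with $\omega_\varep=\varep\omega/\delta$. The four pieces play the expected roles: $\phi(\overline\xi)$ carries the leftward bistable front; the shift $\omega_\varep e^{-\delta(t-t_1)}-\omega_\varep$ (zero at $t_1$, negative afterwards) produces through $\phi'<0$ the large favorable term $-\phi'(\overline\xi)\omega_\varep\delta e^{-\delta(t-t_1)}$; $\varep e^{-\delta(t-t_1)}$ absorbs the $\varep$-gap at $t=t_1$ coming from \eqref{6.38-initial}, so that $\overline u(t_1,\cdot)=\phi(-x-(c_b+c)t_1+\xi_1)+\varep+\cdots\ge u(t_1,\cdot)$; the term $\varep e^{-\mu(x-x_1)}$ equals $\varep$ at $x=x_1$, so $\overline u(t,x_1)\ge\varep\ge u(t,x_1)$ by \eqref{6.39-bc-u}; and $\varep e^{-\mu(x_1-ct-x)}$ equals $\varep$ at $x=x_1-ct$, where moreover $\overline\xi(t,x_1-ct)=-x_1-c_bt+\omega_\varep e^{-\delta(t-t_1)}-\omega_\varep+\xi_1\le-x_1-c_bt_1+\xi_1$, hence $\phi(\overline\xi(t,x_1-ct))\ge\phi(-x_1-c_bt_1+\xi_1)\ge1-\varep/2$ by \eqref{6.39-bc-phi}, giving $\overline u(t,x_1-ct)\ge1+\varep/2\ge u(t,x_1-ct)$. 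Thus $\overline u\ge u$ on the whole parabolic boundary.

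For the interior inequality, using $\phi''+c_b\phi'+f_b(\phi)=0$ and the fact that the operator $\partial_t-\partial_{xx}-c\partial_x$ annihilates the advective part of $\varep e^{-\mu(x_1-ct-x)}$, one computes
\begin{align*}
\mathcal N\overline u&:=\overline u_t-\overline u_{xx}-c\overline u_x-f_b(\overline u)\\
&=\big[f_b(\phi(\overline\xi))-f_b(\overline u)\big]-\phi'(\overline\xi)\omega_\varep\delta e^{-\delta(t-t_1)}-\varep\delta e^{-\delta(t-t_1)}+(c\mu-\mu^2)\varep e^{-\mu(x-x_1)}-\mu^2\varep e^{-\mu(x_1-ct-x)}.
\end{align*}
Then split into $\overline\xi\le-C$, $\overline\xi\ge C$, $-C\le\overline\xi\le C$. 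In the first two regimes $f_b'<0$ near the endpoints and $\overline u-\phi(\overline\xi)\ge0$ with $\overline u$ staying (once $\varep\le\delta$) inside the intervals where $f_b'$ is controlled by \eqref{6.2-delta}, \eqref{2.7-delta}, so $f_b(\phi(\overline\xi))-f_b(\overline u)$ is positive and, by the choice \eqref{6.2-mu} of $\mu$, dominates the $\mu^2$-type contributions of the two spatial exponentials. In the transition regime $-C\le\overline\xi\le C$ one has $-\phi'(\overline\xi)\ge\kappa$ by \eqref{6.2-kappa}; there the elementary bounds $x-x_1\ge-(c_b+c)(t-t_1)+\big(-(c_b+c)t_1-x_1-\omega_\varep+\xi_1-C\big)$ and $x_1-ct-x\ge c_b(t-t_1)+\big(c_bt_1+x_1-\xi_1-C\big)$ hold, so that, since $\delta<-\mu(c_b+c)$ and $\delta<\mu c_b$, both spatial exponentials are bounded by $e^{-\delta(t-t_1)}$ times constants whose sum is $\le\delta$ by exactly hypothesis \eqref{equ:left:sup:f_mu}; then $\kappa\omega\ge2\delta+\max_{[0,1+2\delta]}|f_b'|$ from \eqref{6.2-omega} closes the estimate $\mathcal N\overline u\ge0$.

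The parabolic comparison principle on $\{t\ge t_1,\ x_1\le x\le x_1-ct\}$ (valid because $u,\overline u$ are nonnegative and bounded) then gives $u\le\overline u$ there. Since $\overline\xi(t,x)-(-x-(c_b+c)t+\xi_1)=\omega_\varep e^{-\delta(t-t_1)}-\omega_\varep\in[-\omega_\varep,0]$ and $\phi$ is $\|\phi'\|_{L^\infty(\R)}$-Lipschitz, and each $\varep$-term is $\le\varep$, we get $u(t,x)-\phi(-x-(c_b+c)t+\xi_1)\le\|\phi'\|_{L^\infty(\R)}\omega_\varep+3\varep=(\omega\|\phi'\|_{L^\infty(\R)}/\delta+3)\varep$ on $\{x_1\le x\le x_1-ct\}$; together with the trivial bound on $\{x\le x_1\}$ this proves the claim with $M_1:=\omega\|\phi'\|_{L^\infty(\R)}/\delta+3$, independent of $\varep,t_1,x_1,\xi_1$. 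I expect the main obstacle to be precisely the transition-regime estimate — the single favorable term generated by the moving shift must outweigh the $f_b$-Lipschitz term plus both $\mu^2$-type terms from the two spatial corrections, which is the reason for imposing the combination of smallness conditions \eqref{6.2-mu}–\eqref{6.2-omega}, \eqref{2.7-delta} and the technical inequality \eqref{equ:left:sup:f_mu}; the remaining bookkeeping (which spatial exponential is active in which regime, keeping $\overline u$ in the controlled ranges) is routine and follows the pattern of Lemmas \ref{lemma 3.1}, \ref{lemma 3.4}, \ref{lemma-6.4} and \ref{lemma-6.5}.
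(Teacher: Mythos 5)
Your proposal is correct and follows essentially the same route as the paper: the identical supersolution $\phi(\overline\xi)+\varep e^{-\delta(t-t_1)}+\varep e^{-\mu(x-x_1)}+\varep e^{-\mu(x_1-ct-x)}$ with the shift $\omega_\varep e^{-\delta(t-t_1)}-\omega_\varep$, the same parabolic-boundary checks on $\{t\ge t_1,\ x_1\le x\le x_1-ct\}$ via \eqref{6.38-initial}, \eqref{6.39-bc-u}, \eqref{6.39-bc-phi}, the same three-regime estimate of $\mathcal N\overline u$ closed by \eqref{equ:left:sup:f_mu} and \eqref{6.2-omega}, and the same conclusion $M_1=\omega\Vert\phi'\Vert_{L^\infty(\mathbb{R})}/\delta+3$ (the trivial bound on $\{x\le x_1\}$ is also how the paper starts).
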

	
	\begin{proof}
		The proof is based on a comparison argument. First of all, we observe that $\sup_{x\le x_1}\big(u(t,x)-\phi (-x-(c_b+c) t+\xi_1)\big)\le \varep$ for all $t\ge t_1$, thanks to \eqref{6.39-bc-u}. Let us now consider $t\ge t_1$ and $x_1\le x\le x_1-ct$. Define 
		$$\overline{u}(t,x)=\phi(-x-(c_b+c)t+\omega_\varep e^{-\delta(t- t_1)}-\omega_\varep+\xi_1)+\varep e^{-\delta(t- t_1)}+\varep e^{-\mu(x-x_1)}+\varep e^{-\mu(x_1-ct-x)}$$
		for $t\ge t_1$ and $x_1\le x\le x_1-ct$. Let us prove that $\overline u$ is a supersolution of \eqref{1.1'} for $t\ge t_1$ and $x_1\le x\le x_1-ct$. 
		
		In fact, at time $t= t_1$, one infers from \eqref{6.38-initial} that $\overline{u}(t_1,x)\ge \phi(-x-(c_b+c) t_1+\xi_1)+\varep\ge u( t_1,x)$ for all  $x_1\le x\le x_1-ct_1$. On the other hand, at $x=x_1$, it follows from \eqref{6.39-bc-u}--\eqref{6.39-bc-phi} that $\overline{u}(t,x_1)\ge\varep\ge  u(t,x_1)$ and $\overline u(t,x_1-ct)\ge \phi(-x_1-c_b t_1+\xi_1)+\varep\ge 1+\varep/2\ge u(t,x_1-ct)$ for all $t\ge  t_1$. It then remains to show that $\mathcal{N}\overline u(t,x)\!:=\!\overline u_t(t,x)\!-\! \overline u_{xx}(t,x)\!-c\overline u_x(t,x)\!-\!f_b(\overline u(t,x))\!\ge\!0$ for all $t\ge  t_1$ and $x_1\le x\le x_1-ct$. For convenience, we set
		$$\xi(t,x):=-x-(c_b+c)t+\omega_\varep e^{-\delta(t- t_1)}-\omega_\varep+\xi_1.$$
		A straightforward computation implies that
		\begin{align*}
			\mathcal{N}\overline{u}(t,x)=f_b(\phi(\xi(t,x)))- f_b(\overline{u}(t,x))-\phi'&(\xi(t,x))\omega_\varep\delta e^{-\delta(t- t_1)}-\delta\varep e^{-\delta(t- t_1)}\\
		&-(\mu^2- c\mu)\varep e^{-\mu(x-x_1)}-\mu^2\varep e^{-\mu(x_1-ct-x)}.
		\end{align*}
		We distinguish three cases:  
		\begin{itemize}
			\item if $\xi(t,x)\le -C$, then $1-\delta/2\le \phi(\xi(t,x))<1$ by~\eqref{6.2-C}, hence $1-\delta/2\le \overline{u}(t,x)\le 1+2\delta$; therefore, by using~\eqref{6.2-mu}--\eqref{6.2-delta} and \eqref{2.7-mu}, along with the negativity of $\phi'$ and $f_b'(1)$, it follows that
			\begin{align*}
				\mathcal{N} \overline u(t,x)\ge\! \Big(\!-\!\frac{f_b'(1)}{2}\!-\!\delta\Big)\varep e^{-\delta(t- t_1)}+\!\Big(-\frac{f_b'(1)}{2}-&\mu^2+c\mu\Big)\varep e^{-\mu(x-x_1)}\\
				&+\Big(-\frac{f_b'(1)}{2}-\mu^2\Big)\varep e^{-\mu(x_1-ct-x)}>0;
			\end{align*}
			\item if $\xi(t,x)\ge C$, then $0<\phi(\xi(t,x))\le \delta$ by~\eqref{6.2-C} and thus $0\le\overline{u}(t,x)\le 3 \delta$; therefore, owing to~\eqref{6.2-mu}--\eqref{6.2-delta} and \eqref{2.7-mu} as well as the negativity of $\phi'$  and $f_b'(0)$, it follows that
			\begin{align*}
				\mathcal{N} \overline u(t,x)\ge  \Big(-\frac{f_b'(0)}{2}-\delta\Big)\varep e^{-\delta(t- t_1)}+\Big(-\frac{f_b'(0)}{2}-&\mu^2+c\mu\Big)\varep e^{-\mu(x-x_1)}\\
				&+\Big(-\frac{f_b'(0)}{2}-\mu^2\Big)\varep e^{-\mu(x_1-ct-x)}>0;
			\end{align*}
			\item if $-C\le\xi(t,x)\le C$, one has $x-x_1\ge -(c+c_b)(t- t_1)-(c+c_b) t_1-x_1+\omega_\varep e^{-\delta(t- t_1)}-\omega_\varep+\xi_1-C\ge -(c_b+c)(t- t_1)-(c_b+c) t_1-x_1-\omega_\varep+\xi_1-C$, hence 
			$e^{-\mu(x-x_1)}\le e^{-\mu(-(c_b+c)(t- t_1)-(c_b+c) t_1-x_1-\omega_\varep+\xi_1-C)}$.
			On the other hand, $x_1-ct-x\ge x_1-ct+(c_b+c)t-\omega_\varep e^{-\delta(t- t_1)}+\omega_\varep-\xi_1-C\ge x_1+c_b t-\xi_1-C$, which implies that $e^{-\mu(x_1-ct-x)}\le e^{-\mu(c_b (t-t_1)+c_b t_1+x_1-\xi_1-C)}$ In view of $\omega_\varep=\varep\omega/\delta$, one infers from~\eqref{6.2-delta}, \eqref{6.2-kappa}, \eqref{6.2-omega}, \eqref{2.7-delta} and \eqref{equ:left:sup:f_mu}, that
			\begin{align*}
				\mathcal{N}\overline u(t,x)&
				\ge -\max_{[0,1+2\delta]}|f_b'|\Big(\varep e^{-\delta(t- t_1)}+\varep e^{-\mu(x-x_1)}+\varep e^{-\mu(x_1-ct-x)}\Big)+\kappa\omega\varep e^{-\delta(t- t_1)}\\
				&~~~~~~~~~~~~~~~~~~~~~~~~~~~~~~~~~~~~~~~~~~-\delta \varep e^{-\delta(t- t_1)}-( \mu^2-c\mu) \varep e^{-\mu(x-x_1)}-\mu^2\varep e^{-\mu(x_1-ct-x)}\cr
				&= \Big(\kappa\omega-\max_{[0,1+2\delta]}|f_b'|-\delta\Big)\varep e^{-\delta(t- t_1)}-\Big(\max_{[0,1+2\delta]}|f_b'|+\mu^2-c\mu\Big)\varep e^{-\mu(x-x_1)}\\
				&~~~~~~~~~~~~~~~~~~~~~~~~~~~~~~~~~~~~~~~~~~~~~~~~~~~~~~~~~~~~~~~- 			\Big(\max_{[0,1+2\delta]}|f_b'|+\mu^2\Big)\varep e^{-\mu(x_1-ct-x)}
				  \cr
				&\ge \Big(\kappa\omega-\max_{[0,1+2\delta]}|f_b'|-\delta\Big)\varep e^{-\delta(t- t_1)}- 			\Big(\max_{[0,1+2\delta]}|f_b'|+\mu^2\Big)\varep e^{-\mu(c_b(t-t_1)+c_b t_1+x_1-\xi_1-C)}\\
				&~~~~~~~~~~~~~~~~~~~~~~~~~~\quad-\Big(\max_{[0,1+2\delta]}|f_b'|+\mu^2-c\mu\Big)\varep e^{-\mu\big(
					-(c_b+c)(t- t_1)-(c_b+c) t_1-x_1-\omega_\varep+\xi_1-C\big)}\cr
				&\ge\Big(\kappa\omega-\max_{[0,1+2\delta]}|f_b'|-2\delta\Big)\varep e^{-\delta(t- t_1)}\ge 0.
			\end{align*} 
		\end{itemize}
		We then obtain that $\mathcal{N}\overline u(t,x)\!=\!\overline u_t(t,x)\!-\! \overline u_{xx}(t,x)\!-c\overline u_x(t,x)\!-\!f_b(\overline u(t,x))\!\ge\!0$ for all $t\ge  t_1$ and $x_1\le x\le x_1-ct$. The comparison principle implies that
		$$u(x,t)\le\overline u(t,x)= \phi(-x-(c_b+c)t+\omega_\varep e^{-\delta(t- t_1)}-\omega_\varep+\xi_1)+\varep e^{-\delta(t- t_1)}+\varep e^{-\mu(x-x_1)}+\varep e^{-\mu(x_1-ct-x)}$$
		for $t\ge  t_1$ and $x_1\le x\le x_1-ct$.
		For these $t$ and $x$, since $\phi'<0$, one derives that
		$$u(x,t)\le \phi(-x-(c_b+c)t-\omega_\varep+\xi_1)+3\varep \le
		\phi(-x-(c_b+c)t+\xi_1)+ \omega_\varep\Vert\phi'\Vert_{L^\infty(\mathbb{R})} +3\varep. $$
		In conclusion, one has
		\begin{equation*}
			 	\sup_{x_1\le x\le x_1-ct}\Big(u(t,x)-\phi(-x-(c_b+c) t_1+\xi_1)\Big)\le {M}_1\varep~~~\text{for all}~t\ge t_1,
		\end{equation*}
		where ${M}_1:=\omega_\varep\Vert\phi' \Vert_{L^\infty(\mathbb{R})}/\varep+3=\omega\Vert\phi' \Vert_{L^\infty(\mathbb{R})}/\delta+3$ is independent of $\varep$, $ t_1$, $x_1$ and $\xi_1$. Lemma \ref{lemma:left:stability:sup} is therefore achieved, with $M_1$ chosen above.
	\end{proof}
\begin{lemma}
	\label{lemma:left:stability:sub}
	Assume that $c\le  -c_m$ and $c_b>0$. Let $\mu>0$, $\delta>0$, $C>0$, $\kappa>0$ and $\omega>0$ satisfy \eqref{6.2-mu}--\eqref{6.2-omega} and \eqref{2.7-mu}--\eqref{2.7-omega}.   If there exist $\varep\in(0,\delta]$, $ t_2>0$, $x_2\in\R$ and $\xi_2\in\mathbb{R}$ such that
	\begin{equation}
		\label{equ:left:init}
		-\varep\le \sup_{y\le x_2}\Big(v(t_2,y)-\phi(-y-c_bt_2+\xi_2)\Big),
	\end{equation}
	\begin{equation}\label{equ:left:init2}
		1-\varep\le v(t,x_2)~~~~\text{for all}  ~~t\ge  t_2,
		\ee
		and
		\begin{equation}
			\label{equ:left:f_mu}
			\Big(\max_{[-2\delta,1]}|f_b'|+ \mu^2\Big)e^{-\mu(c_b t_2+x_2-\omega_\varep-\xi_2-C)}\le\delta
		\end{equation}
		with $\omega_\varep=\varep\omega/\delta$, then  there exists $M_2>0$ such that the following holds true:
		$$-M_2\varep\le \sup_{y\le x_2}\Big(v(t,y)-\phi(-y-c_b t+\xi_2)\Big)~~~~\text{for all}~t\ge  t_2.$$
	\end{lemma}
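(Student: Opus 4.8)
The plan is to establish the (pointwise, hence in particular the stated $\sup$-) lower bound by a subsolution argument, running in the shifted frame~\eqref{1.1} the exact dual of the proofs of Lemma~\ref{lemma:left:stability:sup} and Lemma~\ref{lemma-6.5}. With $\omega_\varepsilon=\varepsilon\omega/\delta$ as in the statement, I would define, for $t\ge t_2$ and $y\le x_2$,
\[
\underline v(t,y)=\phi\big(\zeta(t,y)\big)-\varepsilon e^{-\delta(t-t_2)}-\varepsilon e^{-\mu(x_2-y)},\qquad \zeta(t,y)=-y-c_bt+\xi_2-\omega_\varepsilon e^{-\delta(t-t_2)}+\omega_\varepsilon,
\]
so that $\zeta(t_2,y)=-y-c_bt_2+\xi_2$ and $-y-c_bt+\xi_2\le\zeta(t,y)\le -y-c_bt+\xi_2+\omega_\varepsilon$. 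The claim is that $\underline v$ lies below $v$ on the half-strip $\{t\ge t_2,\ y\le x_2\}$.

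The three comparison ingredients are as follows. At $t=t_2$, hypothesis~\eqref{equ:left:init} (taken pointwise, i.e.\ $v(t_2,y)\ge\phi(-y-c_bt_2+\xi_2)-\varepsilon$ for $y\le x_2$) together with $\varepsilon e^{-\mu(x_2-y)}\ge0$ gives $\underline v(t_2,y)\le v(t_2,y)$ on all of $\{y\le x_2\}$. On the lateral boundary $y=x_2$, since $\phi<1$ we have $\underline v(t,x_2)\le1-\varepsilon e^{-\delta(t-t_2)}-\varepsilon<1-\varepsilon\le v(t,x_2)$ by~\eqref{equ:left:init2}. Finally, because~\eqref{hypf} forces $f(y-ct,s)\ge f_b(s)$ for all $y,t$ (as $f_m\ge f_b$ and $f$ is nonincreasing in the space variable on $[-L,L]$), the function $v$ is a supersolution of $w_t=w_{yy}+f_b(w)$, so it suffices to verify $\mathcal L\underline v:=\underline v_t-\underline v_{yy}-f_b(\underline v)\le0$; using $\phi''+c_b\phi'+f_b(\phi)=0$ one gets
\[
\mathcal L\underline v(t,y)=f_b(\phi(\zeta))-f_b(\underline v)+\phi'(\zeta)\,\omega_\varepsilon\delta e^{-\delta(t-t_2)}+\delta\varepsilon e^{-\delta(t-t_2)}+\mu^2\varepsilon e^{-\mu(x_2-y)},
\]
which is formally identical to the quantity treated in the proof of Lemma~\ref{lemma-6.5}. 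Since both $v$ (by Lemma~\ref{lem:u:basic_propert}) and $\underline v$ are bounded, the comparison principle on the half-strip $\{t\ge t_2,\ y\le x_2\}$ then applies, just as in Lemmas~\ref{lemma-6.5} and~\ref{lemma:left:stability:sup}.

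The sign of $\mathcal L\underline v$ is checked by splitting on $\zeta(t,y)$. If $\zeta\le-C$, then $\phi(\zeta)\ge1-\delta/2$ by~\eqref{6.2-C} and hence $\underline v\in[1-3\delta,1)$, and~\eqref{2.7-mu}--\eqref{2.7-delta} with $\phi'<0$ yield $\mathcal L\underline v\le(f_b'(1)/2+\delta)\varepsilon e^{-\delta(t-t_2)}+(f_b'(1)/2+\mu^2)\varepsilon e^{-\mu(x_2-y)}<0$; the case $\zeta\ge C$ is handled symmetrically, using $\phi(\zeta)\le\delta$ (from~\eqref{6.2-C}) and $f_b'(0)$. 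If $-C\le\zeta\le C$, then $\phi'(\zeta)\le-\kappa$ by~\eqref{6.2-kappa}, $|f_b(\phi(\zeta))-f_b(\underline v)|\le\max_{[-2\delta,1]}|f_b'|\,(\varepsilon e^{-\delta(t-t_2)}+\varepsilon e^{-\mu(x_2-y)})$, and $\zeta\ge-C$ forces $x_2-y\ge c_b(t-t_2)+(c_bt_2+x_2-\omega_\varepsilon-\xi_2-C)$, so that~\eqref{equ:left:f_mu} and $\delta<\mu c_b$ (from~\eqref{2.7-delta}) give $\mu^2\varepsilon e^{-\mu(x_2-y)}\le\delta\varepsilon e^{-\delta(t-t_2)}$; hence $\mathcal L\underline v\le(\max_{[-2\delta,1]}|f_b'|-\kappa\omega+2\delta)\varepsilon e^{-\delta(t-t_2)}\le0$ by~\eqref{2.7-omega}. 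The comparison principle then gives $v\ge\underline v$ on $\{t\ge t_2,\ y\le x_2\}$, and since $\phi'<0$ and $\zeta(t,y)\le-y-c_bt+\xi_2+\omega_\varepsilon$, the mean value theorem yields $\phi(\zeta(t,y))\ge\phi(-y-c_bt+\xi_2)-\omega_\varepsilon\Vert\phi'\Vert_{L^\infty(\R)}$, so that
\[
v(t,y)\ge\phi(-y-c_bt+\xi_2)-\Big(\tfrac{\omega\Vert\phi'\Vert_{L^\infty(\R)}}{\delta}+2\Big)\varepsilon\qquad\text{for all }t\ge t_2,\ y\le x_2,
\]
which is the desired estimate with $M_2:=\omega\Vert\phi'\Vert_{L^\infty(\R)}/\delta+2$ (independent of $\varepsilon,t_2,x_2,\xi_2$), and a fortiori the stated $\sup$-bound.

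The main obstacle is the middle case $-C\le\zeta\le C$ of the differential inequality: there the reaction increment is only controlled by the Lipschitz constant $\max_{[-2\delta,1]}|f_b'|$, so one must genuinely extract the damping $-\kappa\omega\,\varepsilon e^{-\delta(t-t_2)}$ produced by the drift of the front (this is the purpose of~\eqref{6.2-kappa} and the smallness of $\delta$ relative to $\kappa\omega$ in~\eqref{2.7-omega}) and, simultaneously, dominate the spatial exponential $\mu^2\varepsilon e^{-\mu(x_2-y)}$ — which is precisely where the somewhat delicate hypothesis~\eqref{equ:left:f_mu}, and hence the requirement that $t_2$ and $x_2$ be large when this lemma is invoked, enters. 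All the remaining points (the parabolic computation of $\mathcal L\underline v$, the bookkeeping of the parameter constraints, and the validity of the maximum principle on the unbounded half-strip for bounded functions) are routine and proceed exactly as in Lemmas~\ref{lemma-6.5} and~\ref{lemma:left:stability:sup}.
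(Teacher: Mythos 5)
Your proposal is correct and follows essentially the same route as the paper: the identical shifted subsolution $\phi(\zeta)-\varep e^{-\delta(t-t_2)}-\varep e^{-\mu(x_2-y)}$, the same three-case verification of the differential inequality (with \eqref{equ:left:f_mu} absorbing the spatial exponential in the middle case), and the same constant $M_2=\omega\Vert\phi'\Vert_{L^\infty(\R)}/\delta+2$. The only nitpick is that in the middle case the hypothesis \eqref{equ:left:f_mu} is used to dominate the full combination $\big(\max_{[-2\delta,1]}|f_b'|+\mu^2\big)\varep e^{-\mu(x_2-y)}$ by $\delta\varep e^{-\delta(t-t_2)}$, not merely the $\mu^2$ part as you wrote, but your final inequality already accounts for this, so the argument is sound.
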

	
	\begin{proof}
		We first claim that 
		$$\underline{v}(t,y)=\phi(-y-c_bt-\omega_\varep e^{-\delta(t- t_2)}+\omega_\varep+\xi_2)-\varep e^{-\delta(t- t_2)}-\varep e^{-\mu(x_2-y)}$$
		is a subsolution of \eqref{1.1} for $t\ge  t_2$ and $y\le x_2$, for which it is sufficient to show that $\overline v$ is a subsolution to $v_t=  v_{yy}+f_b(v)$ for $t\ge  t_2$ and $y\le x_2$. 
		
		At time $t= t_2$, one has $\underline{v}( t_2,y)\le \phi(-y-c_b t_2+\xi_2)-\varep\le v( t_2,y)$ for all $y\le x_2$, thanks to~\eqref{equ:left:init}. Moreover, $\underline{v}(t,x_2)\le 1-\varep\le v(t,x_2)$ for all $t\ge  t_2$, owing to~\eqref{equ:left:init2}. It remains to show that $\mathcal{N}\underline{v}(t,y):=\underline{v}_t(t,y)- \underline{v}_{yy}(t,y)-f_b(\underline{v}(t,y))\le 0$ for all $t\ge  t_2$ and $y\le x_2$. For convenience, we set
		$$\eta(t,y):=-y-c_bt-\omega_\varep e^{-\delta(t- t_2)}+\omega_\varep+\xi_2.$$
		By a straightforward computation, one has
		\begin{equation*}
			\mathcal{N}\underline{v}(t,y)=f_b(\phi(\eta(t,y)))- f_b(\underline{v}(t,y))+\phi'(\eta(t,y))\omega_\varep\delta e^{-\delta(t- t_2)}+\varep\delta e^{-\delta(t- t_2)}+   \mu^2 \varep e^{-\mu(x_2-y)}.
		\end{equation*}
		We divide our analysis into three cases:
		\begin{itemize}
			\item if $\eta(t,y)\le -C$, then $1-\delta/2\le \phi(\eta(t,y))<1$ by~\eqref{6.2-C}, hence $1>\underline{v}(t,y)\ge 1-\delta/2-2\varep\ge 1-3\delta$; therefore it follows from~\eqref{2.7-mu}--\eqref{2.7-delta}, together with the negativity of $\phi'$ and $f_b'(1)$,  that
			\begin{align*}
				\mathcal{N}\underline{v}(t,y)&\le  \frac{f_b'(1)}{2}\Big(\varep e^{-\delta(t- t_2)}+\varep e^{-\mu(x_2-y)}\Big)+\varep\delta e^{-\delta(t- t_2)}+   \mu^2\varep e^{-\mu(x_2-y)}\cr
				&=\Big(\frac{f_b'(1)}{2}+\delta\Big)\varep e^{-\delta(t- t_2)}+\Big(\frac{f_b'(1)}{2}+ \mu^2\Big)\varep e^{-\mu(x_2-y)}\le 0;
			\end{align*}
			\item if $\eta(t,y)\ge C$, then $0<\phi(\eta(t,y))\le \delta$ by~\eqref{6.2-C} and thus $-2\delta\le-2\varep\le\underline{v}(t,y)\le \delta$; therefore, owing to~\eqref{2.7-mu}--\eqref{2.7-delta} as well as the negativity of $\phi'$  and $f_b'(0)$, it follows that
			\begin{align*}
				\mathcal{N}\underline{v}(t,y)&\le  \frac{f_b'(0)}{2}\Big(\varep e^{-\delta(t- t_2)}+\varep e^{-\mu(x_2-y)}\Big)+\varep\delta e^{-\delta(t- t_2)}+  \mu^2\varep e^{-\mu(x_2-y)}\cr
				&=\Big(\frac{f_b'(0)}{2}+\delta\Big)\varep e^{-\delta(t- t_2)}+\Big(\frac{f_b'(0)}{2}+ \mu^2\Big)\varep e^{-\mu(x_2-y)}\le 0;
			\end{align*}
			\item if $-C\le\eta(t,y)\le C$, one has $x_2-y\ge c_b(t- t_2)+c_b t_2+x_2+\omega_\varep e^{-\delta(t- t_2)}-\omega_\varep-\xi_2-C\ge c_b(t- t_2)+c_b t_2+x_2-\omega_\varep-\xi_2-C$, hence $e^{-\mu(x_2-y)}\le e^{-\mu(c_b(t- t_2)+c_b t_2+x_2-\omega_\varep-\xi_2-C)}$; since $\omega_\varep=\varep\omega/\delta$, one infers from~\eqref{6.2-kappa}--\eqref{6.2-omega}, \eqref{2.7-delta}--\eqref{2.7-omega} and \eqref{equ:left:f_mu} that
			\begin{align*}
				\mathcal{N}\underline{v}(t,y)&\le \max_{[-2\delta,1]}|f_b'|\Big(\varep e^{-\delta(t- t_2)}+\varep e^{-\mu(x_2-y)}\Big)\!-\!\kappa\omega_\varep\delta e^{-\delta(t- t_2)}\!+\!\varep\delta e^{-\delta(t- t_2)}\!+\!   \mu^2\varep e^{-\mu(x_2-y)}\cr
				&\le \Big(\!\max_{[-2\delta,1]}\!|f_b'|\!-\!\kappa\omega\!+\!\delta\!\Big)\varep e^{-\delta(t- t_2)}\!+\!\Big(\!\max_{[-2\delta,1]}\!|f_b'|\!+\! \mu^2\Big)\varep e^{-\mu(c_b(t- t_2)+c_b t_2+x_2-\omega_\varep-\xi_2-C)}\cr
				&\le \Big(\max_{[-2\delta,1]}|f_b'|-\kappa\omega+2\delta\Big)\varep e^{-\delta(t- t_2)}\le  0.
			\end{align*}
		\end{itemize}

		Eventually, one concludes that $\mathcal{N}\underline{v}(t,y)=\underline{v}_t(t,y)- \underline{v}_{yy}(t,y)-f_b(\underline{v}(t,y))\le 0$ for all $t\ge  t_2$ and~$y\le x_2$. The maximum principle implies that 
		\begin{align*}
			v(t,y)\ge \underline v(t,y)= \phi\big(-y-c_bt-\omega_\varep e^{-\delta(t- t_2)}+\omega_\varep+\xi_2\big)-\varep e^{-\delta(t- t_2)}-\varep e^{-\mu(x_2-y)}
		\end{align*}
		for all $t\ge  t_2$ and $y\le x_2$. For these $t$ and $y$, since $\phi'<0$, one derives that
		\begin{align*}
			v(t,y)\ge \phi(-y-c_bt+\omega_\varep+\xi_2)-2\varep\ge \phi(-y-c_bt+\xi_2)-\omega_\varep\Vert\phi'\Vert_{L^\infty(\mathbb{R})}-2\varep.
		\end{align*}
	In conclusion, one has
	\begin{equation*}
	-M_2\varep\le 	\sup_{y\le x_2}\Big(v(t,y)-\phi(-y-c_b t_2+\xi_2)\Big)~~~\text{for all}~t\ge t_2,
	\end{equation*}
			where $M_2:=\omega_\varep\Vert\phi' \Vert_{L^\infty(\mathbb{R})}/\varep+2=\omega\Vert\phi' \Vert_{L^\infty(\mathbb{R})}/\delta+2$ is independent of $\varep$, $ t_2$, $x_2$ and $\xi_2$.		The proof of Lemma~\ref{lemma:left:stability:sub} is thereby complete. 
	\end{proof}

Lemma \ref{lemma:left:stability:sub} can be equivalently written as:
\begin{lemma}
	\label{lemma:left:stability:sub:equiv}
	Assume that $c\le  -c_m$ and $c_b>0$. Let $\mu>0$, $\delta>0$, $C>0$, $\kappa>0$ and $\omega>0$ satisfy \eqref{6.2-mu}--\eqref{6.2-omega} and \eqref{2.7-mu}--\eqref{2.7-omega}.   If there exist $\varep\in(0,\delta]$, $ t_2>0$, $x_2\in\R$ and $\xi_2\in\mathbb{R}$ such that
	\begin{equation}
		\label{equ:left:init:equiv}
		-\varep\le \sup_{x\le x_2-ct_2}\Big(u(t_2,x)-\phi(-x-(c_b+c)t_2+\xi_2)\Big),
	\end{equation}
	\begin{equation}\label{equ:left:init2:equiv}
		1-\varep\le u(t,x_2-ct)~~~~\text{for all}~~ t\ge t_2,
		\ee
		and
		\begin{equation*}
			\Big(\max_{[-2\delta,1]}|f_b'|+ \mu^2\Big)e^{-\mu(c_bt_2+x_2-\omega_\varep-\xi_2-C)}\le\delta
		\end{equation*}
		with $\omega_\varep=\varep\omega/\delta$, then  there exists $M_2>0$ such that the following holds true:
		$$	-M_2 \varep\le \sup_{x\le x_2-ct}\Big(u(t,x)-\phi(-x-(c_b+c)t+\xi_2)\Big)~~~~\text{for all}~t\ge t_2.$$
	\end{lemma}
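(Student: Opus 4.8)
The plan is to obtain Lemma~\ref{lemma:left:stability:sub:equiv} as an immediate restatement of Lemma~\ref{lemma:left:stability:sub} via the moving-frame change of variables relating \eqref{1.1'} and \eqref{1.1}. Recall from the introduction that if $u$ solves \eqref{1.1'} then $v(t,y):=u(t,y-ct)$ solves \eqref{1.1}, and conversely $u(t,x)=v(t,x+ct)$. So first I would set $v(t,y):=u(t,y-ct)$ and rewrite each hypothesis of Lemma~\ref{lemma:left:stability:sub:equiv} in the $y$-variable through the substitution $x=y-ct$, i.e. $y=x+ct$.

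Under this substitution the profile term transforms cleanly: $\phi\big(-x-(c_b+c)t+\xi_2\big)=\phi\big(-(y-ct)-(c_b+c)t+\xi_2\big)=\phi\big(-y-c_bt+\xi_2\big)$; the set $\{x\le x_2-ct\}$ becomes $\{y\le x_2\}$; and $u(t,x_2-ct)=v(t,x_2)$. Hence \eqref{equ:left:init:equiv} and \eqref{equ:left:init2:equiv} become exactly \eqref{equ:left:init} and \eqref{equ:left:init2}, while the condition $\big(\max_{[-2\delta,1]}|f_b'|+\mu^2\big)e^{-\mu(c_bt_2+x_2-\omega_\varep-\xi_2-C)}\le\delta$ is literally \eqref{equ:left:f_mu}, since it contains no spatial variable and $\omega_\varep=\varep\omega/\delta$ together with the constant $M_2$ depend only on the fixed parameters $\omega$, $\delta$ and $\Vert\phi'\Vert_{L^\infty(\mathbb{R})}$. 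I would then invoke Lemma~\ref{lemma:left:stability:sub} to obtain $-M_2\varep\le\sup_{y\le x_2}\big(v(t,y)-\phi(-y-c_bt+\xi_2)\big)$ for all $t\ge t_2$, and translate back via $v(t,y)=u(t,y-ct)$, which returns precisely the claimed inequality $-M_2\varep\le\sup_{x\le x_2-ct}\big(u(t,x)-\phi(-x-(c_b+c)t+\xi_2)\big)$.

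There is no genuine obstacle here: the statement is a cosmetic reformulation, and the only point requiring (minimal) care is the bookkeeping of the spatial shifts — namely checking that the argument of $\phi$ picks up exactly the factor $c+c_b$ and that the moving observation point $x_2-ct$ in the $x$-frame corresponds to the fixed point $x_2$ in the $y$-frame. One could alternatively redo the comparison argument of Lemma~\ref{lemma:left:stability:sub} directly for \eqref{1.1'}, using the subsolution $\underline u(t,x):=\phi\big(-x-(c_b+c)t-\omega_\varep e^{-\delta(t-t_2)}+\omega_\varep+\xi_2\big)-\varep e^{-\delta(t-t_2)}-\varep e^{-\mu(x_2-ct-x)}$ on $\{t\ge t_2,\ x\le x_2-ct\}$, but this merely reproduces the same three-case estimate with the term $c\,u_x$ reinstated, so the change-of-variables route is the efficient one.
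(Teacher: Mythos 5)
Your proposal is correct and is exactly the paper's approach: the paper states Lemma \ref{lemma:left:stability:sub:equiv} as an "equivalently written" form of Lemma \ref{lemma:left:stability:sub}, i.e. the moving-frame identification $v(t,y)=u(t,y-ct)$ that you spell out, under which $\phi(-x-(c_b+c)t+\xi_2)$ becomes $\phi(-y-c_bt+\xi_2)$, $\{x\le x_2-ct\}$ becomes $\{y\le x_2\}$, and the constants are unchanged. Your bookkeeping of the shifts is accurate, so the lemma follows immediately from Lemma \ref{lemma:left:stability:sub} as claimed.
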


We are now in a position to finish the proof of Theorem \ref{thm_c le -cm,cb>0}, by Lemma \ref{lemma:left:stability:sup} and Lemma \ref{lemma:left:stability:sub:equiv}.

\vspace{0.2cm}

\noindent
\begin{proof}[End of proof of Theorem \ref{thm_c le -cm,cb>0}.]
	Let  $X_3>L$,  $T_3>0$, $z_3\in\mathbb{R}$ be given as in Lemma \ref{lemma:c<-c_m}, and let $X_4>L$,  $T_4>0$,  $z_4\in\R$, $\mu>0$ and $\delta>0$ be given as in the proof of Lemma \ref{lemma6.3_upper bound right}. It follows from \eqref{6.2-1} and \eqref{u-left-lower bd} that, for $t\ge T:=\max(T_3,T_4)$ such that $X_3<X_4-cT$ and for $X_3\le x\le X_4-ct$,
	\begin{equation}
		\begin{aligned}\label{6.41}
			\phi(-x-(c_b+c)t&+z_4)-\delta e^{-\delta(t-T_4)}-\delta e^{-\mu(X_4-x-ct)}\\
			&~~~\le 	u(t,x)\le\phi\big(-x-(c_b+c)(t-T_3)+z_3\big)+\delta e^{-\delta(t-T_3)}+\delta e^{-\mu(x-X_3)}.
		\end{aligned}
	\end{equation}
	Consider any sequence $(t_n)_{n\in\mathbb{N}}$ in $\R$ such that $t_n\to+\infty$ as $n\to+\infty$. From parabolic estimates, the functions
	\begin{equation*}
		(t,z)\in\R^2\mapsto u_n(t,z):=u(t+t_n, z-(c_b+c)t_n)
	\end{equation*}
	converge as $n\to+\infty$, locally uniformly in $\R^2$, to a classical solution $u_\infty$ of $(u_\infty)_t=(u_\infty)_{zz}+c(u_\infty)_z+f_b(u_\infty)$ in $\R^2$. By applying \eqref{6.41} at $(t+t_n, z-(c_b+c)t_n)$ and then passing to the limit as $n\to+\infty$, it follows that
	\begin{equation}
		\label{6.42}
		\phi(-z-(c_b+c)t+z_4)\le u_\infty(t,z)\le 	\phi(-z-(c_b+c)(t-T_3)+z_3)~~~~\text{in}~\R^2.
	\end{equation}
	Then, \cite[Theorem 3.1]{BH2007} implies that there exists $\eta\in\R$ such that
	\begin{equation}
		\label{6.43}
		u_\infty(t,z)=\phi(-z-(c_b+c)t+\eta)~~~~\text{for}~(t,z)\in\R^2.
	\end{equation}
	whence 
	\begin{equation}
		\label{equ:converge:local}
		u_n(t,z)\to \phi(-z-(c_b+c) t+\eta)~\text{as}~n\to+\infty~~\text{locally uniformly in}~(t,z)\in\mathbb{R}\times\mathbb{R}.
	\end{equation}
	
	Consider now any $\varep \in (0,\delta/3]$. Let $B_{\varep}>0$ be such that
	\begin{equation}\label{equ:phi:A_vareps}
		\phi \ge 1 -\frac{\varep}{2} \text{ in } (-\infty,-B_{\varep}]  \text{ and } \phi \le \frac{\varep}{2} \text{ in } [B_{\varep},+\infty).
	\end{equation}
	Set
	$E_1:= \min\big( -B_{\varep}+\eta,-B_{\varep}+(c_b+c)T_3 +z_3\big)$ and $E_2:=\max\big(B_{\varep}+\eta, B_{\varep}+z_4\big)>E_1$.
	It then follows from \eqref{equ:converge:local} that
	\begin{equation}\label{equ:u_n:phi:int}
		\sup_{E_1 \leq z \leq E_2} \vert u_n(0,z)- \phi(-z+\eta) \vert \leq \varep
	\end{equation}
	for all $n$ large enough. In view of $t_n \rightarrow +\infty$ as $n \rightarrow +\infty$, \eqref{6.41}, \eqref{equ:phi:A_vareps} and the definition of $E_1$ and $E_2$, we then have 
	\begin{equation}
		\label{6.51}
		\begin{aligned}
			\begin{cases}\displaystyle
				0 < u_n(0,z) \le \varep &\text{ for all } z \le E_1,\\
				 1-\varep \le u_n(0,z) \le 1+\varep  &\text{ for all } \displaystyle E_2 \le z \le E_2 + \frac{c_b}{2} t_n, 
			\end{cases}
		\end{aligned}
	\end{equation}
	for all $n$ large enough. Furthermore, since $E_2 \ge B_{\varep} +\eta$ and  $E_1 \le -B_{\varep} +\eta$,
	\begin{equation}
		\label{6.52}
		\begin{aligned}
			\begin{cases} \displaystyle
				0< \phi(-z+\eta) \le \frac{\varep}{2} <\varep&\text{ for all } z \le E_1,\\
			\displaystyle	1-\varep < 1 - \frac{\varep}{2} \le \phi(-z+\eta )<1 &\text{ for all } z \ge E_2.
			\end{cases}
		\end{aligned}
	\end{equation}
	It then follows from \eqref{equ:u_n:phi:int}-\eqref{6.52} that, for all $n$ large enough
	$$
	\vert u_n(0,z) -\phi(-z+\eta) \vert \le 2 \varep~~~~
	\text{ for all }~~ z\le E_2+\frac{c_b}{2}t_n.
	$$
	Due to the definition of $u_n(t,z)$, one has
	\begin{equation}
		\label{6.53}
			\vert u(t_n,x) -\phi(-x-(c_b+c)t_n+\eta) \vert \le 2\varep ~~~~\text{ for all }~ x\le E_2-\frac{c_b}{2}t_n-ct_n.
	\end{equation}
	On the other hand, one infers from \eqref{6.4-1} and \eqref{6.5-u}  that for all $n$ large enough,
	$$	
	\begin{aligned}
		1-3\varep&\le\phi(-x-(c_b+c)(t_n-T_{4,\varep})-cT_{4,\varep}+z_{4,\varep})-\varep e^{-\delta(t_n-T_{4,\varep})}-\varep e^{-\mu(X_{4,\varep}-x-ct_n)}\\
		& \le u(t_n,x)\le\phi\big(-x-(c_b+c)(t_n-T_{3,\varep})+z_{3,\varep}\big)+\varep e^{-\delta(t_n-T_{3,\varep})}+\varep e^{-\mu(x-X_{3,\varep})}\le 1+2\varep
	\end{aligned}
	$$
	for $E_2-\frac{c_b}{2}t_n-ct_n \le x \le X_{4,\varep} -ct_n$, where $X_{3,\varep}>L$, $X_{4,\varep}>L$, $T_{3,\varep}>0$, $T_{4,\varep}>0$, $z_{3,\varep}\in\R$, $z_{4,\varep}\in \mathbb{R}$ were given in Lemmas \ref{lemma-6.4}--\ref{lemma-6.5}.
	Notice also that for all $n$ large enough
	$$
	1-\varep \le \phi(-x - (c_b+c) t_n +\eta) <1 ~~~~\text{ for all } ~~ E_2-\frac{c_b}{2}t_n-ct_n\le x \le X_{4,\varep}-ct_n.
	$$
	 Combining the above two inequalities yields that for all $n$ large enough,
	$$
	\big| u(t_n,x) -\phi(-x-(c_b+c)t_n +\eta) \big| \le 3 \varep ~~~~\text{ for all }~ x \in \left[E_2-\frac{c_b}{2}t_n-ct_n,X_{4,\varep} -ct_n\right].
	$$
	Together with \eqref{6.53}, one derives that for all $n$ large enough,
	$$
\sup_{x \le X_{4,\varep}-ct_n}	\big| u(t_n,x)-\phi(-x-(c_b+c)t_n+\eta) \big| \le 3 \varep.
	$$

	We are now in a position to prove  \eqref{equ:c_le_cm:l} by applying Lemmas \ref{lemma:left:stability:sup} and  \ref{lemma:left:stability:sub:equiv}.
	Let $x_1=x_2=X_{4,\varep}$ and $\xi_1=\xi_2=\eta$. We infer from Lemma \ref{lem:u:basic_propert} and \eqref{2.7-w} that for all $n$ large enough,
	$$1-\varep\le u(t, X_{4,\varep}-ct)\le 1+\frac{\varep}{2}~~~~\text{for}~~t\ge t_n.$$
	Moreover, one has for all $n$ large enough that $\sup_{x\le X_{4,\varep}} u(t,x)\le \varep$ for all $t\ge t_n$ by Lemma \ref{lemma_c<-c_m}, and $\phi(-X_{4,\varep}-c_b t_n+\eta)\ge 1-\frac{\varep}{2}$, thanks to $c_b>0$, and that
	\begin{equation*}
		\label{equ:left:sup:f_mu}
		\Big(\max_{[-2\delta,1+2\delta]}|f_b'|+ \mu^2 -c \mu \Big)e^{-\mu(-(c_b+c) t_n-X_{4,\varep}-\omega_\varep+\eta-C)}+	\Big(\max_{[-2\delta,1+2\delta]}|f_b'|+ \mu^2 \Big)e^{-\mu(c_b t_n+X_{4,\varep}-\omega_\varep-\eta-C)}\le\delta
\end{equation*}

It then follows from Lemmas \ref{lemma:left:stability:sup} and  \ref{lemma:left:stability:sub:equiv} (applied with $t_1=t_2=t_n$, $x_1=x_2=X_{4,\varep}$ and $\xi_1=\xi_2=\eta$  and $3\varep$ instead of $\varep$) that for $n$ large enough,
\begin{equation*}
	 \big|u(t,x)-\phi(-x-(c_b+c)t+\xi_2)\big|\le 3M\varep~~~~\text{for all}~t\ge t_n ~\text{and}~x\le X_{4,\varep}-ct,
\end{equation*}
for $M:=\max(M_1,M_2)$, where $M_1$ and $M_2$ were  given in Lemmas \ref{lemma:left:stability:sup} and  \ref{lemma:left:stability:sub:equiv} respectively. Since $\varep\in(0,\delta/3]$ was arbitrarily chosen, one eventually infers that
\begin{equation*}
	\sup_{x\le X_{4,\varep}-ct}     \big|u(t,x)-\phi(-x-(c_b+c)t+\xi_2)\big|\to 0~~~~\text{as}~t\to+\infty.
\end{equation*}
Thus, \eqref{equ:c_le_cm:l}  is achieved,  with $z_2=\xi_2$. This completes the proof of Theorem \ref{thm_c le -cm,cb>0}.
\end{proof}

	\subsection[Proof of Theorem \ref{thm_extinction}]{Extinction: Proof of Theorem \ref{thm_extinction}}

	\begin{proof}[Proof of Theorem \ref{thm_extinction}]
		Remember that $c\le-c_m$.
		\vspace{0.2cm}
		
		\noindent
		\textit{Step 1: proof under condition (i).}	We first consider the case where $c_b<0$. Our proof includes two parts:  $c_b< -c_m$ and $c\in(c_b,-c_m]$;  $c_b<0$ and $c\le \min(-c_m,c_b)$.
		\vspace{0.2cm}
		
			\noindent
		\textit{Step 1.1}.	 Assume that $c_b< -c_m$ and $c\in(c_b,-c_m]$.	Since $c_b<c$, we can construct $\overline u$ as in \eqref{super_thm2.5} in the proof of Theorem \ref{thm_blocking_c>c_b} (which relies only on $c_b<c$). We readily check that $\overline u$ is a supersolution such that the solution $u$ is blocked in the right direction, i.e., $u(t,x)\to 0$ as $x\to+\infty$ uniformly in $t\ge 0$. 	Then, for any $\varep\in(0,\theta)$, there exists $x_0>L$ sufficiently large such that
		\begin{equation*}
			0<\sup_{x\ge x_0}u(t,x)<\varep \quad\text{for}~ t \geq 0.
		\end{equation*}
		On the other hand, since $c\le -c_m$, we infer from Lemma \ref{lemma_c<-c_m} that there exists $t_0>0$ large enough such that 
		$$0<\sup_{x \le x_0}u(t,x)< \varep \quad\text{for}~ t\ge t_0.$$ 
		Therefore, one concludes that $u(t,x)\to 0$ as $t\to+\infty$, uniformly for $x\in\R$.
		\vspace{0.2cm}
	
		\noindent
		\textit{Step 1.2}. Assume that $c_b<0$ and $c\le \min(-c_m,c_b)$. We divide into  two cases.

		\noindent
		\textit{Case 1. Assume that $c\le -c_m$ and $c<c_b$.} Choose $\varep_0\in (0,-c_b)$.
	Lemma \ref{lemma:c<-c_m} immediately implies that there exists $X>L$ such that
		\begin{equation}\label{region-1}
			\limsup_{t\to+\infty}\sup_{X\le x<-(c_b+c+\varep_0)t} u(t,x)=0.
		\end{equation}
		On the other hand, it follows from \eqref{4.57} in Lemma \ref{lemma 3.1} (i) that
		\begin{equation}\label{region-2}
			\limsup_{t\to+\infty}\sup_{ x>(c_b-c+\varep_0)t} u(t,x)=0.
		\end{equation}
		Combining \eqref{region-1}--\eqref{region-2}, along with the fact that $c_b-c+\varep_0<-c_b-c-\varep_0$ as well as Lemma \ref{lemma_c<-c_m}, one reaches $\lim_{t\to+\infty}u(t,x)=0$ uniformly in $x\in\R$.
	
		\vspace{0.2cm}
		
			\noindent
		\textit{Case 2. Assume that $c=c_b\le -c_m$.} For any $\varep\in(0,\theta/2)$, let $f_{b,\varep}$ be a $C^1(\mathbb{R})$ function such that $f_{b,\varep}\ge f_b~\text{in}~\R$, and
		$$\left\{\baa{l}
		f_{b,\varep}(0)=f_{b,\varep}(\theta-\varep)=f_{b,\varep}(1)=0,~~f_{b,\varep}'(0)<0,~~f_{b,\varep}'(\theta)>0,~~f_{b,\varep}'(1)<0,\vspace{3pt}\\
		f_{b,\varep}<0~\text{in}~(0,\theta-\varep),~~f_{b,\varep}>0~\text{in}~(\theta-\varep,1),~~\int_0^1f_{b,\varep}(s)ds<0.\eaa\right.$$
		 For each $\varep\in(0,\theta/2)$ small enough, let $\phi_\varep$ be the unique traveling front profile of $u_t=  u_{xx}+f_{b,\varep}(u)$ such that 
		$$ \phi_\varep''+c_{b,\varep}\phi_\varep'+f_{b,\varep}(\phi_\varep)=0\hbox{ in }\R,~~\phi_\varep(0)=\theta,~~\phi_\varep(-\infty)=1,~~\phi_\varep(+\infty)=0,$$
		with speed $c_{b,\varep}>c_b$. Then, $\phi_\varep\to\phi$ in $C^2_{loc}(\R)$ and $c_{b,\varep}\to c_b$ as $\varep\to0$. We then fix $\varep\in(0,\theta/2)$ small enough such that $c_b<c_{b,\varep}<0$. 
		
		Let now $u_\varep$ be the solution to the Cauchy problem \eqref{1.2}  with $f_\varep(x,s)$ instead of $f(x,s)$ for $(x,s)\in\R\times\R_+$ starting from the initial condition $u_0$, where we assume that $f_\varep(x,s)=f(x,s)$ for $(x,s)\in(-\infty,L]\times \R_+$, while $f(x,s)=f_{b,\varep}(s)$ for $x\in[L,+\infty)$. By the comparison principle, we have that $u_\varep(t,x)>u(t,x)$ for $t>0$ and $x\in\R$. We now claim that $u_\varep$ extincts, therefore so does $u$.
		
		 In fact, we notice that $c\le -c_m$ and $c<c_{b,\varep}$.  On the one hand, since $c\le -c_m$, we apply Lemma \ref{lemma:c<-c_m} and derive that there exist  $T>0$, $X>L$, $z\in\mathbb{R}$ and $\delta>0$ such that
		\begin{align*}
			u_\varep(t,x)\le\phi_\varep\big(-x+X-(c_{b,\varep}+c)(t-T)+z\big)+\delta e^{-\delta(t-T)}~~~\text{for}~t\ge T~\text{and}~ x\ge X.
		\end{align*}
	This implies that $	\lim_{t\to+\infty}\sup_{X\le x<-(c_{b,\varep}+c)t} u_\varep(t,x)=0$. Together with Lemma \ref{lemma_c<-c_m}, we then obtain 
	\begin{equation}
		\label{2.8-1}
		\lim_{t\to+\infty}\sup_{ x<-(c_{b,\varep}+c)t} u_\varep(t,x)=0
	\end{equation} 
	On the other hand, due to $c<c_{b,\varep}$, we adapt Lemma \ref{lemma 3.1} (i) and deduce that  there exist $X_1>L$, $T_1>0$, $z_1\in\mathbb{R}$, $\mu_1>0$ and $\delta_1>0$ such that
	\begin{equation*}
		u_\varep(t,x)\le\phi_\varep(x-(c_{b,\varep}-c)(t-T_1)+z_1)+\delta_1 e^{-\delta_1(t-T_1)}+\delta_1 e^{-\mu_1(x-X_1)}~~\text{for all}~ t\ge T_1~\text{and} ~x\ge X_1,
	\end{equation*}
		which yields that
		\begin{equation}
			\label{2.8-2}
			\lim_{t\to+\infty}\sup_{ x>(c_{b,\varep}-c)t} u_\varep(t,x)=0.
		\end{equation} 
		Combining \eqref{2.8-1}--\eqref{2.8-2}, along with the fact that $c_{b,\varep}<0$, we immediately get $\sup_{x\in\R}u_\varep(t,x)\to 0$ as $t\to+\infty$. That is, $u_\varep$ goes extinction. 		
	
		\vspace{0.2cm} 
		 
		 \noindent
			\textit{Step 2: proof under condition (ii).}		Assume now that \textnormal{spt}$(u_0)$ is and $\Vert u_0\Vert_{L^\infty(\mathbb{R})} < \theta$.	For any $\varep\in(0,\theta)$,  we choose $f_{b,\varep} \in C^{1}(\mathbb{R})$ such that $f_{b,\varep} \geq f_b$ and 
		 \begin{equation}
		 	\label{tilde f_b}
		 	\begin{aligned}
		 		\left\{\baa{l}
		 		f_{b,\varep}(\varep)=f_{b,\varep}(\theta)=f_{b,\varep}(1)=0,~~f_{b,\varep}'(\varep)<0,~~f_{b,\varep}'(\theta)>0,~~f_{b,\varep}'(1)<0,\vspace{3pt}\\
		 		f_{b,\varep}>0~\text{in}~(-\infty,\varep)\cup(\theta,1),~~f_{b,\varep}<0~\text{in}~(\varep,\theta)\cup(1,+\infty).\eaa\right.
		 	\end{aligned}
		 \end{equation}
		 We now prove that there exist monostable decreasing traveling waves of
		 \begin{equation}
		 	\label{6.57}
		 	w_t=w_{xx}+f_{b,\varep}(w), ~~~~(t,x)\in\R^2,
		 \end{equation}
		 connecting $\theta$ and $\varep$, and moving to the left, one of which will serve as an upper barrier for the function $u$. To do so, set $z(t,x)=\theta-\omega(t,x)$ for $(t,x)\in\R^2$, then $z$ satisfies
		 \begin{equation}
		 	\label{6.58}
		 	z_t=z_{xx}+g(z),~~~~(t,x)\in\R^2,
		 \end{equation} 
		 with $g(z)=-f_{b,\varep}(\theta-z)$ being positive in $(0,\theta-\varep)$. Then it is known that  \eqref{6.58} admits traveling front solutions $\psi(x-\nu t)$ with $\psi: \mathbb{R} \rightarrow (0,\theta-\varep)$ such that $\psi''+\nu\psi'+g(\psi)=0$ in $\R$,  $0<\psi<\theta-\varep$, $\psi(-\infty)=\theta$ and $\psi(+\infty)=\varep$ if and only if $\nu\ge\nu_{\text{min}}$ for some $\nu_{\text{min}}\ge 2\sqrt{f'_{b,\varep}(\theta)}$. 
		 Fix $\nu_0 > \max(-c, \nu_{\text{min}})$. It is straightforward to see that  $\theta-\psi(-x-\nu_0 t)$ is a decreasing traveling wave for \eqref{6.57} connecting $\theta$ and $\varep$, and moving to the left with speed $\nu_0>0$.
		 
		 On the other hand, one can easily verify that $\varphi_{c_m+c}(-x-(c_m+c)t)$ is a supersolution to \eqref{1.1'} for $(t,x)\in\R^2$ which has an increasing profile and moves to the left with speed $c_m+c\le 0$.
		 
		  Since $\text{spt}(u_0)$ is included in the bistable region and $\Vert u_0\Vert_{L^\infty(\mathbb{R})}<\theta$, one can fix  $B\in\R$ such that  $\theta-\psi(-x-B)>u_0(x)$ for $x\ge L$. Then, choose  $A\in\R$ such that $\varphi_{c_m+c}(-L+A)=\theta-\psi(-L-B)$.	   Then, one can check that $$\overline u(t,x):=\min\big(\varphi_{c_m+c}(-x-(c_m+c)t+A),\theta-\psi(-x-(\nu_0+c) t-B)\big),~~~t\ge 0,~x\in\R,$$ is a supersolution to \eqref{1.1'} such that $\overline u(0,x)>u_0(x)$ for $x\in\R$. The comparison principle gives that $\overline u(t,x)>u(t,x)$ for $t>0$ and $x\in\R$. Noticing that $\theta-\psi(-x-(\nu_0+c) t-B)$
		   indeed moves to the left with speed $\nu_0+c>0$, it  implies that $\sup_{x\ge X} u(t,x)\le \varep$ as $t\to+\infty$ for some $X>L$. Since $\varep>0$ was arbitrarily chosen, together with  $\sup_{x\le X} u(t,x)\le \varep$ as $t\to+\infty$ by Lemma \ref{lemma_c<-c_m}, it follows that $u(t,x)\to 0$ as $t\to+\infty$, uniformly for $x\in\R$. 	 This completes the proof of Theorem \ref{thm_extinction}.
	\end{proof}

	\section[Proof of Theorem \ref{thm_log delay}]{Sharp estimate of the level sets in the left direction when $c\in(-c_m,+\infty)$: Proof of Theorem \ref{thm_log delay}}
	
The proof of Theorem \ref{thm_log delay} is highly motivated by \cite{HNRR2013}. In fact, the  estimate of the level sets from exterior (upper bound) comes from the supersolution and  is a straightforward application of \cite{HNRR2013}. Regarding the estimate from interior (lower bound), the idea in \cite{HNRR2013} can be adapted with a slight modification, so that it works for every $c>-c_m$.    The details will be sketched below for the sake of completeness.
	
	\begin{proof}[Proof of Theorem \ref{thm_log delay}]
	Remember that  the level set is defined as $E_\varrho^-(t)=\inf\{x\in\R|u(t,x)=\varrho\}$ for any $t>0$ and for any given $\varrho\in(0,1)$.
	
	First of all,  consider \eqref{1.1} with $f_m(v)$ instead of $f(y-ct,v)$, for which let us define $S_\varrho(t)=\inf\{y\in\R|v(t,y)=\varrho\}$ for $t>0$ and for any $\varrho\in(0,1)$. Then it follows from \cite{HNRR2013} that $S_\varrho(t)=-c_m t+3/(2\lambda^*)\ln t+O_{t\to+\infty}(1)$, with $2\lambda^*=c_m$. Correspondingly, the level sets of equation \eqref{1.1'} with $f_m(u)$ instead of $f(x,u)$ in the left direction behave asymptotically like  $-(c_m+c) t+3/(2\lambda^*)\ln t+O_{t\to+\infty}(1)$. This implies that  $E_\varrho^-(t)\ge -(c_m+c) t+3/(2\lambda^*)\ln t+C_1 $ for some $C_1\in\R$. In what follows, it remains to prove that $E_\varrho^-(t)\le -(c_m+c) t+3/(2\lambda^*)\ln t+C_2 $ for some $C_2\in\R$.

		\noindent
		{\it Step 1.} The linearized problem with Dirichlet boundary condition at $-(c+c_m)t$. It is easy to verify that the function $w(t,\hat{x})=u(t,-(c+c_m)t-\hat{x}-L)$ solves 
		$$
		w_t-w_{\hat{x}\hat{x}}-c_m w_{\hat{x}}-f(-(c+c_m)t-\hat{x}-L,w)=0,~~~~t>0,~\hat{x}\in\R.
		$$
		In particular, we have $
		w_t-w_{\hat{x}\hat{x}}-c_m w_{\hat{x}}-f_m(w)=0$ for $t>0$ and $\hat{x}>0$.
		Consider the following linear equation with Dirichlet boundary condition at $\hat{x}=0$:
		\begin{equation}
			\begin{aligned}
				\begin{cases}
						z_t-z_{\hat{x}\hat{x}}-c_mz_{\hat{x}}-f_m'(0)z=0,~~~~&t>0,~\hat{x}>0,\\
						z(t,0)=0,~~~~  &t >0.
				\end{cases}
			\end{aligned}
		\end{equation} 
		In view of $c_m=2\sqrt{f_m'(0)}=2\lambda^*$, the function $p(t,\hat{x})=e^{\lambda^* \hat{x}} z(t,\hat{x})$ solves
		$$
		\begin{cases}
			p_t=p_{\hat{x}\hat{x}},&t>0,\hat{x}>0,\\
			p(t,0)=0,&t>0,
		\end{cases}
		$$
		 hence
		$$
		z(t,\hat{x})=\frac{e^{-\lambda^*\hat{x}}}{\sqrt{4\pi t}}
		\int_{0}^{+\infty}
		\left( e^{-\frac{(\hat{x}-\upsilon)^2}{4t}}-e^{-\frac{(\hat{x}+\upsilon)^2}{4t}} \right) p(0,\upsilon) d\upsilon~~~~ \text{ for all } t>0 \text{ and } \hat{x} \ge 0,
		$$
		which implies that
		$$
		z(t,\hat{x}) \sim C \hat{x} e^{-\lambda^* \hat{x} -\frac{\hat{x}^2}{4t}}t^{-\frac{3}{2}} \text{ as } t \rightarrow +\infty
		$$
		in the interval $\hat{x} \in[0,\sqrt{t}]$, where $C$  depends only on $p(0,\cdot)$.
		
			\noindent
		{\it Step 2.} Lower bound at $x=-(c+c_m)t-O(\sqrt{t})$.
		In view of $f_m\in C^2([0,1])$, there exists $M>0$ so that 
		$$
		f(s) -f_m'(0)s \geq -Ms^2 \text{ for } s \in[0,s_0)~~~~~ \text{ for some } s_0>0.
		$$
		Notice that $z(t,\hat{x})\leq C(t+1)^{-\frac{3}{2}}$ for $t >0$. Let  $a(t)$ solve
		$
		a'(t)=-CM(t+1)^{-\frac{3}{2}}a(t)^2,~t>0.$ Such $a(t)$ can be chosen uniformly bounded from above and below: $0<a_0<a(t)<a_1<+\infty$. 
		Then, the function $\underline{w}(t,\hat{x})=a(t) z(t,\hat{x})$ satisfies
		\begin{align*}
			\underline{w}_t- \underline{w}_{\hat{x}\hat{x}}-c_m \underline{w}_{\hat{x}}  -f_m(\underline{w})
			=&a'(t)z +a(t)\big(z_t-z_{\hat{x}\hat{x}}-c_m z_{\hat{x}}-f_m'(0) z\big)+f'(0)a z-f(az)\\
			\le &a'(t)z+M(az)^2=\big(a'(t)+Ma(t)^2z\big)z=0.
		\end{align*}		
		 Therefore, for any $\sigma>0$, there exists $\hat{\delta}>0$ such that 
		\begin{equation}\label{equ:u:lambda_m}
			u(t,-(c+c_m)t-\hat{x}-L)=w(t,\hat x)\geq \underline{w}(t,\hat{x})\geq \hat{\delta} \hat{x} e^{-\lambda^* \hat{x}} t^{-\frac{3}{2}}
		~~~~	\text{ for all } t\ge 2 
			\text{ and } \hat{x} \in [0,\sigma\sqrt{t}].
		\end{equation}
		
			\noindent
		{\it Step 3.} The approximate traveling fronts are subsolutions of \eqref{1.1'} for $-(c+c_m)t - O(\sqrt{t})-L\le x \le \mu_0 t-L$ with any fixed $-(c+c_m)<\mu_0<  \min(0,c_m-c)$. Fix $\sigma >0$ and let $\xi(t) = \sigma \sqrt{t}$. By the estimate \eqref{equ:u:lambda_m}, we will construct an explicit subsolution of \eqref{1.1'} on the interval $-(c+c_m)t -\xi(t)-L\le x \le \mu_0 t-L$ for $t$ large enough. This subsolution will be an approximate traveling front, moving with leftward speed $c_m+c>0$.
		
		According to \eqref{equ:u:lambda_m}, there exist $\tilde{\delta}>0$ and $T_1\ge 0$ such that
		\begin{equation}\label{equ:u:xi}
			u(t,-(c+c_m)t -\xi(t)-L) \ge \underline{w}(t,\xi(t)) \ge \tilde{\delta} \xi(t) e^{-\lambda^* \xi (t)} t^{-\frac{3}{2}}
		\end{equation}
		for all $t \ge T_1$. It follows from Theorems \ref{thm_c>cm}--\ref{thm:-cm_cm:left} that
		$$
		\inf_{\mu_1 t \le x \le \mu_2 t} u(t,x)  \rightarrow 1~~~~ \text{ for any }~ -(c_m+c) <\mu_1 <\mu_2<  \min(0,c_m-c).
		$$
		Given any $\varrho\in(0,1)$, fix $\overline\varrho\in(\rho,1)$. Therefore, there is $T_2(\ge T_1)$ such that $u(t,\mu_0t-L) \ge \overline\varrho$ for all $t \ge T_2$.
		
		Let $f_1$ be a $C^1$ function such that $f_1 \le f_m$ for $u \in [0,\overline\varrho]$, $f_1(0)=f_1(\overline\varrho)=0$, $f_1'(0)=f_m'(0)$ and $f_1(s)>0 \text{ on } (0,\overline\varrho)$. The function $f_1$ then satisfies
		$$
		f_1(s)\le f_m(s) \le f_m'(0) s =f_1'(0)s~~~~~\text{for all}~s\in [0,\overline\varrho].
		$$
		 Then there exists a traveling front $U_{c_m}(x-c_mt)$ of $u_t=u_{xx}+f_1(u)$ such that $0<U_{c_m}<\overline\varrho$ in $\mathbb{R}$, $U_{c_m}(-\infty)=\overline\varrho$, $U_{c_m}(+\infty)=0$ with speed $c_m=2 \sqrt{f_m'(0)}$. The profile $U_{c_m}$ is decreasing in $\mathbb{R}$ and is such that 
		$$
		U_{c_m}(s) \sim \tilde{B} s e^{-\lambda^* s} \text{ as } s \rightarrow+ \infty,~~~~~\text{for some}~\tilde{B}>0.
		$$
		Let now $\gamma>0$ and fix $x_1 \in \mathbb{R}$ large enough so that $\tilde{B}(\gamma+1) e^{-\lambda^* x_1} \le \tilde{\delta}$. Since there exists $T_3 \ge T_2$  such that 
		$$
		\frac{3}{2 \lambda^*} \ln t +x_1 < \gamma  \xi (t) ~~~\text{ for } t \ge T_3,
		$$
		we have 
		\begin{equation}\label{equ:U:c_m}
			U_{c_m}\Big(\frac{3}{2\lambda^*} \ln t + \xi(t)+x_1\Big) \le \tilde{\delta} \xi (t) e^{-\lambda^* \xi(t)}t^{-\frac{3}{2}},~~~~~t\ge T_3.
		\end{equation}

		On the other hand, in view of $U_{c_m}(+\infty)=0$ and $\min_{x \in [-(c+c_m)T_3-\xi(T_3)-L,\mu_0T_3-L]}u(T_3,x)>0$, there exists $x_2 (\ge x_1)$ such that
		$$
		U_{c_m}\Big(-x -(c+c_m)T_3+\frac{3}{2 \lambda^*} \ln T_3 +x_2-L\Big) \le u(T_3,x) 
		$$
		for all $x \in [-(c+c_m)T_3-\xi(T_3)-L,\mu_0T_3-L]$.
		Define the subsolution $\underline{u}$ as follows
		$$
		\underline{u}(t,x)=U_{c_m}\Big(-x -(c+c_m)t +\frac{3}{2 \lambda^*} \ln t +x_2-L\Big)
		$$
		for $t \ge T_3$ and $x \in [-(c+c_m)t-\xi(t)-L,\mu_0t-L]$.
		
		It is easy to verify that $\underline{u}(T_3,x) \le u(T_3,x)$ for all $x \in [-(c+c_m)T_3-\xi(T_3)-L,\mu_0T_3-L]$. Since $x_2 \geq x_1$ and $U_{c_m}$ is decreasing, along with  \eqref{equ:u:xi}--\eqref{equ:U:c_m}, we have 
		$$
		\underline{u}(t,-(c+c_m)t-\xi(t)-L)=U_{c_m}\Big(\xi(t) +\frac{3}{2 \lambda^*} \ln t +x_2\Big) \le \tilde{\delta} \xi(t) e^{-\lambda^* \xi(t)} t^{-\frac{3}{2}}  \le u(t,-(c+c_m)t-\xi(t)-L)
		$$
		for all $t \ge T_3 \ge T_1$. Besides, $\underline{u}(t,\mu_0t-L) < \overline\varrho \le u(t,\mu_0t-L)$ for all $t \ge T_3 \ge T_2$.
		
		Lastly, since $f_1 \le f_m$ in $[0,\overline\varrho]$ and since $U_{c_m}$ is decreasing and satisfies $U_{c_m}^{''}+c_mU_{c_m}^{'}+f_1(U_{c_m})=0$ for all $ t\ge T_3$ and $x \in[-(c+c_m)t-\xi(t)-L,\mu_0t-L]$, we get
		$$
		\underline{u}_t -\underline{u}_{xx}-c\underline{u}_x -f_m(\underline{u})
		=\Big(-(c+c_m)+\frac{3}{2\lambda^*t}\Big)U_{c_m}'(\eta) -U_{c_m}^{''}(\eta)+cU_{c_m}^{'}(\eta)-f_1(U_{c_m}(\eta))=\frac{3}{2\lambda^*t}U_{c_m}'(\eta) \le0
		$$
		where $\eta=-x -(c+c_m)t+ 3/(2\lambda^*) \ln t +x_2-L$.
		Therefore, the function $\underline{u}$ is a subsolution of \eqref{1.1'} for all $t\ge T_3$ and $x \in [-(c+c_m)t-\xi(t)-L,\mu_0t-L]$. The comparison principle yields that
		\begin{equation}\label{equ:lower:u}
			u(t,x) \geq \underline{u}(t,x)=U_{c_m}\Big(-x -(c+c_m)t +\frac{3}{2 \lambda^*} \ln t +x_2-L\Big)
		\end{equation}
		for all $t \ge T_3$ and $x \in [-(c+c_m)t-\xi(t)-L,\mu_0t-L]$.
		
		\noindent
		{\it Step4.} Conclusion of the proof. The inequality \eqref{equ:lower:u} implies that for any given $x' \in \mathbb{R}$
		$$
		u\Big(t,-(c+c_m)t + \frac{3}{2 \lambda^*} \ln t +x'\Big) \ge \underline{u}\Big(t,-(c+c_m)t + \frac{3}{2 \lambda^*} \ln t +x'\Big)=U_{c_m}(-x'+x_2-L)>0
		$$ 
		for $t$ large enough. This proves that $E_\varrho^-(t)\le -(c_m+c) t+3/(2\lambda^*)\ln t+C_2 $ for some $C_2\in\R$. This completes the proof of Theorem \ref{thm_log delay}.
	\end{proof}

	In addition, assume that $c>-c_m$, it follows from the lines of \cite[Theorem 1.2]{HNRR2013}, one can further show that the solution $u$ of \eqref{1.1'} approaches the family of shifted traveling waves $\varphi_{c_m}(-x-(c_m+c)t+3/(2\lambda^*)\ln t+\xi(t))$ uniformly for $x<[\min(0, c_m-c)-\varep ]t$ for $\varep>0$ small enough, where $\xi:(0,+\infty)\to\R$ is such that $|\xi(t)|\le C$ with some $C>0$, and that the solution $u$ converges along its level sets to the profile of the minimal traveling wave, for which we refer readers to \cite{HNRR2013} and will not give further details.

\end{document}